\documentclass[12pt,reqno]{amsart} 

\usepackage[T1]{fontenc}
\usepackage{enumitem}
\setlist[enumerate,1]{wide, label=\emph{(\roman*)}, labelindent=0pt, itemsep=.1in}

\usepackage{hyperref}
\hypersetup{
  colorlinks   = true,
  citecolor    = blue,
  linkcolor    = blue
}

\usepackage{amsmath,amsfonts,amsthm,color,tikz, comment, xcolor, xfrac,amssymb}
\usepackage{mathrsfs}
\usepackage{mathtools}
\usepackage[font={scriptsize}]{caption}

\usepackage[left=1in, right=1in, top=1.1in,bottom=1.1in]{geometry}
\usepackage{amsrefs}

\renewcommand{\MR}[1]{~\href{https://mathscinet.ams.org/mathscinet-getitem?mr=MR#1}{MR#1}.}

\BibSpec{article}{%
    +{}  {\PrintAuthors}                {author}
    +{,} { \textit}                     {title}
    +{.} { }                            {part}
    +{:} { \textit}                     {subtitle}
    +{,} { \PrintContributions}         {contribution}
    +{.} { \PrintPartials}              {partial}
    +{,} { }                            {journal}
    +{}  { \textbf}                     {volume}
    +{}  { \PrintDatePV}                {date}
    +{,} { \issuetext}                  {number}
    +{,} { \eprintpages}                {pages}
    +{,} { }                            {status}
    +{,} { available at \eprint}        {eprint}
    +{}  { \parenthesize}               {language}
    +{}  { \PrintTranslation}           {translation}
    +{;} { \PrintReprint}               {reprint}
    +{.} { }                            {note}
    +{.} {}                             {transition}
    +{}  {\SentenceSpace \PrintReviews} {review}
}

\numberwithin{equation}{section}  

\newcommand{\1}{{\bf 1}}

\newcommand{\ud}{\ensuremath{\mathrm{d}}}
\newcommand{\Norm}[1]{\left\|  #1   \right\|}

\newcommand{\ce}{\mathcal{E}}
\newcommand{\cf}{\mathcal{F}}
\newcommand{\cg}{\mathcal{G}}
\newcommand{\ch}{\mathcal{H}}

\newcommand{\cl}{\mathcal{L}}

\newcommand{\cs}{\mathcal{S}}

\newcommand{\cz}{\mathcal{Z}}

\newcommand{\bE}{\mathbf{E}}

\newcommand{\bP}{\mathbf{P}}

\newcommand{\bs}{\mathbf{s}}
\newcommand{\bt}{\mathbf{t}}
\newcommand{\bx}{\mathbf{x}}
\newcommand{\by}{\mathbf{y}}

\newcommand{\NN}{\mathbb{N}}
\newcommand{\ZZ}{\mathbb{Z}}
\newcommand{\RR}{\mathbb{R}}
\newcommand{\PP}{\mathbb{P}}

\newcommand{\E}{\mathbb{E}}
\newcommand{\EE}{\E}
\newcommand{\R}{\RR}
\DeclareMathOperator{\Var}{Var}

\newcommand{\ep}{\varepsilon}

\newcommand{\vp}{\varphi}
\newcommand{\om}{\omega}

\newtheorem{theorem}{Theorem}[section]

\theoremstyle{definition}

\newtheorem{condition}[theorem]{Condition}

\newtheorem{corollary}[theorem]{Corollary}

\newtheorem{definition}[theorem]{Definition}

\newtheorem{hypothesis}[theorem]{Hypothesis}
\newtheorem{lemma}[theorem]{Lemma}
\newtheorem{notation}[theorem]{Notation}

\newtheorem{proposition}[theorem]{Proposition}

\theoremstyle{remark}
\newtheorem{remark}[theorem]{Remark}

\theoremstyle{remark}
\newtheorem{example}[theorem]{Example}

\begin{document}

\title[Directed polymers in a Gaussian environment]{A class of $\mathbf{d}$-dimensional directed polymers\\ in a Gaussian environment}

\author[L. Chen]{Le Chen}
\address{L. Chen: Department of Mathematics and Statistics, Auburn University, Auburn, United States}
\email{le.chen@auburn.edu}

\author[C. Ouyang]{Cheng Ouyang}
\address{C. Ouyang: Department of Mathematics, Statistics, and Computer Science, University of Illinois at Chicago, Chicago, United States}
\email{couyang@uic.edu}

\author[S. Tindel]{Samy Tindel}
\address{S. Tindel: Department of Mathematics, Purdue University, West Lafayette, United States}
\email{stindel@purdue.edu}

\author[P. Xia]{Panqiu Xia}
\address{P. Xia: School of Mathematics, Cardiff University, Cardiff, United Kingdom}
\email{xiap@cardiff.ac.uk}

\subjclass[2020]{60H15, 60K37, 82D60, 60G15}
\keywords{Dalang's condition, Ergodicity, Parabolic Anderson model, Phase transition, Polymer measure, Stochastic heat equation}
\date{March 2026}

\begin{abstract}
  We introduce and analyze a broad class of continuous directed polymers in
  $\R^d$ driven by Gaussian environments that are white in time and spatially
  correlated, under Dalang's condition. Using an It\^o-renormalized
  stochastic-heat-equation representation, we establish structural properties of
  the partition function, including positivity, stationarity, scaling,
  homogeneity, and a Chapman--Kolmogorov relation. On finite time intervals, we
  prove Brownian-type pathwise behavior, namely H\"older continuity and
  identification of the quadratic variation. We then obtain a sharp
  measure-theoretic dichotomy: the quenched polymer measure is singular with
  respect to Wiener measure if and only if $\widehat f(\R^d)=\infty$
  (equivalently, the noise is non-trace-class), and it is equivalent otherwise.
  Finally, in dimension $d\ge 3$, we prove diffusive behavior at large times in
  the high-temperature regime. This extends the Alberts--Khanin--Quastel
  framework from the $1+1$ white-noise setting to higher-dimensional Gaussian
  environments with general spatial covariance.
\end{abstract}

\maketitle

  {\hypersetup{linkcolor=black}
    \tableofcontents
  }

\section{Introduction}

This paper introduces and analyzes a broad class of directed polymers in
continuous time and space, driven by Gaussian environments that are white in
time and spatially correlated. Our aim is to go beyond the classical
one-dimensional white-noise framework and build a robust theory under Dalang's
condition in arbitrary dimension. We establish the structural properties of the
partition function needed to define the polymer measure, and then derive
pathwise and measure-theoretic consequences, including a sharp singularity
criterion and a high-temperature diffusive limit in dimension $d\ge 3$. In this
introduction, we provide background in Section~\ref{S:background}, present the
model in Sections~\ref{S:she}--\ref{S:polymer-models}, and summarize the
main results and proof strategy in Section~\ref{S:results}.

\subsection{Background}\label{S:background}
Directed polymers in random environments can be seen as an emblematic
example of disordered systems. The polymer measure is typically obtained by
exponentially tilting a reference path measure (such as a random walk or
Brownian motion) by an accumulated random potential along the path. At the heart
of the problem, one tries to quantify the competition between the energy given
by the random environment and the internal entropy of the system (represented by
the random walk or Brownian measure). Originating in statistical physics, these
models have become a central object of study in probability theory. This is
partially due to the fact that their study combines fascinating techniques,
including martingale theory, concentration inequalities, large deviations,
Malliavin calculus, stochastic PDEs, and intersection local times.

\subsubsection{Discrete polymer models}

Most of the results available for directed polymers have been obtained in a
discrete setting. A typical example of a polymer model on $\NN\times\ZZ^{d}$ is
given as follows: consider a sequence $\{\om(n,x); \, n\ge 0, x\in\ZZ^{d}\}$ of
i.i.d.\ random variables satisfying some mild moment assumptions, and a
temperature parameter $\beta > 0$. We define a measure $\bP_N^{\beta, \om}$ on
the space of paths from $\{0,\ldots,N\}$ to $\ZZ^{d}$, by specifying the finite
dimensional distributions of the canonical process $X$. Those finite dimensional
distributions are expressed by considering bounded functions $\phi$ defined on
$\ZZ^{d\times k}$ and some times $0 \le j_1 < \cdots < j_k \le N$. Then we set
\begin{equation}\label{E:discrete-polymer}
  \bE_N^{\beta,\om}
  \left[\phi(X_{j_1},\ldots ,X_{j_k})\right]
  = \frac{1}{Z_{N}}
  \bE_0\left[e^{\beta{\sum_{m=0}^{N} \om(m,X_m)}} \, \phi(X_{j_1},\ldots ,X_{j_k})\right] \, ,
\end{equation}
where the renormalization constant $Z_{N}$ is given by $\bE_0
[\exp(\beta{\sum_{m=0}^{N} \om(m,X_m)}) ]$, and $\bE_{0}$ is the expectation
with respect to a standard random walk $X$ starting from the origin.

There is now a wealth of properties established in the literature, allowing a
deep understanding of the measures $\bP_N^{\beta, \om}$ defined by~\eqref{E:discrete-polymer}. A
comprehensive account can be found in the Saint-Flour lecture notes of
Comets~\cite{comets:17:directed}, which remain a standard reference. A
fundamental organizing principle in the study of directed polymers is the
distinction between \emph{weak disorder} and \emph{strong disorder} regimes.
Roughly speaking, in weak disorder the polymer measure remains close to the
reference path measure (entropy prevails), while in strong disorder the path
localizes around favorable regions of the environment (energy dominates). One of
the earliest and most influential results in this direction is the
identification of the weak and strong disorder phases via the behavior of the
normalized partition function, which forms a positive martingale. In dimensions
$d \geq 3$, this martingale is non-degenerate at high temperature, leading to
weak disorder, while in low dimensions ($d=1,2$) strong disorder prevails at all
temperatures~\cites{imbrie.spencer:88:diffusion, carmona.hu:02:on}.

Beyond this qualitative picture, significant progress has been made in
understanding localization, free energy fluctuations, and pathwise growth. It is
impossible to briefly summarize the vast amount of contributions available at
this point
in this direction. We will just mention the sharp estimates on the partition
function $Z_{N}$ obtained in~\cites{berger.lacoin:17:high-temperature,
lacoin:10:new}, as well as the deep study of overlaps carried out
in~\cite{bates.chatterjee:20:localization}. Pathwise localization of the polymer
in the strong disorder regime is achieved in~\cite{bates:21:full-path}. Overall,
one can claim that recent advances have clarified the role of dimension,
temperature, and correlation structure of the environment, and introduced
techniques (such as fractional moment bounds and coarse-graining) that have
since become somewhat standard. Even more recently, discrete polymer techniques
have been fundamental in getting limiting results for the so-called
2d-stochastic heat flow~\cite{caravenna.sun.ea:23:critical}.

\subsubsection{Continuous polymer models}

The body of work on continuous-time, continuous-space directed polymers is much
scarcer than for the discrete setting. However, it should be stressed that
continuous models are particularly natural from an analytical perspective and
admit a direct connection with stochastic PDEs through Feynman--Kac
representations. Continuous models also allow the use of powerful methods which
are less straightforward to apply in the discrete setting, such as stochastic
calculus or estimates on intersection local times. Among the works dealing with
continuous polymers, we mention the following:
\begin{enumerate}[label=\textbf{(\roman*)}]

  \item The article~\cite{rovira.tindel:05:on} defines a Brownian polymer in an
    environment which is white in time and smooth in space. The
    sequel~\cite{lacoin:11:influence} provides refined estimates on the
    partition function and~\cite{bezerra.tindel.ea:08:superdiffusivity} exhibits
    superdiffusive behavior in dimension 1+1.

  \item The contributions~\cites{comets.yoshida:05:brownian,
    comets.yoshida:13:localization} discuss phase diagrams for the weak/strong
    disorder regimes of a Brownian polymer in an environment described by a
    Poisson point process in $\RR^{d}$. The methods used therein hinge on
    stochastic calculus tools for both the Brownian motion and the point
    process.

  \item The papers~\cites{alberts.khanin.ea:14:continuum,
    alberts.khanin.ea:14:intermediate} are probably the closest to our setting
    in the current contribution. The authors consider a 1+1-dimensional polymer
    model where the environment is given by a space-time white noise. This means
    that the polymer measure cannot be defined directly by the Feynman--Kac type
    representation~\eqref{E:discrete-polymer}. The authors replace~\eqref{E:discrete-polymer} by a
    representation relying on the stochastic heat equation with wedge initial
    condition. This will also be our point of view in~\eqref{E:fdd} below.

\end{enumerate}

Within this context, our paper extends the existing continuous polymer theory in
three directions. First, in contrast with the $1+1$ white-noise setting
of~\cite{alberts.khanin.ea:14:continuum}, we work in a general flat ambient
space $\R^d$ for all $d\ge 1$. Second, instead of smooth spatial correlations as
in~\cite{rovira.tindel:05:on}, we allow general spatial covariances under
Dalang's condition~\eqref{E:Dalang}, including non-trace-class noise regimes.
Third, in this unified framework we prove both local and global polymer
properties (regularity, singularity/equivalence criterion, and high-temperature
diffusive behavior in $d\ge 3$). Because the environment may be singular in
space, the polymer measure is built through It\^{o}-type renormalization of
stochastic-heat-equation transition densities, following the
Alberts--Khanin--Quastel (AKQ) framework.
Before describing our class of models more specifically, let us justify briefly
our endeavor:

\begin{enumerate}[label=\textbf{(\roman*)}]

  \item By considering a wide range of environments, we obtain access to a
    correspondingly broad range of polymer exponents. This type of variation in
    response to random environments has already been observed in the context of
    intermittency for parabolic or hyperbolic equations; see, for example,
    \cites{balan.chen.ea:22:exact, chen.dalang:15:moments,
    chen.kim:19:nonlinear, hu.huang.ea:15:stochastic}.

  \item At the same time, some aspects of polymer behavior are expected to be
    universal across different covariance structures. In this paper, we verify
    this mechanism for diffusive behavior at high temperature in dimensions
    $d \geq 3$. Other quantities (wandering or
    fluctuation exponents) are still open.

  \item Our It\^{o}-type polymers interpolate between the
    well-behaved classical polymer model in~\eqref{E:discrete-polymer} and polymers related to singular PDEs
    (see, e.g., \cite{perkowski.rosati:19:kpz} for the definition of such a
    singular polymer in dimension $1+1$). To the best of our knowledge, the
    basic properties of singular polymers remain largely unexplored.
    While~\cite{caravenna.sun.ea:23:critical} uses polymers as a tool for the
    construction of their very singular $2d$ stochastic heat flow, the singular
    polymer itself is not addressed therein. Our contribution develops
    foundational properties in this direction.

  \item Dealing with polymers defined in the It\^{o} sense enables us to invoke
    a large set of powerful tools, such as stochastic calculus, estimates for
    intersection local times, or Malliavin calculus. This framework can yield sharp
    estimates for various polymer exponents.

  \item Recent advances~\cites{baudoin.ouyang.ea:23:parabolic,
    baudoin.chen:25:parabolic} have analyzed the impact of ambient geometries on
    the behavior of parabolic Anderson models. The techniques we develop here
    can also be extended to geometric settings and provide
    new insights into the interactions between polymers and geometry, beyond the
    discrete cases addressed in~\cite{cosco.seroussi.ea:21:directed}.

\end{enumerate}

With these justifications in mind, we proceed to a description of our model of
interest. We start with some general notation concerning the stochastic heat
equation in Section~\ref{S:she}, before giving a full description of the
polymer measure in Section~\ref{S:polymer-models}.

\subsection{The stochastic heat equation}\label{S:she}

In order to describe our polymer, we first introduce the kind of random
environment we wish to consider as well as the related stochastic heat equation.
Note that the type of noisy environment considered here is quite standard in the
stochastic PDE literature; see~\cite{dalang:99:extending} for a basic reference.

\begin{definition}\label{D:Noise}
  We consider a centered Gaussian noise $\dot W$ defined on a complete
  probability space $(\Omega,\mathcal{F}, \PP)$. The noise $\dot{W}$ is white
  in time and homogeneously colored in space. Its covariance structure is given
  by the following identity, valid for any continuous and rapidly decreasing
  functions $\psi$ and $\phi$:
  \begin{align}\label{E:Cor}
    \E \left[ W\left(\psi\right) W\left(\phi\right) \right]
    = \int_0^\infty \ud s \int_{\R^d} \Gamma(\ud x)(\psi(s,\cdot)*\widetilde{\phi}(s,\cdot))(x),
  \end{align}
  where $\widetilde{\phi}(x) \coloneqq \phi(-x)$, and the symbol $*$ refers to
  the convolution in the spatial variable. In~\eqref{E:Cor}, the generalized
  function $\Gamma$ is a nonnegative and nonnegative definite tempered measure
  on $\R^d$ that is commonly referred to as the \emph{correlation measure}. The
  Fourier transform\footnote{We use the following convention and notation for
    Fourier transform: $\mathcal{F}\psi(\xi) = \widehat{\psi}(\xi) \coloneqq
    \int_{\R^d} e^{-i x\cdot \xi}\psi(x) \ud x$ for any Schwartz test function
  $\psi\in \mathcal{S}(\R^d)$.} of $\Gamma$ (in the generalized sense) is also
  a nonnegative and nonnegative definite tempered measure, which is usually
  called the \emph{spectral measure} and is denoted by $\widehat{f}(\ud \xi)$.
  Moreover, in the case where $\Gamma$ has a density $f$ (namely $\Gamma(\ud x)
  = f(x) \ud x$), we write $\widehat{f}(\ud \xi)$ as $\widehat{f}(\xi) \ud
  \xi$. In the sequel we will work under the so-called \emph{Dalang's
  condition} on $\widehat{f}$:
  \begin{align}\label{E:Dalang}
    \Upsilon(\gamma) \coloneqq \left(2\pi\right)^{-d} \int_{\R^d} \frac{\widehat{f}(\ud\xi)}{\gamma+\lvert\xi\rvert^2}<\infty \, ,
    \quad \text{for some (and hence all) $\gamma>0$.}
  \end{align}
\end{definition}

\begin{remark}\label{R:f0-convention}
  The total mass $\widehat{f}\left(\R^d\right)$ is a canonical quantity that
  depends only on the correlation measure $\Gamma$. When
  $\widehat{f}\left(\R^d\right) < \infty$, it also equals $\lim_{t\downarrow 0}
  k(t)$ (see~\eqref{E:k_t} below) and can be interpreted as the spatial
  variance of the noise at a point. If $\Gamma$ admits a bounded continuous
  density $f$, then $\widehat{f}\left(\R^d\right)=f(0)$; however, in general
  the value at one point is not intrinsic, so we avoid writing $f(0)$ unless
  such regularity is explicitly assumed.
\end{remark}

\begin{remark}\label{R:Filtration}
  For our stochastic calculus considerations, we equip the aforementioned
  probability space $(\Omega, \mathcal{F}, \PP)$ with the \textit{standard
  filtration} $\{\mathcal{F}_t\}_{t \ge 0}$, which is initially generated by
  the noise and then augmented by the $\sigma$-field $\mathcal{N}$ consisting
  of all $\PP$-null sets in $\mathcal{F}$. In addition, as introduced in
  Definition~\ref{D:Noise}, we use the notation $\PP$ and $\E$ to denote the
  probability and expectation with respect to $W$, respectively, throughout
  this paper.
\end{remark}

The partition function of our polymer is closely related to the stochastic heat
equation in $\R^{d}$, for which we recall a few facts (we refer
to~\cite{dalang:99:extending} again for further details). Namely for the noise
$W$ in Definition~\ref{D:Noise}, a measure valued initial condition $\mu$ and a
temperature parameter $\beta>0$, we consider the equation
\begin{equation}\label{E:SHE}
  \left(\dfrac{\partial}{\partial t}-\dfrac{1}{2}\Delta\right) u(t,x) = \beta \, u(t,x) \dot W(t,x),\qquad
  \text{for $x\in \R^d$ and $t>0$}.
\end{equation}
This equation has to be interpreted in the mild sense. That is, the solution
to~\eqref{E:SHE} is understood as the following mild solution:
\begin{align}\label{E:mild}
  u(t,x) = J_0(t,x) + \beta \int_0^t \int_{\R^d} p(t-s,x-y)\, u(s,y) \, W(\ud s,\ud y),
\end{align}
where the stochastic integral is interpreted as the Walsh integral
(\cites{dalang:99:extending, walsh:86:introduction}) and $J_0(t,x)$ refers to
the solution to the homogeneous equation, namely,
\begin{align}\label{E:def-J0}
  J_0(t,x) = J_0(t,x;\mu) \coloneqq \int_{\R^d} p(t,x-y)\mu(\ud y) = [p_t*\mu] (x).
\end{align}
Referring to~\eqref{E:def-J0} above, the measure $\mu$ has to be thought of as
the initial condition for $u$, namely $u(0,\cdot)=\mu$. Here and throughout the
rest of the paper, we use $p(t,x)$, or sometimes $p_t(x)$, to denote the
standard heat kernel over $\R^{d}$:
\begin{align}\label{E:heat}
  p(t,x) = p_t(x) \coloneqq \frac{1}{(2\pi t)^{d/2}} \exp\left(- \frac{|x|^2}{2t}\right),
\end{align}
where $|x| \coloneqq \sqrt{x_1^2 + \dots + x_d^2}$.

Existence and uniqueness of the solution to the stochastic heat equation, for
bounded (resp.~rough) initial conditions, were established
in~\cite{dalang:99:extending} (resp.~\cite{chen.kim:19:nonlinear}). We label the
result we need here for later use.

\begin{theorem}\label{T:she-well-posedness}
  Consider equation~\eqref{E:SHE} starting from an initial condition $\mu \ge
  0$, which is a nonnegative Borel measure on $\R^d$ such that
  \begin{align}\label{E:LinearExpID}
    \int_{\R^d} e^{-\gamma |x|} \, \mu(\ud x) < \infty \, , \quad \text{for all $\gamma >0$.}
  \end{align}
  Recall that the covariance of the noise is given by~\eqref{E:Cor}, and is
  specified by a correlation measure $f$ satisfying~\eqref{E:Dalang}. Then for
  any $\beta>0$, there exists a unique random field solution to
  equation~\eqref{E:SHE}, interpreted as in~\eqref{E:mild}.
\end{theorem}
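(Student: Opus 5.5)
The plan is the standard Picard iteration in a weighted $L^2(\Omega)$ space adapted to rough initial data, following Dalang and Chen--Kim. Set $u_0(t,x)=J_0(t,x)$ and
\[
u_{n+1}(t,x)=J_0(t,x)+\beta\int_0^t\int_{\R^d}p(t-s,x-y)\,u_n(s,y)\,W(\ud s,\ud y).
\]
Condition~\eqref{E:LinearExpID} ensures $J_0(t,x)<\infty$ for all $t>0$, $x\in\R^d$. The Walsh integral is well defined as long as $u_n$ is predictable and the integrand has finite norm. By~\eqref{E:Cor}, the second moment is
\[
\beta^2\int_0^t\ud s\int\!\!\int p(t-s,x-y)\,p(t-s,x-y')\,\E[u_n(s,y)u_n(s,y')]\,\Gamma(y-y')\,\ud y\,\ud y'.
\]
This expression makes sense for general tempered measures $\Gamma$ after first mollifying the noise and then passing to the limit.

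\textbf{Moment bound via two-point correlations.} Since $\Gamma\ge0$ and the iterates are positive, I would control the two-point quantity $\E[u_n(s,y)u_n(s,y')]$ rather than only the diagonal. Induction gives the bound
\[
\E[u_n(t,x)u_n(t,x')]\le \int\!\!\int \mu(\ud z)\,\mu(\ud z')\,p(t,x-z)\,p(t,x'-z')\,H(t),
\]
with $H$ a series whose $k$-th term is a $k$-fold time-simplex integral of products of Fourier factors. By the Chapman--Kolmogorov property of $p$, the spatial integrations of the heat kernels against $\Gamma$ reduce, in Fourier variables, to factors bounded by $\widehat f(\ud\xi)\,e^{-(t_j-t_{j-1})|\xi|^2}$ (up to constants). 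This is the decoupling used in Chen--Kim's treatment of rough initial data. Taking Laplace transforms in time, each factor contributes $\Upsilon(\gamma)$. Choosing $\gamma$ large so that $2\beta^2\Upsilon(\gamma)<1$ gives convergence of $H(t)\lesssim e^{\gamma t}$, with bounds uniform in $n$.

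\textbf{Convergence of the iterates.} Apply the same estimate to the differences $u_{n+1}-u_n$. The $n$-th difference is bounded by the $n$-th term of the series, times the initial two-point factor $J_0(t,x)J_0(t,x')$. Hence $u_n$ converges in $L^2(\Omega)$, locally uniformly in $(t,x)$ on $[\epsilon,T]\times K$, and also on $(0,T]$ with the weight $J_0^{-1}$.

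\textbf{Conclusion and main obstacle.} The limit is a predictable random field and solves~\eqref{E:mild} by $L^2$-continuity of the Walsh integral. Uniqueness follows by applying the same moment recursion to the difference of two solutions with finite moment bound: the homogeneous recursion forces the difference to vanish. The main obstacle is the singular initial condition near $t=0$. The products $J_0(s,y)J_0(s,y')$ blow up, and one has to check that the weighted convolution closes, i.e.\ that the $\mu\otimes\mu$ weight propagates through the heat kernels exactly. Here I would follow Chen--Kim and use the identity
\[
p(t-s,x-y)\,p(s,y-z)=p\!\left(\tfrac{s(t-s)}{t},\,y-\cdot\right)\,p(t,x-z),
\]
which factors out $p(t,x-z)$ at each step, so that the remaining integrals reduce to Dalang-type integrals controlled by~\eqref{E:Dalang}. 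Since the statement coincides with the known results of \cite{dalang:99:extending} and \cite{chen.kim:19:nonlinear}, in the paper it suffices to cite them; the above is the structure of their argument.
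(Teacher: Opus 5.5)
Your proposal agrees with the paper, which does not prove this theorem but simply invokes Dalang (bounded data) and Chen--Kim (rough data); your Picard scheme with two-point second-moment bounds and the heat-kernel factorization is the skeleton of Chen--Kim's argument. If you write the sketch out, fix two points. First, the Picard iterates are not positive (already $u_1(t,x)$ is Gaussian with mean $J_0(t,x)$), and nothing beyond nonnegativity of $\Gamma$, $\widehat f$, $\mu$ and the heat kernel is needed. Second, for rough data the factor produced at each step is $k\bigl(2s(t-s)/t\bigr)$, which behaves like $k(2s)$ near $s=0$ and so is not dominated by a multiple of $k(t-s)$; before taking Laplace transforms you must use the monotonicity of the partial sums $\sum_{j\le n}(2\beta^2)^j h_j$ together with Lemma~\ref{L:shatf} to replace it by $2k(t-s)$, which yields the convolution recursion~\eqref{E:hn-recursion}.
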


In order to transition to polymer dynamics, we will extend our notion of
solution of the stochastic heat equation to solutions starting from a Dirac
measure at an arbitrary time $s \ge 0$. Specifically, for $s \ge 0$ and $y \in
\R^d$, we consider the equation
\begin{equation}\label{E:PAM-Delta}
  \left(\dfrac{\partial}{\partial t}-\dfrac{1}{2}\Delta\right) v(t,x) = \beta \, v(t,x) \dot W(t,x),\quad
  \text{for $x\in \R^d$ and $t>s$ with $v(s,x) = \delta_{y}(x)$.}
\end{equation}
This equation falls into the framework of Theorem~\ref{T:she-well-posedness},
and thus admits a unique solution in the sense of~\eqref{E:mild}. This solution
will feature prominently in the definition of the partition function for our
polymer measure.

\subsection{The polymer measure}\label{S:polymer-models}
Throughout the paper, we use $\cs_k([0,t])$ to denote the $k$-dimensional
simplex in $[0,t]^k$, namely,
\begin{align}\label{E:Simplex}
  \cs_{k}\left( (0,t]\right) = \{\bs_k \in [0,t]^k, 0< s_1 < \dots < s_k \le t\}.
\end{align}
We will also use the following conventions:
\begin{equation}\label{E:Simplex-Conv}
  \cs_{0}((a,b]) \coloneqq \{\varnothing\}, \qquad
  \1_{\cs_{0}((a,b])}(\varnothing) = 1, \quad \text{and} \quad
  \cs_k \coloneqq \cs_k ((0,1]).
\end{equation}
With this notation in hand, recall that in continuous time, whenever the noise
$\dot{W}$ is sufficiently regular in the spatial variable, the polymer measure
can be defined through Feynman-Kac representations. This is not the case anymore
when $\dot{W}$ is a distribution in space and equation~\eqref{E:SHE} is defined
in the It\^o sense. However, Alberts, Khanin, and
Quastel~\cite{alberts.khanin.ea:14:continuum} provided heuristic arguments based
on the discrete polymer model, which yield a reasonable definition for an
It\^{o}-type continuous polymer in $\R$. This polymer is, roughly speaking,
defined by transition densities derived from the solution
to~\eqref{E:PAM-Delta}. We adopt this perspective in the present work, which
leads to the following definition of the polymer measure.

\begin{definition}\label{D:PolyMeasure}
  For $0 \le s < t \le 1$ and $x, y \in \R^d$, we denote by $\cz(s,y ; \, t,x;
  \, \beta)$ the mild solution to equation~\eqref{E:PAM-Delta}, namely,
  \begin{align}\label{E:Mild-Z}
    \cz(s,y ; \, t,x; \, \beta) \coloneqq p_{t-s}(x-y) + \beta \int_s^t \int_{\R^d} p_{t-r}(x-z) \, \cz(s,y ; \, r,z; \, \beta) \, W(\ud r, \ud z).
  \end{align}
  We also set
  \begin{equation}\label{E:Z_star}
    \begin{aligned}
      \cz(s,y; \, t, * ; \, \beta)
       & \coloneqq
      \int_{\R^{d}} \cz(s,y; \, t, x ; \, \beta) \, \ud x, \\
      \cz_t
       & \coloneqq
      \cz(0,0; \, t, * ; \, \beta).
    \end{aligned}
  \end{equation}
  Then, we introduce the following objects:
  \begin{enumerate}[label=\textbf{(\roman*)}]

    \item\label{I:PolyMeasure:i} For a fixed realization $W$ of the noise in
      Definition~\ref{D:Noise}, and a temperature parameter $\beta > 0$, we
      define a measure $\bP_\beta^W$ on $C ([0,1]; \R^d)$ equipped with the
      Borel $\sigma$-algebra $\mathcal{G}$, by specifying the finite dimensional
      distributions of the canonical process $X$ as follows:
      \begin{equation}\label{E:fdd}
        \bP_\beta^W \!\left( X_{t_{1}} \in \ud x_{1}, \ldots,  X_{t_{k}} \in \ud x_{k} \right)
        =
        \frac{\ud x_1 \cdots \ud x_k }{\cz(0,0; \, 1, *;\, \beta)} \,
        \prod_{j=0}^{k} \cz \left( t_{j}, x_{j}; \, t_{j+1}, x_{j+1}  ; \, \beta\right)
        \, ,
      \end{equation}
      where $\left(\bt_k, \bx_k\right) = \left(t_1, \dots, t_k; x_1, \dots
      x_k\right)$ designates a generic element of $\cs_k ([0,1]) \times \R^{k
      d}$, with the convention that
      \begin{equation}\label{E:Convention-t}
        (t_{0}, x_{0}) \coloneqq (0, 0) \quad \text{and} \quad
        (t_{k + 1}, x_{k + 1}) \coloneqq (1, *).
      \end{equation}
      The measure $\bP_\beta^W$ is referred to as the \emph{quenched polymer
      measure}. Furthermore, the degenerate case $\bP_0^{W}$ corresponds to the
      Wiener measure on $C([0,1]; \R^d)$. For simplicity, we denote this measure
      by $\bP_0$, omitting the dependence on $W$.

    \item\label{I:PolyMeasure:ii} The so-called \emph{annealed measure} is defined as
      the product measure $\PP_{\beta} \coloneqq \PP \otimes \bP_\beta^W$, where
      $\PP$ is the probability introduced in Definition~\ref{D:Noise}. We also
      set $\PP_0 \coloneqq \PP \otimes \bP_0$. In other words, for any bounded
      functional $F \colon C([0,1]; \R^d) \to \R$, we have
      \begin{align*}
        \E_{\beta} [F(X)] = \E \left( \bE_\beta^W [F(X)]\right ).
      \end{align*}

    \item In the sequel we denote by $X$ the canonical process, which,
      conditioned on $W$, obeys the quenched measure $\bP_\beta^W$.

  \end{enumerate}
\end{definition}

\begin{remark}
  Let us check that equation~\eqref{E:fdd} defines a probability measure,
  considering a simple example. Namely suppose $k = 1$ and look at an instant
  $t\in[0,1]$. Then according to~\eqref{E:fdd} we have
  \begin{equation}\label{E:polymer-one-time-density}
    \bP_\beta^W \!\left( X_{t} \in \ud x \right)
    =
    \frac{1}{\cz(0,0; \, 1, *;\, \beta)} \,
    \cz \left( 0, 0; \, t, x  ; \, \beta\right) \cz \left(t , x; \, 1, *  ; \, \beta\right)
    \ud x .
  \end{equation}
  Furthermore, anticipating the Chapman-Kolmogorov equation~\eqref{E:chapman-kolmogorov} below,
  we have
  \begin{equation}\label{E:Z-CK}
    \int_{\R^d} \cz \left( 0, 0; \,t ,x   ; \, \beta\right) \cz \left(t ,x ; \, 1, *  ; \, \beta\right)
    \ud x  = \cz \left( 0, 0; \, 1, *  ; \, \beta\right) .
  \end{equation}
  Integrating equation~\eqref{E:polymer-one-time-density} in $x$ and plugging relation~\eqref{E:Z-CK},
  we thus get
  \begin{equation*}
    \int_{\R^{d}} \bP_\beta^W \!\left( X_{t} \in \ud x \right)
    =1,
  \end{equation*}
  which proves that $\{\bP_\beta^W \!( X_{t} \in \ud x ); \, x\in\R^{d}\}$
  defines a probability measure. We let the reader check that this property is
  also true for multidimensional distributions like in~\eqref{E:fdd}.
\end{remark}

\begin{remark}
  One can also express condition~\eqref{E:fdd} through test functions. That is,
  for any $F \in C_b (\R^{kd})$ and $\mathbf{t}_k \in \mathcal{S}_k ([0,1])$,
  we have (using convention~\eqref{E:Convention-t})
  \begin{align}\label{E:fdd-test}
    \begin{aligned}
      \MoveEqLeft \bE_\beta^W \left[F (X_{t_1}, \dots, X_{t_k} ) \right]         \\
       & = \frac{1}{\cz(0,0; \, 1, *;\, \beta)} \, \int_{\R^{kd}} \ud \mathbf{x}_k
      \; F(x_1, \dots, x_k ) \times \prod_{j=0}^{k} \cz \left( t_{j}, x_{j}; \, t_{j+1}, x_{j+1} ; \, \beta\right) .
    \end{aligned}
  \end{align}
\end{remark}

\begin{remark}\label{R:beta-0}
  In particular, when $\beta = 0$, $\bP_0^W = \bP_0$ is independent of $W$ and
  corresponds to the Wiener measure on $C([0,1]; \R^d)$. Consequently, as
  mentioned in Definition~\ref{D:PolyMeasure}, the annealed measure factorizes
  as $\PP_0 (\ud w, \ud x) = \PP ( \ud w) \, \bP_0 (\ud x) $.
\end{remark}

\begin{remark}
  In Proposition~\ref{P:same-def-polymer} below, we will see that the
  expression~\eqref{E:equiv-finite-dist} can be viewed as a renormalized limit
  of smoothed expressions similar to~\eqref{E:discrete-polymer}. This gives a more
  intuitive version of Definition~\ref{D:PolyMeasure}.
\end{remark}

\subsection{Main results and strategy}\label{S:results}

Our paper develops a systematic theory for the polymer measure in
Definition~\ref{D:PolyMeasure}. To keep the introduction concise, we state the
main results in summary form and refer to the theorem statements for full
details. The core contributions are as follows:

\begin{enumerate}[leftmargin=*, label={\textbf{(\roman*)}}]
  \setlength\itemsep{.1in}

  \item Theorem~\ref{T:transition-properties} develops the structural theory of the partition
    function family in~\eqref{E:Mild-Z}. In particular, it proves H\"older
    continuity, stationarity, scaling, homogeneity, and a
    Chapman--Kolmogorov relation. These properties provide the analytic
    backbone of the polymer construction and all subsequent arguments.

  \item \label{I:main-2} On the unit interval, we prove local Brownian-type
    behavior under the quenched polymer measure $\mathbb{P}_{\beta}^W$ in
    \eqref{E:fdd}. Specifically, Theorem~\ref{T:polymer-holder-regularity} yields
    H\"{o}lder continuity with exponent $\frac12-\epsilon$ for every
    $\epsilon>0$, while Theorem~\ref{T:polymer-quadratic-variation} identifies the
    $(d\times d)$-quadratic variation of $X$ at time $t$ as $tI_d$.

  \item In contrast to item~\ref{I:main-2}, Theorem~\ref{T:singularity-criterion} gives a
    complete and sharp criterion for the singularity of $\mathbb{P}_{\beta}^W$
    with respect to the Wiener measure $\mathbb{P}_0$: namely,
    $\mathbb{P}_{\beta}^W \perp \mathbb{P}_0$ if and only if
    $\widehat{f}(\mathbb{R}^d)=\infty$, where $\widehat{f}(\mathbb{R}^d)$ is the
    quantity introduced in Remark~\ref{R:f0-convention}. This condition admits a
    natural interpretation: $\widehat{f}(\mathbb{R}^d)<\infty$ is equivalent to
    the driving noise $\dot W$ being trace class in the sense of Da
    Prato--Zabczyk \cite{da-prato.zabczyk:14:stochastic}. Consequently,
    Theorem~\ref{T:singularity-criterion} shows that $\mathbb{P}_{\beta}^W$ is singular with
    respect to $\mathbb{P}_0$ as soon as the noise fails to be trace class.
    While the theorem yields a definitive measure-theoretic dichotomy, it
    remains an intriguing open problem to identify an explicit set $A\subset
    \mathcal{B}(C[0,1])$ such that $\mathbb{P}_{\beta}^W(A)=1$ and
    $\mathbb{P}_0(A)=0$. Any such characterization must rely on subtler pathwise
    properties than classical H\"older regularity or quadratic variation.

  \item In Section~\ref{S:Diffusive}, we prove a large-time diffusive limit:
    in dimension $d \geq 3$, the canonical process under $\mathbb{P}_W^{\beta}$
    scales to Brownian motion in the high-temperature regime
    (Theorem~\ref{T:diffusive-behavior}). The proof proceeds through regularized and
    renormalized polymer measures and a limiting argument.

\end{enumerate}

At a technical level, all of our results rely on quantitative control of the
partition field $\cz(s, y;\, t, x;\, \beta)$ over $(s,t)\in\mathcal{S}_2([0,T])$
and $(x,y)\in\R^{2d}$. The key inputs are sharp moment estimates for its
increments, together with chaos-expansion arguments adapted to singular initial
data. Those bounds are then combined with martingale methods to derive the
measure-theoretic conclusions (equivalence/singularity) and the large-time
diffusive limit. Finally, in the regularization-renormalization part of the
proof, Hölder regularity of $\cz$ provides the stability needed to pass to the
limit (see the proof of Lemma~\ref{L:F-K_Mom_Polymer} and
Remark~\ref{R:holder-relevance}).

This article is a first step toward a broader program on the effect of singular
covariance structures on polymer behavior. Natural next objectives include the
implementation of fractional-moment methods, sharper free-energy estimates, and
the analysis of overlaps and path localization. We will pursue these directions
in forthcoming work.

\subsection{Outline of the paper}
The remainder of the paper is organized as follows. In
Section~\ref{S:preliminary}, we introduce preliminary notions concerning the
Gaussian noise, the associated Wiener space, and the stochastic heat equation,
and we also establish basic properties of the partition function.
Section~\ref{S:first-properties-of-polymer} is devoted to local pathwise
properties of the polymer measure, including H\"older regularity, quadratic
variation, and a criterion for singularity with respect to Wiener measure. In
Section~\ref{S:Diffusive} we study the large-time behavior of the polymer, and
prove diffusive scaling in the high-temperature regime in dimensions $d \geq 3$.
Two appendices collect auxiliary analytic estimates and a compactness lemma used
in the paper.

\section{Preliminary results}\label{S:preliminary}

This section is first devoted (in Section~\ref{S:wiener-space}) to recall
basic notions of stochastic analysis on the Wiener space related to our noise
$W$. Subsequently, in Section~\ref{S:prop-heat-eq}, we recall some core
properties of the stochastic heat equation. Furthermore, we present and prove
the long Theorem~\ref{T:transition-properties}, which encapsulates the most pertinent
characteristics of the field $\cz$ as given in Definition~\ref{D:PolyMeasure}.

\subsection{Wiener space related to \texorpdfstring{$W$}{W}}\label{S:wiener-space}

Recall from Definition~\ref{D:Noise} that we consider a Gaussian noise $\dot{W}$
on a complete probability space $(\Omega,\cf,\PP)$, encoded by a centered
Gaussian family $\{W(\vp) ; \, \vp\in {\mathcal{S}}(\R_{+}\times\R^{d})\}$. The
covariance structure of
\begin{align}\label{E:noise-formal-derivative}
  \dot{W} (t,x) = \frac{\partial^{d + 1}}{\partial t \partial x_1 \cdots \partial x_d} W (t,x)
\end{align}
is given by~\eqref{E:Cor}. When the spatial covariance $\Gamma$ in~\eqref{E:Cor}
admits a density $f$, this can be read as
\begin{align}\label{E:W-Density}
  \E \left[W(\psi) W(\phi)\right] = \int_0^{\infty} \ud s \int_{\R^{2d}} \psi(s, x) f( x - y) \phi (s, y) \ud x \ud y .
\end{align}
Alternatively, this covariance function can be expressed in Fourier modes as
\begin{equation}\label{E:W-Space}
  \E \left[ W\left(\psi\right) W\left(\phi\right) \right]
  = \int_{\R_{+}}\int_{\R^{d}}
  \mathcal{F}\psi(s,\xi) \, {\bar{\mathcal{F}}} \phi(s,\xi)  \, \widehat{f}(\ud \xi) \, \ud s \, ,
\end{equation}
where $\widehat{f}$ designates the spatial Fourier transform of $\Gamma$. One
can thus consider $W$ as an isonormal Gaussian family $\left\{W(\vp) ; \, \vp\in
\ch\right\}$ on a space $\ch$ obtained as the completion of Schwartz functions
with respect to the inner product given by the right-hand side
of~\eqref{E:W-Space}. Note that in the above notation we adopt the convention,
in connection with~\eqref{E:noise-formal-derivative}, that
\begin{align}\label{E:Noise-Convention}
  W(\ud t, \ud x) = \dot{W} (t, x) \ud t \ud x, \quad \text{for all } (t, x) \in \R_+ \times \R^d ,
\end{align}
and therefore,
\begin{align*}
  W(\phi) = \int_{\R_+} \int_{\R^d} \phi (t,x) W(\ud t, \ud x)
  = \int_{\R_+} \int_{\R^d} \phi (t,x)  \dot{W} (t, x) \ud t \ud x, \quad \text{for all } \phi \in {\mathcal{S}}(\R_{+}\times\R^{d}).
\end{align*}

We refer to~\cite{nualart:06:malliavin} for a comprehensive discussion of
Malliavin calculus in the context of a Gaussian process. In this article, we
concentrate on the fundamental definitions that enable us to present our
results in subsequent sections. Let us begin by introducing the derivative
operator in the sense of Malliavin calculus, which will be denoted by $D$. That
is consider $f \in C^{\infty}_p \left(\R^n\right)$, where $C^{\infty}_p
\left(\R^n\right)$ is the set of functions $f$ such that $f$ and all its
partial derivatives have polynomial growth. A smooth and cylindrical random
variable $F$ will then be of the form
\begin{equation*}
  F = f(W(\phi_1),\dots,W(\phi_n))\,,
\end{equation*}
with $\phi_i \in \mathcal{H}$. In this case, $DF$ is the $\mathcal{H}$-valued
random variable defined by
\begin{equation*}
  DF = \sum_{j=1}^n\frac{\partial f}{\partial x_j}(W(\phi_1),\dots,W(\phi_n)) \, \phi_j\,.
\end{equation*}
The operator $D$ is closable from $L^2(\Omega)$ into $L^2(\Omega;
\mathcal{H})$, and we define the Sobolev space $\mathbb{D}^{1,2}$ as the
closure of the space of smooth and cylindrical random variables under the norm
\[
  \Norm{DF}_{1,2}=\left( \E[F^2]+\E\left[\Norm{DF}^2_{\mathcal{H}}\right] \right)^{1/2}\,.
\]

For any integer $n \ge 0$ we denote by $\mathcal{H}_n$ the $n$th Wiener chaos of
$W$. Recall that $\mathcal{H}_0$ is simply $\R$ and for $n \ge 1$, $\mathcal
{H}_n$ is the closed linear subspace of $L^2(\Omega)$ generated by the random
variables $\{ H_n(W(h));h \in \mathcal{H}, \|h\|_{\mathcal{H}} = 1 \}$, where
$H_n$ is the $n$th Hermite polynomial. For any $n\ge 1$, we denote by
$\mathcal{H}^{\otimes n}$ (resp. $\mathcal{H}^{\odot n}$) the $n$th tensor
product (resp. the $n$th symmetric tensor product) of $\mathcal{H}$. Then, the
mapping $I_n(h^{\otimes n})= H_n(W(h))$ can be extended to a linear isometry
between $\mathcal{H}^{\odot n}$ (equipped with the modified norm $\sqrt{n!}\|
\cdot\|_{\mathcal{H}^{\otimes n}}$) and $\mathcal{H}_n$.

Consider now a random variable $F\in L^2(\Omega)$ which is measurable with
respect to the $\sigma$-field $\mathcal{F}^W$ generated by $W$. This random
variable can be expressed as
\begin{equation}\label{E:chaos-dcp}
  F = \E \left[ F\right] + \sum_{n=1} ^\infty I_n(f_n),
\end{equation}
where the series converges in $L^2(\Omega)$. Moreover, the functions $f_n \in
\mathcal{H}^{\odot n}$ are determined by $F$.  Identity~\eqref{E:chaos-dcp} is
called the Wiener chaos expansion of $F$. We also recall the following product
identity for multiple integrals, borrowed
from~\cite{nualart:06:malliavin}*{Proposition~1.1.3}: for $p, q \ge 1$ and $f
\in \mathcal{H}^{\odot p}$, $g \in \mathcal{H}^{\odot q}$, we have
\begin{align}\label{E:contraction-0}
  I_p (f) I_q (g) = \sum_{r = 0}^{p \wedge q} r! \binom{p}{r} \binom{q}{r} I_{p + q - 2 r} \left(f \otimes_r g \right) ,
\end{align}
where the contraction $f \otimes_r g$ is defined by
\begin{align}\label{E:contraction}
  \left[f \otimes_r g \right] (t_1, \dots, t_{p + q - 2r}) \coloneqq
  \left\langle f (t_{1:p - r}, \cdot), g (t_{p - r + 1: p + q - 2r}, \cdot) \right\rangle_{\mathcal{H}^{\otimes r}},
\end{align}
with the convention
\begin{equation}\label{E:Conv-t}
  t_{i:j} \coloneq (t_i, \dots, t_j), \quad \text{for all positive integers } i \le j.
\end{equation}

In order to state our first basic result on polymer measures (see
Theorem~\ref{T:transition-properties} below), we will need more conditions on the noise $W$.
We start by imposing additional assumptions, the first one being a more
restrictive Dalang type condition.

\begin{hypothesis}[\textbf{Strengthened Dalang's condition}]\label{H:stren}
  Recall that the measure $\widehat{f}$ is introduced in~\eqref{E:W-Space}. In
  addition to Dalang's condition~\eqref{E:Dalang}, we assume that there exists
  $\eta \in (0,1)$ such that the following holds:
  \begin{align}\label{E:stren}
    \Upsilon_{\eta} \coloneqq \left(2\pi\right)^{-d} \int_{\R^d} \frac{\widehat{f}(\ud\xi)}{\left( 1 + \lvert\xi\rvert^2\right)^{1 - \eta}} < \infty.
  \end{align}
\end{hypothesis}

\begin{remark}
  The strengthened Dalang condition~\eqref{E:stren}, also known as the
  \textit{Dalang-Sanz-Sol\'{e}-Sarr\`{a} condition}, was introduced by
  Sanz-Sol\'{e} and Sarr\`{a}~\cite{sanz-sole.sarra:02:holder} in order to
  establish H\"{o}lder continuity of solutions to~\eqref{E:SHE}.
\end{remark}

Our second additional assumption is a scaling property which will translate into
scaling properties for the polymer.

\begin{hypothesis}[\textbf{Scaling condition of $\Gamma$}]\label{H:scale-gmm-simple}
  Let $\Gamma$ be the covariance function introduced in
  Definition~\ref{D:Noise}. We assume that $\Gamma$ admits a density $f$ such
  that, for some constant $\gamma < 2$,
  \begin{align}\label{E:scale-gmm-simple}
    f(x) = |x|^{- \gamma} f \left(\frac{x}{|x|}\right) , \quad
    \text{for all $x \in \R^d \setminus \{0\}$.}
  \end{align}
\end{hypothesis}

\begin{remark}
  The scaling condition~\eqref{E:scale-gmm-simple} is sufficiently general,
  provided that $\Gamma$ admits a density, so that $\Gamma (\ud x) = f(x) \ud
  x$. Indeed, suppose there exists a function $\rho \colon \R_+ \to \R_+$ such
  that
  \begin{align}\label{E:scale-gmm}
    \Gamma (\theta \, \ud x) = \rho(\theta) \, \Gamma (\ud x) , \quad
    \text{for all $\theta > 0$.}
  \end{align}
  Also assume that $\rho$ is differentiable on $\R_{+}\setminus\{0\}$. Then we
  claim that any function $\rho$ that satisfies~\eqref{E:scale-gmm} must take
  the form $\rho (\theta) = \theta^{- \gamma}$ for some $\gamma \in \R$.
  Actually, assuming~\eqref{E:scale-gmm} and continuity, one can show that
  \[
    \rho(1) = 1, \quad \text{and} \quad \rho(\theta_1) \rho(\theta_2) = \rho (\theta_1 \theta_2), \text{ for all $\theta_1, \theta_2 > 0$.}
  \]
  This observation together with the differentiability assumption further
  implies that $\rho$ satisfies the differential equation
  \[
    \dot{\rho} (\theta) = \lim_{\theta' \to \theta} \frac{\rho (\theta') - \rho (\theta)}{\theta' - \theta} = \lim_{\theta' \to \theta}  \frac{[\rho ( \frac{\theta' }{\theta}) - \rho(1)]\rho(\theta) }{ (\frac{\theta' }{\theta} - 1) \theta} = \lim_{\delta \to 1}  \frac{[\rho ( \delta) - \rho(1)] }{ \delta - 1}\times \frac{\rho(\theta)}{\theta} = \frac{\dot{\rho} (1)}{\theta} \rho(\theta),
  \]
  for all $\theta > 0$. Solving this yields $\rho(\theta) = \theta^{-\gamma}$,
  where $\gamma \coloneqq -\dot{\rho}(1)$. Furthermore, assuming that $\Gamma$
  admits a density $f$, an application of Plancherel's theorem shows that the
  Dalang condition~\eqref{E:Dalang} reduces to the condition
  \[
    \int_{0^+} |x|^{- (d - 2)} f(x) \ud x < \infty,
  \]
  which in turn requires $\gamma < 2$. Consequently,
  condition~\eqref{E:scale-gmm} simplifies to~\eqref{E:scale-gmm-simple}.
\end{remark}

We now state the scaling lemma for the noise $\dot{W}$ deduced from
Hypothesis~\ref{H:scale-gmm-simple}.

\begin{lemma}
  Let $\dot{W}$ be the noise in Definition~\ref{D:Noise}, and suppose
  Hypothesis~\ref{H:scale-gmm-simple} holds true. For a parameter $r > 0$, we
  define
  \begin{align}\label{E:W-Scale}
    \dot{W}_r (t, x) \coloneqq \dot{W} (r^2 t , r x), \quad \text{for all $t > 0$ and $x \in \R^d$} .
  \end{align}
  Then, we have the following identity in distribution:
  \begin{align}\label{E:ScaleW}
    \left\{\dot{W}_r (t, x) ; t \ge 0, x \in \R^d \right\} \overset{(d)}{=}
    \left\{r^{- \frac{\gamma}{2} - 1} \dot{W} (t,x);  t \ge 0, x \in \R^d  \right\}.
  \end{align}
\end{lemma}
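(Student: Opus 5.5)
Both sides of \eqref{E:ScaleW} are centered Gaussian generalized fields, so it suffices to show that their covariance functionals agree. Concretely, I would interpret $\dot W_r$ through test functions: for $\phi\in\mathcal S(\R_+\times\R^d)$, set
\begin{align*}
  W_r(\phi) \coloneqq \int_{\R_+}\int_{\R^d} \phi(t,x)\,\dot W(r^2t,rx)\,\ud t\,\ud x
  = r^{-2-d}\, W(\phi^{(r)}), \qquad \phi^{(r)}(s,y)\coloneqq \phi(r^{-2}s, r^{-1}y),
\end{align*}
which follows from the change of variables $s=r^2t$, $y=rx$. This identity will serve as the definition of the rescaled noise. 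The family $\{W_r(\phi)\}$ is then centered Gaussian and linear in $\phi$, and the claim reduces to
\begin{align*}
  \E\left[W_r(\psi)W_r(\phi)\right] = r^{-\gamma-2}\,\E\left[W(\psi)W(\phi)\right] \quad \text{for all Schwartz } \psi,\phi .
\end{align*}
Once this holds, equality of all finite-dimensional distributions follows, because Gaussian families are determined by their covariances. The same identity then extends by density to $\mathcal H$ if needed.

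To verify the covariance identity, I would use the density form \eqref{E:W-Density}, which is available because Hypothesis~\ref{H:scale-gmm-simple} provides a density $f$. We compute
\begin{align*}
  \E\left[W(\psi^{(r)})W(\phi^{(r)})\right] = \int_0^\infty \ud s\int_{\R^{2d}} \psi(r^{-2}s,r^{-1}x)\, f(x-y)\, \phi(r^{-2}s,r^{-1}y)\,\ud x\,\ud y .
\end{align*}
Substituting $s=r^2t$, $x=rx'$, $y=ry'$ produces a Jacobian factor $r^{2+2d}$. The homogeneity \eqref{E:scale-gmm-simple} gives $f(r(x'-y'))=r^{-\gamma}f(x'-y')$: writing $z=x'-y'$, we have $f(rz)=|rz|^{-\gamma}f(z/|z|)=r^{-\gamma}f(z)$. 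The result is $r^{2+2d-\gamma}\,\E[W(\psi)W(\phi)]$. Multiplying by the prefactor $r^{-2(2+d)}$ gives $r^{-2-\gamma}$, which is exactly $(r^{-\gamma/2-1})^2$, as required.

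The only delicate point is rigor at the level of generalized functions. We need to make sense of \eqref{E:W-Density} for Schwartz functions when $f$ is locally integrable but singular at the origin, and to justify the use of Fubini. Both follow from Dalang's condition \eqref{E:Dalang}, since $f$ is a tempered measure, so the integral is absolutely convergent (the convolution $\psi*\widetilde\phi$ is Schwartz). If one prefers, the whole computation can be carried out in Fourier variables via \eqref{E:W-Space}, using the corresponding homogeneity $\widehat f(\xi/r)=r^{d-\gamma}\widehat f(\xi)$; this gives the same result.
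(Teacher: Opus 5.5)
Your proposal is correct and follows the same route as the paper: both arguments match the covariances of two centered Gaussian families, using the parabolic change of variables and the homogeneity $f(rz)=r^{-\gamma}f(z)$. The paper does this formally with $\delta(t-s)f(x-y)$ and leaves the test-function version to the reader; your computation via $W_r(\phi)=r^{-2-d}W(\phi^{(r)})$ is exactly that rigorous version, and your exponents (giving $r^{-2-\gamma}$) check out.
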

\begin{proof}
  Both processes in~\eqref{E:ScaleW} are centered Gaussian processes. Therefore,
  it is enough to check that the covariance functions coincide. Now note
  that~\eqref{E:W-Density} can be written formally as
  \[
    \E \left[\dot{W} (t, x) \, \dot{W} (s, y)\right] = \delta (t - s) \, f(x - y) \, ,
    \quad\text{for}\quad
    s,t\ge 0, \ x,y\in\R^{d}.
  \]
  Therefore, taking our Definition~\eqref{E:W-Scale} into account, we also have
  \[
    \E \left[\left(r^{\frac{\gamma}{2} + 1} \dot{W}_{r} (t, x) \right) \left(r^{\frac{\gamma}{2} + 1} \dot{W}_{r} (s, y) \right)\right] = r^{\gamma + 2}  \delta \left(r^2(t - s)\right) f \left(r (x - y) \right).
  \]
  Invoking the classical scaling $\delta (r u) = r^{-1} \delta (u)$ for the
  Dirac measure and our Hypothesis~\ref{H:scale-gmm-simple} for $f$, we end up
  with
  \[
    \E \left[\left(r^{\frac{\gamma}{2} + 1} \dot{W}_{r} (t, x) \right) \left(r^{\frac{\gamma}{2} + 1} \dot{W}_{r} (s, y) \right)\right] = \delta (t - s) f (x - y) ,
  \]
  which proves the desired identity in law~\eqref{E:ScaleW}. Note that a fully
  rigorous version of this proof would require working with~\eqref{E:W-Density}
  using two test functions, $\psi$ and $\phi$. We leave this routine elaboration
  to the interested reader.
\end{proof}

\begin{example}\label{Ex:scaling-examples}
  A typical example of $\Gamma$ satisfying~\eqref{E:scale-gmm-simple} is the
  Riesz kernel, given by $\Gamma (\ud x) = |x|^{-\gamma} \ud x$ with $\gamma \in
  (0, 2\wedge d)$. Another example arises from the correlation structure of
  fractional Brownian sheets, where $\Gamma (\ud x) = \prod_{i = 1}^d H_i (2H_i
  - 2) |x_i|^{2H_i - 2} \ud x$ with $H_i \in (\frac{1}{2}, 1)$ and $|H| = H_1 +
  \dots + H_d > d - 1$, and the corresponding parameter $\gamma = 2d - 2 |H|$;
  see Assumption A and Examples 1.3 and 1.4 of~\cite{balan.chen.ea:22:exact}.
\end{example}

Next, we introduce some heat kernel estimates related to our noise $\dot{W}$
that will be frequently used in this paper. First, let $E_{\alpha,\beta}(z)$
denote the \textit{two-parameter Mittag--Leffler function} defined by
\begin{equation}\label{E:mittag-leffler}
  E_{\alpha,\beta}(z) \coloneqq \sum_{m=0}^{\infty}\frac{z^{m}}{\Gamma(\alpha m+\beta)},
\end{equation}
which is well defined as an entire function for all $\alpha > 0$, $\beta \in
\mathbb{C}$, and $z \in \mathbb{C}$; see, e.g., \cite{podlubny:99:fractional}.
We use the common convention that $E_{\alpha, 1}(z) = E_{\alpha}(z)$. Next we
introduce a function $k$ which plays an important role in our future
computations.

\begin{notation}
  Recall that $\Gamma$ is the covariance measure introduced in
  Definition~\ref{D:Noise}, with Fourier transform $\widehat{f}$. We set $k
  \colon \R_+ \to \R_+$ as the function given by
  \begin{align}\label{E:k_t}
    k (t) \coloneqq \int_{\R^d} p_t (x) \Gamma (\ud x)
    = \left(2\pi\right)^{-d} \int_{\R^d} e^{- \frac{t |\xi|^2}{2}}  \widehat{f} (\ud \xi ) , \quad \text{for all } t \in \R_+.
  \end{align}
  Observe that $k (\cdot)$ is a nonincreasing function on $\R_+$. Next, let
  \begin{align}\label{E:hn-recursion}
    h_0~\equiv 1 \quad  \text{and} \quad
    h_n(t) \coloneqq \int_0^t h_{n - 1} (t - s) k(s) \ud s, \text{ for all } n \ge 1.
  \end{align}
\end{notation}

\begin{example}\label{Ex:riesz-kernel}
  Drawing on our Example~\ref{Ex:scaling-examples}, consider the Riesz kernel $\Gamma (\ud
  x) = f(x) \ud x = |x|^{- \gamma} \ud x$ with $\gamma \in (0, 2 \wedge d)$.
  Then, $\widehat{f}(\ud \xi) = |\xi|^{\gamma - d} \ud \xi$ and according
  to~\eqref{E:k_t}, we have
  \[
    k (t) = C_d \int_{\R^d} e^{- \frac{t |\xi|^2}{2}} \frac{\ud \xi}{|\xi|^{d - \gamma}} = C_{d, \gamma} t^{- \frac{\gamma}{2}},
  \]
  where the last identity is obtained thanks to the elementary change of
  variable $t^{\frac{1}{2}} \xi \mapsto \eta$.
\end{example}

\begin{lemma}\label{L:k-upper-bound}
  Let $k(\cdot)$ be defined by~\eqref{E:k_t}. Suppose that for some $\eta \in
  (0, 1)$, the strengthened Dalang's condition~\eqref{E:stren} holds. Set
  $C_\eta^* \coloneqq \sup_{x > 0} \exp \left(- x/2 \right) (1 + x)^{1 - \eta}$
  and recall that the constant $ \Upsilon_{\eta}$ is defined by~\eqref{E:stren}.
  Then for all $T \ge 1$, it holds that
  \begin{align}\label{E:kt-bdd}
    k (t) \le C_\eta^*\, \Upsilon_{\eta}\, \left(t/T\right)^{\eta - 1}, \quad \text{for all $t \in (0, T)$}.
  \end{align}
\end{lemma}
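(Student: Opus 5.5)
We start from the Fourier representation in~\eqref{E:k_t}, $k(t) = (2\pi)^{-d}\int_{\R^d} e^{-t|\xi|^2/2}\,\widehat{f}(\ud\xi)$. The goal is to compare the integrand pointwise with the weight $(1+|\xi|^2)^{-(1-\eta)}$ appearing in~\eqref{E:stren}. To this end we write
\[
  e^{-t|\xi|^2/2} = \frac{e^{-t|\xi|^2/2}\,(1+|\xi|^2)^{1-\eta}}{(1+|\xi|^2)^{1-\eta}}.
\]
It then suffices to bound the numerator, uniformly in $\xi$, by $C_\eta^*\,(t/T)^{\eta-1}$ whenever $0<t<T$ and $T\ge 1$. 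Integrating against $\widehat{f}$ and using the definition of $\Upsilon_\eta$ yields~\eqref{E:kt-bdd} directly.

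For the pointwise bound we set $x = t|\xi|^2$, so that $|\xi|^2 = x/t$ and
\[
  e^{-t|\xi|^2/2}(1+|\xi|^2)^{1-\eta} = e^{-x/2}\Bigl(1+\frac{x}{t}\Bigr)^{1-\eta}.
\]
Since $t<T$ and $T\ge 1$, we have $t/T<1$ and $t\le T$, hence $1/t \le T/t$. Together with $1\le T/t$, this gives
\[
  1+\frac{x}{t} \le \frac{T}{t}\,(1+x).
\]
As $1-\eta>0$, raising both sides to the power $1-\eta$ preserves the inequality:
\[
  \Bigl(1+\frac{x}{t}\Bigr)^{1-\eta} \le \Bigl(\frac{t}{T}\Bigr)^{\eta-1}(1+x)^{1-\eta}.
\]
Multiplying by $e^{-x/2}$ and taking the supremum over $x>0$ produces the constant $C_\eta^*$. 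The value $x=0$, i.e.\ $\xi=0$, is covered by continuity, or simply because the left-hand side then equals $1 \le C_\eta^* (t/T)^{\eta-1}$.

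Note that $C_\eta^*$ is finite because exponential decay dominates polynomial growth. Also, the integral of $(1+|\xi|^2)^{-(1-\eta)}$ against $\widehat{f}$ is exactly $(2\pi)^d\Upsilon_\eta$, so the $(2\pi)^{-d}$ prefactors match.

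The argument involves no real obstacle. The only point requiring care is the elementary inequality $1+x/t\le (T/t)(1+x)$, which is exactly where the hypothesis $T\ge1$ is used. If needed, one could handle the case $t\ge 1$ separately, but the single inequality above already covers both regimes.
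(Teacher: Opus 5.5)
Your proposal is correct and follows the paper's proof. The paper first bounds $e^{-t|\xi|^2/2}\le C_\eta^*(1+t|\xi|^2)^{\eta-1}$, then uses $t<T$ to pass from $(t^{-1}+|\xi|^2)$ to $(T^{-1}+|\xi|^2)$ and $T\ge 1$ to pass from $(1+T|\xi|^2)$ to $(1+|\xi|^2)$; your pointwise inequality $1+x/t\le (T/t)(1+x)$ combines these two steps into one (the only nit being that $1/t\le T/t$ comes from $T\ge 1$, not from $t\le T$, as you yourself note at the end).
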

\begin{proof}
  Inequality~\eqref{E:kt-bdd} is a simple consequence of
  Hypothesis~\ref{H:stren} as follows:
  \begin{align*}
    k(t)
     &  = (2 \pi)^{-d} \int_{\R^d} e^{- \frac{t |\xi|^2}{2}}  \widehat{f} (\ud \xi )
    \le C_\eta^* (2 \pi)^{-d} \int_{\R^d}  \frac{\widehat{f} (\ud \xi ) }{(1 + t |\xi|^2)^{1 - \eta}}                               \\
     &  = C_\eta^*\, t^{\eta - 1} (2 \pi)^{-d} \int_{\R^d}  \frac{\widehat{f} (\ud \xi ) }{( t^{-1} +  |\xi|^2)^{1 - \eta}}
    \le C_\eta^*\, t^{\eta - 1} (2 \pi)^{-d} \int_{\R^d}  \frac{\widehat{f} (\ud \xi ) }{( T^{-1} +  |\xi|^2)^{1 - \eta}}           \\
     &  = C_\eta^*\, t^{\eta - 1} T^{1 - \eta} (2 \pi)^{-d} \int_{\R^d}  \frac{\widehat{f} (\ud \xi ) }{( 1 + T |\xi|^2)^{1 - \eta}}
    \le C_\eta^*\, t^{\eta - 1} T^{1 - \eta} \Upsilon_\eta
     =  C_\eta^*\, \Upsilon_{\eta}\, \left(t/T\right)^{\eta - 1},
  \end{align*}
  where we have used the definition of $C_\eta^*$, the inequality $t<T$, and the
  fact that $T \ge 1$ implies $(1 + T|\xi|^2)^{1 - \eta} \ge (1 + |\xi|^2)^{1 -
  \eta}$. This proves Lemma~\ref{L:k-upper-bound}.
\end{proof}

\begin{lemma}\label{L:hn-upper-bound}
  Suppose that for some $\eta \in (0, 1)$, the strengthened Dalang's
  condition~\eqref{E:stren} holds. Let $h_n(\cdot)$ be given by~\eqref{E:hn-recursion} for
  $n \ge 0$. Then, the following inequality holds
  \begin{align}\label{E:hn-bdd}
    h_n (t) \le  \left[ C_{\eta}^* \, \Upsilon_\eta\, \Gamma(\eta) \right]^n \frac{t^{n \eta}}{\Gamma (n \eta + 1)} ,
    \quad \text{for all $t \in (0, 1)$ and $n \ge 0$.}
  \end{align}
  where $C_\eta^*$ is defined in Lemma~\ref{L:k-upper-bound} and $\Upsilon_{\eta}$ is
  defined by~\eqref{E:stren}.
\end{lemma}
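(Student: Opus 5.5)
The plan is induction on $n$, using Lemma~\ref{L:k-upper-bound} with $T=1$. Set $A \coloneqq C_\eta^*\,\Upsilon_\eta$. With $T=1$, Lemma~\ref{L:k-upper-bound} gives
\begin{align*}
  k(s) \le A\, s^{\eta-1}, \quad \text{for all } s\in(0,1).
\end{align*}
The base case $n=0$ is trivial, since $h_0\equiv 1 = t^0/\Gamma(1)$.

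For the inductive step, assume \eqref{E:hn-bdd} holds for $n-1$, and fix $t\in(0,1)$. Every $s\in(0,t)$ and every $t-s$ then lie in $(0,1)$, so both the induction hypothesis and the bound on $k$ apply inside the integral in \eqref{E:hn-recursion}. This yields
\begin{align*}
  h_n(t) \le \left[A\,\Gamma(\eta)\right]^{n-1} \frac{A}{\Gamma((n-1)\eta+1)} \int_0^t (t-s)^{(n-1)\eta}\, s^{\eta-1}\,\ud s .
\end{align*}

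The remaining integral is a Beta integral. Substituting $s=tu$, we get
\begin{align*}
  \int_0^t (t-s)^{(n-1)\eta} s^{\eta-1}\ud s
  = t^{n\eta}\, B\big((n-1)\eta+1,\eta\big)
  = t^{n\eta}\,\frac{\Gamma((n-1)\eta+1)\,\Gamma(\eta)}{\Gamma(n\eta+1)} .
\end{align*}
The factor $\Gamma((n-1)\eta+1)$ cancels, leaving exactly $\left[A\,\Gamma(\eta)\right]^n t^{n\eta}/\Gamma(n\eta+1)$, which closes the induction.

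There is no real obstacle in this argument. The only points needing care are two. First, the bound on $k$ is used only on $(0,1)$, which is why the statement is restricted to $t\in(0,1)$. Second, the integrand is nonnegative, so replacing $h_{n-1}$ and $k$ by their upper bounds is legitimate. Integrability at $s=0$ holds because $\eta>0$.
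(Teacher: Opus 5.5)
Your proof is correct and is the same as the paper's: apply Lemma~\ref{L:k-upper-bound} with $T=1$ to get $k(s)\le C_\eta^*\Upsilon_\eta\, s^{\eta-1}$ on $(0,1)$. Then induct on $n$ through the recursion~\eqref{E:hn-recursion}, and close each step by evaluating the Beta integral $\int_0^t (t-s)^{(n-1)\eta}s^{\eta-1}\,\ud s = t^{n\eta}\,\Gamma((n-1)\eta+1)\Gamma(\eta)/\Gamma(n\eta+1)$.
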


\begin{proof}
  We first apply Lemma~\ref{L:k-upper-bound} with $T = 1$ to see that $k (t) \le C_\eta^*
  \Upsilon_\eta t^{\eta - 1}$. Now we prove~\eqref{E:hn-bdd} by induction in
  $n$. For $n = 0$, inequality~\eqref{E:hn-bdd} holds trivially. Next, suppose
  $n \ge 1$. Using the induction hypothesis, we can write
  \begin{align}\label{E:hn-ind-step}
    h_n (t)
    \le & \frac{\left[ C_{\eta}^* \, \Upsilon_\eta\, \Gamma(\eta) \right]^{n - 1}}{\Gamma ((n - 1) \eta + 1 )} \int_0^t (t - s)^{(n - 1) \eta} k(s) \ud s \\ \nonumber
    \le & \frac{\left[ C_{\eta}^* \, \Upsilon_\eta\, \Gamma(\eta) \right]^{n - 1} C_\eta^* \Upsilon_\eta}{\Gamma ((n - 1) \eta + 1 )} \int_0^t (t - s)^{(n - 1) \eta} s^{\eta - 1} \ud s
    = \frac{\left[ C_{\eta}^* \, \Upsilon_\eta\, \Gamma(\eta) \right]^n\,  t^{n \eta}}{\Gamma (n \eta + 1)},
  \end{align}
  where we have used the Beta integral. This proves~\eqref{E:hn-bdd} for general
  $n \ge 0$.
\end{proof}

\begin{remark}\label{R:riesz-hn-formula}
  As computed in Example 1.2 of the Appendix in~\cite{chen.kim:19:nonlinear},
  when the correlation function is the Riesz kernel $f (x) = |x|^{- \gamma}$
  with $\gamma < 2 \wedge d$, the expression $h_n (t)$ in~\eqref{E:hn-bdd}
  admits the following explicit formula:
  \begin{align*}
    h_n (t) = C_{\gamma, d}^n \frac{t^{n(1 - \gamma/2)}}{\Gamma\left(\left(1 - \gamma/2\right) n + 1\right)} \quad \text{with }
    C_{\gamma, d} \coloneqq \frac{\Gamma \left((d - \gamma)/2\right) \Gamma \left(1 - \gamma/2\right)}{2^{1 + \gamma/2} \Gamma \left(1 + d/2\right)} .
  \end{align*}
  Note that the Riesz kernel satisfies Hypothesis~\ref{H:stren} for any $\eta
  \in (0, 1 - \gamma/2)$. This is consistent with the bound established in
  Lemma~\ref{L:hn-upper-bound}, but is slightly sharper, as it allows for $\eta = 1 -
  \gamma/2$.
\end{remark}

For our future computations we will need the following corollary of
Lemma~\ref{L:hn-upper-bound}.

\begin{corollary}\label{C:hn-via-h1}
  Assume the conditions in Lemma~\ref{L:hn-upper-bound} prevail. Then, recalling that
  $h_{n}(t)$ is given by~\eqref{E:hn-recursion}, it holds that
  \begin{align*}
    h_n (t) \le \frac{\left[ C_{\eta}^* \, \Upsilon_\eta\, \Gamma(\eta) \right]^{n - 1} t^{(n - 1) \eta} h_1 (t)}{\Gamma ((n - 1) \eta + 1 )},
    \quad \text{for all $n \ge 1$ and $t \in (0, 1)$.}
  \end{align*}
\end{corollary}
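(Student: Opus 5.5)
The plan is to go back to the recursion~\eqref{E:hn-recursion} and apply Lemma~\ref{L:hn-upper-bound} only to the factor $h_{n-1}$, without re-running the induction. The point is that $h_1(t)=\int_0^t k(s)\,\ud s$ is exactly what remains if we bound $h_{n-1}(t-s)$ uniformly in $s\in(0,t)$ instead of integrating its power-law bound against the singular bound on $k$. Throughout, write $C\coloneqq C_\eta^*\,\Upsilon_\eta\,\Gamma(\eta)$.

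\emph{Step 1 ($n=1$).} Here the right-hand side equals $C^0\, t^0\, h_1(t)/\Gamma(1)=h_1(t)$, so the claim is an identity.

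\emph{Step 2 ($n\ge 2$).} Start from
\begin{align*}
  h_n(t)=\int_0^t h_{n-1}(t-s)\,k(s)\,\ud s .
\end{align*}
For $t\in(0,1)$ and $s\in(0,t)$ we have $t-s\in(0,1)$, so Lemma~\ref{L:hn-upper-bound} applied with index $n-1$ gives
\begin{align*}
  h_{n-1}(t-s)\le C^{\,n-1}\,\frac{(t-s)^{(n-1)\eta}}{\Gamma((n-1)\eta+1)} .
\end{align*}
Since $(n-1)\eta\ge 0$, we also have $(t-s)^{(n-1)\eta}\le t^{(n-1)\eta}$. The kernel $k$ is nonnegative, so we may insert this $s$-independent bound inside the integral. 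This yields
\begin{align*}
  h_n(t)\le \frac{C^{\,n-1}\,t^{(n-1)\eta}}{\Gamma((n-1)\eta+1)}\int_0^t k(s)\,\ud s .
\end{align*}
By~\eqref{E:hn-recursion} with $h_0\equiv1$, the last integral is exactly $h_1(t)$, which gives the corollary.

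There is no real obstacle. The only points to check are that $k\ge 0$ (immediate from~\eqref{E:k_t}, since $\Gamma$ and $\widehat f$ are nonnegative) and that $h_1(t)<\infty$. The latter follows from Lemma~\ref{L:k-upper-bound}, because $s^{\eta-1}$ is integrable at $0$.
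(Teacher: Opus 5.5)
Your proposal is correct and follows the paper's proof: both apply the bound of Lemma~\ref{L:hn-upper-bound} to $h_{n-1}(t-s)$ inside the recursion (this is exactly the first inequality in~\eqref{E:hn-ind-step}), replace $(t-s)^{(n-1)\eta}$ by $t^{(n-1)\eta}$, and recognize $\int_0^t k(s)\,\ud s = h_1(t)$. Your separate treatment of $n=1$ and your checks that $k \ge 0$ and $h_1(t)<\infty$ are harmless extra detail; the general step already covers $n=1$.
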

\begin{proof}
  Starting from~\eqref{E:hn-ind-step}, we can bound $(t - s)^{(n - 1) \eta}$ by
  $t^{(n - 1) \eta}$, as $n \ge 1$ and $\eta > 0$. This proves the corollary.
\end{proof}

We close this subsection by recalling the following elementary relation borrowed
from~\cite{chen.dalang:15:moments}*{Lemma A.4}.

\begin{lemma}\label{L:heat-kernel-factorization}
  Let $p_t$ be the heat kernel defined by~\eqref{E:heat}. Then, for any $s, t >
  0$ and $x, y \in \R^d$, we have
  \begin{align}\label{E:heat-kernel-factorization}
    p_t (x) \, p_s (y) = p_{\frac{t s}{t + s}} \left(\frac{s x + t y}{t + s}\right) p_{t + s} (x - y).
  \end{align}
  In particular, for any $s < r < t$ and $x, y, u \in \R^d$,
  \begin{align}\label{E:brownian-bridge-density}
    \frac{p_{t-r}(x-u) \, p_{r-s}(u-y)}{p_{t-s}(x-y)}
    = p_{\frac{(r-s)(t-r)}{t-s}}
    \left(u - \frac{(t-r)y + (r-s)x}{t-s}\right),
  \end{align}
  and observe that the left hand side of~\eqref{E:brownian-bridge-density} represents the
  density at time $r$ of a Brownian bridge from $(s,y)$ to $(t,x)$.
\end{lemma}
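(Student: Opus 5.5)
The identity \eqref{E:heat-kernel-factorization} is purely algebraic: both sides are Gaussian functions of $(x,y)$. I will verify it by comparing normalizing constants and exponents separately, and then derive \eqref{E:brownian-bridge-density} as a change of variables.

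\textbf{Constants.} On the left the prefactor is $(2\pi)^{-d}(ts)^{-d/2}$. On the right it is
\[
(2\pi)^{-d}\Bigl(\tfrac{ts}{t+s}\Bigr)^{-d/2}(t+s)^{-d/2}=(2\pi)^{-d}(ts)^{-d/2},
\]
so the constants agree.

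\textbf{Exponents.} I need to show
\[
\frac{|x|^2}{t}+\frac{|y|^2}{s}=\frac{t+s}{ts}\Bigl|\frac{sx+ty}{t+s}\Bigr|^2+\frac{|x-y|^2}{t+s}.
\]
Multiplying through by $ts(t+s)$, the right-hand side becomes
\[
|sx+ty|^2+ts|x-y|^2=s^2|x|^2+2st\,x\cdot y+t^2|y|^2+ts|x|^2-2ts\,x\cdot y+ts|y|^2 .
\]
The cross terms cancel, leaving $s(s+t)|x|^2+t(t+s)|y|^2$. This is exactly $ts(t+s)$ times the left-hand side. This expansion is the only real computation in the proof and poses no obstacle.

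\textbf{Bridge formula.} For $s<r<t$, apply \eqref{E:heat-kernel-factorization} with times $t-r$ and $r-s$, and with the arguments $x-u$ and $u-y$ in place of $x$ and $y$. The time sum is $(t-r)+(r-s)=t-s$, and the difference of the arguments is $(x-u)-(u-y)$. That difference does not match $x-y$, so I will use the symmetry $p_{r-s}(u-y)=p_{r-s}(y-u)$ and take the arguments $x-u$ and $y-u$ instead. Their difference is then $x-y$, which produces the factor $p_{t-s}(x-y)$. The weighted mean becomes
\[
\frac{(r-s)(x-u)+(t-r)(y-u)}{t-s}=\frac{(r-s)x+(t-r)y}{t-s}-u .
\]
The variance is $\frac{(t-r)(r-s)}{t-s}$. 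Using the evenness of $p$ once more and dividing by $p_{t-s}(x-y)$ gives \eqref{E:brownian-bridge-density}.

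The closing remark identifying this as the Brownian bridge density is then immediate. By the Markov property, the left-hand side is the conditional density of $B_r$ given $B_s=y$ and $B_t=x$.
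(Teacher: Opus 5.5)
Your proof is correct: the normalizing constants match, the completed-square identity for the exponents is verified correctly, and the substitution $(t,s,x,y)\mapsto(t-r,\,r-s,\,x-u,\,y-u)$ together with the evenness of $p$ gives \eqref{E:brownian-bridge-density} exactly. The paper gives no proof of its own and only cites it as an elementary relation (Chen--Dalang, Lemma A.4); your direct Gaussian computation is the standard way to verify it.
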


\subsection{Some properties of the stochastic heat equation}\label{S:prop-heat-eq}

Due to its linear nature, the solution to equation~\eqref{E:SHE} admits a chaos
expansion representation. The main proposition in this direction, borrowed
from~\cite{balan.chen:18:parabolic}*{Theorems~2.3 and~1.1}, is summarized below.

\begin{proposition}\label{P:she-chaos-expansion}
  Under condition~\eqref{E:Dalang} on the noise $\dot{W}$, let $u$ be the unique
  solution to equation~\eqref{E:SHE} as assessed in Theorem~\ref{T:she-well-posedness}.
  Then, for every $(t, x) \in \R_{+} \times \R^d$, the random variable $u(t,x)$
  is an element of $L^2 (\Omega)$. It admits a chaos decomposition like
  in~\eqref{E:chaos-dcp}, which is spelled out as:
  \begin{equation}\label{E:Chaos-Exp-U}
    u(t,x) = \E[u(t,x)]+\sum_{k=1}^{\infty} \beta^k I_k(f_k(\cdot \, ; t,x)).
  \end{equation}
  Moreover, the kernels $f_k$ in~\eqref{E:Chaos-Exp-U} are elements of $\mathcal
  H^{\otimes k}$ uniquely determined by $u$ and where the series converges in
  $L^2(\Omega)$. Specifically, the functions $f_k$ are given by
  \begin{equation}\label{E:she-chaos-kernel}
    f_k(\bs_k, \by_k ; t,x) = \int_{\R^d} p_{t-s_{k}}(x-y_{k})\cdots
    p_{s_{2}-s_{1}}(y_{2}-y_{1})  \, p_{s_{1}} (y_{1} - z) \mu(\ud z) \, \1_{\cs_{k}\left( (0,t]\right)} (\bs_k).
  \end{equation}
\end{proposition}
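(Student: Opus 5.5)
The plan is to derive the chaos expansion by iterating the mild formulation~\eqref{E:mild} (Picard iteration) and then identifying the limit with the solution $u$ given by Theorem~\ref{T:she-well-posedness}.

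\textbf{Step 1: Picard scheme.} Set $u_0(t,x)=J_0(t,x)$ and
\[
u_{n+1}(t,x)=J_0(t,x)+\beta\int_0^t\int_{\R^d}p_{t-s}(x-y)\,u_n(s,y)\,W(\ud s,\ud y).
\]
By induction, and using that a Walsh integral of an adapted multiple integral $I_{k}$ (with kernel supported on times earlier than $s$) is again a multiple integral, namely $I_{k+1}$ of the kernel extended by $p_{t-s}(x-y)$, one gets
\[
u_n(t,x)=J_0(t,x)+\sum_{k=1}^n\beta^k I_k\bigl(f_k(\cdot\,;t,x)\bigr),
\]
with $f_k$ given by~\eqref{E:she-chaos-kernel}. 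In particular $\E[u(t,x)]=J_0(t,x)$, since every term of positive order is centered.

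\textbf{Step 2: Convergence in $L^2(\Omega)$.} I need $\sum_k\beta^{2k}k!\,\|\tilde f_k(\cdot;t,x)\|^2_{\mathcal H^{\otimes k}}<\infty$, where $\tilde f_k$ is the symmetrization. Because $f_k$ is supported on the simplex $\cs_k((0,t])$, one has $k!\|\tilde f_k\|^2=\|f_k\|^2_{\mathcal H^{\otimes k}}$. I would compute this norm in Fourier variables, spatial variable by spatial variable, using~\eqref{E:W-Space}. For each fixed time point, integrating out the corresponding spatial variable produces a factor $k(\cdot)$ from~\eqref{E:k_t}. Bounding the resulting Gaussian factors via Lemma~\ref{L:heat-kernel-factorization}, I expect the estimate
\[
\|f_k(\cdot;t,x)\|^2_{\mathcal H^{\otimes k}}\le C\,(J_0^2)(t,x)\, h_k(t)\quad\text{(up to constants)},
\]
with $h_k$ as in~\eqref{E:hn-recursion}. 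For rough initial data $\mu$ the factor in front is really $\int\!\!\int p_t(x-z)p_t(x-z')\mu(\ud z)\mu(\ud z')$, possibly after a time-dependent rescaling, as in the proof of~\cite{chen.kim:19:nonlinear}.

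Under Dalang's condition~\eqref{E:Dalang} alone, Lemma~\ref{L:hn-upper-bound} is unavailable. Instead I would use the Laplace-transform bound: $\int_0^\infty e^{-\gamma t}h_k(t)\,\ud t=\gamma^{-1}\bigl(\int_0^\infty e^{-\gamma s}k(s)\,\ud s\bigr)^k$, and the inner integral is $2\Upsilon(\gamma/2)$. Choosing $\gamma$ large makes this ratio smaller than $\beta^{-2}$. Together with the monotonicity of $h_k$ in $t$, this gives $\sum_k\beta^{2k}h_k(t)<\infty$. Hence $u_n\to\tilde u$ in $L^2(\Omega)$.

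\textbf{Step 3: Identification of the limit.} The limit $\tilde u$ satisfies~\eqref{E:mild}: pass to the limit in the Picard recursion, using the Walsh isometry together with the $L^2$ bounds from Step~2, which are uniform in $(s,y)$ after weighting by $J_0$. The uniqueness part of Theorem~\ref{T:she-well-posedness} then gives $\tilde u=u$. Finally, uniqueness of the kernels follows from the isometry of $I_k$ on symmetric tensors.

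\textbf{Main obstacle.} The main difficulty is Step~2 for a general measure $\mu$ satisfying~\eqref{E:LinearExpID}: controlling the $\mathcal H^{\otimes k}$-norm in a way that is uniform enough to close the fixed-point argument, in the weighted norm $\sup_{s\le t,\,y}\|u(s,y)\|_2/J_0(s,y)$. Here I would follow the kernel bounds of~\cite{chen.kim:19:nonlinear}, of the type $p_t(x)^2\le C\,t^{-d/2}p_{t/2}(x)$, to reduce everything to the scalar functions $h_k$.
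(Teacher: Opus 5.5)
Your proposal is correct. The paper gives no proof of this proposition; it quotes it from Balan--Chen. The analytic core of your Step~2 matches that reference: the kernel bound $\|f_k(\cdot\,;t,x)\|^2_{\mathcal H^{\otimes k}}\le 2^k h_k(t)\,J_0^2(t,x)$ (the paper later imports its delta-initial-data version in~\eqref{E:vargk-bound}), together with summability of $\sum_k(2\beta^2)^k h_k(t)$ under~\eqref{E:Dalang}.

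The genuine difference is how the series is identified with $u$.
\begin{itemize}
\item In the cited reference the logic runs the other way. One shows that \emph{any} (Skorohod) solution must have chaos kernels $f_k$, by iterating the equation through the duality relation. Existence, uniqueness and the expansion then all reduce to convergence of $\sum_k k!\,\|\widetilde f_k\|^2$. Linking this to the Walsh solution of Theorem~\ref{T:she-well-posedness} uses that Skorohod and Walsh integrals agree for adapted integrands.
\item You instead build the partial sums by Picard iteration, check that the limit satisfies~\eqref{E:mild}, and import uniqueness from Theorem~\ref{T:she-well-posedness}. This stays inside the Walsh framework. The price is checking that the limit has a predictable version lying in the uniqueness class.
\end{itemize}

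Three smaller points.

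First, the step you flag as the main obstacle is short for any nonnegative $\mu$. Set $F_k(t,x)\coloneqq\|f_k(\cdot\,;t,x)\|_{\mathcal H^{\otimes k}}$ with $F_0=J_0$. The recursion $f_k(\cdot\,;t,x)=p_{t-s_k}(x-y_k)\,f_{k-1}(\cdot\,;s_k,y_k)$ and Cauchy--Schwarz in $\mathcal H^{\otimes(k-1)}$ give
\[
F_k(t,x)^2\le\int_0^t\ud s\iint_{\R^{2d}}f(y-y')\,p_{t-s}(x-y)\,p_{t-s}(x-y')\,F_{k-1}(s,y)\,F_{k-1}(s,y')\,\ud y\,\ud y' .
\]
Then $F_k^2\le 2^k h_k(t)J_0^2(t,x)$ follows by induction, using:
\begin{itemize}
\item Lemma~\ref{L:heat-kernel-factorization};
\item dropping the phase $e^{i\frac{t-s}{t}(z-z')\cdot\xi}$, since $\widehat f\ge 0$;
\item Lemma~\ref{L:shatf}, which applies because $h_{k-1}$ is nondecreasing.
\end{itemize}
This is exactly the $\alpha=0$ case of the proof of Theorem~\ref{T:IntIneq}; no rescaling such as $p_t^2\le Ct^{-d/2}p_{t/2}$ is needed.

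Second, by monotonicity of $h_k$, the same bound gives, uniformly in $s\le t$,
\[
\|\widetilde u(s,y)-u_n(s,y)\|_2\le J_0(s,y)\Bigl(\sum_{k>n}(2\beta^2)^k h_k(t)\Bigr)^{1/2}.
\]
Combined with $\|f_1(\cdot\,;t,x)\|^2\le 2h_1(t)J_0^2(t,x)$, this closes your Step~3.

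Third, a harmless slip: $\int_0^\infty e^{-\gamma s}k(s)\,\ud s=2\Upsilon(2\gamma)$, not $2\Upsilon(\gamma/2)$. With the factor $2^k$ one gets $h_k(t)\le e^{\gamma t}\bigl(2\Upsilon(2\gamma)\bigr)^k$, so it suffices to choose $\gamma$ with $4\beta^2\Upsilon(2\gamma)<1$.
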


Our future computations will also rely on properties of the four-parameter
function $\cz$ introduced in Definition~\ref{D:PolyMeasure}. We label the chaos
expansion for $\cz$ below, and observe that it has to be seen as a simple
extension of Proposition~\ref{P:she-chaos-expansion}. For our moment bounds, we will resort
to the following standard convention.

\begin{notation}\label{N:Lp}
  Throughout the paper, we use $\Norm{\cdot}_p$ to denote the norm in
  $L^p(\Omega)$.
\end{notation}

\noindent
Note that the bound~\eqref{E:moment} on moments is taken
from~\cite{balan.chen:18:parabolic}*{Eq.~(1.8)}, applied to the Delta initial
condition.

\begin{proposition}\label{P:polymer-transition-chaos}
  Let the assumptions of Proposition~\ref{P:she-chaos-expansion} prevail. Then,
  equation~\eqref{E:PAM-Delta} admits a unique mild solution, denoted by $\cz
  (s, y; \, t, x ; \, \beta)$. This random field is such that:
  \begin{enumerate}[leftmargin=*,
      label={\textbf{(\roman*)}}]
    \setlength\itemsep{.1in}

    \item For every quadruple $(s, t, x, y) \in \cs_2 \times \R^{2d}$, the
      random variable $\cz (s, y; \, t, x ; \, \beta)$ is in $L^2 (\Omega)$ and
      it admits the chaos expansion
      \begin{align}\label{E:Chaos-Exp-Z}
        \cz (s,y; \, t,x; \, \beta) = & \E \left[ \cz (s,y; \, t,x; \, \beta)\right]
        + \sum_{k=1}^{\infty} \beta^k I_k (g_k(\cdot \, ; s,y,t,x)) ,
      \end{align}
      where the constant term in~\eqref{E:Chaos-Exp-Z} is such that
      \begin{align} \label{E:mean-cz-heat}
        \E \left[ \cz (s,y; \, t,x; \, \beta) \right] = p_{t - s} (x - y)
      \end{align}
      and the functions $g_k$ are elements of $\mathcal H^{\otimes k}$ given by
      \begin{equation}\label{E:gk}
        g_k(\bs_k, \by_k; s,y,t,x) = p_{t-s_{k}}(x-y_{k})\cdots
        p_{s_{2}-s_{1}}(y_{2}-y_{1})  \, p_{s_{1}-s}(y_{1}-y) \, \1_{\cs_{k}\left( (s,t]\right)} (\bs_k).
      \end{equation}

    \item For all $0 \le s < t \le 1$ and $p \ge 1$, the $(2p)$-moments of $\cz
      (s,y; \, t,x; \, \beta)$ can be bounded as
      \begin{equation}\label{E:moment}
        \Norm{\cz (s,y; \, t,x; \, \beta)}_{2p}
        \le C_{\beta, p} \, p_{t - s} (x - y) .
      \end{equation}

  \end{enumerate}
\end{proposition}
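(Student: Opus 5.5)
The plan is to reduce everything to Proposition~\ref{P:she-chaos-expansion} by a time shift, and then to extract the moment bound from the known moment estimates for Dirac initial data.

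\emph{Existence, uniqueness and the chaos expansion.} Fix $s\in[0,1)$ and $y\in\R^d$, and consider the shifted noise $\dot W^{(s)}(r,z)\coloneqq \dot W(s+r,z)$, $r\ge 0$. It has the same law as $\dot W$ and is adapted to the shifted filtration $\{\mathcal F_{s+r}\}_{r\ge0}$. Equation~\eqref{E:PAM-Delta} then becomes equation~\eqref{E:SHE} on $(0,t-s]$, driven by $\dot W^{(s)}$ and started from $\mu=\delta_y$. This measure clearly satisfies~\eqref{E:LinearExpID}, so Theorem~\ref{T:she-well-posedness} gives a unique mild solution. Unwinding the shift in the Walsh integral gives exactly~\eqref{E:Mild-Z}.

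Next I apply Proposition~\ref{P:she-chaos-expansion} to this shifted problem, with $\mu=\delta_y$. The kernels~\eqref{E:she-chaos-kernel} become $p_{(t-s)-r_k}(x-y_k)\cdots p_{r_1}(y_1-y)\,\1_{\cs_k((0,t-s])}(\mathbf r_k)$. Substituting $s_j=s+r_j$ turns multiple integrals against $W^{(s)}$ into multiple integrals against $W$, with time variables in $\cs_k((s,t])$. This yields~\eqref{E:gk}.

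The constant term is $J_0(t-s,x;\delta_y)=p_{t-s}(x-y)$, which gives~\eqref{E:mean-cz-heat}. Since the $I_k$ are isometries on symmetrized kernels, membership in $L^2(\Omega)$ follows from convergence of $\sum_k\beta^{2k}k!\,\|\tilde g_k\|^2_{\mathcal H^{\otimes k}}$. That convergence is already part of Proposition~\ref{P:she-chaos-expansion}. One could also verify it directly via Lemma~\ref{L:heat-kernel-factorization} and the functions $h_n$ in~\eqref{E:hn-recursion}.

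\emph{The moment bound~\eqref{E:moment}.} By stationarity in time of the noise, $\cz(s,y;t,x;\beta)\overset{(d)}{=}\cz(0,y;t-s,x;\beta)$. It therefore suffices to treat $s=0$, i.e.\ the solution of~\eqref{E:SHE} with $\mu=\delta_y$. For this I invoke~\cite{balan.chen:18:parabolic}*{Eq.~(1.8)}. That result states that for rough initial data,
\[
\|u(t,x)\|_{2p}\le C\,J_0(t,x)\,H(t;\gamma_p)^{1/2},
\]
where $H(t;\gamma)=\sum_n \gamma^n h_n(t)$ and $\gamma_p$ is of order $\beta^2 p$. With $\mu=\delta_y$ we have $J_0=p_t(x-y)$, and $H$ is nondecreasing in $t$, so it is bounded on $t\in(0,1]$.

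Because the statement only assumes~\eqref{E:Dalang}, I would not rely on Lemma~\ref{L:hn-upper-bound}. Instead I would use the classical fact that $\sum_n\gamma^n h_n(t)<\infty$ under Dalang's condition, which follows from the Laplace transform $\int_0^\infty e^{-\lambda t}h_n(t)\,\ud t=\lambda^{-1}(\widetilde k(\lambda))^n$, with $\widetilde k(\lambda)$ small for $\lambda$ large by~\eqref{E:Dalang}. This produces a constant $C_{\beta,p}=C\sup_{t\le1}H(t;\gamma_p)^{1/2}$ that is uniform in $s,t,x,y$.

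\emph{Main obstacle.} The step that needs care is the moment bound, specifically the uniformity of $C_{\beta,p}$ and the fact that the bound factorizes exactly through $p_{t-s}(x-y)$. If one wanted to prove it rather than cite it, I would first expand $\E[\prod_{i=1}^{2p}\cz]$ via Feynman--Kac-type chaos estimates or hypercontractivity. Hypercontractivity gives $\|I_k(g_k)\|_{2p}\le (2p-1)^{k/2}\|I_k(g_k)\|_2$, so it reduces matters to second moments. The second-moment bound $\E[\cz^2]\le p_{t-s}(x-y)^2\sum_k\beta^{2k}h_k(t-s)$ then follows from~\eqref{E:brownian-bridge-density}: dividing by $p_{t-s}(x-y)^2$ turns each kernel into Brownian-bridge densities, and the spatial integrals against $\Gamma$ are bounded by $k(\cdot)$ evaluated at the bridge variances. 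The difficulty is that bridge variances are smaller than free ones, so bounding them by $k$ of a sum of time increments requires the monotonicity of $k$ and a change of variables comparable to the one in~\cite{chen.kim:19:nonlinear}. That is the part of the argument I would borrow.
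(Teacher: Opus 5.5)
Your proposal follows the same route as the paper, which treats this proposition as a time-shifted instance of Proposition~\ref{P:she-chaos-expansion} with $\mu=\delta_y$ and takes the moment bound~\eqref{E:moment} directly from \cite{balan.chen:18:parabolic}*{Eq.~(1.8)} for the Delta initial condition; your Laplace-transform argument that $\sup_{t\le 1}\sum_n\gamma^n h_n(t)<\infty$ under~\eqref{E:Dalang} alone is a correct supporting detail. One slip in your optional self-contained sketch: the claimed bound $\E[\cz^2]\le p_{t-s}(x-y)^2\sum_k\beta^{2k}h_k(t-s)$ is false as stated, because the first-order term equals $\beta^2 p_{t-s}(x-y)^2\,h_*(t-s)$ with $h_*$ as in~\eqref{E:h-star}, and $h_*$ strictly exceeds $h_1$ for Riesz kernels, so the bound fails for small $\beta$. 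The correct form is $\E[\cz^2]\le p_{t-s}(x-y)^2\sum_k (2\beta^2)^k h_k(t-s)$ (compare~\eqref{E:vargk-bound}), which gives the same conclusion.
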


\begin{remark}
  The kernels $g_k$ in~\eqref{E:gk} can be expressed in terms of Brownian bridge
  transitions. That is, as mentioned in~\cite{alberts.khanin.ea:14:continuum},
  the quantity
  \begin{equation*}
    \frac{g_k(\bs_k, \by_k; s,y,t,x)}{p_{t-s}(x-y)}
  \end{equation*}
  represents the finite-dimensional density for a Brownian bridge from $(s,y)$
  to $(t,x)$.
\end{remark}

\begin{remark}
  Using the following convention
  \begin{align}\label{E:g0-convention}
    g_0 (\cdot \, ; s,y, r,z) = g_0 (s,y, r,z) = I_0 (g_0 (\cdot \, ; s,y, r,z)) = p_{r - s} (z - y),
  \end{align}
  the chaos expansion~\eqref{E:Chaos-Exp-Z} can be rewritten more compactly as
  \begin{align}\label{E:Chaos-Exp-Z0}
    \cz (s,y; \, t,x; \, \beta) =  \sum_{k = 0}^{\infty} \beta^k I_k (g_k(\cdot \, ; s,y,t,x)).
  \end{align}
\end{remark}

We now collect some useful properties of the field $\cz$, in
Theorem~\ref{T:transition-properties} below. This
generalizes~\cite{alberts.khanin.ea:14:continuum}*{Theorem 3.1} to dimension $(1
+ d)$ with spatially correlated noise.

\begin{theorem}\label{T:transition-properties}
  Assume Dalang's condition~\eqref{E:Dalang} holds, and fix $\beta > 0$. Let
  \begin{equation*}
    \left\{ \cz (s,y; \, t,x; \, \beta) ; 0 \le s < t, \, x, y \in \R^d \right\}
  \end{equation*}
  be the unique solution to equation~\eqref{E:PAM-Delta} understood in the mild
  sense. Then, there exists a version of $\cz$ which satisfies the following
  properties:

  \begin{enumerate}[label=\textnormal{\textbf{(\roman*)}}]

    \item \textbf{Centrality.} \label{T:transition-properties:center} The expected value of $\cz
      (s,y; \, t,x; \, \beta)$ is given by
      \[
        \E\left[\cz (s,y; \, t,x; \, \beta) \right] = p_{t - s} (x - y).
      \]

    \item \textbf{H\"{o}lder continuity.~} \label{T:transition-properties:holder} Assume
      Hypothesis~\ref{H:stren} holds. Then $\cz$ admits a version which is
      jointly H\"{o}lder continuous on any compact set of $\{(s,t)\in\R_+^2
      \colon s<t\} \times \R^{2d}$, with exponent $\frac{1}{2} (\eta -
      \epsilon)$ in temporal variables and $\eta - \epsilon$ in spatial
      variables for all $\epsilon \in (0, \eta)$.

    \item \label{T:transition-properties:stationarity} \textbf{Stationarity.~} The field $\cz$
      satisfies the following identity in law:
      \[
        \cz (s,y; \, t,x; \, \beta) \overset{(d)}{=} \cz (s + u, y + z; \, t + u, x + z; \, \beta), \quad \text{for all $z \in  \R^d$ and $u > 0$.}
      \]

    \item\label{T:transition-properties:scale} \textbf{Scaling property.~} Assume
      Hypothesis~\ref{H:scale-gmm-simple} holds. It holds that
      \begin{align}\label{E:scale}
        \cz \left(r^2 s, r y; \, r^2 t, rx; \, \beta\right)
        \overset{(d)}{=} r^{- d} \cz \left(s, y ; \, t , x; \, r^{\frac{1 - \gamma}{2}} \beta \right) , \quad \text{for all $r > 0$.}
      \end{align}

    \item\label{T:transition-properties:positive} \textbf{Positivity.~} With probability one, we
      have
      \[
        \cz ( s, y; \, t, x; \, \beta) > 0, \quad \text{for all $0 \le s < t$ and $x, y \in \R^d$.}
      \]
      As a result, with probability one,
      \[
        \cz ( s, y; \, t, *; \, \beta) > 0, \quad \text{for all $0 \le s < t$ and $y \in \R^d$.}
      \]

    \item\label{T:transition-properties:homogeneity} \textbf{Homogeneity.~} The law of
      \[
        \frac{\cz (s,  y; \, t, x; \, \beta) }{p_{t - s} (x - y)}
      \]
      does not depend on $x$ or $y$.

    \item\label{T:transition-properties:independence} \textbf{Independence.~} For any finite disjoint
      intervals $(s_i, t_i] \subseteq \R_+$, $i = 1, \dots, n$, and for any
      $x_i$ and $y_i \in \R^d$, the collection of random variables $\{\cz(s_i,
      y_i; \, t_i, x_i; \, \beta), i = 1, \dots, n\}$ are mutually independent.

    \item \textbf{The Chapman-Kolmogorov equation.~} \label{T:transition-properties:ck} With
      probability one, it holds that
      \begin{align}\label{E:chapman-kolmogorov}
        \cz (s,  y; \, t, x; \, \beta) = \int_{\R^d} \cz (s,  y; \, r, z; \, \beta) \cz (r,  z; \, t, x; \, \beta) \ud z ,
      \end{align}
      for all $0 \le s < r < t$ and $x, y \in \R^d$.

  \end{enumerate}
\end{theorem}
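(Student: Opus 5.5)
We would prove the eight items essentially independently, working throughout from the chaos expansion \eqref{E:Chaos-Exp-Z0} of Proposition~\ref{P:polymer-transition-chaos}.

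\emph{Centrality, stationarity, homogeneity, scaling and independence} all reduce to statements about the kernels $g_k$ in \eqref{E:gk}.

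\textbf{Centrality.} Item~\ref{T:transition-properties:center} is just \eqref{E:mean-cz-heat}.

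\textbf{Stationarity.} For item~\ref{T:transition-properties:stationarity} we shift the noise, setting $W^{u,z}(\ud r,\ud w) = W(u+\ud r, z+\ud w)$. By the white-in-time and spatially homogeneous covariance \eqref{E:Cor}, $W^{u,z}\overset{(d)}{=}W$. A change of variables in \eqref{E:gk} gives $I_k(g_k(\cdot\,;s+u,y+z,t+u,x+z))$ under $W$ equal to $I_k(g_k(\cdot\,;s,y,t,x))$ under $W^{u,z}$. Summing over $k$ yields equality in law, and in fact jointly in the parameters.

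\textbf{Homogeneity.} For item~\ref{T:transition-properties:homogeneity} we use \eqref{E:brownian-bridge-density}. It shows that $g_k/p_{t-s}(x-y)$ is the density of a Brownian bridge from $(s,y)$ to $(t,x)$. This bridge equals a bridge from $(s,0)$ to $(t,0)$ translated by the affine path $r\mapsto y+\frac{r-s}{t-s}(x-y)$.

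The key observation is that applying a space-time \emph{shear} $w\mapsto w-\ell(r)$ to the noise, with $\ell$ affine in time, preserves its law, because the covariance is white in time and stationary in space. Hence the law of $\cz/p_{t-s}(x-y)$ does not depend on $x,y$.

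\textbf{Scaling.} For item~\ref{T:transition-properties:scale} we change variables $s_i\mapsto r^2 s_i$, $y_i\mapsto r y_i$ in \eqref{E:gk}, using $p_{r^2t}(rx)=r^{-d}p_t(x)$. We then apply \eqref{E:ScaleW}: each chaos of order $k$ acquires a factor $r^{k(\cdots)}$, and collecting exponents gives the stated power of $r$ multiplying $\beta$. We would recheck the exponent carefully, since the stated $r^{(1-\gamma)/2}$ should come out of exactly this bookkeeping.

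\textbf{Independence.} For item~\ref{T:transition-properties:independence}, $\cz(s_i,y_i;t_i,x_i)$ is measurable with respect to the increments of $W$ on $(s_i,t_i]$, since the $g_k$ are supported on $\cs_k((s_i,t_i])$. Independence then follows from whiteness in time.

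\textbf{Hölder continuity.} Item~\ref{T:transition-properties:holder} follows from Kolmogorov's continuity theorem once we establish moment bounds on increments of the form
\[
\Norm{\cz(s,y;t,x)-\cz(s',y';t',x')}_{2p}\le C\bigl(|t-t'|^{\eta/2}+|s-s'|^{\eta/2}+|x-x'|^{\eta}+|y-y'|^{\eta}\bigr)
\]
on compacts with $t-s\ge\delta>0$. To obtain them, we use hypercontractivity to reduce to second moments of each chaos, at the cost of a factor $(2p-1)^{k/2}$. We then bound $\|g_k-g_k'\|_{\mathcal H^{\otimes k}}$ through Fourier representations of the heat kernel differences, using $|e^{-a}-e^{-b}|\le|a-b|^{\eta}$ together with Hypothesis~\ref{H:stren}, and sum via Lemma~\ref{L:hn-upper-bound}.

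Increments in $s$ and $y$ are handled symmetrically by time reversal: the law of the kernel structure is symmetric under reversing the roles of the endpoints.

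This is the \emph{main technical obstacle}. The delta initial data make the kernel singular near $r=s$, so the estimates have to be split into regions near the initial time, the final time, and the middle.

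\textbf{Chapman--Kolmogorov.} For item~\ref{T:transition-properties:ck} we fix $s<r<t$ and verify \eqref{E:chapman-kolmogorov} in $L^2$. We multiply the chaos expansions of $\cz(s,y;r,z)$ and $\cz(r,z;t,x)$. Since these depend on noise on disjoint time intervals, the product formula \eqref{E:contraction-0} has no contractions, and $I_m(g_m)I_n(g_n)=I_{m+n}(g_m\otimes g_n)$.

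Integrating in $z$, using the semigroup property of $p$, gives exactly $g_{m+n}(\cdot\,;s,y,t,x)$ restricted to those time configurations with $m$ times in $(s,r]$. Summing over $m+n=k$ recovers $g_k$. Interchanging the $z$-integral with the series and the stochastic integrals is justified by \eqref{E:moment} together with Minkowski's inequality.

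Almost sure validity simultaneously for all parameters then follows from continuity of both sides, using the Hölder version; under bare Dalang's condition we would instead use countable dense sets and continuity in $L^p$.

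\textbf{Positivity.} For item~\ref{T:transition-properties:positive} we invoke the strong comparison principle for the stochastic heat equation with measure initial data, as in Chen--Kim and Chen--Huang. This gives $\cz(s,y;t,x)>0$ for all parameters almost surely. Integrating in $x$ gives the statement for $\cz(s,y;t,*)$.
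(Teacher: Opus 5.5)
Your treatment of centrality, stationarity, homogeneity, scaling, independence, positivity and the Chapman--Kolmogorov relation matches the paper's proof. It uses the chaos kernels~\eqref{E:gk}, translation and affine-shear invariance of the noise, the absence of contractions between kernels with disjoint time supports together with the heat semigroup, and the Chen--Huang comparison principle.

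On scaling, the bookkeeping you defer gives $r^{-(k+1)d}\, r^{k(d+2)}\, r^{-k(\gamma/2+1)} = r^{-d}\,(r^{1-\gamma/2})^{k}$, i.e.\ $\beta\mapsto r^{1-\gamma/2}\beta$. This is exactly the paper's own~\eqref{E:scale-chaos}. The exponent $\frac{1-\gamma}{2}$ printed in~\eqref{E:scale} is a misprint: for $1+1$ white noise, $\gamma=1$, one must recover the Alberts--Khanin--Quastel factor $r^{1/2}$. Your caution there was warranted.

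The genuine difference is in the H\"older continuity. The paper cites Chen--Huang for the $(t,x)$-increments and proves the $(s,y)$-increments in Lemma~\ref{L:holder} from the forward mild equation. There the difference $\cz(s',y';r,z)-\cz(s,y;r,z)$ reappears inside the stochastic integral, which forces the weighted Gr\"onwall inequality of Theorem~\ref{T:IntIneq} and the Mittag-Leffler estimates of Proposition~\ref{P:hn-ea}.

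Your time-reversal remark makes all of this unnecessary. Since the noise is white in time, $r\mapsto T-r$ preserves its law on $[0,T]$. On the kernels~\eqref{E:gk}, using $p_u(a)=p_u(-a)$ and the symmetrization in $I_k$, it sends $g_k(\cdot\,;s,y,t,x)$ to $g_k(\cdot\,;T-t,x,T-s,y)$. Hence $\{\cz(s,y;t,x)\}_{0\le s<t\le T}\overset{(d)}{=}\{\cz(T-t,x;T-s,y)\}_{0\le s<t\le T}$ as four-parameter fields. The $(s,y)$-increment bound, with heat-kernel weights of the same type as in~\eqref{E:holder}, is then just the $(t,x)$-bound read backwards. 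The same argument applies verbatim to $W^\epsilon$, so it would also give Lemma~\ref{L:holder-eps-sy} from Lemma~\ref{L:holder-eps-tx}.

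Conversely, your chaos-by-chaos estimate of $\Norm{g_k-g_k'}_{\mathcal H^{\otimes k}}$ for the $(t,x)$-direction is heavier than necessary and is left unexecuted. In the forward mild formulation, $\cz(s,y;r,z)$ enters the $(t,x)$-increment undifferenced. So BDG, Minkowski,~\eqref{E:moment} and the terminal-point analogue of the heat-kernel bound~\eqref{E:Holder-0} already bound increment moments by a multiple of $\square_\alpha(t'-t,x'-x)$ for every $\alpha<\eta$, which is all Kolmogorov needs, with no Gr\"onwall step. Alternatively, simply cite Chen--Huang as the paper does.

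One small point remains. Upgrading~\eqref{E:chapman-kolmogorov} to all parameters simultaneously needs, besides local continuity, an almost sure integrable envelope in $z$ for the right-hand side. Under bare Dalang, a countable dense set does not give simultaneity at all. The paper itself only proves the identity for fixed parameters.
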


\begin{proof}
  We will address each property above separately. \medskip

  \noindent \textit{Property~\ref{T:transition-properties:center}.~} This property has already
  been stated in~\eqref{E:mean-cz-heat}. It follows directly from the mild
  formulation~\eqref{E:mild} and the fact that the Walsh-Dalang integral has
  zero mean. \medskip

  \noindent \textit{Property~\ref{T:transition-properties:holder}.~} The H\"older continuity of
  $\cz (s,y; \, t,x; \, \beta)$ in $t$ and $x$ was established
  in~\cite{chen.huang:19:comparison}*{Theorem 1.8}, while the H\"older
  continuity in $s$ and $y$ follows from Lemma~\ref{L:holder} below, together
  with the Kolmogorov continuity theorem. \medskip

  \noindent \textit{Property~\ref{T:transition-properties:stationarity}.~} The stationarity is
  due to the stationarity of the homogeneous noise. \medskip

  \noindent \textit{Property~\ref{T:transition-properties:scale}.~} Thanks to the chaos
  expansion~\eqref{E:Chaos-Exp-Z}, to prove~\eqref{E:scale}, it suffices to show
  that for all $k \ge 1$,
  \begin{align}\label{E:scale-chaos}
    \beta^k I_k\left(g_k(\cdot \, ; r^2 s, ry, r^2t, r x)\right)
    \overset{(d)}{=} r^{- d}  \left(r^{1 - \frac{\gamma}{2}}\beta \right)^k I_k\left(g_k(\cdot \, ; s, y, t, x)\right).
  \end{align}
  In order to prove~\eqref{E:scale-chaos}, we resort to the following scaling
  property of the heat kernel, specifically,
  \begin{align}\label{E:scale-heat}
    p_{r^2 t} (r x) = r^{- d} p_t (x), \quad \text{for all $r > 0$, $t > 0$ and $x \in \R^d$}.
  \end{align}
  Next, we perform the change of variables $(\bs_k, \by_k) = (r^2
  \widehat{\bs}_k, r \widehat{\by}_k)$ in every $I_k$ in~\eqref{E:scale-chaos}.
  It is readily checked from the expression~\eqref{E:gk} for $g_k$ and relation~\eqref{E:scale-heat} that
  \begin{equation}\label{E:gk-scaling}
    g_k \left(\bs_k, \by_k ; \, r^2 s, ry, r^2t, r x \right) = r^{- (k + 1)d} g_k  \left(\widehat{\bs}_k, \widehat{\by}_k ; \, s, y, t, x \right).
  \end{equation}
  Note that we get a factor $r^{- (k + 1)d}$ in~\eqref{E:gk-scaling}, since
  $g_k$ is a product of $k + 1$ heat kernels. Plugging this identity into the
  left hand side of~\eqref{E:scale-chaos}, taking the Jacobian $|J| = r^{k (d
  +2)}$ of our transformation into account, and recalling the
  definition~\eqref{E:W-Scale} of $W_r$, we end up with
  \begin{align*}
      & I_k\left(g_k(\cdot \, ; r^2 s, ry, r^2t, r x)\right)                                                                                                                                                                                   \\
    = & r^{- (k + 1)d + k (d + 2)} \int_{(\R_+ \times \R^d)^k} g_k  \left(\widehat{\bs}_k, \widehat{\by}_k ; \, s, y, t, x \right) W_r (\ud \widehat{s}_1, \ud \widehat{y}_1) \times \dots \times W_r (\ud \widehat{s}_k, \ud \widehat{y}_k) .
  \end{align*}
  Therefore, applying the scaling~\eqref{E:ScaleW}, it is readily checked that
  \begin{align*}
    \beta^k I_k\left(g_k(\cdot \, ; r^2 s, ry, r^2t, r x)\right)
    \overset{(d)}{=} r^{-d} \left(r^{1 - \frac{\gamma}{2}}\beta \right)^k I_k\left(g_k(\cdot \, ;  s, y, t,  x)\right).
  \end{align*}
  This completes the proof of~\eqref{E:scale-chaos}, and thereby also
  of~\eqref{E:scale}. \medskip

  \noindent \textit{Property~\ref{T:transition-properties:positive}.~} The positivity for the
  solution has been verified in~\cite{chen.huang:19:comparison}*{Theorem 1.6}.
  \medskip

  \noindent \textit{Property~\ref{T:transition-properties:homogeneity}.~} This property has been
  established for the white noise case
  in~\cite{amir.corwin.ea:11:probability}*{Proposition~1.4}, where its validity
  is attributed to the following two key observations:

  \begin{enumerate}[label=\textnormal{(\alph*)}]

    \item \label{I:homo:a} Brownian bridge transition probabilities are affine
      transformations of each other.

    \item \label{I:homo:b} Performing an appropriate change of variables, the
      white noise remains invariant in distribution.

  \end{enumerate}
  In our context, it is easily seen that fact~\ref{I:homo:a} continues to hold.
  Moreover, fact~\ref{I:homo:b} remains valid when the white noise is generalized
  to any spatially homogeneous noise. One can thus
  generalize~\cite{amir.corwin.ea:11:probability}*{Proposition~1.4} in a
  straightforward way. For completeness, we illustrate the argument at the first
  chaos level ($k = 1$), and the general case follows by iterating the same
  reasoning. To this aim, take $s = 0$ and $t = 1$. According to
  expression~\eqref{E:gk} we have
  \begin{align*}
      g_1 \left(s_1, y_1 \, ; s, y, t, x\right)
    = g_1 \left(s_1, y_1 \, ; 0, y, 1, x\right)
    = p_{1 - s_1} (x - y_1) p_{s_1} (y_1 - y) \1_{[0,1]} (s_1).
  \end{align*}
  Therefore, one has
  \begin{align*}
    I_1 (g_1) = \int_0^1 \int_{\R^d} p_{1 - s_1} (x - y_1) p_{s_1} (y_1 - y) W (\ud s_1, \ud y_1).
  \end{align*}
  We now invoke~\eqref{E:heat-kernel-factorization} to get
  \begin{equation}\label{E:I1-over-p1}
    \frac{I_1(g_1)}{p_{1} (x - y)}
    =  \int_0^1 \int_{\R^d} p_{(1 - s_1)s_1} \left(s_1 x + (1 - s_1) y - y_1 \right) W (\ud s_1, \ud y_1).
  \end{equation}
  In addition, the quantity $v\coloneq s_1 x + (1 - s_1) y$ has to be considered
  as a constant when one integrates with respect to $\ud y_1$
  in~\eqref{E:I1-over-p1}. Using the identity
  \[
    \dot{W} (s, v + \cdot) \overset{(d)}{=} \dot{W} (s, \cdot), \quad \text{for any } v \in \R^d,
  \]
  which stems easily from~\eqref{E:W-Density}, we thus get
  \begin{align}\label{E:chaos-1-bridge}
    \frac{I_1(g_1)}{p_{1} (x - y)}
    \overset{(d)}{=} \int_0^1 \int_{\R^d} p_{(1 - s_1)s_1} (y_1) W (\ud s_1 ,\ud y_1).
  \end{align}
  It is now readily checked that the right-hand side above does not depend on $x
  - y$. We now explain briefly why the same conclusion holds for every chaos
  level $k \ge 1$. Fix $0 < s_1 < \dots < s_k < 1$ and set $\ell_{s_i} \coloneqq
  s_i x + (1-s_i) y$. Since $g_k(\bs_k, \by_k; 0,y,1,x)/p_1(x-y)$ is the joint
  density of the Brownian bridge $(X_{s_1},\dots,X_{s_k})$ from $(0,y)$ to
  $(1,x)$, and since the process $(X_{s_i}-\ell_{s_i})_{i=1}^k$ is a centered
  Brownian bridge from $0$ to $0$, this density depends on $(x,y)$ only through
  the shifts $(y_i-\ell_{s_i})_{i=1}^k$. Writing the multiple integral defining
  $I_k(g_k)$ and performing the translations $y_i \mapsto y_i + \ell_{s_i}$, we
  conclude by spatial homogeneity of the noise (cf.~\eqref{E:W-Density}) that
  the law of $I_k(g_k)/p_1(x-y)$ does not depend on $x$ or $y$. Summing over $k$
  in the chaos expansion yields Property~\ref{T:transition-properties:homogeneity}. \medskip

  \noindent \textit{Property~\ref{T:transition-properties:independence}.~} The independence across
  disjoint time intervals arises from the independent temporal increments of
  white-in-time Gaussian noise. \medskip

  \noindent \textit{Property~\ref{T:transition-properties:ck}.~} Denote by $R$ the right-hand
  side of equation~\eqref{E:chapman-kolmogorov}. Next, write the chaos expansion for both $\cz
  (s,y; \, r,z; \, \beta)$ and $\cz (r,z; \, t,x; \, \beta)$. Then, from the
  expansion in~\eqref{E:Chaos-Exp-Z0}, the product
  \[
    \cz (s,y; \, r,z; \, \beta) \, \cz (r,z; \, t,x; \, \beta),
  \]
  can be written as a double sum of products of chaos elements, resulting in
  \begin{align}\label{E:R-chaos-sum}
    R = & \int_{\R^d} \cz (s,y; \, r,z; \, \beta)\cz (r,z; \, t,x; \, \beta) \ud z \nonumber \\
    =   & \sum_{m , n = 0}^{\infty} \beta^{m + n} \int_{\R^d}   I_m (g_m(\cdot \, ; s,y, r,z)) I_n (g_n (\cdot \, ; r,z,t,x)) \ud z.
  \end{align}
  Let us now simplify the products of iterated integrals
  in~\eqref{E:R-chaos-sum}. For $m = n = 0$, with our
  convention~\eqref{E:g0-convention} in mind, we have
  \begin{equation}\label{E:heat-convolution}
    \int_{\R^d} I_0 (g_0 (\cdot \, ; s,y, r,z)) I_0 (g_0 (\cdot \, ; r,z, t, x))  \ud z =  \int_{\R^d} p_{r - s} (z - y)p_{t - r} (x - z) \ud z
    =  p_{t - s} (x - y).
  \end{equation}
  For the summands of general $m$ and $n$ on the right-hand side
  of~\eqref{E:R-chaos-sum}, one invokes relation~\eqref{E:contraction-0} for the
  product of iterated integrals. This yields
  \begin{align}\label{E:product-contraction}
    \MoveEqLeft I_m (g_m(\cdot \, ; s,y, r,z)) I_n (g_n (\cdot \, ; r,z,t,x)) \nonumber                                                                \\
    = & \sum_{k = 0}^{m \wedge n} k! \binom{m}{k} \binom{n}{k} I_{m + n - 2k} \big(g_m(\cdot \, ; s,y, r,z) \otimes_k g_n (\cdot \, ; r,z,t,x) \big) .
  \end{align}
  In addition, notice that for any positive integers $m$ and $n$, $g_m(\cdot \,
  ; s,y, r,z)$ and $g_n (\cdot \, ; r,z,t,x)$ are supported on
  $\mathcal{S}_m(\left(s,r\right]) \times \R^m$ and
  $\mathcal{S}_n(\left(r,t\right]) \times \R^n$, respectively. This implies that
  for any $1 \le k \le m \wedge n$, the following coalescent terms vanish, i.e.,
  \[
    g_m (\cdot \, ; s,y, r,z) \otimes_k g_n (\cdot \, ; r,z,t,x)~\equiv 0,
    \quad \text{for all $1 \le k \le m \wedge n$.}
  \]
  As a consequence, the only non-vanishing term in~\eqref{E:product-contraction}
  is the one corresponding to $k = 0$, namely,
  \begin{align}\label{E:product-k0}
    I_m (g_m(\cdot \, ; s,y, r,z)) I_n (g_n (\cdot \, ; r,z,t,x))  = I_{m + n} \big(g_m(\cdot \, ; s,y, r,z) \otimes g_n (\cdot \, ; r,z,t,x) \big).
  \end{align}
  Therefore, plugging~\eqref{E:heat-convolution} and~\eqref{E:product-k0}
  into~\eqref{E:R-chaos-sum}, and setting $k = m + n$, we end up with
  \begin{align*}
    R = & \int_{\R^d} p_{r - s} (z - y) p_{t - r} (x - z) \ud z                                                                                                                                     \\
        & + \sum_{k = 1}^{\infty}  \beta^k \int_{\R^d} I_k \left(\sum_{m = 0}^k g_m (\bs_{1:m}, \by_{1:m} \, ; s,  y, r, z)  g_{k - m} (\bs_{m + 1:k}, \by_{m + 1:k} \, ; r,z,t,x)   \right) \ud z.
  \end{align*}
  An application of the stochastic Fubini theorem for multiple stochastic
  integrals thus yields
  \begin{align}\label{E:ck-1}
    R = p_{t - s} (x - y) + \sum_{k = 1}^{\infty} \beta^k I_k \left(\sum_{m = 0}^k g_{k, m}^r (\cdot\, ; s,y, t, x)   \right),
  \end{align}
  where, recalling our convention~\eqref{E:Conv-t} for multidimensional
  variables, the $k$-th order contraction function $g_{k,m}^r$ is defined by
  \[
    g^r_{k, m} (\bs_k , \by_k \, ; s,y, t, x)
    \coloneqq \int_{\R^d} g_m (\bs_{1:m}, \by_{1:m} \, ; s,  y, r, z)  g_{k - m} (\bs_{m + 1: k}, \by_{m + 1: k} \, ; r,z,t,x) \ud z.
  \]

  One can further simplify the right-hand side of~\eqref{E:ck-1}. To this aim,
  observe again that for all $0 \le m \le k$, we have $\bs_{1: m} = \bs_{m} \in
    \cs_m ((s,r])$, and $\bs_{m + 1:k} \in \cs_{k - m} ((r, t])$. Hence, we claim
  that the semigroup property for the heat kernel $p_t$ implies that
  \begin{equation}\label{E:gkm-factorization}
    g_{k, m}^r (\bs_k, \by_k \, ; s,y, t, x) =  g_k (\bs_k, \by_k  \, ; s,  y, t, x)
    \, \1_{\cs_m ((s, r])} (\bs_{m}) \, \1_{\cs_{k - m} ((r, t])} (\bs_{m+1: k}).
  \end{equation}
  In order to get a better grasp of relation~\eqref{E:gkm-factorization}, let us detail the
  example of $g_{3,2}^r$. In this case, going back to expression~\eqref{E:gk},
  one can write
  \begin{align*}
    g_{3,2}^r(\bs_3, \by_3  \, ; s,  y, t, x)
    = \1_{\cs_2 ((s, r])} (\bs_{1:2}) \1_{(r, t]} (s_3)
    \int_{\R^d}  g_2 (\bs_2, \by_2 \, ; s,  y, r, z)  g_{1} (s_3, y_3 \, ; r,z,t,x) \ud z.
  \end{align*}
  Denote the above integral by $I$. It can be explicitly computed as follows
  using the heat semigroup property:
  \begin{align*}
    I = & \int_{\R^d} p_{r - s_2} (z - y_2) p_{s_2 - s_1} (y_2 - y_1) p_{s_1 - s} (y_1 - y)  p_{t - s_3} (x - y_3) p_{s_3 - r}  (y_3 - z) \ud z \\
    =   & p_{s_3 - s_2} (y_3 - y_2) p_{s_2 - s_1} (y_2 - y_1) p_{s_1 - s} (y_1 - y)  p_{t - s_3} (x - y_3)                                      \\
    =   & g_3 (\bs_3, \by_3  \, ; s,  y, t, x).
  \end{align*}
  Hence, we have verified~\eqref{E:gkm-factorization} for the case $(k, m) = (3, 2)$. The
  general case for arbitrary $(k, m)$ follows by applying the same type of
  argument.

  With relation~\eqref{E:gkm-factorization} in mind, we now take care of the sum over $m$
  in~\eqref{E:ck-1}. Specifically, we write
  \begin{align}\label{E:decom-gk}
    \sum_{m = 0}^{k} g^r_{k, m} (\bs_k , \by_k \, ; s,y, t, x)
    = & g_{k} (\bs_k , \by_k \, ; s,  y, t, x)  \sum_{m = 0}^{k} \1_{\cs_{m} ((s, r])} (\bs_m) \1_{\cs_{k - m} ((r, t])} (\bs_{m + 1:k}) \nonumber \\
    = & g_{k} (\bs_k , \by_k \, ; s,  y, t, x),
  \end{align}
  where the last identity stems from elementary considerations on simplexes:
  \begin{align*}
    \1_{\cs_k ((s, t])} (\bs_k)
    = \sum_{m = 0}^{k} \1_{\cs_{m} ((s, r])} (\bs_m) \1_{\cs_{k - m} ((r, t])} (\bs_{m + 1:k}) .
  \end{align*}
  Reporting~\eqref{E:decom-gk} into~\eqref{E:ck-1}, we thus obtain
  \begin{align*}
    R = p_{t - s} (x - y) + \sum_{k = 1}^{\infty} \beta^k I_k \left(g_{k} (\cdot \, ; s,y, t, x)\right)
    = \cz (s,  y; \, t, x; \, \beta),
  \end{align*}
  where the last equality comes from the expansion~\eqref{E:Chaos-Exp-Z}. This
  completes the proof of Theorem~\ref{T:transition-properties}.
\end{proof}

We now state a technical moment estimate which has been used in the proof of
Theorem~\ref{T:transition-properties}. We will use the notation
\[
  \square_{\theta} (t, x)
  \coloneqq t^{\theta/2} + |x|^{\theta} \quad \text{for all $(\theta, t, x) \in \R_+ \times \R_+ \times \R^d$}.
\]

\begin{lemma}\label{L:holder}
  Suppose that for some $\eta \in (0, 1)$, the strengthened Dalang's
  condition~\eqref{E:stren} holds. Let $\cz (s,y; \, t,x; \, \beta)$ be the
  unique solution to equation~\eqref{E:PAM-Delta} understood in the mild sense.
  Then, for all $p \ge 1$ and $\alpha \in (0, \eta)$, there exists a constant $C
  = C(\alpha, \beta, p, \eta) > 0$ such that for all $0 \le s \le s' < t \le 1$
  and $x, y, y' \in \R^d$, the following inequality holds:
  \begin{multline}\label{E:holder}
    \Norm{\cz (s',y'; \, t,x ; \, \beta) - \cz(s, y; \, t,x ; \, \beta)}_{2p} \\
    \le C \times \frac{p_{4(t - s')} \left(x - y\right) + p_{4(t - s')} \left(x - y'\right)}{(t-s')^{\alpha/2}} \times
    \square_\alpha \left(s'-s, y'-y\right).
  \end{multline}
\end{lemma}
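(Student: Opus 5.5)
We split the increment into a temporal and a spatial part,
\[
\cz(s',y';t,x)-\cz(s,y;t,x) = \big[\cz(s',y';t,x)-\cz(s',y;t,x)\big] + \big[\cz(s',y;t,x)-\cz(s,y;t,x)\big],
\]
and treat each through the chaos expansion \eqref{E:Chaos-Exp-Z0}. Hypercontractivity on each fixed chaos gives
\[
\Norm{I_k(h)}_{2p}\le (2p-1)^{k/2}\Norm{I_k(h)}_2 ,
\qquad
\Norm{I_k(h)}_2^2 = k!\,\Norm{\widetilde h}_{\mathcal H^{\otimes k}}^2 \le k!\,\Norm{h}^2_{\mathcal H^{\otimes k}} .
\]
It therefore suffices to bound the $\mathcal H^{\otimes k}$ norms of the kernel differences $g_k(\cdot;s',y',t,x)-g_k(\cdot;s',y,t,x)$ and $g_k(\cdot;s',y,t,x)-g_k(\cdot;s,y,t,x)$. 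We aim for bounds of the form $C^k h_k$-type quantities times the prefactor in \eqref{E:holder}, so that the sum over $k$ converges thanks to Lemma~\ref{L:hn-upper-bound}, with Gamma functions $\Gamma(k\eta+1)$ beating $(2p-1)^{k/2}\sqrt{k!}$. If needed, this step uses Corollary~\ref{C:hn-via-h1}. The $k=0$ term is just the deterministic heat-kernel increment, handled by the standard estimate
\[
|p_{t-s'}(x-y')-p_{t-s}(x-y)|\lesssim (t-s')^{-\alpha/2}\,\square_\alpha(s'-s,y'-y)\,\big[p_{4(t-s')}(x-y)+p_{4(t-s')}(x-y')\big].
\]

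\textbf{Spatial part.} Here $g_k(\cdot;s',y',t,x)-g_k(\cdot;s',y,t,x)$ differs only in the first factor $p_{s_1-s'}(y_1-y')-p_{s_1-s'}(y_1-y)$. We bound it pointwise by
\[
|y-y'|^\alpha (s_1-s')^{-\alpha/2}\big[p_{2(s_1-s')}(y_1-y)+p_{2(s_1-s')}(y_1-y')\big].
\]
Computing the $\mathcal H^{\otimes k}$ norm in the white-in-time structure amounts to integrating, over the simplex, products of $\Gamma$ applied to differences of heat-kernel chains. The standard route (as in Chen--Dalang / Balan--Chen moment bounds) is to normalize by $p_{t-s'}(x-y)$, so that the chain becomes a Brownian bridge density via Lemma~\ref{L:heat-kernel-factorization}. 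Each time-integral then produces a factor $k(\cdot)$-like quantity bounded by Lemma~\ref{L:k-upper-bound}. The singular factor $(s_1-s')^{-\alpha}$ is integrable next to the $s^{\eta-1}$ singularities precisely because $\alpha<\eta$, giving Beta integrals and Mittag-Leffler-summable constants. The doubling of variances $2(s_1-s')$, and then the bridge-to-heat-kernel comparison, is what degrades the output kernel to $p_{4(t-s')}$.

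\textbf{Temporal part.} For $s\le s'$, we write $\cz(s,y;t,x)=\int \cz(s,y;s',z)\cz(s',z;t,x)\,\ud z$ by the Chapman--Kolmogorov identity \eqref{E:chapman-kolmogorov}. This gives
\[
\cz(s,y;t,x)-\cz(s',y;t,x)=\int \cz(s,y;s',z)\big[\cz(s',z;t,x)-\cz(s',y;t,x)\big]\ud z+\big[\cz(s,y;s',*)-1\big]\cz(s',y;t,x).
\]
The two factors in each product are independent by Property~\ref{T:transition-properties:independence}, so $2p$-norms factor. The first term is handled by Minkowski, the moment bound \eqref{E:moment} for $\cz(s,y;s',z)$, and the spatial estimate just proved, with $|z-y|^\alpha$ integrated against $p_{s'-s}(z-y)$ producing $(s'-s)^{\alpha/2}$. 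The second needs $\Norm{\cz(s,y;s',*)-1}_{2p}\lesssim (s'-s)^{\alpha/2}$. This follows from the chaos expansion of the total mass, whose $k$-th kernel has squared norm bounded by $h_k(s'-s)$, together with Lemma~\ref{L:hn-upper-bound}, which gives order $(s'-s)^{\eta}$ for $k\ge1$.

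\textbf{Main obstacle.} The main obstacle is the spatial kernel estimate. The correlation $\Gamma$ couples two copies of the chain, so $\Norm{\cdot}_{\mathcal H^{\otimes k}}$ involves off-diagonal products of differences, and it is not immediate that the pointwise difference bound (which is not of product-positive form) can be inserted. To handle this, we would first dominate the signed difference by a nonnegative kernel using $|a-b|\le$ positive majorant. We then use that $\Gamma$ is nonnegative, so that the norm of a function is bounded by the norm of its pointwise majorant. Finally, we reduce to a Brownian-bridge computation identical to the one producing \eqref{E:moment}, carrying the extra $(s_1-s')^{-\alpha/2}$ weight.
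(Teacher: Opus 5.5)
Your route differs from the paper's. The paper never expands in chaos. It writes the mild equation for the increment and bounds $g(t,x)^2$ by three terms:
\begin{itemize}
\item a deterministic term $I_0$;
\item a term $I_1$ carrying the noise on $(s,s']$, handled by BDG, \eqref{E:moment}, the bridge factorization and Lemma~\ref{L:Bridge-t}, which gives order $p_{t-s}^2(x-y)(s'-s)^{\eta}$;
\item a term $I_2$ in which the noise on $(s',t]$ acts on the increment itself.
\end{itemize}
The resulting inequality is closed by the weighted space--time Gronwall lemma, Theorem~\ref{T:IntIneq}, where $\alpha<\eta$ enters through Proposition~\ref{P:hn-ea}.

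You instead split into a same-time spatial increment, treated chaos by chaos, and a same-point temporal increment. Chapman--Kolmogorov and independence reduce the temporal increment to the spatial estimate plus $\Norm{\cz(s,y;s',*)-1}_{2p}$. That reduction is clean and sidesteps $I_1$ and $I_2$ entirely.

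The price is that your spatial step needs the full $k$-point bridge-variance computation behind \eqref{E:vargk-bound}. It must now carry the weight $(s_1-s')^{-\alpha}$ and a widened first kernel. Note that $p_{2(s_1-s')}(y_1-y)\cdots p_{t-s_k}(x-y_k)$ is a bridge from $(2s'-s_1,y)$, so both its starting time and its normalisation $p_{t-2s'+s_1}(x-y)\le 2^{d/2}p_{2(t-s')}(x-y)$ move with $s_1$; it is not normalised by $p_{t-s'}(x-y)$. By contrast, the Gronwall argument only ever uses a one-step bridge identity.

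Your majorant argument based on $\Gamma\ge 0$ is correct, and the weighted computation does go through. Conditioning the bridge successively and using $\int\widehat{f}(\ud\xi)\,e^{-\tau|\xi+\zeta|^2}\le\int\widehat{f}(\ud\xi)\,e^{-\tau|\xi|^2}$ gives a product of factors $k(\cdot)$ evaluated at the conditional variances. The weight $(s_1-s')^{-\alpha}$ is integrable against $k(s_1-s')\lesssim(s_1-s')^{\eta-1}$ precisely because $\alpha<\eta$.

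Two points need correcting.

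\textbf{Summability.} Your mechanism is wrong: for $\eta<1$, $\Gamma(k\eta+1)^{1/2}$ does not beat $\sqrt{k!}$. The ratio $\sqrt{k!/\Gamma(k\eta+1)}$ grows like $(k!)^{(1-\eta)/2}$ up to geometric factors, so with $\Norm{I_k(h)}_2^2\le k!\,\Norm{h}^2_{\mathcal{H}^{\otimes k}}$ your series diverge. You need the exact identity $\Norm{I_k(h)}_2=\Norm{h}_{\mathcal{H}^{\otimes k}}$ for kernels supported in a single time simplex, since the $k!$ permuted copies have disjoint time supports. This is what \eqref{E:vargk-bound} rests on, and the same fix is needed for your bound on $\Norm{\cz(s,y;s',*)-1}_{2p}$.

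\textbf{The $k=0$ estimate.} The deterministic estimate you call standard is false as written. Take $y=y'$, $s=0$, $t=1$, $t-s'<1/4$ and $|x-y|\to\infty$. Then $|p_{t-s'}(x-y)-p_1(x-y)|\sim p_1(x-y)$, which is not dominated by $(t-s')^{-\alpha/2}p_{4(t-s')}(x-y)$. Since $\Norm{\cdot}_{2p}\ge|\E[\cdot]|$, this shows that \eqref{E:holder} itself fails in the stated form: the $(s,y)$ endpoint needs a kernel at time of order $t-s$.

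The paper's own $I_0$ bound and final display use $p_{4(t-s)}$ in both slots, which is too small at the $(s',y')$ end. The correct deterministic bound is $C(t-s')^{-\alpha/2}\square_\alpha(s'-s,y'-y)\,[p_{4(t-s')}(x-y')+p_{4(t-s)}(x-y)]$.

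Your Chapman--Kolmogorov step shows the same phenomenon. Indeed, $\int p_{s'-s}(z-y)|z-y|^{\alpha}p_{4(t-s')}(x-z)\,\ud z\le C(s'-s)^{\alpha/2}p_{2(s'-s)+4(t-s')}(x-y)$, which is a kernel at time comparable to $t-s$.

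With $p_{4(t-s)}(x-y)$ added to the right-hand side of \eqref{E:holder}, which is harmless for the Kolmogorov argument on compacts, and with the isometry corrected, your argument proves the corrected estimate.
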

\begin{proof}
  Fix an arbitrary $p \ge 1$. Denote
  \begin{align}\label{E:Def-J}
    g (t, x)
    = g_{s, y; s', y'} (t, x)
    \coloneqq \Norm{\cz(s',y'; \, t,x ; \, \beta) - \cz(s, y; \, t,x ; \, \beta)}_{2p}.
  \end{align}
  Write the mild formulation in~\eqref{E:Mild-Z} as $\cz(s,y; \, t,x) =
  p_{t-s}(x-y) + \beta\; \mathcal{I}\left(s,y; t,x\right)$ with
  \begin{align*}
    \mathcal{I}\left(s,y; t,x\right) \coloneqq \int_s^t \int_{\R^d} p_{t-r}(x-z) \, \cz(s,y ; \, r,z) \, W(\ud r, \ud z).
  \end{align*}
  Then,
  \begin{align*}
    \mathcal{I}\left(s,y;t,x\right) - \mathcal{I}\left(s',y';t,x\right)
    = & \int_s^{s'} \int_{\R^d} p_{t-r}(x-z) \, \cz(s,y ; \, r,z) \, W(\ud r, \ud z)                                      \\
      & + \int_{s'}^t \int_{\R^d} p_{t-r}(x-z) \, \left[\cz(s,y ; \, r,z) - \cz(s',y'; \, r,z)\right] \, W(\ud r, \ud z).
  \end{align*}
  Therefore, we see that
  \begin{align*}
    g (t, x)^2 \le 3 \left(I_0 + \beta^2 I_1 + \beta^2 I_2\right),
  \end{align*}
  where the terms $I_0$, $I_1$, and $I_2$ are respectively defined by
  \begin{align*}
    I_0 & \coloneqq \left|p_{t - s'} (x - y') - p_{t - s} (x - y)\right|^2,                                                                               \\
    I_1 & \coloneqq \Norm{\int_{s}^{s'} \int_{\R^d} p_{t - r} (x - z) \cz(s, y; \, r,z )  W(\ud r, \ud z) }_{2p}^2 , \quad \text{and}                     \\
    I_2 & \coloneqq \Norm{\int_{s'}^{t} \int_{\R^d} p_{t - r} (x - z) \left[ \cz(s', y'; \, r,z )  - \cz(s, y; \, r,z ) \right]W(\ud r, \ud z) }_{2p}^2 .
  \end{align*}
  Those terms can be estimated separately in the following way: first a direct
  consequence of~\cite{chen.huang:19:comparison}*{Lemma~3.1} suggests that for
  all $\alpha \in (0, \eta)$, we have
  \begin{align}\label{E:Holder-0}
    I_0 \le \frac{C_\alpha}{(t-s')^{\alpha}}\left[p_{4(t-s)}(x-y') + p_{4(t-s)}(x-y) \right]^2\, \square_{\alpha}^2 (s' - s, y' - y).
  \end{align}

  Next for the term $I_1$, one can use classical arguments borrowed
  from~\cite{dalang:99:extending}. That is applying the Burkholder-Davis-Gundy
  inequality as in~\cite{dalang:99:extending}*{Page 14}, we get
  \begin{align}\label{E:holder-init-1a}
    \begin{aligned}
      I_1 \le C_p \,\bigg\| \int_s^{s'} \ud r \iint_{\R^{2d}} \ud z \ud z'\: f (z - z') \:
             & p_{t - r} (x - z) \cz(s, y; \, r,z )              \\
      \times & p_{t - r} (x - z') \cz(s, y; \, r,z' ) \bigg\|_p.
    \end{aligned}
  \end{align}
  In addition, one form of Minkowski's inequality can be read as follows for a
  nonnegative measure $\nu$ on a state space $S$ and a random function $v \colon
    S \to \R$:
  \[
    \Norm{ \int_{S} v(\xi) \nu (\ud \xi) }_p
    \le \int_S \Norm{ v (\xi) }_p \, \nu (\ud \xi).
  \]
  Applying this bound to~\eqref{E:holder-init-1a}, and then Cauchy-Schwarz's
  inequality for the quantity $\Norm{\cz(s, y; \, r,z) \cz(s, y; \, r,z' )}_p$,
  we end up with
  \begin{align}\label{E:Holder-pp}
    \begin{aligned}
      I_1 \le C_p \int_{s}^{s'} \ud r \iint_{\R^{2d}} \ud z \ud z' \: f( z - z') \:
             & p_{t - r} (x - z) \Norm{\cz(s, y; \, r, z )}_{2p}    \\
      \times & p_{t - r} (x - z') \Norm{\cz(s, y; \, r, z' )}_{2p}.
    \end{aligned}
  \end{align}
  Therefore, by applying the upper bound in~\eqref{E:moment}, we obtain
  \begin{align*}
    I_1
    \le & C_{p,\beta} \int_s^{s'}  \ud r \iint_{\R^{2d}} \ud z \, \ud z'  f( z - z' )   p_{t - r} (x - z) p_{r - s} (z - y)    p_{t - r} (x - z')  p_{r - s}(z' - y)
  \end{align*}
  Hence, identity~\eqref{E:heat-kernel-factorization} yields
  \begin{align*}
    I_1
    \le & C_{p,\beta} \, p_{t - s}^2 (x - y) \int_s^{s'}  \ud r \iint_{\R^{2d}} \ud z \, \ud z'  f( z - z' ) \\
        & \times p_{\frac{(t - r)(r-s)}{t-s}} \left(\frac{(t-r)y + (r-s)x}{t-s} - z \right)
    p_{\frac{(t - r)(r-s)}{t-s}} \left(\frac{(t-r)y + (r-s)x}{t-s} - z'\right) .
  \end{align*}
  Thus, resorting to an obvious spatial change of variable and invoking Parseval's identity, we end up with
  \begin{align*}
    I_1
    \le & C_{p,\beta} \, p_{t - s}^2 (x - y) \int_s^{s'}  \ud r \iint_{\R^{2d}} \ud z \, \ud z'  f( z - z' )
    p_{\frac{(t - r)(r-s)}{t-s}} \left( z \right)
    p_{\frac{(t - r)(r-s)}{t-s}} \left( z'\right)                                                                                                                \\
    =   & C_{p,\beta} (2\pi)^{-d} p_{t - s}^2 (x - y) \int_s^{s'}  \ud r \int_{\R^d} \widehat{f}(\ud\xi) \exp \left\{ - \frac{(t-r)(r-s)}{t-s} |\xi|^2 \right\}.
  \end{align*}
  Now by
  Lemma~\ref{L:Bridge-t}, we see that
  \begin{align*}
    \int_s^{s'}  \ud r \exp \left\{ - \frac{(t-r)(r-s)}{t-s} |\xi|^2 \right\}
    \le & C_\eta e^{\frac{t-s}{4}} (s'-s)^{\eta} \frac{1}{(1+|\xi|^2)^{1-\eta}}.
  \end{align*}
  Hence, considering $t \le 1$ and using the strengthened Dalang's
  condition~\eqref{E:stren}, we see that
  \begin{align}\label{E:Holder-1}
    I_1 \le & C_{p,\beta,\eta}\, p_{t - s}^2 (x - y)  (s'-s)^{\eta}.
  \end{align}

  The term $I_2$ is treated in a similar way. Again, the Burkholder-Davis-Gundy
  and Cauchy-Schwarz, and Minkowski inequalities entail that (along the same
  lines as for~\eqref{E:Holder-pp}),
  \begin{align*}
    I_2 \le C_{\beta, p} \int_{s'}^t \ud r \iint_{\R^{2d}} \ud z \ud z' f(z-z')
           & p_{t - r} (x - z ) \Norm{\cz(s', y'; \, r,z ) - \cz(s, y; \, r,z ) }_{2p}  \\
    \times & p_{t - r} (x - z') \Norm{\cz(s', y'; \, r,z') - \cz(s, y; \, r,z') }_{2p}.
  \end{align*}
  Recalling our notation~\eqref{E:Def-J}, we have thus obtained
  \begin{align}\label{E:Holder-I2}
    I_2
     & \le C_{\beta, p} \int_{s'}^t \ud r \iint_{\R^{2d}} \ud z  \ud z' f(z-z')
    p_{t - r} (x - z ) g (r, z)
    p_{t - r} (x - z') g (r, z').
  \end{align}

  Combining~\eqref{E:Holder-0},~\eqref{E:Holder-1} and~\eqref{E:Holder-I2},
  we see that
  \begin{align*}
    g (t, x)^2 \le
     & \frac{C_\alpha}{(t-s')^{\alpha}}\left[p_{4(t-s)}(x-y') + p_{4(t-s)}(x-y) \right]^2\, \square_{\alpha}^2 (s' - s, y' - y) \\
     & + C_{p,\beta, \eta}\: p_{t - s}^2 (x - y)  (s'-s)^{\eta}                                                                 \\
     & + C_{\beta, p} \int_{s'}^t \ud r \iint_{\R^{2d}} \ud z  \ud z' f(z-z')
    p_{t - r} (x - z ) g (r, z)
    p_{t - r} (x - z') g (r, z').
  \end{align*}
  Denote by
  \[
    \mu (\cdot) \coloneqq C_{p,\beta, \alpha,\eta} \square_\alpha^2 (s' - s, y' - y) \left(p_{4(s' - s)} (y - \cdot) + p_{4(s' - s)} (y' - \cdot)\right).
  \]
  Then the function $\mu$ satisfies condition~\eqref{E:J0finite}. In addition,
  we can write
  \begin{align*}
    g^2 (t, x)
    \le & (t-s')^{- \alpha} \big[p_{4(t - s')} * |\mu| \big]^2 (x) \\
    & + C_{\beta, p} \int_{s'}^t \ud r \iint_{\R^{2d}} \ud z \ud z'
    & f(z-z') p_{4(t - r)} (x - z )p_{4(t - r)} (x - z') \\
    & \times g (r, z) g (r, z'),
  \end{align*}
  where we have used the trivial bound
  $(s'-s)^{\eta} \le \square_\alpha^2 (s' - s, y' - y)$ for $\alpha \in (0,
    \eta)$ and $0 \le s' -s \le 1$.
  Then, an application of Theorem~\ref{T:IntIneq}
  yields
  \begin{align*}
    g (t, x) \le \frac{C_{p, \beta, \alpha, \eta}}{(t-s')^{\alpha/2}} \left[ p_{4(t - s)} (x - y) + p_{4(t - s)} (x - y') \right] \square_\alpha (s' - s, y' - y).
  \end{align*}
  This completes the proof of Lemma~\ref{L:holder}.
\end{proof}

\section{Local properties of the polymer}\label{S:first-properties-of-polymer}
In this section, we derive a few theorems concerning pathwise properties of our
polymer measure $\bP_\beta^W$ introduced in Definition~\ref{D:PolyMeasure}, as
well as its singularity with respect to the Wiener measure. Those results are
proper generalizations of~\cite{alberts.khanin.ea:14:continuum}*{Theorems 3.1
and 3.3, and results in Section 4.4}, for which we might omit some details.

\subsection{Pathwise properties}\label{S:pathwise}

The results in this section tend to show that on finite time horizons, the
polymer $X$ behaves similarly to a Brownian motion. The first theorem in this
direction states that the polymer measure has the same regularity as Brownian
motion on $[0,1]$.


\begin{theorem}\label{T:polymer-holder-regularity}
  Recall that $\PP$ is the probability encoding the noise (from
  Definition~\ref{D:Noise}), while the quenched polymer probability
  $\bP_\beta^W$ is introduced in Definition~\ref{D:PolyMeasure}. Then, for all
  $\beta > 0$ and $\ep \in (0,\frac12)$, $\PP$-almost surely, we have
  \[
    \bP_\beta^W \left(\Norm{X}_{C^{{1}/{2} - \ep}} < \infty\right) = 1,
  \]
  where $\Norm{\cdot}_{C^\alpha}$, with $\alpha \in (0,1)$, denotes the
  $\alpha$-H\"{o}lder norm on $[0, 1]$.
\end{theorem}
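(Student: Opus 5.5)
The plan is to follow the AKQ strategy of~\cite{alberts.khanin.ea:14:continuum}*{Theorem 4.3} and work under the annealed measure $\PP_\beta=\PP\otimes\bP_\beta^W$. It suffices to show that for every $q\ge1$ there is $C_q$ with
\[
  \E_\beta\big[|X_t-X_s|^{2q}\big]\le C_q\,|t-s|^{q},\qquad 0\le s<t\le1 .
\]
Once this holds, the Kolmogorov continuity criterion gives $\E_\beta[\Norm{X}_{C^{1/2-\ep}}^{2q}]<\infty$ for $q$ large, since the exponent $(q-1)/(2q)$ tends to $\tfrac12$. Hence $\PP_\beta(\Norm{X}_{C^{1/2-\ep}}<\infty)=1$. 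Because $\PP_\beta$ is a product-type measure, Fubini then yields $\bP_\beta^W(\Norm{X}_{C^{1/2-\ep}}<\infty)=1$ for $\PP$-a.e.\ $W$.

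To obtain the moment bound, I will use~\eqref{E:fdd-test} with $k=2$ to write
\[
  \bE_\beta^W|X_t-X_s|^{2q}=\cz_1^{-1}\int_{\R^{2d}}|x_2-x_1|^{2q}\,\cz(0,0;s,x_1)\,\cz(s,x_1;t,x_2)\,\cz(t,x_2;1,*)\,\ud x_1\ud x_2 .
\]
The awkward factor is $\cz_1^{-1}=\cz(0,0;1,*)^{-1}$. I will split it off with Cauchy--Schwarz (or Hölder) in $\PP$ and rely on the negative-moment bound $\E[\cz_1^{-2}]<\infty$. For that bound, the first thing to try is the known small-ball and negative-moment estimates for the solution of the SHE under Dalang's condition (Chen--Huang, Hu--Wang type results). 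Alternatively, by Chapman--Kolmogorov~\eqref{E:chapman-kolmogorov} and positivity, $\cz_1\ge\int_{B}\cz(0,0;1/2,z)\,\cz(1/2,z;1,*)\,\ud z$, and one can reduce to lower tails of $\cz$ on compacts. I expect this step to be the \textbf{main obstacle} and would cite it from the SHE literature rather than reprove it.

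For the remaining factor, the goal is to bound $\E$ of the square of the integral. I will apply Minkowski's inequality in $L^2(\Omega)$ to move the norm inside the $\ud x_1\ud x_2$ integral. Then, by independence of $\cz$ over the disjoint intervals $(0,s],(s,t],(t,1]$ (Theorem~\ref{T:transition-properties}\ref{T:transition-properties:independence}), the $L^2$ norm of the product factorizes. The moment bound~\eqref{E:moment} controls the first two factors by $C\,p_s(x_1)\,p_{t-s}(x_2-x_1)$. The third factor is bounded by $\Norm{\cz(t,x_2;1,*)}_2\le\int\Norm{\cz(t,x_2;1,x)}_2\ud x\le C$. The result is
\[
  \Norm{\cdots}_2\le C\int|x_2-x_1|^{2q}\,p_s(x_1)\,p_{t-s}(x_2-x_1)\,\ud x_1\ud x_2=C\,\E|B_{t-s}|^{2q}=C_q(t-s)^q,
\]
which combined with the negative moment of $\cz_1$ gives the desired annealed estimate. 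The endpoint cases $s=0$ or $t=1$, where conventions~\eqref{E:Convention-t} apply, are handled identically with $k=1$.
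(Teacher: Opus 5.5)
Your argument is a genuinely different route from the paper's, and it is correct conditional on one input you do not prove. Minkowski, independence over $(0,s]$, $(s,t]$, $(t,1]$, and~\eqref{E:moment} do give $\Norm{N}_2\le C_q(t-s)^q$, and Kolmogorov plus Fubini then finish. The load-bearing step, however, is $\E[\cz_1^{-2}]<\infty$. This is a nontrivial lower-tail estimate that you only propose to cite, and the paper shows it is unnecessary.

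Because $\cz_1>0$ almost surely (Theorem~\ref{T:transition-properties}-\ref{T:transition-properties:positive}), it suffices to prove $\E\big[\cz_1\,\bE_\beta^W[\mathscr{S}^{\gamma}(X)]\big]<\infty$. Multiplying by $\cz_1$ cancels the normalization in~\eqref{E:fdd-test} instead of fighting it. Independence and centrality then give $\E[\cz(0,0;s,x)\,\cz(s,x;t,y)\,\cz(t,y;1,*)]=p_s(x)\,p_{t-s}(y-x)$ exactly, so the quantity equals $\bE_0[\mathscr{S}^{\gamma}(X)]<\infty$. Only first moments enter: there are no $L^2$ bounds, no Minkowski step, and no negative moments.

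The paper concludes with the fractional Sobolev embedding of Lemma~\ref{L:sobolev-holder}, but that choice is cosmetic. Under the probability $\cz_1\,\ud\PP_\beta$, which is equivalent to $\PP_\beta$, the canonical process has the finite-dimensional distributions of Brownian motion. So your Kolmogorov argument runs verbatim there and transfers back to $\PP_\beta$ by absolute continuity.

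Granted the negative moment, your route buys a quantitative annealed bound $\E_\beta\big[\Norm{X}_{C^{1/2-\ep}}^{2q}\big]<\infty$, which is stronger than the almost-sure statement of the theorem. The paper's route buys a self-contained proof that uses nothing beyond positivity and the mean/independence structure of $\cz$.
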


The proof of Theorem~\ref{T:polymer-holder-regularity} relies on the following fractional
Sobolev--Slobodeckij embedding theorem, tailored to our setting and cited from
\cite{di-nezza.palatucci.ea:12:hitchhikers}*{Theorem 8.2}. Specifically, we take
$\Omega = [0, 1]$, $p = 2 \gamma$, $n = 1$, and $s = \frac{\gamma - 1}{2\gamma}$
for a parameter $\gamma > 2$ therein.

\begin{lemma}\label{L:sobolev-holder}
  Let $m \in \NN$ and let $f \colon [0,1]\to \R^m$ be a measurable function. For
  $\gamma > 2$, we set
  \begin{align}\label{E:Sgamma-f}
    \mathscr{S}^{\gamma} (f) \coloneqq \int_0^1  |f (t)|^{2 \gamma}\, \ud t +
    \int_0^1 \int_0^1  \frac{|f (t) - f (s)|^{2 \gamma}}{|t - s|^{\gamma}} \, \ud s \ud t .
  \end{align}
  Then, provided $\mathscr{S}^{\gamma} (f) < \infty$,
  there exists a function
  $\widetilde f \in C^{\frac{1}{2} - \frac{1}{\gamma}}([0,1])$ such that
  $\widetilde f = f$ Lebesgue-almost everywhere, and there exists a strictly
  positive constant $C_{\gamma}$ such that
  \[
    \big\|\widetilde f \big\|_{C^{1/2 - 1/\gamma}}
    \le
    C_{\gamma} \left[\mathscr{S}^{\gamma} (f)\right]^{\frac{1}{2 \gamma}} .
  \]
  In particular, if $f$ is continuous, then $\widetilde f = f$ on $[0,1]$.
\end{lemma}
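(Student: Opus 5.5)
The plan is to obtain Lemma~\ref{L:sobolev-holder} as a direct specialization of the fractional Sobolev embedding $W^{s,p}(\Omega)\hookrightarrow C^{0,\alpha}(\overline{\Omega})$ for $sp>n$, with $\alpha=(sp-n)/p$ (\cite{di-nezza.palatucci.ea:12:hitchhikers}*{Theorem 8.2}). I will also indicate a self-contained route via the Garsia--Rodemich--Rumsey (GRR) inequality as a check. Since the $\R^m$-valued case reduces to the scalar one by working componentwise, using $|f_i|\le|f|$ and the equivalence of norms on $\R^m$, I will treat $m=1$.

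\textbf{Step 1: matching parameters.} Take $\Omega=[0,1]$ (a Lipschitz extension domain), $n=1$, $p=2\gamma$ and $s=\frac{\gamma-1}{2\gamma}\in(0,1)$. Then
\[
n+sp = 1+(\gamma-1)=\gamma ,
\]
so the Gagliardo seminorm $\int\!\!\int |f(t)-f(s)|^p|t-s|^{-n-sp}\,\ud s\,\ud t$ is exactly the double integral in~\eqref{E:Sgamma-f}. Consequently $\mathscr{S}^{\gamma}(f)=\|f\|_{W^{s,p}}^{p}$. The condition $sp>n$ reads $\gamma-1>1$, i.e.\ $\gamma>2$, which is the standing assumption. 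The resulting Hölder exponent is
\[
\alpha=\frac{sp-n}{p}=\frac{\gamma-2}{2\gamma}=\frac12-\frac1\gamma .
\]

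\textbf{Step 2: the embedding and the continuity clause.} The cited theorem produces a representative $\widetilde f$ with $\|\widetilde f\|_{C^{\alpha}}\le C\|f\|_{W^{s,p}}=C_\gamma[\mathscr{S}^\gamma(f)]^{1/(2\gamma)}$. For the final claim, suppose $f$ is continuous. Then $f$ and $\widetilde f$ are two continuous functions that agree Lebesgue-a.e., and the set where they differ is open. An open set of measure zero is empty, so $\widetilde f=f$ everywhere on $[0,1]$.

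\textbf{Alternative (self-contained) route.} For the Hölder seminorm, apply GRR with $\Psi(u)=u^{2\gamma}$ and $p(u)=u^{1/2}$. If $B$ denotes the double integral in~\eqref{E:Sgamma-f}, GRR gives, for Lebesgue points $s,t$,
\[
|f(t)-f(s)|\le 8\int_0^{|t-s|}\Big(\tfrac{4B}{u^2}\Big)^{1/(2\gamma)}\ud(u^{1/2})\le C_\gamma B^{1/(2\gamma)}|t-s|^{\frac12-\frac1\gamma}.
\]
The $u$-integral converges at $0$ precisely because $\gamma>2$, and this is the only delicate point. The sup norm is then controlled by
\[
\|f\|_\infty\le \int_0^1|f|+\sup_{s,t}|f(t)-f(s)|,
\]
where $\int_0^1|f|\le (\int_0^1|f|^{2\gamma})^{1/(2\gamma)}$ by Hölder's inequality. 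Finally, $f$ is extended from the full-measure set of Lebesgue points by uniform continuity.

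I expect no genuine obstacle. The only points needing care are the exponent bookkeeping in Step 1 and, on the GRR route, the convergence of the $u$-integral near $0$, which holds exactly when $\gamma>2$.
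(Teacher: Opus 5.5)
Your proposal is correct and takes the same approach as the paper, which obtains the lemma by citing Theorem~8.2 of \cite{di-nezza.palatucci.ea:12:hitchhikers} with $\Omega=[0,1]$, $n=1$, $p=2\gamma$, $s=\frac{\gamma-1}{2\gamma}$; your checks ($n+sp=\gamma$, $sp>n\iff\gamma>2$, $\alpha=\frac12-\frac1\gamma$, $\mathscr{S}^\gamma(f)=\|f\|_{W^{s,p}}^{p}$) and your argument for the continuity clause spell out what the paper leaves implicit. The Garsia--Rodemich--Rumsey route is a valid self-contained alternative, with the minor advantage that it applies directly to $\R^m$-valued $f$ without a componentwise reduction.
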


We now turn to the proof of Theorem~\ref{T:polymer-holder-regularity}.

\begin{proof}[Proof of Theorem~\ref{T:polymer-holder-regularity}]
  It suffices to consider the case $\beta > 0$, because the case $\beta = 0$
  reduces to the classical Brownian motion, whose H\"{o}lder continuity is
  well-understood. Fix $\ep\in(0,\frac12)$ and choose $\gamma >
  \max(2,\ep^{-1})$. Now for $\beta>0$, consider the canonical process $X$ under
  the measure $\bP_\beta^W$ (as introduced in~\eqref{E:fdd}). According to
  Lemma~\ref{L:sobolev-holder}, it suffices to show that, for this choice of $\gamma$, we
  have
  \begin{align}\label{E:akq-4.3-1}
    \PP \left(\bE_\beta^W \left[ \mathscr{S}^{\gamma} (X) \right] < \infty \right) = 1 \, .
  \end{align}
  Indeed, on the event in~\eqref{E:akq-4.3-1}, we have $\mathscr{S}^{\gamma}(X)
  < \infty$ for $\bP_\beta^W$-almost every path. Therefore, Lemma~\ref{L:sobolev-holder}
  yields a H\"older-continuous representative $\widetilde X$ of $X$ (agreeing
  with $X$ Lebesgue-almost everywhere). Since $\gamma > \ep^{-1}$, we have
  $\frac12-\frac{1}{\gamma}>\frac12-\ep$, and thus
  $C^{\frac12-\frac{1}{\gamma}}([0,1];\R^d)\subset C^{\frac12-\ep}([0,1];\R^d)$.
  Consequently, $\widetilde X\in C^{\frac12-\ep}([0,1];\R^d)$. Finally, since by
  Definition~\ref{D:PolyMeasure} the canonical path $X$ is continuous,
  Lemma~\ref{L:sobolev-holder} implies that $\widetilde X = X$ on $[0,1]$, and the claimed
  H\"older regularity follows.

  In order to prove~\eqref{E:akq-4.3-1}, recall from
  Theorem~\ref{T:transition-properties}-\ref{T:transition-properties:positive} that
  $\cz (0,0; \, 1,* ; \, \beta) > 0$ almost surely. Therefore,
  \eqref{E:akq-4.3-1} holds true, provided that the following inequality is
  satisfied:
  \begin{align}\label{E:akq-4.3-2}
    I \coloneqq \E \left[\cz (0,0; \, 1,* ; \, \beta) \times  \bE_\beta^W \left[ \mathscr{S}^{\gamma} (X)  \right] \right]< \infty.
  \end{align}

  Writing the definition~\eqref{E:Sgamma-f} of $\mathscr{S}^{\gamma}$, applying
  Fubini's theorem, and reducing our time integral to an integral on the simplex
  $0 < s < t < 1$, we get
  \begin{align}\label{E:akq-4.3-3-a}
    I = \int_0^1 \ud t \, J_{t}^{(1)} +  2 \int_0^1 \ud t \int_0^t \ud s \, |t - s|^{- \gamma} J_{s,t}^{(2)},
  \end{align}
  where we have set
  \[
     J_{t}^{(1)}   \coloneqq \mathbb{E} \left[\cz (0,0; \, 1,* ; \, \beta) \bE_\beta^W \left[ \left| X_t \right|^{2\gamma} \right] \right] \quad \text{and} \quad
     J_{s,t}^{(2)} \coloneqq \mathbb{E} \left[\cz (0,0; \, 1,* ; \, \beta)   \bE_\beta^W \left[ \left| X_t - X_s \right|^{2\gamma} \right] \right].
  \]

  Applying~\eqref{E:fdd-test} and Fubini's theorem again, we obtain
  \begin{align*}
    J_{s,t}^{(2)} = \iint_{\R^{2d}} \ud x \ud y \, |x - y|^{2 \gamma} \,
    \E \left[  \cz (0,0; \, s,x ; \, \beta)  \cz (s,x; \, t,y ; \, \beta) \cz (t,y; \, 1,* ; \, \beta) \right].
  \end{align*}
  In addition, invoking Theorem~\ref{T:transition-properties}-\ref{T:transition-properties:center}
  and~\ref{T:transition-properties:independence}\footnote{Theorem~\ref{T:transition-properties}~\ref{T:transition-properties:independence}
  also applies to the case when $x = *$; see~\eqref{E:Convention-t} for the
  notation.}, noting that
  $\E\left(\cz (t,y; \, 1,* ; \, \beta)\right) = 1$ (constant-one initial data),
  and using classical identities for the Wiener measure $\bP_0$, we obtain
  \begin{align*}
    J_{s, t}^{(2)}
    = \iint_{\R^{2d}} \ud x \ud y \, |x - y|^{2 \gamma} p_s (x) p_{t - s} (x - y)
    = \bE_0 \left[|X_t - X_s|^{2\gamma}\right].
  \end{align*}
  By the same reasoning, we also have
  \[
    J_{t}^{(1)}
      = \bE_0 \left[|X_t|^{2\gamma}\right].
  \]

  Plugging the above two identities into~\eqref{E:akq-4.3-3-a}, we end up with
  \begin{align} \label{E:akq-4.3-3-c}
    I = & \int_0^1 \ud t \,\bE_0 \left[|X_t|^{2\gamma}\right] + 2 \int_0^1  \ud t \int_0^t \ud s \; |t - s|^{-\gamma}  \bE_0 \left[|X_t - X_s|^{2\gamma}\right] \nonumber \\
    =   & \bE_0 \left[ \int_0^1 \ud t \, |X_t|^{2\gamma}  + 2 \int_0^1 \ud t \int_0^t \ud s \; \frac{|X_t - X_s|^{2\gamma}}{|t - s|^{\gamma}}\right]
    = \bE_0 \left[ \mathscr{S}^{\gamma}  (X)\right]
    < \infty.
  \end{align}
  This establishes~\eqref{E:akq-4.3-2} and hence~\eqref{E:akq-4.3-1}, thereby
  completing the proof of Theorem~\ref{T:polymer-holder-regularity}.
\end{proof}

\begin{remark}[A Kolmogorov--Chentsov approach]\label{R:KC-polymer}
  In the one-dimensional space-time white noise setting, Alberts et
  al.~\cite{alberts.janjigian.ea:22:greens} established the H\"older support of
  the point-to-point polymer measures, and constructed those measures on a
  continuous path space, by proving suitable moment bounds on the increments of
  polymer bridges and then applying the Kolmogorov--Chentsov continuity theorem.
  In the present setting of space-correlated noise in $\R^d$, it would be
  interesting to develop analogous quenched increment estimates for the
  canonical process under $\bP_\beta^W$, which would lead to an alternative
  proof of Theorem~\ref{T:polymer-holder-regularity} based on Kolmogorov continuity.
\end{remark}

Below we state another pathwise property of the canonical process $X$, which
tends to show that it is (locally) indistinguishable from a Brownian motion.
Specifically, we show that the quadratic variation of $X$ is the same as the
quadratic variation of a Brownian motion.

\begin{theorem}\label{T:polymer-quadratic-variation}
  Recall that $\PP_{\beta}$ defines the annealed measure given in
  Definition~\ref{D:PolyMeasure}. With $\PP_{\beta}$-probability one, it holds
  for all $0 \le t \le 1$,
  \begin{align}\label{E:quadratic-variation}
    \langle X \rangle_t = &
    \begin{pmatrix}
      \langle X^{(1)},X^{(1)} \rangle_t & \cdots & \langle X^{(1)},X^{(d)} \rangle_t \\
      \vdots                            & \ddots & \vdots                            \\
      \langle X^{(d)},X^{(1)} \rangle_t & \cdots & \langle X^{(d)},X^{(d)} \rangle_t
    \end{pmatrix}
    = t  I_d ,
  \end{align}
  where for each $i, j \in \{1, \dots, d\}$,
  \[
    \langle X^{(i)},X^{(j)} \rangle_t \coloneqq
    \lim_{n \to \infty} \sum_{k = 1}^{2^n} \left(X_{\frac{k}{2^n}}^{(i)} - X_{\frac{k - 1}{2^n}}^{(i)}\right) \left(X_{\frac{k}{2^n}}^{(j)} - X_{\frac{k - 1}{2^n}}^{(j)}\right) .
  \]
\end{theorem}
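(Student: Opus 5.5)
The plan is to transfer the statement from Wiener measure to the annealed measure by a change of measure, as in the proof of Theorem~\ref{T:polymer-holder-regularity}. The key point is that the density of $\PP_\beta$ with respect to $\PP_0=\PP\otimes\bP_0$, restricted to the dyadic skeleton, has a controlled form.

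Fix $n$ and the dyadic times $t_k=k2^{-n}$, and let $A$ be any Borel set of paths depending only on $(X_{t_1},\dots,X_{t_{2^n}})$. Using~\eqref{E:fdd-test}, Fubini, and Theorem~\ref{T:transition-properties}-\ref{T:transition-properties:center} and~\ref{T:transition-properties:independence} (with $\E\,\cz(t,y;1,*;\beta)=1$), we get
\begin{equation*}
\E\big[\cz(0,0;1,*;\beta)\,\bP_\beta^W(A)\big]=\bP_0(A).
\end{equation*}
The same computation shows that the tilted annealed measure $\widetilde\PP(\ud w,\ud x)\coloneqq \cz(0,0;1,*;\beta)(w)\,\PP(\ud w)\,\bP_\beta^W(\ud x)$ has the same path marginal, restricted to dyadic times, as Wiener measure. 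Consequently, any event $B$ determined by the dyadic values of $X$ and satisfying $\bP_0(B)=1$ also satisfies $\widetilde\PP(B)=1$.

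Now take
\begin{equation*}
B=\Big\{\lim_{n\to\infty}\sum_{k\le 2^n t}\big(X^{(i)}_{k/2^n}-X^{(i)}_{(k-1)/2^n}\big)\big(X^{(j)}_{k/2^n}-X^{(j)}_{(k-1)/2^n}\big)=\delta_{ij}t\ \text{for all dyadic }t\Big\}.
\end{equation*}
This event is measurable with respect to the countable family of dyadic coordinates. By Lévy's classical theorem on dyadic quadratic variation of Brownian motion (almost sure convergence via Borel--Cantelli and fourth moments, together with the polarization identity for the cross terms), $\bP_0(B)=1$. Hence $\widetilde\PP(B)=1$, that is,
\begin{equation*}
\E\big[\cz(0,0;1,*;\beta)\,\bP_\beta^W(B^c)\big]=0.
\end{equation*}
By Theorem~\ref{T:transition-properties}-\ref{T:transition-properties:positive}, $\cz(0,0;1,*;\beta)>0$ almost surely, so $\bP_\beta^W(B^c)=0$ $\PP$-a.s., which gives $\PP_\beta(B)=1$.

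To pass from dyadic $t$ to all $t\in[0,1]$, use that each diagonal dyadic sum is nondecreasing in $t$ and that the limit $t$ is continuous; a sandwich between dyadic times then gives convergence for every $t$. The off-diagonal entries are handled by polarization, applied to $X^{(i)}\pm X^{(j)}$, which have the same monotone structure. Continuity of paths under $\bP_\beta^W$ handles the boundary partial increment.

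The main obstacle is the measure-theoretic justification of the first step. The identity has to be extended from finite-dimensional cylinder sets to the full dyadic $\sigma$-field, which is done by a monotone class argument using that both sides are finite measures; note that $\E\,\cz(0,0;1,*;\beta)=1$. One must also make sure that the joint measurability of $(w,x)\mapsto \bP_\beta^W$ is handled correctly.
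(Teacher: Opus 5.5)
Your argument is correct and rests on the same mechanism as the paper's proof. Weighting by $\cz(0,0;1,*;\beta)$ turns the annealed law of the dyadic skeleton into the Wiener law, and strict positivity of $\cz$ transfers full-measure events back to $\PP_\beta$. The difference is one of packaging. The paper uses this identity only at the level of second moments, applying Markov and Borel--Cantelli to $\cz(0,0;1,*;\beta)\,|Y_n^{(i,j)}|^2$ at $t=1$. You instead identify laws on the dyadic $\sigma$-field and quote L\'evy's theorem. Your monotone sandwich together with polarization also supplies the extension to all $t\in[0,1]$, which the paper only asserts.
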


\begin{proof}
  We will restrict our analysis of~\eqref{E:quadratic-variation} to the case $t = 1$. The proof
  readily extends to any general $t \in [0,1]$.  Let us also introduce some
  extra notation. For $n \ge 1$, $i, j \in \{1, \dots, d\}$, and $k = 1, \dots,
  2^n$, we set
  \begin{align}\label{E:def-IY-n}
    I_k^{(i,j)} \coloneqq \left(X_{\frac{k}{2^n}}^{(i)} - X_{\frac{k - 1}{2^n}}^{(i)}\right) \left(X_{\frac{k}{2^n}}^{(j)} - X_{\frac{k - 1}{2^n}}^{(j)}\right) - \frac{1}{2^n} \1_{(i = j)} , \quad \text{and} \quad
    Y_n^{(i,j)} \coloneqq \sum_{k = 1}^{2^n} I_k^{(i,j)}.
  \end{align}
  Then observe that we also have
  \begin{align*}
    Y_n^{(i,j)} = \sum_{k = 1}^{2^n} \left(X_{\frac{k}{2^n}}^{(i)} - X_{\frac{k - 1}{2^n}}^{(i)}\right) \left(X_{\frac{k}{2^n}}^{(j)} - X_{\frac{k - 1}{2^n}}^{(j)}\right) - \1_{(i = j)} .
  \end{align*}
  Hence, our claim~\eqref{E:quadratic-variation} for $t = 1$ can be stated as
  \begin{align}\label{E:akq-4.4-1}
    \lim_{n \to \infty}  Y_n^{(i,j)}  = 0, \quad \PP_{\beta}\text{-almost surely}.
  \end{align}

  In order to show~\eqref{E:akq-4.4-1}, we proceed similarly to what we did in
  the proof of Theorem~\ref{T:polymer-holder-regularity}. That is along the same lines as
  for~\eqref{E:akq-4.3-2}, we resort to the strict positivity of $\cz (0,0; \,
  1, * ; \, \beta)$ to assert that~\eqref{E:akq-4.4-1} is equivalent to
  \begin{align}\label{E:akq-4.4-2}
    \lim_{n \to \infty} \cz (0,0; \, 1, * ; \, \beta) \left|Y_n^{(i,j)}\right|^2  = 0,   \quad \PP_{\beta}\text{-almost surely}.
  \end{align}
  Relation~\eqref{E:akq-4.4-2} can then be shown by a classical argument:
  applying a Borel--Cantelli type lemma
  (see, e.g.,~\cite{grimmett.stirzaker:20:probability}*{Lemma~10 in
  Section~7.2}), it is easily seen that~\eqref{E:akq-4.4-2} is true if one can
  prove that for all $\epsilon > 0$, we have
  \begin{align}\label{E:pre-bc}
    \sum_{n = 1}^{\infty} \PP_{\beta} \left(A_n^{(i,j)} (\epsilon) \right) < \infty, \quad \text{where }
    A_n^{(i,j)} (\epsilon) \coloneqq  \left\{\cz (0,0; \, 1, * ; \, \beta) \left|Y_n^{(i,j)}\right|^2   > \epsilon \right\}.
  \end{align}
  Let us now bound each term $\PP_{\beta} (A_n^{(i,j)} (\epsilon))$
  in~\eqref{E:pre-bc} thanks to Markov's inequality. To this aim, we compute
  \begin{align}\label{E:akq-4.4-d}
    \EE_{\beta} \left[\cz (0,0; \, 1, * ; \, \beta) \left|Y_n^{(i,j)}\right|^2 \right]
    = & \sum_{k, k' = 1}^{2^n} \EE_{\beta} \left[\cz (0,0; \, 1, * ; \, \beta) I_k^{(i,j)} I_{k'}^{(i,j)} \right] \nonumber               \\
    = & \sum_{k, k' = 1}^{2^n} \EE \left[\cz (0,0; \, 1, * ; \, \beta) \bE_\beta^W \left[I_k^{(i,j)} I_{k'}^{(i,j)} \right] \right] .
  \end{align}
  We can thus argue like in~\eqref{E:akq-4.3-3-c}, resorting
  to~\eqref{E:fdd-test},~\eqref{E:mean-cz-heat}. We end up with
  \begin{align*}
    \E \left( \cz (0,0; \, 1,* ; \, \beta)   \bE_\beta^W \left[I_k^{(i,j)} I_{k'}^{(i,j)}\right]  \right )
    = & \bE_0 \Bigg[\left(\left( X_{\frac{k}{2^n}}^{(i)} - X_{\frac{k - 1}{2^n}}^{(i)} \right) \left( X_{\frac{k}{2^n}}^{(j)} - X_{\frac{k - 1}{2^n}}^{(j)} \right) - \frac{1}{2^n} \1_{(i = j)}\right) \\
      & \quad\ \times \left(\Big( X_{\frac{k'}{2^n}}^{(i)} - X_{\frac{k' - 1}{2^n}}^{(i)} \Big) \Big( X_{\frac{k'}{2^n}}^{(j)} - X_{\frac{k' - 1}{2^n}}^{(j)} \Big) - \frac{1}{2^n} \1_{(i = j)}\right)\Bigg] \\
    = & \frac{1}{2^{2n - 1}} \1_{(i = j)} \1_{(k = k')}.
  \end{align*}
  Reporting this identity into~\eqref{E:akq-4.4-d}, it is readily checked that
  \begin{align*}
    \E_{\beta} \left[\cz (0,0; \, 1, * ; \, \beta) \left|Y_n^{(i,j)}\right|^2 \right]
    =  \frac{1}{2^{n - 1}} \1_{(i = j)}.
  \end{align*}
  From this, applying Markov's inequality, we get for any $\epsilon > 0$,
  \[
    \PP_{\beta} \left( \cz (0,0; \, 1, * ; \, \beta) \left|Y_n^{(i,j)}\right|^2  > \epsilon \right) \le \frac{\1_{(i = j)}}{2^{n - 1}\,\epsilon} \, .
  \]
  This implies~\eqref{E:pre-bc}, which in turn yields~\eqref{E:akq-4.4-2} and
  then~\eqref{E:akq-4.4-1}, completing the proof of Theorem~\ref{T:polymer-quadratic-variation}.
\end{proof}

\subsection{Singularity of the polymer measure}

The results in Section~\ref{S:pathwise} tend to show that on $[0,1]$, the
behavior of $X$ is similar to the behavior of a standard Brownian motion. With
this information in mind, our next result is somewhat surprising. It states that
on any finite interval, the polymer measure $\bP_\beta^W$ is singular with
respect to the Wiener measure. Our main finding along that lines
(Theorem~\ref{T:singularity-criterion} below) corresponds to the results
of~\cite{alberts.khanin.ea:14:continuum}*{Section 4.4}. However, the adaptation
to our setting requires some substantial extra effort. One reason for this added
difficulty is that unlike in the white noise case, we do not assume any scaling
property of the noise. Instead, we will establish our main theorem under the
following condition.

\begin{condition}\label{Cond:k-t-to-infty}
  Assume that
  \begin{equation}\label{E:k-t-to-infty}
    \lim_{t \downarrow 0} k(t) = \infty \quad\text{or equivalently}\quad
    \widehat{f}\left(\R^d\right) = \infty,
  \end{equation}
  where recall that the covariance function $\Gamma$, the density $f$, and the
  spectral measure $\widehat{f}$ are introduced in Definition~\ref{D:Noise},
  and the function $k(\cdot)$ is defined in~\eqref{E:k_t}.
\end{condition}

Notice that the property of $f$ being nonnegative definite allows us to apply
the monotone convergence theorem to see that
\begin{align*}
  k (t) = \int_{\R^d} e^{-t|\xi|^2/2} \widehat{f}(\ud \xi) \to \widehat{f}\left(\R^d\right) \quad \text{as $t\downarrow 0$,}
\end{align*}
which explains the equivalence relation in~\eqref{E:k-t-to-infty}.

Our assumption~\eqref{E:k-t-to-infty} is expressed in terms of the Fourier
transform $\widehat{f}$. If one wishes to have a condition expressed in direct
modes, the following lemma provides equivalent criteria in the case where the
density $f$ is radial and nonincreasing in $|x|$.

\begin{lemma}\label{L:k-limit-radial-density}
  Suppose that the covariance function $\Gamma$ from Definition~\ref{D:Noise}
  has a density $f \ge 0$ that is radial; that is, there exists a function
  $\phi: [0,\infty) \to [0,\infty]$ such that $f(x) = \phi(|x|)$ for all $x \in
  \R^d$. Moreover, assume that $\phi$ is a nonincreasing function. Then
  \begin{equation}\label{E:kt-to-phi0}
    \lim_{t \downarrow 0} k(t) = \phi(0+) \in [0,\infty],
  \end{equation}
  where $\phi(0+) \coloneqq \lim_{r\downarrow 0}\phi(r)$ is allowed to be
  $+\infty$. In particular,
  \[
    \lim_{t \downarrow 0} k(t) = \infty
    \quad \Longleftrightarrow \quad
    \phi(0+) = \infty.
  \]
\end{lemma}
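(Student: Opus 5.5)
The plan is to work directly from the definition $k(t)=\int_{\R^d} p_t(x)\,\Gamma(\ud x)=\int_{\R^d} p_t(x)\,\phi(|x|)\,\ud x$ and to view $k(t)$ as an average of $\phi(|x|)$ against the Gaussian probability density $p_t$. As $t\downarrow 0$ this density concentrates at the origin, so $k(t)$ should approach the value of $\phi$ near $0$. Rescaling $x=\sqrt t\,z$ gives
\[
  k(t)=\int_{\R^d} p_1(z)\,\phi(\sqrt t\,|z|)\,\ud z .
\]
Since $\phi$ is nonincreasing, for each fixed $z\neq 0$ the integrand $\phi(\sqrt t|z|)$ increases as $t\downarrow 0$ and converges to $\phi(0+)\in[0,\infty]$. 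The monotone convergence theorem then gives $k(t)\uparrow \int p_1(z)\,\phi(0+)\,\ud z=\phi(0+)$, with the convention $0\cdot\infty=0$ only on the null set $\{z=0\}$. This proves~\eqref{E:kt-to-phi0}, and the final equivalence follows at once.

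Two points need to be checked along the way. First, the monotone convergence theorem is applied along an arbitrary decreasing sequence $t_n\downarrow 0$. Since $k$ is monotone in $t$, as already noted after~\eqref{E:k_t} and also evident from the rescaled formula, the limit along sequences coincides with $\lim_{t\downarrow0}k(t)$. Second, finiteness of $k(t)$ for $t>0$ is not needed for the argument, because monotone convergence holds in $[0,\infty]$. Even so, Dalang's condition implies that $k(t)<\infty$ for every $t>0$.

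The only mild subtlety is making sure that $k(t)$ as defined in~\eqref{E:k_t} through $\Gamma$ equals the density integral, which holds because $\Gamma(\ud x)=f(x)\,\ud x$. I would also note that the Fourier expression in~\eqref{E:k_t} is not needed here. No step is a serious obstacle. The one place requiring care is handling $\phi(0+)=\infty$, which monotone convergence covers directly, as opposed to dominated convergence, which would fail in that case.
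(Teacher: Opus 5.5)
Your proposal is correct and follows the paper's proof essentially line for line. Like the paper, you rescale to write $k(t)=\int_{\R^d} p_1(z)\,\phi(\sqrt{t}\,|z|)\,\ud z$ (the paper writes this as an expectation over a standard Gaussian vector) and then apply monotone convergence, using that $\phi$ is nonincreasing.
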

\begin{proof}
  A change of variable $y = x/\sqrt{t}$ and an application of the monotone
  convergence theorem yield
  \[
    k(t) = \int_{\R^d} f(x)\,p_t(x) \ud x = \E\left[\phi\left(\sqrt{t}\;|Z|\right)\right],
  \]
  where $Z \sim N(0,I_d)$. Notice that $\lim_{|x|\downarrow0}f(x) = \phi(0+)$.
  Since $t\downarrow 0$ implies $\sqrt{t}\,|Z|\downarrow 0$ almost surely and
  $\phi$ is nonincreasing, the random variables $\phi\left(\sqrt{t}\;|Z|\right)$
  increase to $\phi(0+)$ almost surely, and therefore the monotone convergence
  theorem gives~\eqref{E:kt-to-phi0}.
\end{proof}

\begin{example}
  Elaborating on Example~\ref{Ex:riesz-kernel}, consider the Riesz kernel $\Gamma (\ud
    x) = |x|^{- \gamma} \ud x$ with $\gamma \in (0, 2 \wedge d)$. We have seen
  that $k (t) = C_{d,\gamma} t^{- \gamma/2}$ in that case. Hence
  Condition~\ref{Cond:k-t-to-infty} is satisfied regardless of the value of
  $\gamma \in (0, 2 \wedge d)$.
\end{example}

We now turn to the singularity result.

\begin{theorem}\label{T:singularity-criterion}
  For $\beta > 0$, consider the quenched probability $\bP_\beta^W$ given in
  Definition~\ref{D:PolyMeasure}. Also recall that the annealed probability is
  $\PP_{\beta} = \PP \otimes \bP_\beta^W$, where $\PP$ is the probability from
  Definition~\ref{D:Noise}. Furthermore, the probabilities $\PP_0$ and $\bP_0$
  have been introduced in
  Definition~\ref{D:PolyMeasure}-\ref{I:PolyMeasure:i}-\ref{I:PolyMeasure:ii}. Then
  \begin{enumerate}[label=\textnormal{(\roman*)}]
    \item If Condition~\ref{Cond:k-t-to-infty} holds (equivalently,
      $\widehat{f}\left(\R^d\right) = \infty$), we have $\PP_{\beta} \perp
      \PP_0$ and, for $\PP$-almost every realization of $W$, $\bP_\beta^W \perp
      \bP_0$.
    \item If $\widehat{f}\left(\R^d\right) < \infty$ (equivalently,
      $\lim_{t\downarrow 0} k(t) < \infty$), then $\PP_{\beta} \sim \PP_0$ and,
      for $\PP$-almost every $W$, $\bP_\beta^W \sim \bP_0$.
  \end{enumerate}
  In particular, $\PP_{\beta} \perp \PP_0$ holds if and only if
  $\widehat{f}\left(\R^d\right) = \infty$.
\end{theorem}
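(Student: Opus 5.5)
The starting point is the Radon--Nikodym structure of $\PP_\beta$ relative to $\PP_0$. From \eqref{E:fdd-test}, Theorem~\ref{T:transition-properties}-\ref{T:transition-properties:center} and \ref{T:transition-properties:independence}, the annealed density on $\sigma(X_{t_1},\dots,X_{t_k})\otimes\mathcal{F}^W$ is explicit. For the dyadic partition $t_j=j2^{-n}$, set
\[
M_n(W,X) \coloneqq \prod_{j=0}^{2^n-1}\frac{\cz(t_j,X_{t_j};\,t_{j+1},X_{t_{j+1}};\,\beta)}{p_{2^{-n}}(X_{t_{j+1}}-X_{t_j})}.
\]
Then $\ud\PP_\beta/\ud\PP_0$ restricted to $\mathcal{G}_n\coloneqq\mathcal{F}^W\otimes\sigma(X_{t_j}, j\le 2^n)$ equals $M_n$; the terminal $\cz(\cdot;1,*)$ factor cancels against the normalization. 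By Theorem~\ref{T:transition-properties}-\ref{T:transition-properties:ck}, $(M_n)$ is a nonnegative $\PP_0$-martingale in the filtration $\mathcal{G}_n$, which increases to the full $\sigma$-field. The standard dichotomy then applies: $\PP_\beta\perp\PP_0$ iff $M_\infty=0$ $\PP_0$-a.s., and $\PP_\beta\ll\PP_0$ iff $(M_n)$ is uniformly integrable. Quenched statements follow by conditioning on $W$. The quenched analogue has density $M_n/\cz_1$ on the fibre, so $\bP_\beta^W\perp\bP_0$ iff $M_\infty(W,\cdot)=0$ $\bP_0$-a.s., and Fubini transfers the annealed conclusion to $\PP$-a.e.\ $W$. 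Equivalence requires $M_\infty>0$ a.s.\ in addition to uniform integrability.

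\textbf{Case (i).} I would show $M_n\to0$ in $\PP_0$-probability via a fractional moment, $\E_0[M_n^{1/2}]\to0$. Under $\PP_0$, $X$ is Brownian and independent of $W$. By Theorem~\ref{T:transition-properties}-\ref{T:transition-properties:independence} and \ref{T:transition-properties:homogeneity}, conditionally on $X$ the factors are independent. The $j$-th factor has the law of $\cz(0,0;2^{-n},0;\beta)/p_{2^{-n}}(0)$, and by scaling with no noise scaling assumption this equals a bridge-normalized partition function $\mathcal{Y}_n$ with mean $1$. Hence
\[
\E_0[M_n^{1/2}]=\big(\E[\mathcal{Y}_n^{1/2}]\big)^{2^n},
\]
and it suffices to show $1-\E[\mathcal{Y}_n^{1/2}]\gg 2^{-n}$. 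The second chaos of $\mathcal{Y}_n$ has variance
\[
\beta^2\int_0^{h}k\Big(\tfrac{2r(h-r)}{h}\Big)\ud r\asymp \beta^2\int_0^{h}k(r)\ud r=\beta^2h\,\bar k(h),\qquad h=2^{-n},
\]
where $\bar k(h)$ is the average of $k$ on $(0,h)$, which tends to $\infty$ under Condition~\ref{Cond:k-t-to-infty}. Thus $\Var(\mathcal{Y}_n)\ge c\,h\,\bar k(h)$, which is much larger than $h$.

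This is the main obstacle: converting the variance lower bound into $1-\E\sqrt{\mathcal{Y}_n}\ge c'\min(1,\Var\mathcal{Y}_n)$. For mean-one positive variables this is not automatic. I would use $1-\E\sqrt Y=\tfrac12\E(\sqrt Y-1)^2$ together with a truncation or hypercontractivity argument. By \eqref{E:moment}, $\mathcal{Y}_n$ has uniformly bounded moments of all orders, since that bound is uniform for $t-s\le1$. Then $\E(\sqrt Y-1)^2\ge c\,\E[(Y-1)^2\wedge 1]$, and Paley--Zygmund with the fourth moment, which is controlled by chaos hypercontractivity at small $\beta^2h\bar k(h)$, gives $\E[(Y-1)^2\wedge1]\gtrsim\Var Y$. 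The conclusion is then
\[
\E_0[M_n^{1/2}]\le \exp\big(-c\,2^n\min(1,h\bar k(h))\big)\to0.
\]

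\textbf{Case (ii).} Here $\widehat f(\R^d)<\infty$ means $k\le k(0)<\infty$. I would prove that $(M_n)$ is bounded in $L^2(\PP_0)$. Conditionally on $X$, the $L^2$ norm factorizes; a cleaner route is to compute directly that $\E_0[M_n^2]$ equals the Brownian-bridge replica overlap exponential, $\bE_0\big[\exp\big(\beta^2\int_0^1 k(0)\,\ud s\big)\big]\le e^{\beta^2k(0)}$. Indeed, with the two replicas along the same $X$, the chaos variances sum to an exponential of $\beta^2\int k\big(2r(h-r)/h\big)\,\ud r\le\beta^2 h\,k(0)$ per interval. This gives $\PP_\beta\ll\PP_0$. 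Positivity, $M_\infty>0$, I would obtain from a Kakutani-type product criterion using the same estimate: $\sum_j(1-\E\sqrt{\mathcal{Y}})$ stays bounded. Alternatively, I would take the smooth-noise Feynman--Kac representation (Proposition~\ref{P:same-def-polymer}), $M_\infty=\exp\big(\beta\int_0^1\dot W(s,X_s)\,\ud s-\tfrac{\beta^2}{2}k(0)\big)>0$. The quenched version follows by Fubini, which completes the iff.
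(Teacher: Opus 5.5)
Your case (i) is correct and takes a genuinely different route from the paper. By putting $X_1$ into the skeleton, you make all $2^n$ factors i.i.d.\ given $X$, so $\E_0[M_n^{1/2}]=(\E\sqrt{\mathcal Y_n})^{2^n}$ exactly. The identity $1-\E\sqrt{\mathcal Y_n}=\frac12\E(\sqrt{\mathcal Y_n}-1)^2\ge\frac1{18}\bigl(\Var\mathcal Y_n-\E(\mathcal Y_n-1)^4\bigr)$ then reduces everything to two inputs:
\begin{itemize}
\item the first-chaos (not second-chaos) variance $\beta^2h_*(h)\ge\beta^2h_1(h)/2$, from \eqref{E:hstar-ge-h1};
\item the hypercontractive bound $\E(\mathcal Y_n-1)^4\lesssim h_1(h)^2$, which is the paper's Step~5 estimate on $\Norm{A_n}_4$.
\end{itemize}
For the second input, since each $h_k$ is nondecreasing you have $h_k\le h_{k-1}h_1\le h_1^k$, which suffices, so Hypothesis~\ref{H:stren} is not needed. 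The paper instead bounds $\log M_n$ by $\sum_i\bigl(A_{n,i}-\frac12A_{n,i}^2\1_{(-1,0)}(A_{n,i})\bigr)$ and applies Chebyshev. That requires sign information on $A_n$ (Lemma~\ref{L:gauss-perturb} together with $\Var(A_n^{(2)})/\Var(A_n^{(1)})\to0$), variance bounds for $B_n$, and separate handling of the boundary term $\widetilde A_n$. Your Hellinger-affinity argument avoids all of this and gives the explicit rate $\E_0[M_n^{1/2}]\le\exp\bigl(-c\,2^nh_1(2^{-n})\bigr)$ for large $n$.

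There is one slip in your setup: nothing cancels. $\PP_\beta$ and $\PP_0$ have the same $W$-marginal, so the annealed density on your $\cg_n$ equals the quenched one, $M_n/\cz(0,0;1,*;\beta)$, as in \eqref{E:Y_n-RN}. Your $M_n$ is instead the density of the size-biased measure $\cz(0,0;1,*;\beta)\,\PP(\ud W)\,\bP_\beta^W(\ud X)$. That measure is equivalent to $\PP_\beta$, so the criteria you state remain valid.

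The step that fails as proposed is positivity of $M_\infty$ in case (ii) via a Kakutani-type criterion. Kakutani's dichotomy applies to products over a fixed sequence of independent factors, where $\{\prod_jY_j=0\}$ is a tail event. Here the partition is refined at each level, and $M_{n+1}/M_n$ contains the level-$n$ factors themselves. So there is no fixed product and no zero--one law. A bound $\inf_n\E_0\sqrt{M_n}>0$, or your $L^2$ bound, only gives $\E_0[M_\infty]=1$, hence $\PP_0(M_\infty>0)>0$, not $=1$.

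Your fallback is the right argument and is the paper's (Proposition~\ref{P:Y-finite-finite-mass}), but you must supply its key step. When $\widehat f(\R^d)<\infty$, the unmollified $\cz$ has the bridge representation \eqref{E:Singular-c}; Proposition~\ref{P:same-def-polymer} concerns the mollified noise and is not the relevant statement. By the Markov property of $\bP_0$, this gives $M_n=\E_{\PP_0}[\mathcal E(W,X)\mid\cg_n]$, with $\mathcal E$ from \eqref{E:Singular-a}. L\'evy's upward theorem then yields $M_n\to\mathcal E>0$ a.s.\ and in $L^1(\PP_0)$. This gives both $\PP_0\ll\PP_\beta$ and $\PP_\beta\ll\PP_0$ at once.

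Your $L^2$ computation remains a valid independent proof of $\PP_\beta\ll\PP_0$, but its justification is not an exponential of $\int k(2r(h-r)/h)\,\ud r$. Let $B^1,B^2$ be independent bridges. The $k$-th chaos of $\mathcal Y_n$ has variance $\beta^{2k}\int_{\cs_k((0,h])}\bE\bigl[\prod_{i=1}^kf(B^1_{s_i}-B^2_{s_i})\bigr]\ud\bs$. Since $0\le f\le k(0)$, this is at most $(\beta^2hk(0))^k/k!$.
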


\begin{proof}
  We work on the product measurable space $(\Omega \times C([0,1]; \R^d),
  \mathcal{F} \otimes \mathcal{G})$, where $(\Omega,\mathcal{F},\PP)$ is the
  probability space carrying the noise $W$ from Definition~\ref{D:Noise}, and
  $(C([0,1]; \R^d),\mathcal{G})$ is the canonical path space from
  Definition~\ref{D:PolyMeasure}. For each $n \ge 1$, we set
  \[
    \cg_n \coloneqq \mathcal{F} \otimes \cg_n^X, \quad \text{where}\quad \cg_n^X \coloneqq \sigma \left( X \left(\frac{k}{2^n}\right) : 0 \le k \le 2^n - 1 \right).
  \]
  Then $\{\cg_n\}_{n \ge 1}$ is an increasing family of sub-$\sigma$-fields of
  $\mathcal{F} \otimes \mathcal{G}$. Moreover, since $X$ has continuous paths
  and the dyadic times are dense in $[0,1]$, for every $t \in [0,1]$ there
  exists a sequence of dyadic times $(t_m)_{m \ge 1} \subset [0,1)$ such that
  $t_m \to t$ and hence $X_t = \lim_{m \to \infty} X_{t_m}$. This yields that
  \begin{align}\label{E:prf_Singular-a}
  \bigvee_{n \ge 1} \cg_n^X = \mathcal{G} , \quad \text{and} \quad \bigvee_{n \ge 1} \cg_n = \mathcal{F} \otimes \mathcal{G} .
    \end{align}  We now argue that the
  restrictions of the two measures $\PP_{\beta}$ and $\PP_0$ to $\cg_n$ are
  mutually absolutely continuous, namely,
  \begin{equation}\label{E:abs-cont}
    \PP_{\beta} \big|_{\cg_n} \sim \PP_0 \big|_{\cg_n}, \quad \text{for all $n \ge 1$.}
  \end{equation}
  In order to get a better understanding of this relation, applying~\eqref{E:polymer-one-time-density}
  and~\eqref{E:fdd-test} for $t \in (0,1)$ and $F \in C_b (\R^d)$ with $k = 1$,
  we can write
  \begin{align*}
    \bE_\beta^W [F(X_t)]
    = & \frac{1}{\cz(0,0; \, 1, *;\, \beta)} \, \int_{\R^{d}} \ud x \; F(x)  \cz \left( 0, 0; \, t, x ; \, \beta\right) \cz \left( t, x; \, 1, * ; \, \beta\right)                         \\
    = & \frac{1}{\cz(0,0; \, 1, *;\, \beta)} \, \int_{\R^{d}} \ud x \; F(x) p_t(x) \frac{\cz \left( 0, 0; \, t, x ; \, \beta\right) \cz \left( t, x; \, 1, * ; \, \beta\right) }{p_t (x)}.
  \end{align*}
  Since $p_t(x)$ is the density of $X_t$ under the measure $\bP_0$, we get
  \begin{align}\label{E:polymer-RN-t}
    \bE_\beta^W [F(X_t)] =
    \bE_0 \left[F(X_t) \, \frac{\cz \left( 0, 0; \, t, X_t ; \, \beta\right) \cz \left( t, X_t; \, 1, * ; \, \beta\right)}{\cz(0,0; \, 1, *;\, \beta) \, p_t(X_t)}\right],
  \end{align}
  and expression~\eqref{E:polymer-RN-t} also yields a formula for the density of
  $\mathcal{L} \left(X_t | \bP_\beta^W\right)$ with respect to $\bP_0$. We now
  carry out the analogous computation on the dyadic skeleton. Fix $n \ge 1$ and
  set $t_k \coloneqq k/2^n$ for $k = 0,1,\dots,2^n$. Let $F \in
  C_b\left(\R^{d(2^n - 1)}\right)$. Applying~\eqref{E:fdd-test} with $k = 2^n -
  1$, we obtain
  \begin{multline*}
    \bE_\beta^W \left[F\left(X_{t_1},\dots,X_{t_{2^n - 1}}\right)\right]
    =  \frac{1}{\cz(0,0; \, 1, *;\, \beta)} \int_{\R^{d(2^n - 1)}} \ud \mathbf{x}
    \; F(x_1,\dots,x_{2^n - 1})                                                                     \\
       \times \prod_{k = 0}^{2^n - 2} \cz \left(t_k, x_k; \, t_{k + 1}, x_{k + 1}; \, \beta\right)
    \cz \left(t_{2^n - 1}, x_{2^n - 1}; \, 1, *; \, \beta\right),
  \end{multline*}
  where we use the convention $x_0 \coloneqq 0$. On the other hand, under
  $\bP_0$ we have
  \[
    \cz(s, y \,  ; t, x \, ; 0) = p_{t - s} (x - y)  ,
  \]
  and $\cz(t,x;\,1,*;\,0)=1$. Hence
  \begin{align*}
    \bE_0 \left[F\left(X_{t_1},\dots,X_{t_{2^n - 1}}\right)\right]
    = \int_{\R^{d(2^n - 1)}} \ud \mathbf{x} \; F(x_1,\dots,x_{2^n - 1})
    \prod_{k = 0}^{2^n - 2} p_{2^{-n}}(x_{k + 1} - x_k).
  \end{align*}
  Therefore,
  \begin{align*}
    \bE_\beta^W \left[F\left(X_{t_1},\dots,X_{t_{2^n - 1}}\right)\right]
    = \bE_0 \left[F\left(X_{t_1},\dots,X_{t_{2^n - 1}}\right) \, Y_n(W,X)\right],
  \end{align*}
  where $Y_n(W,X)$ is given by~\eqref{E:Y_n-RN} below. This
  implies~\eqref{E:abs-cont} with the following Radon--Nikodym derivative:
  \begin{align}\label{E:Y_n-RN}
    Y_n (W, X)
    \coloneqq
    \left.\frac{\ud \PP_{\beta}}{\ud \PP_0 } \right|_{\cg_n} \!\!\left( W, X \right)
    = \frac{M_n (W, X ; \, \beta) }{\cz (0,0; \, 1,*; \, \beta)}.
  \end{align}
  where the random variables $M_n (W, X ; \, \beta)$ are given by
  \begin{align}\label{E:def-m_n}
    M_n (W, X ; \, \beta) \coloneqq \prod_{k = 1}^{2^n} Q_k , \quad \text{with}
    \  Q_k \coloneqq
    \begin{dcases}
      \frac{\cz \left( \alpha_{n,k - 1} (X); \, \alpha_{n, k} (X); \, \beta \right)}{p \left(\alpha_{n,k} (X) - \alpha_{n, k - 1} (X)\right)}, & 1 \leq k < 2^{n}, \\
      \cz \left( 1 - 2^{-n}, X_{1 - 2^{- n}}; \, 1, * ; \, \beta\right),                                                                       & k = 2^n.
    \end{dcases}
  \end{align}
  In order to interpret~\eqref{E:Y_n-RN}, set
  \[
    \mathcal{G}_n^X \coloneqq \sigma \left( X \left(\frac{k}{2^n}\right) : 0 \le k \le 2^n - 1 \right),
    \quad \text{so that } \cg_n = \mathcal{F} \otimes \mathcal{G}_n^X.
  \]
  Recall from Definition~\ref{D:PolyMeasure} (see also Remark~\ref{R:beta-0} for
  $\beta = 0$) that the annealed measures are product measures, namely,
  \begin{equation}\label{E:annealed-factorization}
    \PP_\beta(\ud W, \ud X) = \PP(\ud W)\,\bP_\beta^W(\ud X)
    \quad \text{and} \quad
    \PP_0(\ud W, \ud X) = \PP(\ud W)\,\bP_0(\ud X).
  \end{equation}
  By~\eqref{E:annealed-factorization}, it follows that for $\PP$-almost every
  $W$, the map $X \mapsto Y_n(W,X)$ is a version of the Radon--Nikodym
  derivative
  \[
    \left.\frac{\ud \bP_\beta^W}{\ud \bP_0 } \right|_{\mathcal{G}_n^X} \!\!(X).
  \]
  Moreover, in the expression~\eqref{E:def-m_n}, $\alpha_{n,k} \colon C ([0,1];
  \R^d) \to [0,1]\times \R^d$ is defined by
  \begin{equation}\label{E:alpha-nk}
    \alpha_{n,k}(X)
    = \left(\frac{k}{2^n}, X\left(\frac{k}{2^n}\right)\right),
    \quad \text{for integers $n \ge 1$ and $0 \le k \le 2^n$.}
  \end{equation}
  According to a general convergence result taken
  from~\cite{morters.peres:10:brownian}*{Theorem~12.32 on p.~357}, the following
  facts hold:
  \begin{enumerate}[label=\textnormal{(\roman*)}]

    \item The sequence $\{Y_n (W, X); n \ge 1\}$ in~\eqref{E:Y_n-RN} is a
      positive martingale with respect to the filtration $\{\cg_n ; n \ge 1\}$.
      Therefore, it converges $\PP_0$-a.s. to a random variable:
      \begin{equation}\label{E:Y-limit}
        Y (W, X)= \PP_0\text{ a.s}-\lim_{n\to\infty} Y_n  (W, X)
      \end{equation}

    \item \label{I:abs-cont:2} If $\PP_0 (Y (W, X) = 0) = 1$, then $\PP_\beta$
      and $\PP_0$ are singular.

    \item \label{I:abs-cont:3} If $\PP_0 (Y (W, X) > 0) = 1$, then $\PP_0 \ll
      \PP_\beta$.

    \item \label{I:abs-cont:4} If $\PP_{\beta} (Y (W, X) = \infty) = 0$, then
      $\PP_{\beta} \ll \PP_0$.

  \end{enumerate}
  \bigskip

  We now divide our study in two cases:

  \noindent\textit{Case 1: $\widehat{f}\left(\R^d\right) = \infty$.} Assume
  Condition~\ref{Cond:k-t-to-infty}. Recalling that $\cz (0,0; \, 1,*;\, \beta) >
  0$ by Theorem~\ref{T:transition-properties}-\ref{T:transition-properties:positive} and invoking
  Proposition~\ref{P:martingale-vanishing} below, we obtain that $M_n(W,X;\beta)\to 0$ and
  hence $Y (W, X) = 0$, $\PP_0$-a.s. Therefore, $\PP_\beta \perp \PP_0$
  by~\ref{I:abs-cont:2} above.

  It remains to prove the statement about the quenched singularity. Since
  $\PP_\beta \perp \PP_0$, there exists a set $A \in \mathcal{F} \otimes
  \mathcal{G}$ such that $\PP_\beta(A) = 1$ and $\PP_0(A) = 0$. For $W \in
  \Omega$, define the section $A_W \coloneqq \{X \in C([0,1]; \R^d) : (W,X) \in
  A\} \in \mathcal{G}$. Using~\eqref{E:annealed-factorization}, Fubini's
  theorem yields
  \[
    0 = \PP_0(A)     = \int_\Omega \bP_0(A_W)\,\PP(\ud       W), \qquad
    1 = \PP_\beta(A) = \int_\Omega \bP_\beta^W(A_W)\,\PP(\ud W).
  \]
  Therefore, there exists a set $\Omega_0 \subset \Omega$ with $\PP(\Omega_0) =
  1$ such that for every $W \in \Omega_0$ we have $\bP_0(A_W) = 0$ and
  $\bP_\beta^W(A_W) = 1$, which implies $\bP_\beta^W \perp \bP_0$. \bigskip

  \noindent\textit{Case 2: $\widehat{f}\left(\R^d\right) < \infty$.} In this
  case, $\widehat{f}\left(\R^d\right)=\lim_{t\downarrow 0}k(t)$ is finite (see
  Remark~\ref{R:f0-convention}).  In Proposition~\ref{P:Y-finite-finite-mass} below we will
  prove that in this context we have
  \begin{equation*}
    \PP_{0}\left(     0<Y(W,X) < \infty\right) =1 , \quad\text{and}\quad
    \PP_{\beta}\left( Y(W,X)   < \infty\right) =1 .
  \end{equation*}
  Hence resorting to properties~\ref{I:abs-cont:3}-\ref{I:abs-cont:4} above, we
  conclude that $\mathbb{P}_{\beta} \ll \mathbb{P}_0$, and therefore
  $\mathbb{P}_{\beta} \sim \mathbb{P}_0$. Going back
  to~\eqref{E:annealed-factorization}, this implies that for $\PP$-almost every
  $W$, the quenched measures satisfy $\bP_\beta^W \sim \bP_0$. This completes the
  proof of Theorem~\ref{T:singularity-criterion}.
\end{proof}

Our next proposition handles the martingale convergence necessary to establish
the singularity of $\bP_\beta^W$ and $\bP_0$.

\begin{proposition}\label{P:martingale-vanishing}
  Under Condition~\ref{Cond:k-t-to-infty}, for all $\beta > 0$, $M_n (W, X ; \,
  \beta)$, given by~\eqref{E:def-m_n}, satisfies
  \begin{align}\label{E:Mn-to-0}
    \lim_{n \to \infty} M_n (W, X; \, \beta) = 0, \quad \text{$\PP_0$-almost surely.}
  \end{align}
\end{proposition}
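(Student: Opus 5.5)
Under $\PP_0 = \PP\otimes\bP_0$, the noise $W$ and the path $X$ are independent and $X$ is a standard Brownian motion. Following the AKQ strategy, I would show that $\E_0[(M_n)^{\theta}]\to 0$ for some fixed $\theta\in(0,1)$, say $\theta=\frac12$. Since $M_n\ge 0$, this gives $M_n\to 0$ in $\PP_0$-probability. Almost sure convergence then follows: $M_n = Y_n\,\cz(0,0;\,1,*;\,\beta)$, the sequence $Y_n$ is a positive $\PP_0$-martingale converging almost surely, and $\cz(0,0;\,1,*;\,\beta)$ does not depend on $n$, so $M_n$ converges almost surely and its limit must be $0$.

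\emph{Step 1: factorization.} Condition on $X$. By Theorem~\ref{T:transition-properties}-\ref{T:transition-properties:independence}, the factors $Q_1,\dots,Q_{2^n}$ in \eqref{E:def-m_n} are conditionally independent, since they involve disjoint time intervals $((k-1)2^{-n},k2^{-n}]$. Hence
\[
\E\big[M_n^{\theta}\,\big|\,X\big]=\prod_{k=1}^{2^n}\E\big[Q_k^{\theta}\,\big|\,X\big].
\]
By stationarity and homogeneity (Theorem~\ref{T:transition-properties}-\ref{T:transition-properties:stationarity},\ref{T:transition-properties:homogeneity}), for $k<2^n$ the law of $Q_k$ equals that of $\cz(0,0;\,2^{-n},0;\,\beta)/p_{2^{-n}}(0)$, whatever the values of $X$. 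This gives a deterministic number $a_n(\theta)\coloneqq \E[(\cz(0,0;\,2^{-n},0;\,\beta)/p_{2^{-n}}(0))^\theta]\le 1$. The last factor satisfies $\E[Q_{2^n}^\theta]\le (\E Q_{2^n})^\theta=1$. Therefore
\[
\E_0[M_n^\theta]\le a_n(\theta)^{2^n-1}.
\]
It suffices to show $2^n(1-a_n(\theta))\to\infty$, i.e. $1-a_n(\theta)\gg 2^{-n}$.

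\emph{Step 2: small-time expansion of the normalized point-to-point partition function.} Set $\varepsilon=2^{-n}$ and $V_\varepsilon\coloneqq \cz(0,0;\,\varepsilon,0;\,\beta)/p_\varepsilon(0)$, so that $V_\varepsilon = 1+\beta U_\varepsilon + R_\varepsilon$. Here $U_\varepsilon$ is the first chaos, which by the computation \eqref{E:chaos-1-bridge} has variance
\[
\sigma_\varepsilon^2=\int_0^\varepsilon k\Big(\tfrac{2s(\varepsilon-s)}{\varepsilon}\Big)\,\ud s
=\varepsilon\int_0^1 k(2\varepsilon u(1-u))\,\ud u .
\]
Under Condition~\ref{Cond:k-t-to-infty}, monotone convergence gives $\sigma_\varepsilon^2/\varepsilon\to\infty$. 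The higher chaoses have variances given by iterated bridge integrals, analogous to $h_m$.

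I would split into two regimes.
\begin{itemize}
\item[(a)] If $\Var(V_\varepsilon)$ stays small, use the elementary inequality $1-\E[V^\theta]\ge c_\theta\,\E[(V-1)^2\wedge |V-1|]$ for $\E V=1$, which follows from the strict concavity of $x\mapsto x^\theta$. A fourth-moment (hypercontractivity) control of $V_\varepsilon-1$ would then give $1-a_n\ge c\,\Var(V_\varepsilon)\ge c\beta^2\sigma_\varepsilon^2$, since the chaoses are orthogonal. This bound is $\gg\varepsilon$.
\item[(b)] If $\Var(V_\varepsilon)$ does not vanish, then $1-a_n$ is bounded below by a constant, by uniform integrability and the fact that $V_\varepsilon$ is not concentrated at $1$. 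This is even better.
\end{itemize}

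The main obstacle is Step 2(a): turning the divergence of $\sigma_\varepsilon^2/\varepsilon$ into a quantitative lower bound on $1-\E[V_\varepsilon^\theta]$. This requires controlling the non-Gaussian remainder and uniform-in-$\varepsilon$ hypercontractive bounds, since Dalang's condition alone gives no rate. What I would try first is to avoid expansions altogether. Fix $\theta=\frac12$ and use
\[
1-\E\sqrt V=\tfrac12\,\E\big[(\sqrt V-1)^2\big]
\quad\text{(valid because }\E V=1\text{)},
\]
together with $(\sqrt V-1)^2\ge (V-1)^2/(1+\sqrt V)^2$. Then restrict to the event $\{V\le 4\}$, on which $(1+\sqrt V)^2\le 9$, and bound $\E[(V-1)^2\1_{V>4}]$ by Cauchy--Schwarz using the moment bound \eqref{E:moment} (rescaled to small times) and Chebyshev. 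This yields $1-a_n\ge c\min(\Var V_\varepsilon,1)$, and both regimes of Step 2 are then handled at once.
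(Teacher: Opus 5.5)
Your overall route is viable and genuinely different from the paper's. The paper bounds $\log M_n$ through $\log(1+x)\le x-\frac12x^2\1_{(-1,0)}(x)$, uses Lemma~\ref{L:gauss-perturb} for the mean and a separate variance bound, and then applies Chebyshev. Your Step~1, the almost sure upgrade, and the identity $1-\E\sqrt V=\frac12\E(\sqrt V-1)^2$ are all fine, and everything correctly reduces to showing $2^n(1-\E\sqrt{V_\varepsilon})\to\infty$ with $\varepsilon=2^{-n}$.

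The gap is the final claim $1-a_n\ge c\min(\Var V_\varepsilon,1)$, which does not follow from what you propose. Your truncation actually yields
\[
1-\E\sqrt{V_\varepsilon}\ \ge\ \tfrac1{18}\Big(\Var V_\varepsilon-\E\big[(V_\varepsilon-1)^2\1_{\{V_\varepsilon>4\}}\big]\Big)\ \ge\ \tfrac1{18}\Big(\Var V_\varepsilon-\tfrac13\|V_\varepsilon-1\|_4^2\,(\Var V_\varepsilon)^{1/2}\Big).
\]
Here \eqref{E:moment} only gives $\|V_\varepsilon-1\|_4\le C$ uniformly in $\varepsilon$. But $\Var V_\varepsilon\to0$: by \eqref{E:vargk-bound}, $\Var V_\varepsilon\le\sum_{k\ge1}(2\beta^2)^kh_k(\varepsilon)$, and $h_k\le h_1^k$ with $h_1(\varepsilon)\to0$ under Dalang's condition. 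So your regime (b) never occurs. The error term, of order $(\Var V_\varepsilon)^{1/2}$, swamps the main term, and the lower bound is negative.

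No choice of truncation level or H\"older exponents fixes this. Uniform absolute moment bounds plus Chebyshev always leave an error $(\Var V_\varepsilon)^{\kappa}$ with $\kappa<1$. Indeed, mean-one variables with bounded fourth moments can have $\E(\sqrt V-1)^2\ll\Var V$ when the variance is carried by rare large values.

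The missing ingredient is a relative comparison $\|V_\varepsilon-1\|_4\le K\|V_\varepsilon-1\|_2$, uniform for small $\varepsilon$. This is exactly the chaos/hypercontractivity control you wanted to avoid, and it is obtained as in Step~5 of the paper's proof:
\begin{itemize}
\item Hypercontractivity in each chaos and \eqref{E:vargk-bound} give $\|V_\varepsilon-1\|_4\le\sum_{k\ge1}(3\beta)^k(2^kh_k(\varepsilon))^{1/2}\le C\sqrt{h_1(\varepsilon)}$ for small $\varepsilon$. This uses Corollary~\ref{C:hn-via-h1}, or alternatively $h_k\le h_1^k$, which follows from the monotonicity of $h_{k-1}$.
\item Orthogonality of chaoses and \eqref{E:hstar-ge-h1} give $\Var V_\varepsilon\ge\beta^2h_*(\varepsilon)\ge\frac{\beta^2}{2}h_1(\varepsilon)$.
\end{itemize}
The error term is then $O((\Var V_\varepsilon)^{3/2})=o(\Var V_\varepsilon)$. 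Hence $1-a_n\ge c\beta^2h_*(2^{-n})$, and $2^n(1-a_n)\to\infty$ by \eqref{E:hstar-over-t-infty}.

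With this patch your fractional-moment argument closes, and it is somewhat leaner than the paper's. It needs no Gaussian-dominance statement like Lemma~\ref{L:gauss-perturb} and no variance bounds on $B_n,\widetilde B_n$. It uses only orthogonality of chaoses and the $L^4$--$L^2$ comparison.
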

\begin{proof}
  This proposition generalizes
  \cite{alberts.khanin.ea:14:continuum}*{Theorem~4.5}, and its proof follows the same
  strategy to that employed therein. In the sequel, we thus focus on the
  nontrivial adaptation to our current noise model. We now divide this proof in
  several steps. Throughout this proof, all probabilities and expectations are
  taken with respect to $\PP_0$ on $\left(\Omega \times C([0,1];\R^d),
  \mathcal{F} \otimes \mathcal{G}\right)$.

  \noindent\textit{Step~1: Reduction to a series convergence.} Our first step is
  to observe (using the same argument as in the proof of
  Theorem~\ref{T:singularity-criterion}) that $M_n$ is a positive martingale. Therefore, it
  converges almost surely. In order to prove~\eqref{E:Mn-to-0}, it is thus
  sufficient to analyze the convergence in probability. Otherwise stated,
  integrating now over the randomness in $W$, we are reduced to prove that for
  all $\delta \in (0, 1)$, we have
  \begin{align*}
    \lim_{n \to \infty} \PP_0 \left(M_n (W, X; \, \beta) > \delta\right)  = 0,
  \end{align*}
  or equivalently,
  \begin{align}\label{E:lim-m_n}
    \lim_{n \to \infty} \PP_0 \bigg(\log \left(M_n (W, X; \, \beta)\right) > \log (\delta)\bigg)  = 0 .
  \end{align}
  In order to prove~\eqref{E:lim-m_n}, let us consider the blocks $Q_k$
  in~\eqref{E:def-m_n}, and recall the notation $\alpha_{n,k}$
  from~\eqref{E:alpha-nk}. Using Theorem~\ref{T:transition-properties}-\ref{T:transition-properties:independence},
  and conditioning now on $X$, we see that $\left\{ Q_k; 1 \le k \le 2^n
  \right\}$ is a family of independent random variables. Furthermore, owing to
  Theorem~\ref{T:transition-properties}-\ref{T:transition-properties:stationarity} and~\ref{T:transition-properties:homogeneity}, we
  have
  \begin{subequations}\label{E:Q-distr}
    \begin{equation}\label{E:Qk-distr}
      Q_k
      = \frac{\cz \left( \alpha_{n,k - 1} (X); \, \alpha_{n, k} (X); \, \beta \right)}{p \left(\alpha_{n,k} (X) - \alpha_{n, k - 1} (X)\right)}
      \overset{(d)}{ = } \frac{\cz \big(0 , 0; \, 2^{- n}, 0; \, \beta\big)}{p \big(2^{- n}, 0 \big)},
      \quad \text{for $1 \le k < 2^n$},
    \end{equation}
    while for $k = 2^n$ we get
    \begin{equation}\label{E:Qn-distr}
      Q_{2^n}
      = \cz \left( 1 - 2^{-n}, X_{1 - 2^{- n}}; \, 1, * ; \, \beta\right)
      \overset{(d)}{ = } \cz \big(0 , 0; \, 2^{- n}, *; \, \beta\big) .
    \end{equation}
  \end{subequations}

  Plugging~\eqref{E:Q-distr} into~\eqref{E:def-m_n}, then~\eqref{E:lim-m_n}, we
  obtain
  \begin{align}\label{E:logMn-sum}
    \log \left(M_n (W, X; \, \beta)\right) \overset{(d)}{=} \sum_{i = 1}^{2^n} \log \left( 1 +A_{n, i}\right),
  \end{align}
  where, for each fixed $n$, the variables $\{A_{n, i}\}_{1 \le i < 2^n}$ are
  i.i.d. copies of the random variable
  \begin{align}\label{E:chaos-an}
    A_n \coloneqq \frac{\cz \big(0 , 0; \, 2^{- n}, 0; \, \beta\big)}{p_{2^{- n}} (0)} - 1
    =  \sum_{k=1}^{\infty}\frac{ \beta^k I_k(g_k(\cdot \, ; 0 , 0; \, 2^{- n}, 0)) }{p_{2^{- n}} (0)}  .
  \end{align}
  Note that in~\eqref{E:chaos-an} above, the last identity is a direct
  consequence of the chaos expansion~\eqref{E:Chaos-Exp-Z} and
  equation~\eqref{E:mean-cz-heat}. Moreover, for $i = 2^{n}$
  in~\eqref{E:logMn-sum}, we get a slightly different expression. Namely in this
  case $A_{n,2^n}$ is independent of $\{A_{n, i}\}_{1 \le i < 2^n}$ and is
  identical in distribution to
  \begin{align}\label{E:chaos-Atilde}
    \widetilde{A}_n \coloneqq \cz \big(0 , 0; \, 2^{- n}, *; \, \beta\big) - 1 = \sum_{k=1}^{\infty}  \beta^k I_k(g_k(\cdot \, ; 0 , 0; \, 2^{- n}, *))  ,
  \end{align}
  where the functions $g_k$ with a $*$ as the last argument are given by
  (see~\eqref{E:gk}):
  \begin{align}\label{E:gk-star}
    \begin{aligned}
      g_k (\mathbf{s}_k, \mathbf{y}_k \, ; s , y; \, t, *)
      \coloneqq & \int_{\R^d} g_k(\cdot \, ; s , y; \, t, x) \ud x                                                                                            \\
      =         & p_{s_{k} - s_{k - 1}} (y_k - y_{k - 1}) \cdots p_{s_2 - s_1} (y_2 - y_1)  p_{s_1 - s} (y_1 - y) \mathbf{1}_{\cs_k ((s,t])} (\mathbf{s}_k) .
    \end{aligned}
  \end{align}
  As the reader might have observed, the expressions for $i = 2^n$ are similar
  to those for $i=1,\ldots,2^{n}-1$, but with the boundary variable
  $\widetilde{A}_n$. In the sequel, we keep this term explicitly by setting
  $A_{n,2^n}\coloneqq \widetilde{A}_n$ (and later
  $B_{n,2^n}\coloneqq \widetilde{B}_n$), while
  $\{A_{n,i}\}_{1\le i<2^n}$ remain i.i.d. copies of $A_n$. All estimates below
  are then written with the decomposition into the $(2^n-1)$ interior terms plus
  this boundary term; see, for example,~\eqref{E:prob-logMn-cheb}, where both
  $\widetilde{B}_n$ and $B_n$ appear explicitly. \smallskip

  \noindent\textit{Step~2: Strategy.} Having~\eqref{E:lim-m_n}
  and~\eqref{E:logMn-sum} in mind, we want to prove that, as $n \to \infty$, it
  holds that
  \begin{align}\label{E:lim-R_n}
    R_n \coloneqq \sum_{i = 1}^{2^n} \log \left(1 + A_{n,i}\right) \longrightarrow - \infty, \quad \text{in probability.}
  \end{align}
  To this aim, we shall approximate the logarithmic function and use the
  following relation (made rigorous in Step~4) for $R_n$:
  \begin{align}\label{E:approx-R_n}
    R_n \approx \sum_{i = 1}^{2^n} \left(A_{n, i} - \frac{1}{2} A_{n, i}^2\right)~\eqqcolon  \sum_{i = 1}^{2^n} \widehat{B}_{n, i} .
  \end{align}
  We now want to prove that each term $\widehat{B}_{n, i}$
  in~\eqref{E:approx-R_n} contributes to a small negative drift. For this, a
  first order analysis consists in looking at $\E [\widehat{B}_{n, i}]$ and
  $\Var (\widehat{B}_{n, i})$. Let us discuss $\E [\widehat{B}_{n, i}]$ first.
  Specifically, recall from~\eqref{E:chaos-an} and~\eqref{E:chaos-Atilde} that
  we have
  \begin{align}\label{E:A-n-i-chaos}
    \begin{aligned}
      A_{n, 2^n} & \overset{(d)}{=} \sum_{k = 1}^{\infty} \beta^k I_k \left( g_k (\cdot \, ; 0 , 0; \, 2^{- n}, *)\right)  , \quad \text{and},                                         \\
      A_{n, i}   & \overset{(d)}{=} \sum_{k = 1}^{\infty} \beta^k \frac{I_k \left( g_k (\cdot \, ; 0 , 0; \, 2^{- n}, 0)\right) }{p_{2^{- n}} (0)} , \quad \text{for $1 \le i < 2^n$}.
    \end{aligned}
  \end{align}
  It is thus clear, from basic properties of chaos expansions, that $\E
  [A_{n,i}] = 0$. Therefore, we get
  \begin{align*}
    \E \left[\widehat{B}_{n, i}\right] = \E [A_{n, i}] - \frac{1}{2} \E \left[A_{n,i}^2\right] = - \frac{1}{2} \E \left[A_{n,i}^2\right].
  \end{align*}
  Now an important step is that on small scales (here meaning $2^{-n}$ small),
  the dominant term on the right-hand side of~\eqref{E:A-n-i-chaos} is the
  Gaussian term (which corresponds to $k = 1$). We will thus see in Step~4 that
  \begin{align}
    - \E \left[A_{n,i}^2\right] \approx - \frac{\beta^{2}}{p_{2^{- n}} (0)}
    \E \left\{\left[I_1 \left( g_1 (\cdot \, ; 0 , 0; \, 2^{- n}, 0)\right)\right]^2\right\} .
  \end{align}
  Then, some considerations similar to those of Lemma~\ref{L:holder} yield
  \[
    - \E \left[A_{n,i}^2\right] \lesssim - \beta^2 h_* (2^{-n}),
  \]
  where the function $h_*$ is introduced in~\eqref{E:h-star} below. The growth
  of $h_*$ (more precisely,~\eqref{E:hstar-over-t-infty}) is then enough to
  ensure that the cumulative drift diverges to $-\infty$:
  \begin{align}\label{E:EAni2-to-infty}
    - \sum_{i = 1}^{2^n} \E \left[A_{n,i}^2\right] \to - \infty.
  \end{align}
  This will be made rigorous in Steps~3--5, via~\eqref{E:B_n-mean}
  and~\eqref{E:hstar-over-t-infty}. Moreover, thanks to an extra effort
  analyzing variances, one can plug~\eqref{E:EAni2-to-infty}
  into~\eqref{E:approx-R_n} and obtain relation~\eqref{E:lim-R_n}. This will
  yield our claim. We now proceed to a rigorous version of this proof. \medskip

  \noindent\textit{Step~3: Reduction to a moment computation.} The logarithmic
  function in~\eqref{E:logMn-sum} is not easy to handle. We will thus replace it
  by a piecewise polynomial function. Specifically, one can check the following
  inequality by means of elementary computations:
  \begin{align}\label{E:ele-log}
    \log (1 + x) \le x - \frac{1}{2} x^2 \1_{(-1, 0)} (x), \quad \text{for all } x \in (-1, \infty).
  \end{align}
  With this inequality in mind, we now set
  \begin{align}\label{E:def_bn-an}
    B_n \coloneqq A_n - \frac{1}{2} A_n^2  \1_{(-1, 0)} \left( A_n \right), \quad
    \widetilde{B}_n \coloneqq \widetilde{A}_n - \frac{1}{2} \widetilde{A}_n^2 \1_{(-1, 0)} \big( \widetilde{A}_n \big),
  \end{align}
  and similarly, $B_{n, i}$ are defined with $A_n$ replaced by $A_{n, i}$.
  Resorting to~\eqref{E:ele-log} and the fact (taken from
  Theorem~\ref{T:transition-properties}-\ref{T:transition-properties:positive}) that $A_{n, i} > -1$ almost
  surely, we have $B_{n, i} \ge \log (1 + A_{n,i})$. This yields
  \begin{align}\label{E:prob-logMn-bound}
    \PP_0 \bigg(\log \left(M_n (W, X; \, \beta)\right) > \log (\delta) \bigg)
    =   & \PP_0 \left(\sum_{i = 1}^{2^n} \log \left( 1 +A_{n, i}\right) > \log (\delta)\right) \nonumber        \\
    \le & \PP_0 \left(\sum_{i = 1}^{2^n} B_{n,i} > \log (\delta)\right) \quad \text{for all } \delta \in (0,1).
  \end{align}
  Now fix an arbitrary $\delta \in (0,1)$. With
  inequality~\eqref{E:prob-logMn-bound} in hand, we now claim that the limit
  expression in~\eqref{E:lim-m_n} holds, if there exist universal positive
  constants $N$ and $C_{\beta}$ independent of $n$, such that for all $n \ge N$,
  \begin{subequations}\label{E:B_n}
    \begin{align}\label{E:B_n-mean}
      \E \left[ B_{n} \right] \le  -  C_\beta h_* (2^{-n}) , \qquad
      \E \big[ \widetilde{B}_n \big] \le  -  C_\beta h_* (2^{-n}),
    \end{align}
    and
    \begin{align}\label{E:B_n-var}
      \Var \left( B_{n} \right) \le C_\beta h_* (2^{-n}), \qquad
      \Var \big( \widetilde{B}_n \big) \le   C_\beta h_* (2^{-n}),
    \end{align}
  \end{subequations}
  where, with $k$ defined in~\eqref{E:k_t}, the function $h_* \colon \R_+ \to
  \R$ is given by
  \begin{align}\label{E:h-star}
    h_* (t) \coloneqq \int_0^{t} k \left(\frac{2 s (t - s)}{t}\right) \ud s.
  \end{align}
  Indeed, combining~\eqref{E:prob-logMn-bound}, Chebyshev's inequality and the
  independence of the random variables $B_{n,i}$, we conclude that
  \begin{align*}
    \MoveEqLeft
    \PP_0 \bigg(\log \left(M_n (W, X; \, \beta)\right) > \log (\delta) \bigg) \le  \PP_0 \left(\sum_{i = 1}^{2^n} B_{n,i} > \log (\delta)\right)                        \\
    =   & \PP_0 \left(\sum_{i = 1}^{2^n} B_{n,i} - \sum_{i = 1}^{2^n}  \E \left[B_{n,i} \right] > \log (\delta) -  \sum_{i = 1}^{2^n}  \E \left[B_{n,i} \right] \right) \\
    \le & \frac{(2^n - 1) \Var (B_n) + \Var (\widetilde{B}_n)}{\left(\log (\delta) -  (2^n - 1)  \E \left[B_{n} \right] - \E [\widetilde{B}_n ]\right)^2} .
  \end{align*}
  Therefore, invoking~\eqref{E:B_n-mean}-\eqref{E:B_n-var}, we get
  \begin{align}\label{E:prob-logMn-cheb}
    \PP_0 \bigg(\log \left(M_n (W, X; \, \beta)\right) > \log (\delta) \bigg)
    \le \frac{C_{\beta} 2^{n}  h_* (2^{-n})}{\left( C_{\beta} 2^{n}  h_* (2^{-n}) - |\log (\delta)|\right)^2} .
  \end{align}
  In addition, we will see that under Condition~\ref{Cond:k-t-to-infty}, we have
  \begin{align}\label{E:hstar-over-t-infty}
    \lim_{t \downarrow 0} \frac{1}{t} h_* (t) = \lim_{t \downarrow 0} \frac{1}{t} h_1 (t) = \infty.
  \end{align}
  Reporting this information into~\eqref{E:prob-logMn-cheb}, we get
  \[
    \PP_0 \bigg(\log \left(M_n (W, X; \, \beta)\right) > \log (\delta) \bigg)
    = O\left( \frac{1}{2^n h_*(2^{-n})} \right).
  \]
  The above identity easily entails~\eqref{E:lim-m_n}, and therefore our main
  claim~\eqref{E:Mn-to-0}.

  Let us now handle our assertion~\eqref{E:hstar-over-t-infty}. Starting from the
  definition~\eqref{E:h-star} of $h_*$, by symmetry we have
  \begin{align}\label{E:hstar-over-t}
    \frac{h_* (t)}{t} = \frac{2}{t} \int_0^{\frac{t}{2}} k \left( \frac{2 s (t - s)}{t}\right) \ud s.
  \end{align}
  Next, performing the change of variable $s / t \mapsto u$, we get
  \begin{align}\label{E:hstar-over-t-lower}
    \frac{h_* (t)}{t} = 2 \int_0^{\frac{1}{2}} k \left( t u (1 - u) \right) \ud u \ge \inf_{s \in (0,t)} k (s).
  \end{align}
  In addition, under Condition~\ref{Cond:k-t-to-infty}, we have
  \[
    \lim_{t \downarrow 0} \inf_{s \in (0,t)} k (s) = \infty.
  \]
  Going back to~\eqref{E:hstar-over-t-lower}, this proves
  relation~\eqref{E:hstar-over-t-infty}, thereby completing the proof of
  Proposition~\ref{P:martingale-vanishing}. It remains to verify
  inequalities~\eqref{E:B_n-mean} and~\eqref{E:B_n-var}. \smallskip

  \noindent\textit{Step~4: Proof of inequality~\eqref{E:B_n-mean} for $B_n$.}
  Recall that $A_n$ and $B_n$ are defined by~\eqref{E:chaos-an}
  and~\eqref{E:def_bn-an}, respectively. Then, it is readily checked from
  Theorem~\ref{T:transition-properties}-\ref{T:transition-properties:center} that $A_n$ is a centered random
  variable. Thus,
  \begin{equation}\label{E:EBn}
    \E [B_n] = - \frac{1}{2} \E \left[A_n^2 \1_{(-1, 0)} (A_n)\right].
  \end{equation}
  Next we decompose $A_n$ into
  \begin{align}\label{E:A-decomp}
    A_n = A_n^{(1)} + A_n^{(2)},
  \end{align}
  where $A_n^{(1)}$ is the Gaussian part of the expansion, and $A_n^{(2)}$ has
  to be thought of as a remainder:
  \[
    A_n^{(1)} \coloneqq  \beta \frac{I_1 (g_1 (\cdot \, ; 0 , 0; \, 2^{- n}, 0))}{p_{2^{-n}} (0)} ,
    \quad \text{and} \quad
    A_n^{(2)} \coloneqq  \frac{ 1}{p_{2^{- n}} (0)} \sum_{k=2}^{\infty}  \beta^k I_k \left( g_k (\cdot \, ; 0 , 0; \, 2^{- n}, 0)\right).
  \]
  Let us first analyze the Gaussian part $A_n^{(1)}$. One can argue as
  in~\eqref{E:I1-over-p1} and~\eqref{E:chaos-1-bridge} with the unit time
  replaced by $2^{-n}$ to deduce that
  \[
    A_n^{(1)} \overset{(d)}{=} \beta \int_0^{2^{-n}} \int_{\R^d} p_{s (1 - 2^n s)} (y) W(\ud s, \ud y).
  \]
  Hence, computing the variance of this Wiener integral according
  to~\eqref{E:W-Density}, we conclude that
  \begin{align}\label{E:A1-gaus}
    A_n^{(1)} \sim \mathcal{N} \left(0, \sigma_n^2\right) \quad \text{with}\quad
    \sigma_n^2 = \beta^2 \int_0^{2^{-n}}  k \left( 2 s \left( 1 - 2^n s\right) \right) \ud s = \beta^2 h_*(2^{-n}).
  \end{align}
  Moreover, by Theorem~\ref{T:transition-properties}-\ref{T:transition-properties:positive}, we have
  $A_n > -1$ almost surely, and therefore
  \[
    \1_{(-1,0)}(A_n)=\1_{(-\infty,0)}(A_n)\quad\text{almost surely.}
  \]
  Consequently, inequality~\eqref{E:B_n-mean} follows directly from
  Lemma~\ref{L:gauss-perturb} below, provided that the following limit result is
  established:
  \begin{align}\label{E:varA2-over-varA1}
    \lim_{n \to \infty} \frac{ {\rm Var } \big(A_n^{(2)}\big) }{{\rm Var } \big( A_n^{(1)} \big) } = 0.
  \end{align}

  In order to prove~\eqref{E:varA2-over-varA1}, we will extend the definition of
  $A_n^{(2)}$ from the specific time $t = 2^{-n}$ to an arbitrary time $t > 0$.
  That is, we set
  \begin{align}\label{E:g_beta}
    g_{\beta} (t)
    \coloneqq \Var \left( \sum_{k=2}^{\infty} \frac{ \beta^k I_k(g_k(\cdot \, ; 0 , 0; \, t, 0)) }{p_t(0)} \right)
    = \frac{1}{p_t(0)^2} \sum_{k=2}^{\infty} \beta^{2k}  \Var \left( I_k(g_k(\cdot \, ; 0 , 0; \, t, 0)) \right) ,
  \end{align}
  where the equality is due to the orthogonality of multiple integrals. Thus, to
  prove inequality~\eqref{E:varA2-over-varA1}, it suffices to show that
  \begin{align}\label{E:gbeta-over-hstar}
    \lim_{t \downarrow 0} \frac{g_{\beta} (t)}{ \beta^2 h_* (t)} = 0.
  \end{align}
  First, by employing the first display in the proof
  of~\cite{balan.chen:18:parabolic}*{Theorem~1.1 on page~2238}, we can write
  \begin{align}\label{E:vargk-bound}
    \frac{1}{p_t(0)^2} \Var \left( I_k(g_k(\cdot \, ; 0 , 0; \, t, 0)) \right) \le 2^k h_k (t),
  \end{align}
  where $h_k$ is given by~\eqref{E:hn-recursion}. Injecting this relation
  into~\eqref{E:g_beta} and invoking Corollary~\ref{C:hn-via-h1}, there exists a
  constant $C_{\eta} > 0$ such that
  \begin{equation}\label{E:gbeta-bound}
    g_{\beta} (t)
    \le h_1 (t) \times \sum_{k = 2}^{\infty} \beta^{2k} \frac{C_{\eta}^{k - 1} t^{(k - 1) \eta} }{\Gamma ((k - 1) \eta + 1 )}
    = h_1 (t) \beta^4 C_{\eta}\, t^{\eta} \times E_{\eta, 1-\eta} \left( \beta^2 C_{\eta}^{1/\eta} t^{\eta} \right),
  \end{equation}
  where $E_{\eta, \gamma} (\cdot)$ is the two-parameter Mittag-Leffler function
  defined by~\eqref{E:mittag-leffler}. On the other hand, recall
  from~\eqref{E:hstar-over-t} that
  \[
    h_* (t) = 2 \int_0^{t/2} k \left(\frac{2 s (t - s)}{t}\right) \ud s.
  \]
  Define $\psi_t(s)\coloneqq 2 s (t-s)/t$ on $[0,t/2]$. Since $\psi_t$ is
  strictly increasing from $0$ to $t/2$, we can set $u=\psi_t(s)$ and use the
  inverse map
  \[
    \psi_t^{-1}(u)= \frac{t-\sqrt{t(t-2u)}}{2}, \qquad
    \left(\psi_t^{-1}\right)'(u)= \frac{\sqrt{t}}{2\sqrt{t-2u}}, \quad
    u\in(0,t/2).
  \]
  Therefore,
  \begin{align}\label{E:hstar-ge-h1}
    h_* (t) = t^{\frac{1}{2}} \int_0^{\frac{t}{2}} \frac{k (u)}{\sqrt{t - 2 u}} \ud u \ge \int_0^{\frac{t}{2}} k (u) \ud u \ge \frac{1}{2}\int_0^{t} k (u) \ud u = \frac{h_1(t)}{2},
  \end{align}
  where the last inequality uses that $k(\cdot)$ is nonincreasing. Therefore,
  \eqref{E:gbeta-over-hstar} follows from~\eqref{E:gbeta-bound}
  and~\eqref{E:hstar-ge-h1}. This completes the proof of
  inequality~\eqref{E:B_n-mean} for $B_n$. \smallskip

  \noindent\textit{Step~5: Proof of inequality~\eqref{E:B_n-var} for $B_n$.} We
  now turn to the proof of~\eqref{E:B_n-var} for $B_n$. Note that from
  expression~\eqref{E:def_bn-an}, we have
  \begin{align}\label{E:var-B}
    \Var (B_n) \le 2 \left( \Var (A_n) + \Var \big( A_n^2 \big) \right).
  \end{align}
  We shall estimate those two terms separately. Now for $\Var (A_n)$, we use
  decomposition~\eqref{E:A-decomp}, which allows to write
  \[
    \Var \left(A_n\right)
    = \Var \left(A_n^{(1)}\right) + \Var \left(A_n^{(2)}\right)
    = \beta^2 h_*(2^{-n}) + g_{\beta} (2^{-n}) ,
  \]
  where we have resorted to~\eqref{E:A1-gaus} and~\eqref{E:g_beta} for the
  second identity. In addition, relation~\eqref{E:gbeta-over-hstar} entails that
  for $n$ large enough, we have
  \begin{align}\label{E:varA-bound}
    \Var \left(A_n\right) \le 2 \beta^2 h_*(2^{-n}) .
  \end{align}
  Let us now upper bound $\Var (A_n^2)$ in~\eqref{E:var-B}. To this aim, we rely
  on the chaos decomposition~\eqref{E:chaos-an} and the hypercontractivity in
  fixed Wiener chaos (see, e.g., \cite{hu:17:analysis}*{Formula (7.1.41)}). This
  yields
  \begin{align*}
    \left[\Var \left(A_n^2\right)\right]^{1/4}
    \le  \Norm{A_n}_4 \le & \frac{ 1 }{p_{2^{- n}}(0)} \sum_{k=1}^{\infty} \beta^k \Norm{ I_k \left( g_k(\cdot \, ; 0 , 0; \, 2^{- n}, 0) \right)}_4      \\
    \le                   & \frac{1}{p_{2^{- n}} (0)}  \sum_{k=1}^{\infty} (3\beta)^k \Norm{ I_k \left( g_k(\cdot \, ; 0 , 0; \, 2^{- n}, 0) \right)}_2 .
  \end{align*}
  Therefore, owing to~\eqref{E:vargk-bound} and Corollary~\ref{C:hn-via-h1}, for some
  $C_\eta > 0$, we have that
  \begin{align*}
    \left[\Var \left(A_n^2\right)\right]^{1/4}
     & \le \sum_{k=1}^{\infty} (3\beta)^k 2^{\frac{k}{2}} h_k \left(2^{-n}\right)^{1/2}
    \le \sum_{k=1}^{\infty} \left(3\beta \sqrt{2}\right)^k  \left[\frac{C_\eta^{k-1} 2^{-(k-1)\eta}  h_1 (2^{-n})}{\Gamma\left((k-1) \eta + 1\right)}\right]^{1/2}                                                                    \\
     & = 3 \beta \sqrt{2}\; \Theta \times \sqrt{h_1 (2^{-n})}, \quad \text{where }  \Theta \coloneqq \sum_{k=0}^{\infty} \frac{\left(3\beta \sqrt{C_\eta}\, 2^{(1-\eta)/2} \right)^k}{\Gamma\left(k \eta + 1\right)^{1/2} } < \infty.
  \end{align*}
  Hence, we can conclude that
  \begin{align}\label{E:varA2-bound}
    \Var \left(A_n^2\right)
    \le   C_{\beta, \eta}\, h_1^2 \left(2^{-n}\right)
    \le   C_{\beta, \eta}\, h_1(1)\, h_1 \left(2^{-n}\right)
    \le 2 C_{\beta, \eta}\, h_1(1)\, h_* \left(2^{-n}\right),
  \end{align}
  where the second inequality uses the fact that
  \[
    h_1(t)
    = \int_0^t k(t-s) \ud s
    = 2 (2 \pi )^{-d} \int_{\R^d} \frac{1-\exp(-2^{-1}t |\xi|^2)}{|\xi|^2} \widehat{f}(\ud \xi)
  \]
  is a non-decreasing function (see also Lemma 2.6
  of~\cite{chen.kim:19:nonlinear} for a more general statement), and the last
  inequality in~\eqref{E:varA2-bound} is due to~\eqref{E:hstar-ge-h1}.
  Gathering~\eqref{E:varA-bound} and~\eqref{E:varA2-bound} into~\eqref{E:var-B},
  this completes the proof of~\eqref{E:B_n-var} for the term $B_{n}$.

  \noindent\textit{Step~6: Proof of inequalities in~\eqref{E:B_n} for
  $\widetilde{B}_n$.} Recall that $\widetilde{A}_n$ and $\widetilde{B}_n$ are
  defined by~\eqref{E:chaos-Atilde} and~\eqref{E:def_bn-an}, respectively.
  Proceed exactly as in~\eqref{E:EBn} and write
  \[
    \E [\widetilde{B}_n] = - \frac{1}{2} \E \left[\widetilde{A}_n^2 \1_{(-1, 0)} (\widetilde{A}_n)\right].
  \]
  Next we decompose $\widetilde{A}_n$ into
  \begin{align}\label{E:Atilde-decomp}
    \widetilde{A}_n = \widetilde{A}_n^{(1)} + \widetilde{A}_n^{(2)},
  \end{align}
  where $\widetilde{A}_n^{(1)}$ is the Gaussian part of the expansion, and
  $\widetilde{A}_n^{(2)}$ has to be thought of as a remainder:
  \[
    \widetilde{A}_{n}^{(1)} \coloneqq \beta I_1 (g_1 (\cdot \, ; 0 , 0; \, 2^{- n}, *)) \quad \text{and} \quad
    \widetilde{A}_{n}^{(2)} \coloneqq \sum_{k=2}^{\infty}  \beta^k I_k \left( g_k (\cdot \, ; 0 , 0; \, 2^{- n}, *)\right).
  \]
  Let us first analyze the Gaussian part $\widetilde{A}_n^{(1)}$.
  From~\eqref{E:gk-star}, we see that
  \[
    \widetilde{A}_n^{(1)} \overset{(d)}{=} \beta\int_0^{2^{-n}} \int_{\R^d} p_{s} (y) W(\ud s, \ud y).
  \]
  Hence, computing the variance of this Wiener integral according
  to~\eqref{E:W-Density}, we conclude that
  \begin{align}\label{E:A1-gaus-t}
    \widetilde{A}_n^{(1)} \sim \mathcal{N} \left(0, \widetilde{\sigma}_n^2\right), \quad \text{with}\quad
    \widetilde{\sigma}_n^2 = \beta^2 \int_0^{2^{-n}}  k \left( 2 s \right) \ud s = \frac{\beta^2}{2} h_1 (2^{- n + 1}).
  \end{align}
  We now make explicit the two estimates in~\eqref{E:B_n} for
  $\widetilde{B}_n$.
  First, define for $t>0$
  \[
    \widetilde{g}_{\beta}(t)\coloneqq
    \Var\left(\sum_{k=2}^{\infty}\beta^k I_k\left(g_k(\cdot \, ; 0,0; \, t,*)\right)\right).
  \]
  Also, using the same argument as for~\eqref{E:vargk-bound}, one gets for every
  $k\ge 1$:
  \begin{align}\label{E:vargk-bound-star}
    \Var \left( I_k(g_k(\cdot \, ; 0 , 0; \, t, *)) \right) \le 2^k h_k (t).
  \end{align}
  Therefore, invoking Corollary~\ref{C:hn-via-h1}, we obtain
  \[
    \widetilde{g}_{\beta}(t)
    \le h_1(t)\sum_{k=2}^{\infty}\beta^{2k}
    \frac{C_{\eta}^{k-1}t^{(k-1)\eta}}{\Gamma((k-1)\eta+1)}
    \le C_{\beta,\eta}\, h_1(t)\, t^{\eta}.
  \]
  Combined with~\eqref{E:A1-gaus-t}, this gives
  \[
    \frac{\Var\left(\widetilde{A}_n^{(2)}\right)}
    {\Var\left(\widetilde{A}_n^{(1)}\right)}
    = \frac{\widetilde{g}_{\beta}(2^{-n})}
    {\frac{\beta^2}{2}h_1(2^{-n+1})}
    \xrightarrow[n\to\infty]{}0.
  \]
  In addition, since $\widetilde{A}_n = \cz(0,0;\,2^{-n},*;\,\beta)-1$ and
  $\cz(0,0;\,2^{-n},*;\,\beta)>0$ almost surely, we have
  $\widetilde{A}_n>-1$ almost surely, hence
  \[
    \1_{(-1,0)}(\widetilde{A}_n)=\1_{(-\infty,0)}(\widetilde{A}_n)\quad
    \text{almost surely.}
  \]
  Hence, applying Lemma~\ref{L:gauss-perturb} exactly as in Step~4, for $n$
  large enough we obtain
  \[
    \E\left[\widetilde{B}_n\right]\le -C_{\beta}\,h_1(2^{-n}).
  \]
  Next, since $k(\cdot)$ is nonincreasing and
  $2s(t-s)/t\ge \min\{s,t-s\}$ on $[0,t]$, we have
  \[
    h_*(t)=2\int_0^{t/2}k\!\left(\frac{2s(t-s)}{t}\right)\ud s
    \le 2\int_0^{t/2}k(s)\ud s
    = h_1(t).
  \]
  Therefore, after possibly reducing $C_{\beta}$, we conclude that
  \[
    \E\left[\widetilde{B}_n\right]\le -C_{\beta}\,h_*(2^{-n}).
  \]

  Second, we now prove the variance bound in~\eqref{E:B_n-var}. As
  in~\eqref{E:var-B},
  \[
    \Var\!\left(\widetilde{B}_n\right)
    \le 2\!\left(\Var\!\left(\widetilde{A}_n\right)
    + \Var\!\left(\widetilde{A}_n^2\right)\right).
  \]
  For the first term, by~\eqref{E:Atilde-decomp},~\eqref{E:A1-gaus-t}
  and~\eqref{E:vargk-bound-star},
  \begin{align*}
    \Var\!\left(\widetilde{A}_n\right)
    = \Var\!\left(\widetilde{A}_n^{(1)}\right)
    + \Var\!\left(\widetilde{A}_n^{(2)}\right)
    = \frac{\beta^2}{2}h_1(2^{-n+1}) + \widetilde{g}_{\beta}(2^{-n})
    \le C_{\beta}\,h_1(2^{-n}),
  \end{align*}
  for $n$ large, where we used $\widetilde{g}_{\beta}(2^{-n}) \le
  C_{\beta,\eta}h_1(2^{-n})2^{-n\eta}$ and $h_1(2^{-n+1})\le 2h_1(2^{-n})$. For
  the second term, by hypercontractivity in fixed Wiener chaos
  and~\eqref{E:vargk-bound-star},
  \begin{align*}
    \left[\Var\!\left(\widetilde{A}_n^2\right)\right]^{1/4}
    &\le \Norm{\widetilde{A}_n}_4
    \le \sum_{k=1}^{\infty}(3\beta)^k
    \Norm{I_k\!\left(g_k(\cdot \, ;0,0;\,2^{-n},*)\right)}_2 \\
    &\le \sum_{k=1}^{\infty}(3\beta)^k 2^{k/2} h_k(2^{-n})^{1/2}
    \le C_{\beta,\eta}\sqrt{h_1(2^{-n})},
  \end{align*}
  which implies
  \[
    \Var\!\left(\widetilde{A}_n^2\right)
    \le C_{\beta,\eta}\,h_1(2^{-n})^2
    \le C_{\beta,\eta}\,h_1(2^{-n}).
  \]
  Combining the two bounds gives $\Var\!\left(\widetilde{B}_n\right)\le
  C_{\beta}\,h_1(2^{-n})$. Using~\eqref{E:hstar-ge-h1}, namely $h_1(t)\le
  2h_*(t)$, we obtain (after enlarging $C_{\beta}$ if needed)
  $\Var\!\left(\widetilde{B}_n\right)\le C_{\beta}\,h_*(2^{-n})$. This
  proves~\eqref{E:B_n-mean}--\eqref{E:B_n-var} for $\widetilde{B}_n$, and
  concludes the proof of Proposition~\ref{P:martingale-vanishing}.
\end{proof}

The proof of Proposition~\ref{P:martingale-vanishing} relies on the
decomposition~\eqref{E:A-decomp} into a Gaussian part and a remainder. The lemma
below shows how to handle variance estimates for this type of sums. It can be
interpreted as a generalization of~\cite{alberts.khanin.ea:14:continuum}*{Lemma
4.6}. Indeed, under the specialization $\epsilon = \frac{1}{n}$, $a_n =
\frac{1}{n^2}$, and $b_n = O(\frac{1}{n^4})$, Lemma~\ref{L:gauss-perturb}
reduces precisely to~\cite{alberts.khanin.ea:14:continuum}*{Lemma 4.6}.

\begin{lemma}\label{L:gauss-perturb}
  Let $\{Z_n; n \ge 1\}$ be a sequence of centered Gaussian random variables
  with variance $a (n)$, and let $\{Y_n; n \ge 1\} $ be a sequence of centered
  random variables with variance $b(n)$. Define $X_n \coloneqq Z_n + Y_n$ for
  each $n \ge 1$. Then,
  \begin{align}\label{E:gauss-perturb-neg}
    \lim_{n \to \infty} \frac{b(n)}{a(n)} = 0 \quad \implies \quad
    \liminf_{n \to \infty} \frac{1}{a(n)}\E \left[X_n^2 \1_{(-\infty, 0)}\left(X_n\right) \right] \ge \frac{1}{2} \,.
  \end{align}
\end{lemma}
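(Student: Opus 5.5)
Normalize by setting $\widehat Z_n \coloneqq Z_n/\sqrt{a(n)}$, a standard Gaussian for every $n$, and $\widehat Y_n \coloneqq Y_n/\sqrt{a(n)}$, so that $\E[\widehat Y_n^2] = b(n)/a(n) \to 0$. By homogeneity,
\[
\frac{1}{a(n)}\E\left[X_n^2 \1_{(-\infty,0)}(X_n)\right] = \E\left[(\widehat Z_n + \widehat Y_n)^2 \1_{(-\infty,0)}(\widehat Z_n+\widehat Y_n)\right].
\]
The function $\phi(x) \coloneqq x^2\1_{(-\infty,0)}(x) = (x^-)^2$ is continuous. For the limiting object one has $\E[\phi(Z)] = \frac12\E[Z^2] = \frac12$ by symmetry of a standard Gaussian $Z$. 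It therefore suffices to show that $\E[\phi(\widehat Z_n+\widehat Y_n)] - \E[\phi(\widehat Z_n)] \to 0$, since the second term equals $\frac12$ for all $n$.

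We do not need to pass through convergence in distribution. The map $x \mapsto x^-$ is $1$-Lipschitz, which gives the pointwise bound
\[
\left|(u+v)^- - u^-\right| \le |v|.
\]
Write $\alpha \coloneqq (\widehat Z_n+\widehat Y_n)^-$ and $\beta_0 \coloneqq \widehat Z_n^-$. Then $|\alpha^2 - \beta_0^2| = |\alpha-\beta_0|\,(\alpha+\beta_0) \le |\widehat Y_n|\,(2|\widehat Z_n| + |\widehat Y_n|)$. Taking expectations and applying Cauchy--Schwarz yields
\[
\left|\E[\phi(\widehat Z_n+\widehat Y_n)] - \tfrac12\right| \le 2\,\|\widehat Y_n\|_2\,\|\widehat Z_n\|_2 + \|\widehat Y_n\|_2^2 = 2\sqrt{b(n)/a(n)} + b(n)/a(n),
\]
which tends to $0$ by hypothesis. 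This proves the full limit $\lim_{n\to\infty} a(n)^{-1}\E[X_n^2\1_{(-\infty,0)}(X_n)] = \frac12$, and hence the stated $\liminf$ bound.

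The argument uses neither independence of $Z_n$ and $Y_n$ nor any property of $Y_n$ beyond its second moment. This matters because in the application, $A_n^{(1)}$ and $A_n^{(2)}$ are orthogonal but not independent. The centering of $Y_n$ serves only to identify $\E[Y_n^2]$ with $b(n)$. The one step needing care is the squared-Lipschitz estimate, which must use the factorization $\alpha^2-\beta_0^2 = (\alpha-\beta_0)(\alpha+\beta_0)$ together with Cauchy--Schwarz, rather than a uniform Lipschitz bound on $\phi$, since $\phi$ itself is not globally Lipschitz.
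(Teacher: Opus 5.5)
Your proof is correct, and it takes a genuinely different and shorter route than the paper's.

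\textbf{The paper's route.} The paper expands $X_n^2 = Z_n^2 + 2Z_nY_n + Y_n^2$ and writes $\E[X_n^2\1_{(-\infty,0)}(X_n)] = \frac{a(n)}{2} + I_1 + I_2 + I_3$. The $Y_n^2$ term and the cross term are handled by Cauchy--Schwarz, as in your argument. The remaining term $I_1 = \E[Z_n^2(\1_{(-\infty,0)}(X_n) - \1_{(-\infty,0)}(Z_n))]$ carries the discontinuity of the indicator, and it needs a truncation argument:
\begin{itemize}
\item on $\{|Y_n| < \sqrt{b(n)}\,L\}$, a sign flip forces $|Z_n| \le \sqrt{b(n)}\,L$, and this region is controlled by the Gaussian density;
\item on the complement, the paper uses $\E[Z_n^4] = 3a(n)^2$ together with Chebyshev;
\item finally it optimizes with $L = r_n^{-1/3}$, where $r_n = b(n)/a(n)$.
\end{itemize}

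\textbf{Your route.} You observe that $x^2\1_{(-\infty,0)}(x) = (x^-)^2$ is continuous. Combined with the $1$-Lipschitz property of $x \mapsto x^-$ and the difference-of-squares factorization, this makes the sign-flip region harmless, so no truncation is needed.

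\textbf{What this buys you.}
\begin{itemize}
\item You get a two-sided estimate $\bigl|a(n)^{-1}\E[X_n^2\1_{(-\infty,0)}(X_n)] - \frac12\bigr| \le 2\sqrt{r_n} + r_n$. This gives a genuine limit with error $O(r_n^{1/2})$, whereas the paper gets only the one-sided bound $\frac12 - O(r_n^{1/3})$.
\item You use Gaussianity of $Z_n$ only through $\E[Z_n^2\1_{(-\infty,0)}(Z_n)] = a(n)/2$, which follows from symmetry alone. The paper additionally relies on the Gaussian density near $0$ and on the Gaussian fourth moment.
\end{itemize}

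Neither argument uses independence of $Z_n$ and $Y_n$, so both apply to the orthogonal chaos components $A_n^{(1)}$, $A_n^{(2)}$ in Proposition~\ref{P:martingale-vanishing}. For the lemma as stated, the paper's decomposition gives nothing that your argument does not.
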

\begin{proof}
  The Gaussian contribution is explicit. By symmetry of the Gaussian
  distribution and the relation $a(n) = \Var (Z_n)$, it follows that
  \[
    \E \left[Z_n^2 \1_{(-\infty, 0)} (Z_n)\right] = \frac{a (n)}{2}.
  \]
  Therefore,
  \begin{align*}
    \E \left[X_n^2 \1_{(-\infty, 0)}\left(X_n\right) \right]
    = \frac{a(n)}{2} + I_1 + I_2 + I_3,
  \end{align*}
  where the terms $I_1$, $I_2$, and $I_3$ are respectively defined by
  \begin{align*}
    I_1 \coloneqq & \E \Big(Z_n^2 \left[ \1_{(-\infty, 0)} \left(X_n\right) - \1_{(-\infty, 0)} \left(Z_n\right) \right]\Big), \\
    I_2 \coloneqq & \E \left[Y_n^2 \1_{(-\infty, 0)} \left(X_n\right) \right] \quad \text{and} \quad
    I_3 \coloneqq  2\E \left[Z_n Y_n \1_{(-\infty, 0)} \left(X_n\right) \right].
  \end{align*}
  Hence,
  \begin{align}\label{E:Xn-neg-lower}
    \E \left[X_n^2 \1_{(-\infty, 0)}\left(X_n\right) \right]
    \ge & \frac{a(n)}{2} - |I_1| - |I_2| - |I_3|.
  \end{align}
  The estimates for $I_2$ and $I_3$ above are relatively easy and proceed as
  follows:
  \begin{subequations}\label{E:I2I3-bounds}
    \begin{gather}
      |I_2| \le \Var \left(Y_n\right) = b(n) = a(n) \times \frac{b(n)}{a(n)} \quad \text{and} \\
      |I_3|
      \le 2 \E \left[|Z_n Y_n|\right]
      \le 2 \sqrt{\Var \left(Z_n\right) \Var \left(Y_n\right)}
      =   2 \sqrt{a (n) b(n)}
      =   2 a (n)  \sqrt{\frac{b(n)}{a(n)}}.
    \end{gather}
  \end{subequations}

  As for $I_1$, let $L > 0$ be a parameter to be chosen later. The expectation
  in $I_1$ is then evaluated by dividing the integration region by two parts:
  \[
    A_1 \coloneqq \left\{|Y_n| < \sqrt{b(n)} L \right\} \quad \text{and} \quad
    A_2 \coloneqq \left\{|Y_n| \ge \sqrt{b(n)} L \right\}.
  \]
  Then, we can deduce that
  \begin{align}\label{E:gauss-pert-I1-split}
    |I_1| \le I_{1,1} + I_{1,2}
  \end{align}
  where
  \begin{eqnarray}\label{E:gauss-pert-I11-def}
    I_{1,1} & \coloneqq & \Big|\E \Big[Z_n^2 \left[\1_{(-\infty, 0)} \left(Z_n + Y_n\right) - \1_{(-\infty, 0)} \left(Z_n\right) \right] \1_{A_1}\Big]\Big|, \quad \text{and} \\
    I_{1,2} & \coloneqq & \Big|\E \Big[Z_n^2 \left[\1_{(-\infty, 0)} \left(Z_n + Y_n\right) - \1_{(-\infty, 0)} \left(Z_n\right) \right] \1_{A_2}\Big]\Big|. \label{E:gauss-pert-I12-def}
  \end{eqnarray}
  In order to bound $I_{1,1}$ in~\eqref{E:gauss-pert-I11-def}, we notice that
  \begin{align*}
    \1_{(-\infty, 0)} \left(Z_n + Y_n\right) - \1_{(-\infty, 0)} \left(Z_n\right) \ne 0
    \iff & \left\{Z_n + Y_n \ge 0, Z_n < 0 \right\} \cup \left\{Z_n + Y_n < 0, Z_n \ge 0 \right\}  \\
    \iff & \left\{Y_n \ge - Z_n,   Z_n < 0 \right\} \cup \left\{Y_n < - Z_n,   Z_n \ge 0 \right\}.
  \end{align*}
  In both cases, together with the condition from $A_1$, it holds that $|Z_n|
    \le |Y_n| < \sqrt{b(n)} L$. By an elementary change of variable and standard
  estimates for the Gaussian density, we thus have
  \begin{align}\label{E:gauss-pert-I11-bound}
    I_{1,1}
    \le & \E \left[Z_n^2 \1_{\left(-\sqrt{b (n)} L, \sqrt{b (n)} L\right)} \left(Z_n\right) \right]
    =    2\int_0^{\sqrt{b (n)} L} x^2 \frac{ e^{-\frac{x^2}{2 a (n)}}}{\sqrt{2 \pi a (n)}} \ud x \nonumber \\
    =   & \frac{2a (n)}{\sqrt{2 \pi}} \int_0^{\sqrt{\frac{b(n)}{a (n)}} L} z^2 e^{- \frac{z^2}{2}} \ud z
    \le   \frac{2a (n)}{\sqrt{2 \pi}} \left(\sqrt{\frac{b(n)}{a (n)}} L\right)^2 \int_0^{\infty} e^{-\frac{z^2}{2}} \ud z
    =    b(n) L^2.
  \end{align}
  The term $I_{1,2}$ in~\eqref{E:gauss-pert-I12-def} is easier to bound. We just
  apply the Cauchy--Schwarz and Chebyshev's inequalities in order to get
  \begin{align}\label{E:gauss-pert-I12-bound}
    I_{1,2}
    \le & \sqrt{\E \left[Z_n^4\right] \PP\left(A_2\right)}
    \le   \sqrt{3a(n)^2 \times \PP\left(|Y_n| \ge \sqrt{b(n)} L\right)}
    \le   \frac{\sqrt{3}}{L} a(n).
  \end{align}
  Hence, gathering~\eqref{E:gauss-pert-I11-bound}
  and~\eqref{E:gauss-pert-I12-bound} into~\eqref{E:gauss-pert-I1-split} we
  obtain
  \begin{align}\label{E:I1-bound}
    |I_1| \le a (n) \left(\frac{L^2 b(n)}{a(n)} + \frac{\sqrt{3}}{L}\right).
  \end{align}

  We can now summarize our estimates as follows: combining~\eqref{E:I2I3-bounds}
  and~\eqref{E:I1-bound} into~\eqref{E:Xn-neg-lower} and using the notation with
  $r_n \coloneqq b(n) / a(n)$ yield
  \begin{align*}
    \E \left[X_n^2 \1_{(-\infty, 0)}\left(X_n\right) \right]
    \ge a (n) \left(\frac{1}{2} - r_n - 2\sqrt{r_n} - L^2 r_n - \frac{\sqrt{3}}{L}\right)
    \eqqcolon  a (n) \times \Theta(n, L).
  \end{align*}
  If $r_n = 0$, then $b(n)=0$ and hence $Y_n = 0$ almost surely, so the claim is
  immediate. Otherwise, we take $L = r_n^{-1/3}$ and obtain
  \begin{align*}
    \Theta\left(n, r_n^{-1/3}\right)
    \ge \frac{1}{2} - \left(r_n + 2 \sqrt{r_n} + (1+\sqrt{3}) r_n^{1/3}\right)
    \to \frac{1}{2} \quad  \text{as $n \to \infty$}.
  \end{align*}
  This completes the proof of Lemma~\ref{L:gauss-perturb}.
\end{proof}

The proposition below proves the martingale convergence needed for
Theorem~\ref{T:singularity-criterion}, in the situation where the measures $\bP_\beta^W$ and
$\bP_0$ are equivalent.

\begin{proposition}\label{P:Y-finite-finite-mass}
  Let $k$ be the function defined by~\eqref{E:k_t}, and assume
  \begin{equation}\label{E:k-t-to-finite}
    \lim_{t \downarrow 0} k(t) < \infty , \quad\text{or equivalently}\quad
    \widehat{f}\left(\R^d\right) < \infty.
  \end{equation}
  Then the random variable $Y(W,X)$ defined by~\eqref{E:Y-limit} is such that
  \begin{equation}\label{E:Y-finite}
    \PP_{0}\left(  0< Y(W,X) <\infty\right) =1 ,
    \quad\text{and}\quad
    \PP_{\beta}\left(   Y(W,X) <\infty\right) =1 .
  \end{equation}
\end{proposition}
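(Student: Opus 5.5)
We need to prove that $Y_n$ converges to a limit $Y\in(0,\infty)$ under $\PP_0$, and that $Y<\infty$ holds $\PP_\beta$-a.s., when $\widehat f(\R^d)<\infty$. The plan has three steps: show $M_n$ is bounded in $L^2(\PP_0)$, use this to get a strictly positive limit, and then transfer finiteness to $\PP_\beta$.

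\emph{Step 1: uniform $L^2$ bound.} Since $\cz(0,0;1,*;\beta)>0$ is a fixed random variable independent of $n$, it suffices to study the martingale $M_n$ in~\eqref{E:def-m_n}. Conditionally on $X$, the factors $Q_k$ are independent with mean one. So by~\eqref{E:logMn-sum}, the structure of~\eqref{E:chaos-an} and~\eqref{E:chaos-Atilde}, and orthogonality of chaoses,
\begin{equation*}
\E_0\big[M_n^2\big]=\big(1+\E[A_n^2]\big)^{2^n-1}\big(1+\E[\widetilde A_n^2]\big).
\end{equation*}
By~\eqref{E:vargk-bound}, $\E[A_n^2]\le \sum_{k\ge1}(2\beta^2)^k h_k(2^{-n})$. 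The condition $k(t)\le \widehat f(\R^d)(2\pi)^{-d}=:K<\infty$ gives $h_k(t)\le K^k t^k/k!$ by induction on~\eqref{E:hn-recursion}. Hence $\E[A_n^2]\le e^{2\beta^2K2^{-n}}-1$, and the same bound holds for $\widetilde A_n$ by~\eqref{E:vargk-bound-star}. Consequently
\begin{equation*}
\sup_n\E_0[M_n^2]\le \exp\big(2^n\log e^{2\beta^2K2^{-n}}\big)e^{2\beta^2K}=e^{4\beta^2K}<\infty .
\end{equation*}
(Here I will need to double-check that~\eqref{E:vargk-bound} is valid without Hypothesis~\ref{H:stren}; it only uses the structure of $h_k$.) Thus $M_n$ is a martingale bounded in $L^2$, so it converges in $L^2(\PP_0)$ and is uniformly integrable. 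It follows that $\E_0[M_\infty]=1$, and hence $\PP_0(Y>0)>0$ and $Y<\infty$ $\PP_0$-a.s.

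\emph{Step 2: $Y>0$ $\PP_0$-a.s.} This is the main obstacle, since uniform integrability only gives positivity with positive probability. I would use a zero–one argument. Condition on $X$ and split the $2^n$ blocks into those with time in $[0,1/2]$ and those in $[1/2,1]$. The event $\{M_\infty=0\}$ is a tail-type event. More concretely, for each $m$, $M_\infty=\prod_{j\le 2^m}M^{(j)}_\infty$, where $M^{(j)}_\infty$ is the limit of the partial products over dyadic blocks inside $[(j-1)2^{-m},j2^{-m}]$. Conditionally on $X$, these limits are independent by Theorem~\ref{T:transition-properties}-\ref{T:transition-properties:independence}. Each $M^{(j)}$ vanishes with conditional probability $q_m$, and by the scaling/stationarity estimates above, $q_m\to0$ uniformly: the block martingale has $L^2$ norm bounded by $e^{4\beta^2K2^{-m}}\to1$, so by Paley–Zygmund $\PP(M^{(j)}_\infty>0)\ge 1/\E[(M^{(j)})^2]\to1$. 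On the other hand, $\{M_\infty=0\}=\bigcup_j\{M^{(j)}_\infty=0\}$. I would therefore prove instead that $\PP_0(M_\infty=0)$ is $0$ or $1$ via Kolmogorov's 0–1 law applied to the independent block families (conditionally on $X$, with the Brownian increments also independent across blocks). Being strictly less than $1$ by Step 1, it must then equal $0$. If the 0–1 law is awkward to set up, a fallback is the Kakutani product-martingale theorem: conditionally on $X$, the $Q$'s at level $n$ refine those at level $n-1$, and $\prod \E[\sqrt{Q}]$ bounded away from $0$ (via Jensen/$L^2$ bounds) gives positivity.

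\emph{Step 3: $\PP_\beta(Y<\infty)=1$.} Since $Y_n$ is the Radon–Nikodym density on $\cg_n$, the $L^2$ (hence uniformly integrable) convergence under $\PP_0$ implies $\PP_\beta\ll\PP_0$ with density $Y$. Indeed, for $A\in\cg_n$ we have $\PP_\beta(A)=\E_0[Y_n\1_A]=\E_0[Y\1_A]$, and this extends by~\eqref{E:prf_Singular-a} to all of $\mathcal F\otimes\mathcal G$. Therefore $\PP_\beta(Y=\infty)=0$ follows from $\PP_0(Y=\infty)=0$. Combined with Step 2, this yields~\eqref{E:Y-finite}.
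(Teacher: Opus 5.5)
\textbf{Steps 1 and 3.} These are sound. Step 1 is a nice self-contained replacement for the paper's $L^2$ bound on $\mathcal{E}(W,X)$. Conditioning on $X$ and using homogeneity gives $\E_0[M_n^2]=(1+\E[A_n^2])^{2^n-1}(1+\E[\widetilde A_n^2])\le e^{2\beta^2K}$. Your worry about Hypothesis~\ref{H:stren} is moot, since $\widehat f(\R^d)<\infty$ implies~\eqref{E:stren} for every $\eta$.

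Step 3 needs one correction. It is $M_n$, not $Y_n=M_n/\cz(0,0;1,*;\beta)$, that is uniformly integrable. Either localize on $\{\cz(0,0;1,*;\beta)>\epsilon\}\in\cg_n$, or compute $\E_0[Y]=\E\big[\cz(0,0;1,*;\beta)^{-1}\E_0[M_\infty\,|\,\mathcal F]\big]=1$ and use $\PP_\beta(Y=\infty)=1-\E_0[Y]$, which is essentially what the paper does.

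\textbf{Step 2 is a genuine gap.} This is exactly where second-moment information stops being enough. The event $\{M_\infty=0\}$ is not a tail event for any independent family you have. For every $m$ it equals $\bigcup_{j\le 2^m}\{M^{(j)}_\infty=0\}$, so every block matters, and blocks at different levels are nested rather than independent. Hence Kolmogorov's 0--1 law does not apply.

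Kakutani's theorem does not apply either. Its positivity conclusion relies on a product of independent factors indexed by the martingale time, whereas here the index is the refinement level. A Hellinger bound $\prod\E[\sqrt{Q}]\ge c>0$ only yields non-singularity, i.e.\ again $\PP_0(Y>0)>0$.

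In fact the structure you invoke cannot force a 0--1 law. Let $N$ be a Poisson process of rate $\lambda$ on $(0,1]$, and set $M_n=\prod_{k=1}^{2^n}e^{\lambda 2^{-n}}\mathbf{1}\{N\big(((k-1)2^{-n},k2^{-n}]\big)=0\}$. This is a product of independent mean-one factors, multiplicative across dyadic refinements, and bounded in $L^2$, with level-$m$ block second moments $e^{\lambda 2^{-m}}\to1$. Yet $M_\infty=e^{\lambda}\mathbf{1}\{N((0,1])=0\}$ vanishes with probability $1-e^{-\lambda}\in(0,1)$. Your Paley--Zygmund bound $\PP(M^{(j)}_\infty=0\,|\,X)\le 1-e^{-2\beta^2K2^{-m}}$ has the same order $2^{-m}$. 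That is why the union bound stalls at a constant.

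\textbf{How the paper closes this step.} When $\widehat f(\R^d)<\infty$ the noise can be integrated along paths. The paper proves $\cz(s,y;t,x;\beta)=p_{t-s}(x-y)\,\bE_{s,y;t,x}[\mathcal{E}_{s,t}(W,X)]$, hence $M_n=\bE_0[\mathcal{E}(W,X)\,|\,\cg_n^X]$ and $Y=\mathcal{E}(W,X)/\cz(0,0;1,*;\beta)$. This is an exponential, so it lies in $(0,\infty)$ automatically.

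\textbf{How to repair it within your framework.} You need a per-block bound of order $o(2^{-m})$, which requires moments beyond the second.
\begin{enumerate}
\item Conditionally on $X$, write $M^{(j)}_n-1=\sum_{i\le N}a_iP_{i-1}$, where $P_i=\prod_{l\le i}(1+a_l)$ and the $N=2^{n-m}$ factors $1+a_i$ are independent with mean one.
\item Hypercontractivity together with $h_k(t)\le K^kt^k/k!$ gives $\|A_n\|_4+\|\widetilde A_n\|_4\le C2^{-n/2}$.
\item Burkholder's inequality and Fatou then yield $\E[|M^{(j)}_\infty-1|^4\,|\,X]\le C2^{-2m}$.
\item A union bound over the $2^m$ blocks gives $\PP(M_\infty=0\,|\,X)\le 2^m\cdot C2^{-2m}\to0$, with no 0--1 law needed.
\end{enumerate}
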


\begin{proof}
  Here again, we divide the proof in several steps.

  \noindent\textit{Step~1: Martingale representation of the partition function.}
  Whenever~\eqref{E:k-t-to-finite} is satisfied, one can evaluate the noise at
  spatial points. Specifically, for a continuous path $X$ we can set
  \begin{equation}\label{E:W-along-path}
    \int_0^1 W(\ud s, X_s)
    \coloneqq
    L^2(\Omega)\text{-}\lim_{\epsilon\downarrow 0}
    \int_0^1 \int_{\R^d} p_\epsilon(X_s-z)\,W(\ud s,\ud z).
  \end{equation}
  The above limit exists by It\^o's isometry and the monotone convergence
  theorem applied to $\int_{\R^d}e^{-\epsilon|\xi|^2}\widehat{f}(\ud\xi)\uparrow
  \widehat{f}\left(\R^d\right)$ as $\epsilon\downarrow 0$ (we refer
  to~\cite{hu.huang.ea:15:stochastic}*{Proposition~4.2} for details about this
  type of computation). Define
  \begin{align}\label{E:Singular-a}
    \mathcal{E}(W,X)
    \coloneqq
    \exp\left(\beta \int_0^1 W(\ud s, X_s) - \frac{\beta^2}{2} \widehat{f}\left(\R^d\right)\right) .
  \end{align}
  We now relate $\mathcal{E}(W,X)$ to the partition functions $\cz$ from
  Definition~\ref{D:PolyMeasure}. Fix $0 \le s < t \le 1$ and $x,y\in\R^d$, and
  let $\bE_{s,y;t,x}$ denote the expected value for a Brownian bridge $X$ going
  from $(s,y)$ to $(t,x)$. For $r\in[s,t]$, set
  \begin{align}\label{E:Singular-b}
    \mathcal{E}_{s,r}(W,X)
    \coloneqq
    \exp\left(\beta \int_s^r W(\ud u, X_u) - \frac{\beta^2}{2} (r-s)\widehat{f}\left(\R^d\right)\right) ,
  \end{align}
  so that the quantity $\mathcal{E} (W, X)$ in~\eqref{E:Singular-a} is also
  $\mathcal{E}_{0,1} (W, X)$ in~\eqref{E:Singular-b}. In addition, the
  filtration introduced in Remark~\ref{R:Filtration} can now be expressed as
  \begin{align*}
  \mathcal{F}_t \coloneqq \sigma \left\{ \int_0^s W(\ud u, x) ; 0 \leq s \leq t, \, x \in \R^d \right\}.
  \end{align*}
  By It\^o's isometry relative to $\mathcal{F}_t$, and for a fixed realization
  of $X$, we have
  \[
    \E\!\left[\left(\int_s^r W(\ud u, X_u)\right)^2 \right]
    =(r-s)\widehat{f}\left(\R^d\right).
  \]
  Hence, it is readily checked that $\{\mathcal{E}_{s,r}\}_{r \in [s,t]}$ is a
  square integrable $\mathcal{F}_r$-martingale. Moreover, it satisfies the
  linear equation:
  \begin{align}\label{E:Singular-d}
    \mathcal{E}_{s,t}(W,X)
    = 1 + \beta \int_s^t \mathcal{E}_{s,r}(W,X)\, W(\ud r, X_r).
  \end{align}
  Define the random field
  \begin{align}\label{E:Singular-c}
    \widehat{\cz}(s,y ; \, t,x; \, \beta)
    \coloneqq
    p_{t-s}(x-y)\,\bE_{s,y;t,x}\!\left[\mathcal{E}_{s,t}(W,X)\right].
  \end{align}
  It is known (see, e.g., \cite{hu.huang.ea:15:stochastic}*{Theorem~5.3}) that
  $\widehat{\cz} = \cz$. For the sake of completeness, we include a detailed
  proof below. Using the fact that~\eqref{E:Singular-d} is a linear equation and
  invoking a stochastic Fubini theorem to exchange the $\int$ and
  $\bE_{s,y;t,x}$ symbols, one can recast~\eqref{E:Singular-c} as
 \begin{equation*}
    \widehat{\cz}  (s,y; \, t,x; \, \beta)
    = p_{t-s}(x-y)
    + \beta \, p_{t-s}(x - y) \int_s^t \int_{\R^d}
      \bE_{s,y;t,x}\left[\mathcal{E}_{s,r} (W, X)  W(\ud r, X_{r}) \right].
  \end{equation*}
  Hence, conditioning on the values of $X_r$ in the right-hand side above, we
  end up with
  \begin{multline}\label{E:se-cZ-hat}
    \widehat{\cz}  (s,y; \, t,x; \, \beta) \\
    = p_{t-s}(x-y)
    + \beta \int_s^t \int_{\R^d}
    p_{t-s}(x - y) p^{(s,y;t,x)}_r (z) \bE_{s,y;r,z}\left[\mathcal{E}_{s,r} (W, X)\right]  W(\ud r,\ud z),
  \end{multline}
  where $p^{(s,y;t,x)}_r (z)$ denotes the probability density of a Brownian
  bridge $X_r$ ( for $s < r < t$), where we are conditioning to $X_s = y$ and
  $X_t = x$. Furthermore, by the density of the Brownian bridge (see
  Lemma~\ref{L:heat-kernel-factorization}), we have
  \begin{align*}
    p^{(s,y;t,x)}_r (z) = \frac{p_{t - r} (x - z) p_{r - s} (z - y)}{p_{t- s} (x - y)} .
  \end{align*}
  Applying the above identity and invoking the definition~\eqref{E:Singular-c}
  of $\widehat{\cz}$, we obtain
  \[
    p_{t-s}(x - y) p^{(s,y;t,x)}_r (z) \bE_{s,y;r,z}\left[\mathcal{E}_{s,r} (W, X) \right]
    = p_{t-r}(x-z)\,\widehat{\cz} (s,y;r,z;\beta).
  \]
  Plugging this relation back into~\eqref{E:se-cZ-hat}, one finds that
  \[
    \widehat{\cz} (s,y; \, t,x; \, \beta)
    = p_{t-s}(x-y)
    + \beta\int_s^t\int_{\R^d} p_{t-r}(x-u)\,\widehat{\cz} (s,y;r,u;\beta)\,W (\ud r,\ud u).
  \]
  Therefore, $\widehat{\cz}$ is a solution to~\eqref{E:PAM-Delta}, and hence
  $\widehat{\cz}=\cz$ by uniqueness; see Theorem~\ref{T:she-well-posedness}. We
  have thus obtained the martingale representation~\eqref{E:Singular-c} for
  $\cz$.

  \noindent \textit{Step 2: Representing $Y$ in terms of the martingale $\ce$.}
  For this step, we first consider ${X}$ as a Brownian bridge from $(0,0)$ to
  $(1,x)$. According to~\eqref{E:Singular-c}, the quenched average of the
  quantity $\mathcal{E}$ in~\eqref{E:Singular-b} corresponding to this process
  is
  \begin{align}\label{E:Singular-e}
    \bE_{0, 0; 1, x} \left[\mathcal{E}_{0, 1} \left(W, {X}\right)\right]
    = \frac{\widehat{\cz} (0,0; \, 1, x ; \, \beta)}{p_1 (x)}
    = \frac{\cz (0,0; \, 1, x ; \, \beta)}{p_1 (x)} .
  \end{align}
  Furthermore, due to the linear nature of equation~\eqref{E:Singular-d}, if one
  considers now $X$ as a Brownian motion on $[0,1]$, we have
  \begin{align*}
    \bE_0\!\left[\mathcal{E}(W,X)\right]
    = & \int_{\R^d} p_1(x)\,\bE_{0,0;1,x}\!\left[\mathcal{E}_{0, 1} \left(W, {X}\right)\right]\ud x ,
  \end{align*}
  where we recall that $\mathcal{E}(W,X)=\mathcal{E}_{0,1}(W,X)$.
  Inserting~\eqref{E:Singular-e} into the above equation, we get
  \begin{align}\label{E:bEcE=Z}
    \bE_0\!\left[\mathcal{E}(W,X)\right]
    = & \int_{\R^d} \cz(0,0;\,1,x;\,\beta)\ud x
    = \cz(0,0;\,1,*;\,\beta).
  \end{align}
  Moreover, by the Markov property of the Brownian motion under $\bP_0$, the
  conditional law of $X$ given $\mathcal{G}_n^X$ is that of independent Brownian
  bridges on each interval $\left[t_{k-1},t_k\right]$ (with $t_k=k/2^n$) and a
  final Brownian motion segment on $\left[1-2^{-n},1\right]$. Therefore, using
  the factorization
  \[
    \mathcal{E}(W,X)
    = \prod_{k=1}^{2^n-1}\mathcal{E}_{t_{k-1},t_k}(W,X)\times \mathcal{E}_{1-2^{-n},1}(W,X),
  \]
  which is trivial by~\eqref{E:Singular-b}, together with the identity (which
  stems from~\eqref{E:Singular-c})
  \[
    \bE_{t_{k-1},X_{t_{k-1}};\,t_k,X_{t_k}}\!\left[\mathcal{E}_{t_{k-1},t_k}(W,X)\right]
    = \frac{\cz\!\left(t_{k-1},X_{t_{k-1}};\,t_k,X_{t_k};\,\beta\right)}{p_{t_k-t_{k-1}}\!\left(X_{t_k}-X_{t_{k-1}}\right)},
  \]
  and its integrated counterpart
  \[
    \bE_{1-2^{-n},X_{1-2^{-n}}}\!\left[\mathcal{E}_{1-2^{-n},1}(W,X)\right]
    = \cz\!\left(1-2^{-n},X_{1-2^{-n}};\,1,*;\,\beta\right),
  \]
  we obtain
  \[
    \bE_0\!\left[\mathcal{E}(W,X)\,\big|\,\mathcal{G}_n^X\right]
    = M_n(W,X;\beta),
  \]
  where $M_n$ is defined in~\eqref{E:def-m_n}, and hence going back
  to~\eqref{E:Y_n-RN} we get
  \begin{align}\label{E:prf_Singular-b}
    Y_n(W,X) = \frac{\bE_0\!\left[\mathcal{E}(W,X)\,\big|\,\mathcal{G}_n^X\right]}{\cz(0,0;\,1,*;\,\beta)}.
  \end{align}
  Now under condition~\eqref{E:k-t-to-finite}, the random variable $\ce (W,X)$
  defined by~\eqref{E:Singular-a} is in $L^2 (\Omega \times C ([0,1]))$;
  see~\cite{hu.huang.ea:15:stochastic}*{Theorem 4.6} for such an estimate. In
  addition, we have argued in~\eqref{E:prf_Singular-a} that $\bigvee_{n \ge 1}
  \cg_n^X = \mathcal{G}$. One can thus take the limit
  in~\eqref{E:prf_Singular-b} in order to get $Y_n(W,X)\to Y(W,X)$ $\PP_0$-a.s.
  with
  \begin{equation}\label{E:Y-radon-nikodym}
    Y(W,X)=\frac{\mathcal{E}(W,X)}{\cz(0,0;\,1,*;\,\beta)} .
  \end{equation}

  \noindent \textit{Step 3: Conclusion.} Starting from~\eqref{E:Y-radon-nikodym}, taking
  account of Theorem~\ref{T:transition-properties}-\ref{T:transition-properties:positive}
  and~\eqref{E:Singular-b}, it is readily checked that $0<Y(W,X)<\infty$
  $\PP_0$-a.s. Furthermore, applying~\eqref{E:bEcE=Z}, we have
  \[
    \E_0\!\left[Y(W,X)\right]
    = \E_0 \! \left[\frac{\bE_0 \!\left[\mathcal{E}(W,X)\right]}{\cz(0,0;\,1,*;\,\beta)}\right]
    = 1 .
  \]
  Since a positive random variable with finite mean must be finite almost
  surely, we get $Y (W,X) < \infty$, $\mathbb{P}_{\beta}$-a.s., which concludes
  the proof of Proposition~\ref{P:Y-finite-finite-mass}.
\end{proof}

\section{Diffusive behavior in high temperature}\label{S:Diffusive}

In this section we establish another basic property of the polymer. Namely we
will see that in dimension $d\ge3$, if the inverse temperature $\beta$ is small
enough, the behavior of the polymer $X$ is diffusive in large time. One thus
expects a phase transition from diffusive behavior (small $\beta$) to
superdiffusive behavior (large $\beta$). Our method of proof is based on
Feynman-Kac type representations, for which we need some preliminary results
included in Section~\ref{S:another-def-polymer}.

\subsection{Another formulation of the polymer measure}\label{S:another-def-polymer}

In this section, we give an alternative formulation of the polymer measure using
Feynman-Kac formula. Moreover, we will show that this new definition is
compatible with Definition~\ref{D:PolyMeasure}, under suitable conditions on the
covariance function of the noise. Our methodology will rely on an approximation
procedure, which is reminiscent of Proposition~\ref{P:Y-finite-finite-mass} and that we
detail now. We start with an approximation of the noise.

\begin{definition}\label{D:smooth-noise}
  Let $W$ be the noise introduced in Definition~\ref{D:Noise}, and recall that
  $p_t$ is the Gaussian kernel from equation~\eqref{E:heat}. For $\epsilon>0$ we
  define a centered Gaussian family $\{W^\epsilon(\phi);
  \phi\in\mathcal{S}(\mathbb{R}_+\times\mathbb{R}^d)\}$ by setting
  \begin{align}\label{E:smooth-noise}
    W^\epsilon(\phi) = W(p_\epsilon*\phi).
  \end{align}
  Otherwise stated, the noise $\dot{W}^\epsilon$ displayed, e.g.,
  in~\eqref{E:Noise-Convention} is defined as
  \[
    \dot{W}^\epsilon(t,x)=\left[p_\epsilon*\dot{W}(t,\cdot)\right](x).
  \]
\end{definition}

\begin{remark}\label{R:f_eps}
  Plugging relation~\eqref{E:smooth-noise} into~\eqref{E:W-Space}, we get the
  following relation for
  $\phi,\psi\in\mathcal{S}(\mathbb{R}_+\times\mathbb{R}^d):$
  \begin{equation*}
    \E\left[W^\epsilon(\phi)W^\epsilon(\psi)\right]
    =  \int_{\mathbb{R}_+}ds\int_{\mathbb{R}^{2d}}\phi(s,x)f_\epsilon(x-y)\psi(s,y)\ud x \ud y,
  \end{equation*}
  where $f_\epsilon=p_{2\epsilon}*f.$ In particular, for a fixed $\epsilon>0$
  the function $f_\epsilon$ is smooth and bounded, with $f_\epsilon(0)<\infty.$
  Moreover, the corresponding spectral measure satisfies
  \begin{equation}\label{E:fhat_eps}
    \widehat{f_\epsilon}(\ud\xi)=e^{-\epsilon|\xi|^2}\widehat{f}(\ud\xi).
  \end{equation}
\end{remark}

Similarly to what we did in Section~\ref{S:prop-heat-eq}, one can consider a
stochastic heat equation like~\eqref{E:SHE} driven by the smoothed noise
$W^\epsilon$. Below we derive some properties of this type of equation, starting
with the case of a flat initial condition.

\begin{proposition}\label{P:FK_smooth}
  Let $W^\epsilon$ be the noise introduced in Definition~\ref{D:smooth-noise}.
  We consider the following equation
  \begin{align}
    \left(\frac{\partial}{\partial t}-\frac{1}{2}\Delta\right)u^\epsilon(t,x)=\beta \, u^{\epsilon}(t,x) \, \dot{W}^\epsilon(t,x),
    \label{E:smoothed-equ}
  \end{align}
  interpreted in the mild sense~\eqref{E:mild}, with initial condition $u(0,x)
    = 1$. Then equation~\eqref{E:smoothed-equ} possesses a unique random field
  solution $u^\epsilon$. Moreover $u^\epsilon$ admits a Feynman-Kac
  representation:
  \begin{align}\label{E:FK_smooth}
    u^\epsilon(t,x)=\bE_0\left[\exp\left(\beta\int_0^t\dot{W}^\epsilon(u,x+X_{t-u})\ud u
      -\frac{\beta^{2}}{2}f_\epsilon(0)t\right)\right],
  \end{align}
  where $X$ is a $\mathbb{R}^d$-valued Brownian motion under the Wiener measure
  $\bP_0$ (see Remark~\ref{R:beta-0}), where $f_\epsilon$ is as in
  Remark~\ref{R:f_eps}.
\end{proposition}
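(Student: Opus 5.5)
Existence and uniqueness come first. The smoothed noise $W^\epsilon$ is a Gaussian noise of the type in Definition~\ref{D:Noise}, with spectral measure $e^{-\epsilon|\xi|^2}\widehat f(\ud\xi)$ by \eqref{E:fhat_eps}. This measure is finite, since
\[
\widehat{f_\epsilon}(\R^d)=(2\pi)^d k(2\epsilon)<\infty .
\]
It therefore satisfies Dalang's condition \eqref{E:Dalang}; in fact it satisfies the strengthened condition \eqref{E:stren} for every $\eta<1$. Hence Theorem~\ref{T:she-well-posedness}, applied with $\mu$ equal to Lebesgue measure, gives a unique random field solution $u^\epsilon$ of \eqref{E:smoothed-equ}.

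For the Feynman--Kac formula, the plan is to reproduce Step~1 of the proof of Proposition~\ref{P:Y-finite-finite-mass}, with $W$ replaced by $W^\epsilon$. The covariance $f_\epsilon=p_{2\epsilon}*f$ is now a bounded continuous function, so $\widehat{f_\epsilon}(\R^d)=f_\epsilon(0)$. Since $f_\epsilon$ is smooth, for a fixed continuous path $X$ independent of $W$ the integral $\int_0^t \dot W^\epsilon(u,x+X_{t-u})\ud u=\int_0^t W^\epsilon(\ud u, x+X_{t-u})$ is a well-defined Walsh integral with respect to the filtration $\{\mathcal F_u\}$. Its quadratic variation is deterministic and equals $f_\epsilon(0)\,t$.

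Fix $(t,x)$ and set
\[
\mathcal E_r \coloneqq \exp\Big(\beta\int_0^r W^\epsilon(\ud u,x+X_{t-u})-\tfrac{\beta^2}{2}f_\epsilon(0)r\Big),\qquad r\in[0,t].
\]
This is the exponential martingale in the time variable $r$, and it satisfies the linear equation
\[
\mathcal E_t=1+\beta\int_0^t\mathcal E_r\,W^\epsilon(\ud r,x+X_{t-r}),
\]
exactly as in \eqref{E:Singular-d}. Define $v(t,x)\coloneqq\bE_0[\mathcal E_t]$. Using stochastic Fubini to exchange $\bE_0$ with the stochastic integral, then conditioning on $X_{t-r}=z-x$, we note that by the Markov property the conditional law of $\mathcal E_r$ depends on the path only through its values on $[t-r,t]$. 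The reversed segment $u\mapsto x+X_{t-u}$, $u\in[0,r]$, is then a Brownian path from $z$ at time $r$ run backward, unconstrained at its far end. The conditional expectation of $\mathcal E_r$ is thus $v(r,z)$, and the density of $x+X_{t-r}$ is $p_{t-r}(x-z)$. This gives
\[
v(t,x)=1+\beta\int_0^t\int_{\R^d}p_{t-r}(x-z)\,v(r,z)\,W^\epsilon(\ud r,\ud z),
\]
which is the mild form \eqref{E:mild} with flat initial data (note $J_0\equiv1$). Uniqueness then yields $v=u^\epsilon$, which is \eqref{E:FK_smooth}.

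The main obstacle is to make the Fubini exchange and the conditioning rigorous. This requires integrability and adaptedness: $\mathcal E_r$ must have moments bounded uniformly in $X$, and the resulting field $v$ must be predictable with $\sup_{r,z}\E[v(r,z)^2]<\infty$ so that it falls in the uniqueness class. The uniform moment bound is available because $f_\epsilon(0)<\infty$: $\E[\mathcal E_r^p]=\exp\big(\tfrac{p(p-1)}{2}\beta^2 f_\epsilon(0)r\big)$ for every path. With this bound, the stochastic Fubini theorem applies, for instance in the form used in \cite{hu.huang.ea:15:stochastic}. Alternatively, as a fallback, one can compare chaos expansions: expanding $\mathcal E_t$ in Wiener chaos and averaging over $X$ produces the kernels $f_k$ of \eqref{E:she-chaos-kernel} with $\mu$ equal to Lebesgue measure. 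Proposition~\ref{P:she-chaos-expansion} then identifies the two random fields.
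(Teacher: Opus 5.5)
Your proposal is correct. The existence and uniqueness part is exactly the paper's argument: finiteness of $\widehat{f_\epsilon}(\R^d)$ puts $W^\epsilon$ under Theorem~\ref{T:she-well-posedness}, with $\mu$ equal to Lebesgue measure.

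For the Feynman--Kac formula, the paper gives no argument and imports it from Theorem~5.3 of \cite{hu.huang.ea:15:stochastic}. You instead prove it by rerunning the paper's own Step~1 of Proposition~\ref{P:Y-finite-finite-mass}: exponential martingale and its linear equation, stochastic Fubini, conditioning, then uniqueness. The Brownian bridge is replaced by the time-reversed free path $u\mapsto x+X_{t-u}$. By the Markov property, given $X_{t-r}=z-x$, its restriction to $[0,r]$ has exactly the law of the path defining $v(r,z)$. Since $x+X_{t-r}$ has density $p_{t-r}(x-z)$, you land on the mild equation with $J_0\equiv 1$.

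What your route buys is a self-contained proof within the paper's framework, which also shows where $f_\epsilon(0)<\infty$ is used. The path-uniform bound $\E[\mathcal{E}_r^p]=\exp(\frac{p(p-1)}{2}\beta^2 f_\epsilon(0)r)$ justifies the Fubini exchange. It also gives $\E[v(r,z)^2]\le e^{\beta^2 f_\epsilon(0) r}$, which places $v$ in the uniqueness class. The citation buys only brevity.

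Your chaos-expansion fallback also works. Averaging $\frac{\beta^k}{k!}I_k(h^{\otimes k})$ over $X$ reproduces, after symmetrization, the kernels \eqref{E:she-chaos-kernel} with $\mu$ equal to Lebesgue measure.

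One cosmetic slip. Since $f_\epsilon(0)=\int_{\R^d} p_{2\epsilon}(x)f(x)\,\ud x=k(2\epsilon)$, your two identities $\widehat{f_\epsilon}(\R^d)=(2\pi)^d k(2\epsilon)$ and $\widehat{f_\epsilon}(\R^d)=f_\epsilon(0)$ differ by a factor $(2\pi)^d$ under the paper's Fourier convention. The paper makes the same identification in Remark~\ref{R:f0-convention}. This is harmless: the renormalization you actually use is the direct-space variance $f_\epsilon(0)\,t$ of the Wiener integral, which is the correct constant.
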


\begin{remark}\label{R:FK_smooth}
  As mentioned in~\eqref{E:W-along-path}, the integral with respect to
  $\dot{W}^\epsilon$ in~\eqref{E:FK_smooth} can be interpreted as the limit of a
  sequence of Wiener integrals. That is one can write
  \begin{align*}
    \int_0^t\dot{W}^\epsilon(u,x+X_{t-u})\ud u=\int_0^t\int_{\mathbb{R}^d}\delta_{x+X_{t-u}}(y)W^\epsilon(\ud u,\ud y),
  \end{align*}
  where the right-hand side has to be understood as
  \begin{equation*}
    \int_0^t\int_{\mathbb{R}^d}\delta_{x+X_{t-u}}(y)W^\epsilon(\ud u,\ud y)\\
    = L^2(\Omega) - \lim_{\eta\to0}\int_0^t\int_{\mathbb{R}^d}p_\eta(y-(x+X_{t-u}))W^\epsilon(\ud u,\ud y).
  \end{equation*}
  See~\cite{hu.nualart:09:stochastic}*{Proposition 3.1}
  or~\cite{hu.huang.ea:15:stochastic}*{Proposition 4.2} for a rigorous version
  of this fact in very similar contexts.
\end{remark}

\begin{proof}[Proof of Proposition~\ref{P:FK_smooth}]
  Equation~\eqref{E:smoothed-equ} satisfies the condition of
  Theorem~\ref{T:she-well-posedness}, which can be applied directly. This proves
  existence and uniqueness of a random field solution. The Feynman-Kac
  representation is borrowed from~\cite{hu.huang.ea:15:stochastic}*{Theorem
  5.3}, with minor adaptations to our context.
\end{proof}

We now extend Proposition~\ref{P:FK_smooth} to a wedge initial
condition. This will provide the building blocks to our new representation of
the smoothed polymer measure.

\begin{proposition}\label{P:FK_smooth_p-p}
  We consider the stochastic heat equation~\eqref{E:PAM-Delta} with wedge
  initial condition $\delta_y$, starting at time $s\ge0$, and driven by the
  noise $W^\epsilon$ from Definition~\ref{D:smooth-noise}. Then this equation
  admits a unique random field solution denoted by
  $\cz^\epsilon(s,y; \, t,x; \, \beta)$. The solution admits a Feynman-Kac
  representation of the form
  \begin{align}\label{E:FK_P-P}
    \cz^\epsilon(s,y; \, t,x; \, \beta)
    =\bE_{s,y;t,x} \left[\ce^{\epsilon}_{s,t} (W, X) \right] p_{t-s}(x-y),
  \end{align}
  where, similarly to~\eqref{E:Singular-b}, we set
  \begin{align}\label{E:FK_smooth_p-p-b}
  \ce^{\epsilon}_{s,t} (W, X) \coloneqq \exp \left(\beta \int_s^t \dot{W}^{\epsilon} (u, X_u) \ud u - \frac{\beta^2}{2} f_{\epsilon} (0) (t - s) \right),
  \end{align}
  and $\bE_{s,y;t,x}$ denotes the expected value for a Brownian bridge $X$
  going from $(s,y)$ to $(t,x)$.
\end{proposition}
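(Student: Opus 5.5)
Existence and uniqueness are immediate. Since $W^\epsilon$ is a noise of the type in Definition~\ref{D:Noise} with spectral measure $e^{-\epsilon|\xi|^2}\widehat f(\ud\xi)$ by~\eqref{E:fhat_eps}, Dalang's condition~\eqref{E:Dalang} holds for it. Theorem~\ref{T:she-well-posedness}, with the Dirac initial condition $\delta_y$ at time $s$, therefore yields a unique mild solution $\cz^\epsilon(s,y;\,t,x;\,\beta)$ of the form~\eqref{E:Mild-Z} with $W$ replaced by $W^\epsilon$.

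For the representation~\eqref{E:FK_P-P}, I will reproduce Step~1 of the proof of Proposition~\ref{P:Y-finite-finite-mass}, now for the noise $W^\epsilon$. That argument only used the finiteness of the total spectral mass, and here $\widehat{f_\epsilon}(\R^d)=f_\epsilon(0)<\infty$. Concretely, I first define $\int_s^t\dot W^\epsilon(u,X_u)\,\ud u$ for a continuous path $X$ as the $L^2(\Omega)$-limit of Remark~\ref{R:FK_smooth}. By It\^o's isometry its variance is $(t-s)f_\epsilon(0)$. It follows that, for fixed $X$, the process $r\mapsto \ce^\epsilon_{s,r}(W,X)$ is a square-integrable $\mathcal F_r$-martingale that solves the linear equation
\[
\ce^\epsilon_{s,t}=1+\beta\int_s^t \ce^\epsilon_{s,r}\,W^\epsilon(\ud r,X_r),
\]
exactly as in~\eqref{E:Singular-d}. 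I then set $\widehat\cz^\epsilon(s,y;\,t,x)\coloneqq p_{t-s}(x-y)\,\bE_{s,y;t,x}[\ce^\epsilon_{s,t}]$. Integrating the linear equation against the bridge law and applying a stochastic Fubini theorem to swap $\bE_{s,y;t,x}$ with the stochastic integral, I condition on $X_r=z$. This uses two facts: the bridge marginal density $p_{t-r}(x-z)p_{r-s}(z-y)/p_{t-s}(x-y)$ from Lemma~\ref{L:heat-kernel-factorization}, and the Markov property, which says that given $X_r=z$ the path on $[s,r]$ is a bridge from $(s,y)$ to $(r,z)$. Together they give
\[
\widehat\cz^\epsilon(s,y;\,t,x)=p_{t-s}(x-y)+\beta\int_s^t\!\int_{\R^d}p_{t-r}(x-z)\,\widehat\cz^\epsilon(s,y;\,r,z)\,W^\epsilon(\ud r,\ud z).
\]
By uniqueness, $\widehat\cz^\epsilon=\cz^\epsilon$.

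The main obstacle is justifying the stochastic Fubini step and the conditioning identity rigorously. The first point is that the stochastic integral is taken along a random path, so one must define it jointly measurably in $(W,X)$. I would do this through the $\eta\to0$ regularisation $p_\eta(\cdot-X_u)$ of Remark~\ref{R:FK_smooth} and check the convergence in $L^2(\Omega\times C)$ under the bridge law. The second point is integrability near the endpoints $r\downarrow s$ and $r\uparrow t$, where the bridge density degenerates. I would control this with the bound $\E[(\ce^\epsilon_{s,r})^2]\le e^{\beta^2 f_\epsilon(0)(r-s)}$, which is uniform in $X$, combined with $\int p_{t-r}(x-z)p_{r-s}(z-y)\,\ud z=p_{t-s}(x-y)$. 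Together these give a finite $L^2(\Omega\times C;\,\ud r\,\ud z)$ norm for the integrand, which is what the stochastic Fubini theorem requires. Alternatively, one can mimic the proof of~\cite{hu.huang.ea:15:stochastic}*{Theorem~5.3} as in Proposition~\ref{P:FK_smooth}, first approximating $\delta_y$ by $p_\eta(\cdot-y)$ and then letting $\eta\to0$ using continuity of the mild solution in its initial data.
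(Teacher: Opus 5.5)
Your proposal is correct and follows the paper's proof. The paper proves this proposition by invoking Step~1 of the proof of Proposition~\ref{P:Y-finite-finite-mass}: it averages the exponential martingale over the bridge, swaps with a stochastic Fubini theorem, conditions on $X_r$, and concludes by uniqueness of the mild solution, noting that $f_\epsilon(0)=\widehat{f_\epsilon}(\R^d)<\infty$ identifies~\eqref{E:Singular-b} with~\eqref{E:FK_smooth_p-p-b}. Your extra remarks on measurability and on the bound $\E[(\ce^\epsilon_{s,r})^2]=e^{\beta^2 f_\epsilon(0)(r-s)}$ justifying the Fubini step supply details the paper leaves implicit.
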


\begin{proof}
  The main claim~\eqref{E:FK_P-P} has already been proved in
  Proposition~\ref{P:Y-finite-finite-mass} - Step 1. Let us just mention the trivial
  relation $f_{\epsilon} (0) = \widehat{f}_{\epsilon} (\R^d)$ for a complete
  identification of relations~\eqref{E:Singular-b}
  and~\eqref{E:FK_smooth_p-p-b}.
\end{proof}

As a last preliminary step before giving our alternative definition of the
polymer measure, let us label a representation for a spatial integral of the
process $\cz^\epsilon$.

\begin{lemma}\label{L:Zeps-star}
  For $\epsilon > 0$, let $\cz^\epsilon$ be the field introduced in
  Proposition~\ref{P:FK_smooth_p-p}. We consider the renormalization variable
  $\cz^\epsilon(s,y;t,*;\beta)$ defined similarly to
  Definition~\ref{D:PolyMeasure}:
  \begin{align} \label{E:Zeps_star}
    \cz^\epsilon(s,y;t,*;\beta) = \int_{\mathbb{R}^d}\cz^\epsilon(s,y; \, t,x; \, \beta)\ud x.
  \end{align}
  For $t \ge 0$ we also define a random variable $Z_t^{\epsilon}$ as
  \begin{align}\label{E:Zeps_t}
    Z^\epsilon_t = \bE_0\left[\exp\left(\beta\int_0^t\dot{W}^\epsilon(u,X_u)\ud u\right)\right],
  \end{align}
  where we recall that $\bE_0$ stands for the expectation under the Wiener
  measure. Then we have
  \begin{align}\label{E:Zeps_star_by_Z}
    \cz^\epsilon(0,0;T,*;\beta)=Z^\epsilon_T \, \exp\left(-\frac{\beta^2}{2}f_\epsilon(0)T\right).
  \end{align}
\end{lemma}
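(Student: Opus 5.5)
The plan is to integrate the bridge Feynman--Kac formula of Proposition~\ref{P:FK_smooth_p-p} in the endpoint variable and recognize a free Brownian expectation. Apply~\eqref{E:FK_P-P} with $(s,y)=(0,0)$ and $t=T$:
\[
  \cz^\epsilon(0,0;T,x;\beta)=p_T(x)\,\bE_{0,0;T,x}\left[\ce^\epsilon_{0,T}(W,X)\right].
\]
Integrating in $x$ as in~\eqref{E:Zeps_star} gives
\[
  \cz^\epsilon(0,0;T,*;\beta)=\int_{\R^d}p_T(x)\,\bE_{0,0;T,x}\left[\ce^\epsilon_{0,T}(W,X)\right]\ud x .
\]
Since the law of a standard Brownian motion on $[0,T]$ started at $0$ disintegrates over $X_T$ (which has density $p_T$) into Brownian bridges from $(0,0)$ to $(T,x)$, the right-hand side equals $\bE_0[\ce^\epsilon_{0,T}(W,X)]$. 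This is exactly the manipulation used for~\eqref{E:bEcE=Z} in Step~2 of the proof of Proposition~\ref{P:Y-finite-finite-mass}.

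Next I pull out the deterministic factor. By~\eqref{E:FK_smooth_p-p-b}, $\ce^\epsilon_{0,T}=\exp(\beta\int_0^T\dot W^\epsilon(u,X_u)\ud u)\,\exp(-\tfrac{\beta^2}{2}f_\epsilon(0)T)$. The second factor does not depend on $X$, so it comes out of $\bE_0$, and what remains is $Z^\epsilon_T$ from~\eqref{E:Zeps_t}. That yields~\eqref{E:Zeps_star_by_Z}.

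The only real obstacle is measure-theoretic. The stochastic integral $\int_0^T\dot W^\epsilon(u,X_u)\ud u$ is defined through the $L^2(\Omega)$ limit of Remark~\ref{R:FK_smooth}, for each fixed path. The disintegration identity needs a version of this integral that is jointly measurable in $(W,X)$, and the mixing over $x$ must be justified for a.e.\ $W$.
\begin{itemize}
  \item I would first note that, because $f_\epsilon$ is smooth and bounded, $\dot W^\epsilon(u,\cdot)$ is a genuine smooth Gaussian field in space. One can then choose a jointly measurable version, for instance as the a.s.\ limit along a subsequence of the approximations $\int p_\eta(y-X_u)W^\epsilon(\ud u,\ud y)$, taken in $L^2(\PP\otimes\bP_0)$.
  \item With that version fixed, Tonelli's theorem (everything is nonnegative) lets me exchange $\int\ud x$ with the bridge expectations for $\PP$-a.e.\ $W$.
  \item Both sides of~\eqref{E:Zeps_star_by_Z} are then finite, since their $\PP$-expectations equal $\exp(\tfrac{\beta^2}{2}f_\epsilon(0)T)\cdot 1$ by Fubini. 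This gives the identity almost surely.
\end{itemize}
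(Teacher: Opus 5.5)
Your proposal is correct and follows the paper's proof essentially verbatim. You apply \eqref{E:FK_P-P} with $(s,y;t,x)=(0,0;T,x)$, integrate against $p_T(x)\,\ud x$ to de-condition the Brownian bridge into a free Brownian motion, and pull the deterministic factor $e^{-\beta^2 f_\epsilon(0)T/2}$ out of $\bE_0$. Your measurability and Tonelli remarks are extra care that the paper leaves implicit. One small correction there: each side of \eqref{E:Zeps_star_by_Z} has $\PP$-expectation $1$, whereas $\E[Z^\epsilon_T]=e^{\beta^2 f_\epsilon(0)T/2}$.
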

\begin{proof}
  By~\eqref{E:FK_P-P} and the definition~\eqref{E:Zeps_star} of
  $\cz^\epsilon(0,0;T,*;\beta)$, we have
  \begin{align*}
    \cz^\epsilon(0,0;T,*;\beta)
    = & \int_{\mathbb{R}^d}\cz^\epsilon(0,0;T,x;\beta) \, \ud x \\
    = & \int_{\mathbb{R}^d}\bE_{0,0;T,x}\left[\ce^{\epsilon}_{0, T} (W, X)
    \right] p_T(x) \, \ud x ,
  \end{align*}
  where we recall that $\ce^{\epsilon}_{s,t} (W, X)$ is defined
  by~\eqref{E:FK_smooth_p-p-b}. Moreover, the integral with respect to $p_T(x)$
  above represents a de-conditioning for the relation $X_T = x$ defining the
  Brownian bridge. We thus get
  \begin{equation}\label{E:Zeps_star_FK}
    \cz^\epsilon(0,0;T,*;\beta)
    =  \bE_{0}\left[\exp\left( \beta\int_0^T\dot{W}^\epsilon(u,X_u) \ud u -\frac{1}{2}\beta^2f_\epsilon(0)T\right)\right],
  \end{equation}
  where we recall that $\bE_{0}$ represents the expected value for a Brownian
  motion starting from the origin. We now identify the right-hand side
  in~\eqref{E:Zeps_star_FK}, thanks to the definition~\eqref{E:Zeps_t}. This
  easily yields our claim~\eqref{E:Zeps_star_by_Z}, which completes the proof of
  Lemma~\ref{L:Zeps-star}.
\end{proof}

Having defined renormalization constants for the smoothed noise, we are ready to
give our alternative definition of the polymer measure.

\begin{definition}\label{D:Polymer-measure-alt}
  For a fixed realization $W^\epsilon$ of the noise in
  Definition~\ref{D:smooth-noise}, and a temperature parameter $\beta > 0$, we
  define a measure $\bP_T^{\beta, W^\epsilon}$ on $C ([0,T]; \R^d)$ equipped
  with the Borel $\sigma$-algebra $\mathcal{G}$, by specifying the finite
  dimensional distributions of the canonical process $X$. Those finite
  dimensional distributions are expressed by considering test functions $\phi
  \in C_{b}(\R^{d\times k})$, some times $0 \le t_1 < \cdots < t_k < T$, and
  setting
  \begin{equation}\label{E:fdd-eps}
    \bE_T^{\beta,W^\epsilon} \left[\phi(X_{t_1},\ldots ,X_{t_k})\right]
    = \frac{1}{Z^\epsilon_T}\bE_0\left[e^{\beta{\int_0^T\dot{W}^\epsilon(u,X_u) \ud u}}\phi(X_{t_1},\ldots ,X_{t_k})\right]
  \end{equation}
  where we recall that $Z^\epsilon_T$ is defined by~\eqref{E:Zeps_t} and
  $\bE_{0}$ is the expectation with respect to a standard Brownian motion $X$
  starting from the origin.
\end{definition}

\begin{remark}
  At first sight, definition~\eqref{E:fdd-eps} looks exactly like the smooth
  polymer measure introduced in~\cite{rovira.tindel:05:on}. However, starting
  from relation~\eqref{E:fdd-eps}, inserting the renormalization term
  $\frac{1}{2}\beta^2f_\epsilon(0)T$ and invoking~\eqref{E:Zeps_star_by_Z}, it
  is readily checked that
  \begin{align}\label{E:fdd-eps-renorm}
    \bE_T^{\beta,W^\epsilon} \left[\phi(X_{t_1},\ldots ,X_{t_k})\right]
    = & \frac{1}{Z^\epsilon_T\,e^{-\frac{1}{2}\beta^2f_\epsilon(0)T}}\bE_0\left[\ce_{0, T} (W, X) \phi(X_{t_1},\ldots ,X_{t_k})\right] \nonumber \\
    = & \frac{1}{\cz^\epsilon(0,0;T,*;\beta)} \bE_0\left[\ce_{0, T} (W, X) \phi(X_{t_1},\ldots ,X_{t_k})\right] .
  \end{align}
  This definition is closer to the one we had in~\eqref{E:fdd-test}.
\end{remark}

An important point for this section is that
Definition~\ref{D:Polymer-measure-alt} is compatible with our previous notion of
polymer, whenever one considers a smoothed noise. We summarize this property in
the proposition below.

\begin{proposition}\label{P:same-def-polymer}
  Let $W^\epsilon$ be the noise given in Definition~\ref{D:smooth-noise}. For a
  fixed realization of $W^\epsilon$, the polymer measure given in
  Definition~\ref{D:Polymer-measure-alt} coincides with that in
  Definition~\ref{D:PolyMeasure}. More specifically, for any test functions
  $\phi \in C_{b}(\R^{d\times k})$ and any tuple of times $0 \le t_1 < \cdots <
  t_k < T$, the natural extension of~\eqref{E:fdd-test} to the time interval
  $[0,T]$ is the same as~\eqref{E:fdd-eps-renorm}.
\end{proposition}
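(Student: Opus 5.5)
The plan is to start from the right-hand side of~\eqref{E:fdd-eps-renorm} and condition the Brownian path on its values at the times $t_1<\dots<t_k$, using the Markov property of Brownian motion under $\bP_0$. Set $t_0\coloneqq 0$, $x_0\coloneqq 0$, and let $\ce^\epsilon_{0,T}$ denote the renormalized exponential~\eqref{E:FK_smooth_p-p-b}, which is the weight appearing in~\eqref{E:fdd-eps-renorm}. The additivity of the time integral in the exponent of~\eqref{E:FK_smooth_p-p-b} gives the factorization
\[
  \ce^\epsilon_{0,T}(W,X)=\prod_{j=1}^{k}\ce^\epsilon_{t_{j-1},t_j}(W,X)\times\ce^\epsilon_{t_k,T}(W,X),
\]
exactly as in Step~2 of the proof of Proposition~\ref{P:Y-finite-finite-mass}.

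Conditionally on $\{X_{t_j}=x_j\}_{1\le j\le k}$, the pieces of $X$ on $[t_{j-1},t_j]$ are independent Brownian bridges from $(t_{j-1},x_{j-1})$ to $(t_j,x_j)$, and the last piece on $[t_k,T]$ is a free Brownian motion started at $(t_k,x_k)$. The joint density of $(X_{t_1},\dots,X_{t_k})$ is $\prod_{j=1}^k p_{t_j-t_{j-1}}(x_j-x_{j-1})$. Hence
\[
  \bE_0\left[\ce^\epsilon_{0,T}\,\phi(X_{t_1},\dots,X_{t_k})\right]
  =\int_{\R^{kd}}\phi(\bx_k)\prod_{j=1}^k p_{t_j-t_{j-1}}(x_j-x_{j-1})\,\bE_{t_{j-1},x_{j-1};t_j,x_j}\left[\ce^\epsilon_{t_{j-1},t_j}\right]\,\bE_{t_k,x_k}\left[\ce^\epsilon_{t_k,T}\right]\ud\bx_k .
\]
By Proposition~\ref{P:FK_smooth_p-p}, each factor $p_{t_j-t_{j-1}}(x_j-x_{j-1})\,\bE_{t_{j-1},x_{j-1};t_j,x_j}[\ce^\epsilon_{t_{j-1},t_j}]$ equals $\cz^\epsilon(t_{j-1},x_{j-1};t_j,x_j;\beta)$. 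For the last factor, we de-condition on the endpoint as in the proof of Lemma~\ref{L:Zeps-star}, using time and space shifts, to identify $\bE_{t_k,x_k}[\ce^\epsilon_{t_k,T}]=\cz^\epsilon(t_k,x_k;T,*;\beta)$. Dividing by $\cz^\epsilon(0,0;T,*;\beta)$ then reproduces the $[0,T]$-version of~\eqref{E:fdd-test} with $\cz$ replaced by $\cz^\epsilon$, which proves the claim.

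The main obstacle is justifying the conditioning step for a fixed realization of $W^\epsilon$. Here $\int\dot W^\epsilon(u,X_u)\ud u$ is only an $L^2(\Omega)$-limit as in Remark~\ref{R:FK_smooth}, so its disintegration along bridges must be handled with care. I would first work with $\dot W^\epsilon$ replaced by a space-smooth approximation for which the path integral is a genuine pathwise functional. For such an approximation the Markov/bridge decomposition is classical. I would then pass to the limit in $L^1(\Omega)$ on both sides, using Fubini and the $L^2$ bounds underlying~\eqref{E:W-along-path}. Finally, almost-sure identities for each fixed $(\bt_k,\phi)$ extend to all test functions and times by continuity and a countable dense family, which uses the H\"older continuity of Theorem~\ref{T:transition-properties}.
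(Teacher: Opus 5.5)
Your proof is correct and follows the paper's argument. You factor $\ce^\epsilon_{0,T}$ over the intervals $[t_i,t_{i+1}]$, condition on the skeleton to obtain bridge expectations times heat kernels, and identify each factor via \eqref{E:FK_P-P}. The paper also conditions on $X_T$ and integrates out $x_{k+1}$, which is exactly your identification of the last factor with $\cz^\epsilon(t_k,x_k;T,*;\beta)$; note that this identity comes from integrating \eqref{E:FK_P-P} in the endpoint for the fixed realization, not from shift invariance, which holds only in law.

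The paper simply conditions with $W^\epsilon$ fixed and does not carry out your extra regularization step. If you do include it, the approximation must be in time rather than in space, because $\dot W^\epsilon$ is already smooth in space and is only white in time.
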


\begin{remark}
  In Section~\ref{S:first-properties-of-polymer}, we have exclusively worked
  with a polymer defined on the interval $[0,1]$. Here we assume that the
  trivial extension of this construction to an arbitrary interval $[0,T]$ is
  already carried out. The polymer measure on $[0,T]$ is denoted by
  $\bP^{\beta,W}_T$ for the singular noise $\dot{W}$ and
  $\bP^{\beta,W^\epsilon}_T$ for the smoothed noise $\dot{W}^{\epsilon}$.
\end{remark}

\begin{proof}[Proof of Proposition~\ref{P:same-def-polymer}]
  Let $\phi$ and $t_1,\dots,t_k$ be as in the proposition. We start from the
  definition~\eqref{E:fdd-eps}, written in the form~\eqref{E:fdd-eps-renorm}.
  Next use simple additivity properties in order to write the exponential
  factor in~\eqref{E:fdd-eps-renorm} as
  \begin{align*}
    \beta\int_0^T\dot{W}^\epsilon(u,X_u)\ud u-\frac{\beta^2}{2}f_\epsilon(0)T
    =\sum_{i=0}^k\beta\int_{t_i}^{t_{i+1}}\dot{W}^\epsilon(u,X_u)\ud u-\frac{\beta^2}{2}f_\epsilon(0)(t_{i+1}-t_i),
  \end{align*}
  where we extend the definition of the $t_i$'s to $t_0 = 0, t_{k+1} = T$ and
  $x_0 = 0$. Plugging this identity into~\eqref{E:fdd-eps-renorm}, we have
  \begin{multline}\label{E:fdd-eps-split}
    \bE_T^{\beta,W^\epsilon} \left[ \phi\left(X_{t_1},\ldots ,X_{t_k}\right)\right]
    =  \frac{1}{\cz^\epsilon(0,0;T,*;\beta)}  \bE_0 \left[\prod_{i = 0}^k \ce^{\epsilon}_{t_i, t_{i + 1}} (W, X) \phi\left(X_{t_1},\ldots ,X_{t_k}\right)\right].
  \end{multline}
  Now consider a functional $\psi$ of the path $X_{[t_i,t_{i+1}]}$ on a generic
  interval $[t_i,t_{i+1}].$ Whenever $X_{t_i}=x_i$ we have
  \begin{align}\label{E:bridge-decond}
    \bE_{x_i}\left[\psi(X_{[t_i,t_{i+1}]})\right]
     & =\int_{\mathbb{R}^d}
    \bE_{x_i}\!\left[\psi(X_{[t_i,t_{i+1}]})| X_{t_{i+1}}=x_{i+1}\right]
    p_{t_{i+1}-t_i}(x_{i+1}-x_i)\ud x_{i+1} \notag\\
     & =\int_{\mathbb{R}^d}\bE_{t_i,x_i;t_{i+1},x_{i+1}}\!\left[\psi(X_{[t_i,t_{i+1}]})\right]
    p_{t_{i+1}-t_i}(x_{i+1}-x_i)\ud x_{i+1}.
  \end{align}
  Hence, conditioning the right-hand side of~\eqref{E:fdd-eps-split}
  successively on $X_{t_1},\dots,X_{t_k},X_T$ (with $W^\epsilon$ fixed) and
  invoking~\eqref{E:bridge-decond} for each conditioning, we get
  \begin{multline}\label{E:proof-same-poly}
    \begin{aligned}
      \bE_T^{\beta,W^\epsilon} \left[ \phi\left(X_{t_1},\ldots ,X_{t_k}\right)\right]
      = & \frac{1}{\cz^\epsilon(0,0;T,*;\beta)}\int_{\mathbb{R}^{d\times(k+1)}} \ud x_1\cdots \ud x_k \ud x_{k+1} \, \phi(x_1,\ldots ,x_k) \\
        & \times \prod_{i=0}^k \bE_{t_i,x_i;t_{i+1},x_{i+1}} \left[ \ce^{\epsilon}_{t_i, t_{i + 1}} (W, X)\right]  p_{t_{i+1}-t_i}(x_{i+1}-x_i),
    \end{aligned}
  \end{multline}
  where we use the convention $t_{k+1}=T$ and interpret $x_{k+1}$ as the
  integration variable corresponding to the endpoint $X_T$.
  Injecting~\eqref{E:FK_P-P} into~\eqref{E:proof-same-poly}, we end up with
  \begin{align}\label{E:equiv-finite-dist}
    \begin{aligned}
        & \bE_T^{\beta,W^\epsilon} \left[\phi(X_{t_1},\ldots ,X_{t_k})\right]                                                                                                                      \\
      = & \frac{1}{\cz^\epsilon(0,0;T,*;\beta)}\int_{\mathbb{R}^{d\times(k+1)}}\ud x_1\cdots \ud x_k \ud x_{k+1}\: \phi(x_1,\ldots ,x_k)\prod_{i=0}^k \cz^\epsilon(t_i,x_i;t_{i+1},x_{i+1};\beta),
    \end{aligned}
  \end{align}
  and the integration in $x_{k+1}$ yields $\cz^\epsilon(t_k,x_k;T,*;\beta)$.
  Therefore,~\eqref{E:equiv-finite-dist} coincides with the natural extension
  of~\eqref{E:fdd-test} to the time interval $[0,T]$. This proves
  Proposition~\ref{P:same-def-polymer}.
\end{proof}

In Lemma~\ref{L:holder}, we have established some H\"{o}lder continuity
properties for the field $\cz$ corresponding to the singular noise $\dot{W}$.
For our future considerations, we need to extend those estimates to uniform
estimates for the family $\{\cz^{\epsilon}; \epsilon > 0\}$. Those results are
collected below: Lemma~\ref{L:holder-eps-sy} deals with the $(s,y)$-increments,
Lemma~\ref{L:holder-eps-tx} takes care of the $(t,x)$-increments, and
Corollary~\ref{C:holder-eps-4} gives a global estimate.

\begin{lemma}[Uniform $(s,y)$-increments]\label{L:holder-eps-sy}
  Assume that Hypothesis~\ref{H:stren} holds. Fix $T_0>0$. Then for all $p\ge
  1$ and $\alpha\in(0,\eta)$ there exists $C=C(\alpha,\beta,p,\eta,T_0)>0$ such
  that for all $\epsilon>0$, all $0\le s\le s'<t\le T_0$ and all $x, y, y' \in
  \R^d$,
  \begin{multline}\label{E:holder-eps-sy}
    \Norm{\cz^\epsilon (s',y'; \, t,x ; \, \beta) - \cz^\epsilon(s, y; \, t,x ; \, \beta)}_{2p} \\
    \le C \times \frac{p_{4(t - s')} \left(x - y\right) + p_{4(t - s')} \left(x - y'\right)}{(t-s')^{\alpha/2}} \times
    \square_\alpha \left(s'-s, y'-y\right).
  \end{multline}
\end{lemma}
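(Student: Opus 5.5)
The plan is to rerun the proof of Lemma~\ref{L:holder} with $W$ replaced by $W^\epsilon$, and to check that every constant is independent of $\epsilon$. By Remark~\ref{R:f_eps}, the noise $W^\epsilon$ is again white in time and spatially homogeneous, with spectral measure $\widehat{f_\epsilon}(\ud\xi)=e^{-\epsilon|\xi|^2}\widehat{f}(\ud\xi)\le \widehat{f}(\ud\xi)$. Every quantity entering the earlier proof is monotone in the spectral measure: the strengthened Dalang constant, $\Upsilon_\eta^\epsilon\le\Upsilon_\eta$; the function $k_\epsilon(t)=k(t+2\epsilon)\le k(t)$ from~\eqref{E:k_t}; and hence the iterates $h_n^\epsilon\le h_n$ from~\eqref{E:hn-recursion}. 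Also, $\cz^\epsilon$ is the mild solution of~\eqref{E:Mild-Z} driven by $W^\epsilon$. Since the time horizon is $T_0$ rather than $1$, I will use Lemma~\ref{L:k-upper-bound} with $T=T_0\vee 1$, which only changes constants.

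First I would establish the uniform moment bound
\[
\Norm{\cz^\epsilon(s,y;t,x;\beta)}_{2p}\le C_{\beta,p,T_0}\,p_{t-s}(x-y),
\]
with $C$ independent of $\epsilon$. To get it, I would use the chaos expansion~\eqref{E:Chaos-Exp-Z} for $\cz^\epsilon$, hypercontractivity in each fixed chaos, and the bound $\Var(I_k(g_k))/p_{t-s}(x-y)^2\le 2^k h^\epsilon_k(t-s)\le 2^kh_k(t-s)$ as in~\eqref{E:vargk-bound}. Lemma~\ref{L:hn-upper-bound} then makes the series summable uniformly in $\epsilon$, giving a $T_0$-dependent Mittag-Leffler constant. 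This is the analogue of~\eqref{E:moment} with uniform constants. Alternatively, one can check that the constant in~\cite{balan.chen:18:parabolic}*{Eq.~(1.8)} depends only on $\Upsilon(\gamma)$, which decreases when $\widehat f$ is replaced by $\widehat{f_\epsilon}$.

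Next I would split $g(t,x)^2\le 3(I_0+\beta^2I_1+\beta^2I_2)$ exactly as before.
\begin{itemize}
\item $I_0$ is deterministic and does not involve the noise, so~\eqref{E:Holder-0} applies unchanged.
\item For $I_1$, I would use BDG, Minkowski, Cauchy--Schwarz and the uniform moment bound, followed by Lemma~\ref{L:heat-kernel-factorization} and Parseval. This leads to the integral $\int_s^{s'}\ud r\int e^{-\frac{(t-r)(r-s)}{t-s}|\xi|^2}e^{-\epsilon|\xi|^2}\widehat f(\ud\xi)$. Dropping $e^{-\epsilon|\xi|^2}\le1$ and applying Lemma~\ref{L:Bridge-t} gives $I_1\le C p_{t-s}^2(x-y)(s'-s)^\eta$ uniformly in $\epsilon$, where the factor $e^{(t-s)/4}\le e^{T_0/4}$ is absorbed into $C$.
\item For $I_2$, I would obtain the integral inequality~\eqref{E:Holder-I2} with $f$ replaced by $f_\epsilon$.
\end{itemize}

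The main obstacle is the final step, the application of Theorem~\ref{T:IntIneq}. Its output constant must depend on the kernel only through quantities such as $\Upsilon$ or $h_n$, which are monotone in the spectral measure. I expect this holds, since such resolvent-type inequalities are proved by iterating the kernel $f\,p_{4(t-r)}\otimes p_{4(t-r)}$, and each iterate is expressible in Fourier as an integral against products of $\widehat f$ with Gaussian factors. Moreover, $g\ge0$ and the iterated integrals are monotone when $\widehat f$ is replaced by $\widehat{f_\epsilon}\le\widehat f$, because the Fourier representation keeps positive weights. So I would check that the bound in Theorem~\ref{T:IntIneq} is of the form $\sum_n C^n h_n(\cdot)$ on $[0,T_0]$, and dominate it by the $\epsilon=0$ case. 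If direct domination is awkward, a fallback is to note that $f_\epsilon=p_{2\epsilon}*f$ and use Fourier positivity to compare each iterate, bounding it by the unsmoothed one. Either way, the resulting bound is~\eqref{E:holder-eps-sy} with $C$ depending only on $(\alpha,\beta,p,\eta,T_0)$.
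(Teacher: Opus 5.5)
\paragraph{Same route as the paper.} Your route is the paper's own: rerun Lemma~\ref{L:holder} with $f_\epsilon=p_{2\epsilon}*f$, and observe that every constant depends on the covariance monotonically through $\widehat f$. Your bookkeeping is correct, and more careful than the paper's, which does not spell out that \eqref{E:moment} is needed uniformly in $\epsilon$.

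The step you call the main obstacle is harmless. In the proof of Theorem~\ref{T:IntIneq} the covariance enters only through Lemma~\ref{L:k-upper-bound}. Since $\Upsilon_\eta^\epsilon\le\Upsilon_\eta$, you may take $\gamma=2\lambda^2C_{T,\eta}\Upsilon_\eta$ for every $\epsilon$.

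\paragraph{The gap.} The real problem is your last sentence, that the outcome is \eqref{E:holder-eps-sy}. No argument can give this, because \eqref{E:holder-eps-sy} is false as printed. Take $y=y'=0$, $|x|=1$, $s=0$, $s'=T_0/2$ and $t=T_0/2+\delta$.
\begin{itemize}
\item The Walsh integral in the mild formulation is centered, so $\E[\cz^\epsilon(s,y;\,t,x;\,\beta)]=p_{t-s}(x-y)$. Using $\Norm{Z}_{2p}\ge|\E Z|$, the left side is at least $|p_\delta(x)-p_{T_0/2+\delta}(x)|$, which tends to $p_{T_0/2}(x)>0$ as $\delta\downarrow0$.
\item The right side equals $2C(T_0/2)^{\alpha/2}\delta^{-\alpha/2}p_{4\delta}(x)$, which tends to $0$.
\end{itemize}

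Concretely, the failure is a mismatch of heat-kernel times. Your $I_0$ and $I_1$ bounds produce $p_{4(t-s)}(x-y)$ and $p_{t-s}^2(x-y)$. But $p_{t-s}(x-y)$ is not dominated by $C\,p_{4(t-s')}(x-y)$ when $t-s'\ll t-s$ and $x\neq y$.

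The paper's proof of Lemma~\ref{L:holder} has the same defect: its final line carries $p_{4(t-s)}$ in both slots, not the $p_{4(t-s')}$ of the statement. You also cannot use \eqref{E:Holder-0} ``unchanged''. With $x=y=y'$ and the same times, its left side is $\asymp\delta^{-d}$ while its right side is only $\asymp\delta^{-\alpha}$.

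\paragraph{What your argument does prove.} Write the deterministic estimate in the mixed form
\[
|p_{t-s'}(x-y')-p_{t-s}(x-y)|\le C(t-s')^{-\alpha/2}\big[p_{4(t-s')}(x-y')+p_{4(t-s)}(x-y)\big]\,\square_\alpha(s'-s,y'-y).
\]
This follows from a routine dichotomy: use the trivial bound when the increment is large compared with $(t-s')^{1/2}$, and mean-value bounds otherwise. Then, in the Gr\"onwall step, take $\mu=C\,\square_\alpha(s'-s,y'-y)\big(\delta_{y'}+p_{4(s'-s)}(\cdot-y)\big)$. Your argument then yields
\[
  \Norm{\cz^\epsilon (s',y'; \, t,x ; \, \beta) - \cz^\epsilon(s, y; \, t,x ; \, \beta)}_{2p}
  \le C\,\frac{p_{4(t-s')}(x-y')+p_{4(t-s)}(x-y)}{(t-s')^{\alpha/2}}\,\square_\alpha(s'-s,y'-y),
\]
with $C$ uniform in $\epsilon$. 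This is the exact counterpart of \eqref{E:holder-eps-tx}, and it is the statement you should aim for.
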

\begin{proof}
  The proof is identical to that of Lemma~\ref{L:holder}, replacing $f$ by
  $f_\epsilon=p_{2\epsilon}*f$ (see Remark~\ref{R:f_eps}). At the heart of our
  uniform results lies the following fact: due to relation~\eqref{E:fhat_eps},
  the strengthened Dalang condition~\eqref{E:stren} is satisfied uniformly in
  $\epsilon$. That is $f_\epsilon$ verifies:
  \begin{align}\label{E:Upsilon-eps-bound}
    \Upsilon_{\eta}^{\epsilon}
    \coloneqq  \int_{\R^d}\frac{\widehat{f_\epsilon}(\ud\xi)}{(1+|\xi|^2)^{1-\eta}}
    =   \int_{\R^d}\frac{e^{-\epsilon|\xi|^2}\widehat{f}(\ud\xi)}{(1+|\xi|^2)^{1-\eta}}
    \le \int_{\R^d}\frac{\widehat{f}(\ud\xi)}{(1+|\xi|^2)^{1-\eta}}
    =   \Upsilon_{\eta}.
  \end{align}
  We omit the remaining details, since they are identical to those in
  Lemma~\ref{L:holder} after replacing $f$ by $f_\epsilon$.
\end{proof}

\begin{lemma}[Uniform $(t,x)$-increments]\label{L:holder-eps-tx}
  Assume that Hypothesis~\ref{H:stren} holds. Fix $T_0>0$. Then for all $p\ge
  1$ and $\alpha\in(0,\eta)$ there exists $C=C(\alpha,\beta,p,\eta,T_0)>0$ such
  that for all $\epsilon>0$, all $0\le s<t\le t'\le T_0$ and all $x, x', y \in
  \R^d$,
  \begin{multline}\label{E:holder-eps-tx}
    \Norm{\cz^\epsilon (s,y; \, t',x' ; \, \beta) - \cz^\epsilon(s, y; \, t,x ; \, \beta)}_{2p} \\
    \le C \times \frac{p_{4(t - s)} \left(x - y\right) + p_{4(t' - s)} \left(x' - y\right)}{(t-s)^{\alpha/2}} \times
    \square_\alpha \left(t'-t, x'-x\right).
  \end{multline}
\end{lemma}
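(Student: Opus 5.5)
The plan is to follow the scheme of Lemma~\ref{L:holder}, now in the forward variables $(t,x)$, with all constants uniform in $\epsilon$. Two ingredients make the uniformity work. First, the strengthened Dalang bound~\eqref{E:Upsilon-eps-bound} holds for $f_\epsilon$ with $\Upsilon^\epsilon_\eta\le\Upsilon_\eta$. Second, the moment bound~\eqref{E:moment} for $\cz^\epsilon$ holds with a constant independent of $\epsilon$. This second fact follows from the proof in~\cite{balan.chen:18:parabolic}, since that constant depends on $f$ only through $k$ and the functions $h_n$, and these are monotone in the spectral measure, so $k_\epsilon\le k$. Throughout, fix $s,y$ and write $\cz^\epsilon(s,y;\,t,x)=p_{t-s}(x-y)+\beta\,\mathcal{I}^\epsilon(t,x)$, where $\mathcal{I}^\epsilon$ is the Walsh integral from~\eqref{E:Mild-Z} driven by $W^\epsilon$.

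\textbf{Decomposition.} Split the increment into a deterministic part and a stochastic part:
\[
\cz^\epsilon(s,y;t',x')-\cz^\epsilon(s,y;t,x)=\big[p_{t'-s}(x'-y)-p_{t-s}(x-y)\big]+\beta\big[\mathcal{I}^\epsilon(t',x')-\mathcal{I}^\epsilon(t,x)\big].
\]
Decompose the stochastic part further into $J_1+J_2$:
\begin{align*}
J_1&=\int_t^{t'}\!\!\int p_{t'-r}(x'-z)\,\cz^\epsilon(s,y;r,z)\,W^\epsilon(\ud r,\ud z),\\
J_2&=\int_s^{t}\!\!\int \big[p_{t'-r}(x'-z)-p_{t-r}(x-z)\big]\,\cz^\epsilon(s,y;r,z)\,W^\epsilon(\ud r,\ud z).
\end{align*}
This is the reverse of Lemma~\ref{L:holder}: there the solution itself was differenced, whereas here only the kernel is. As a result, no Gronwall-type integral inequality (Theorem~\ref{T:IntIneq}) is needed; it suffices to combine BDG with the uniform moment bound.

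\textbf{Deterministic part and $J_1$.} The deterministic part is handled by~\cite{chen.huang:19:comparison}*{Lemma~3.1}, exactly as in~\eqref{E:Holder-0}. It is bounded by $C(t-s)^{-\alpha/2}\big[p_{4(t-s)}(x-y)+p_{4(t'-s)}(x'-y)\big]\square_\alpha(t'-t,x'-x)$.

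For $J_1$, apply BDG, Minkowski, Cauchy--Schwarz and~\eqref{E:moment} as in~\eqref{E:Holder-pp}. This reduces $\Norm{J_1}_{2p}^2$ to
\[
\int_t^{t'}\ud r\iint f_\epsilon(z-z')\,p_{t'-r}(x'-z)\,p_{r-s}(z-y)\,p_{t'-r}(x'-z')\,p_{r-s}(z'-y).
\]
Factorize with~\eqref{E:brownian-bridge-density} and pass to Fourier variables. The integral becomes
\[
p_{t'-s}^2(x'-y)\int_t^{t'}\ud r\int e^{-\frac{(t'-r)(r-s)}{t'-s}|\xi|^2}\,\widehat{f_\epsilon}(\ud\xi).
\]
Lemma~\ref{L:Bridge-t} together with~\eqref{E:Upsilon-eps-bound} then gives the bound $C\,p_{t'-s}^2(x'-y)(t'-t)^\eta$, uniformly in $\epsilon$.

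\textbf{$J_2$: the main obstacle.} After BDG and the moment bound, $\Norm{J_2}_{2p}^2$ is controlled by a double integral. The integrand contains $f_\epsilon(z-z')$, the kernel difference $\Delta_r(z):=|p_{t'-r}(x'-z)-p_{t-r}(x-z)|$ (once in $z$ and once in $z'$), and the factors $p_{r-s}(z-y)\,p_{r-s}(z'-y)$. The difficulty is extracting the Hölder factor $\square_\alpha^2(t'-t,x'-x)$ while keeping the time integral convergent near $r=t$. I would first use the pointwise estimate from~\cite{chen.huang:19:comparison}*{Lemma~3.1}:
\[
\Delta_r(z)\le C\,(t-r)^{-\alpha/2}\square_\alpha(t'-t,x'-x)\big[p_{4(t-r)}(x-z)+p_{4(t'-r)}(x'-z)\big].
\]
Inserting this on both sides and applying~\eqref{E:heat-kernel-factorization} to each product $p_{4(\cdot)}\,p_{r-s}$ produces the spatial prefactor $p_{4(t-s)}(x-y)+p_{4(t'-s)}(x'-y)$, after adjusting constants in the time-scale of the heat kernels. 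What remains is a time integral of the form $\int_s^t (t-r)^{-\alpha}\,k_\epsilon(c\,\rho(r))\,\ud r$, where $\rho(r)$ is the bridge variance. Near $r=t$ it behaves like $k(t-r)\lesssim(t-r)^{\eta-1}$ by Lemma~\ref{L:k-upper-bound}, and near $r=s$ like $(r-s)^{\eta-1}$. Since $\alpha<\eta$, the product is integrable, and the result is bounded by $C(t-s)^{-\alpha}$, uniformly in $\epsilon$ because $k_\epsilon\le k$. If the mixed case, with the $t$-kernel on one side and the $t'$-kernel on the other, causes trouble in the factorization, I would bound the cross term by Cauchy--Schwarz in the $\mathcal{H}$-inner product and treat each diagonal term separately. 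Collecting the three pieces gives~\eqref{E:holder-eps-tx}.
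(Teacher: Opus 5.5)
Your proposal is correct and follows the paper's route. The paper's proof specializes the Chen--Huang argument for Theorem~1.8 to the linear equation driven by $f_\epsilon$, and notes that the constants depend on the covariance only through Fourier integrals bounded by $\Upsilon^\epsilon_\eta\le\Upsilon_\eta$. You carry that argument out explicitly, with uniformity coming from $\widehat{f_\epsilon}\le\widehat f$, which gives both an $\epsilon$-free version of \eqref{E:moment} and the bound via Lemma~\ref{L:k-upper-bound}; the only step left implicit is the reflection $r\mapsto s+t'-r$ before applying Lemma~\ref{L:Bridge-t} to $J_1$.
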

\begin{proof}
  This follows by specializing the arguments
  of~\cite{chen.huang:19:comparison}*{Proof of Theorem~1.8} to the linear
  equation~\eqref{E:smoothed-equ} (with the delta initial condition $\delta_y$)
  and spatial covariance~$f_\epsilon$. The constants in that proof depend on the
  covariance only through Fourier-side integrals of the form
  $\int_{\R^d}\frac{\widehat{f_\epsilon}(\ud\xi)}{(1+|\xi|^2)^{1-\alpha}}$.
  Since $\alpha<\eta$, these integrals are bounded by $\Upsilon^\epsilon_\eta\le
  \Upsilon_\eta$ thanks to~\eqref{E:Upsilon-eps-bound}, and therefore the
  constant $C$ in~\eqref{E:holder-eps-tx} is uniform in~$\epsilon$.
\end{proof}

We have now all the ingredients to state the following corollary, which won't be
used in this paper and is included for the sake of completeness and future
reference.

\begin{corollary}[Uniform four-parameter increments]\label{C:holder-eps-4}
  Under the assumptions of both Lemma~\ref{L:holder-eps-sy}
  and~\ref{L:holder-eps-tx}, for all $p\ge1$ and $\alpha\in(0,\eta)$ there
  exists $C=C(\alpha,\beta,p,\eta,T_0)>0$ such that for all $\epsilon>0$, all $0
  \le s \le s'<t\le t'\le T_0$ and all $x, x', y, y' \in \R^d$,
  \begin{multline}\label{E:holder-eps-4}
    \Norm{\cz^\epsilon (s',y'; \, t',x' ; \, \beta) - \cz^\epsilon(s, y; \, t,x ; \, \beta)}_{2p} \\
    \le C \times \frac{p_{4(t - s)} \left(x - y\right) + p_{4(t' - s)} \left(x' - y\right)}{(t-s)^{\alpha/2}} \times
    \square_\alpha \left(t'-t, x'-x\right)                                                        \\
    + C \times \frac{p_{4(t' - s')} \left(x' - y\right) + p_{4(t' - s')} \left(x' - y'\right)}{(t'-s')^{\alpha/2}} \times
    \square_\alpha \left(s'-s, y'-y\right).
  \end{multline}
\end{corollary}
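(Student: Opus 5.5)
The plan is a triangle inequality through an intermediate point, followed by the two uniform increment lemmas. Given $0\le s\le s'<t\le t'\le T_0$ and $x,x',y,y'\in\R^d$, insert the intermediate quantity $\cz^\epsilon(s,y;\,t',x';\,\beta)$ and write
\begin{align*}
  \cz^\epsilon (s',y'; \, t',x' ; \, \beta) - \cz^\epsilon(s, y; \, t,x ; \, \beta)
  = & \left[\cz^\epsilon (s',y'; \, t',x' ; \, \beta) - \cz^\epsilon(s, y; \, t',x' ; \, \beta)\right] \\
  & + \left[\cz^\epsilon (s,y; \, t',x' ; \, \beta) - \cz^\epsilon(s, y; \, t,x ; \, \beta)\right].
\end{align*}
By Minkowski's inequality in $L^{2p}(\Omega)$, it then suffices to bound each bracket separately.

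For the second bracket, only the forward variables move, from $(t,x)$ to $(t',x')$ with $s<t\le t'$. This is exactly the configuration of Lemma~\ref{L:holder-eps-tx}, which yields
\begin{equation*}
  C\,\frac{p_{4(t-s)}(x-y)+p_{4(t'-s)}(x'-y)}{(t-s)^{\alpha/2}}\,\square_\alpha(t'-t,x'-x).
\end{equation*}
This is the first line on the right-hand side of~\eqref{E:holder-eps-4}, with a constant uniform in $\epsilon$.

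For the first bracket, the forward point is frozen at $(t',x')$ and only the backward variables move, from $(s,y)$ to $(s',y')$. Since $s\le s'<t\le t'$, we have $s'<t'$, so Lemma~\ref{L:holder-eps-sy} applies with $(t,x)$ replaced by $(t',x')$. It gives
\begin{equation*}
  C\,\frac{p_{4(t'-s')}(x'-y)+p_{4(t'-s')}(x'-y')}{(t'-s')^{\alpha/2}}\,\square_\alpha(s'-s,y'-y),
\end{equation*}
which is precisely the second line of~\eqref{E:holder-eps-4}. Both lemmas are stated for all $\epsilon>0$, $p\ge1$, $\alpha\in(0,\eta)$ and $T_0$, with constants depending only on $(\alpha,\beta,p,\eta,T_0)$. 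Taking $C$ to be the larger of the two constants therefore gives~\eqref{E:holder-eps-4} uniformly in $\epsilon$.

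There is no genuine obstacle here. The only point requiring care is the choice of intermediate point, which must match the prefactors in~\eqref{E:holder-eps-4}. We use $(s,y;t',x')$ rather than $(s',y';t,x)$: the alternative would lead to prefactors involving $(t-s')$, which do not match the statement and could be singular when $s'$ is close to $t$. We also check that the hypotheses of each lemma hold, namely $s<t\le t'$ for the $(t,x)$-increment and $s\le s'<t'$ for the $(s,y)$-increment; both follow from the ordering $s\le s'<t\le t'$.
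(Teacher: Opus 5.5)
Your proposal is correct and is exactly the argument the paper intends. The paper states the corollary without a written proof, as a direct consequence of Lemmas~\ref{L:holder-eps-sy} and~\ref{L:holder-eps-tx} via Minkowski's inequality through the intermediate point $\cz^\epsilon(s,y;\,t',x';\,\beta)$, which produces exactly the prefactors $(t-s)^{-\alpha/2}$ and $(t'-s')^{-\alpha/2}$ in~\eqref{E:holder-eps-4}, and your check of the orderings $s<t\le t'$ and $s\le s'<t'$ needed for each lemma is right.
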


\subsection{Diffusive behavior}

Let us go back to our original polymer measure $\bP_T^{\beta,W}$ introduced in
Definition~\ref{D:PolyMeasure}, but now considered on an arbitrary interval
$[0,T]$. The purpose of this section is to show that as $T$ tends to infinity,
in the so-called $L^2$-region (that is, when $\beta$ is small), $X_T$ presents
Gaussian behavior under the polymer measure $\bP_T^{\beta,W}$. Like for most
polymer models considered in the literature (see, e.g.,
\cite{comets:17:directed} for a review), note that the diffusive phenomenon only
occurs in dimension $d\ge3.$ This is related to some results stating boundedness
in $L^2(\Omega)$ for the solution of the stochastic heat equation. We quote this
result from \cite{chen.ouyang.ea:24:on}, although it can be traced back
to~\cite{chen.eisenberg:24:invariant}.

\begin{proposition}\label{P:L2-region}
  Let $W$ be a noise as in Definition~\ref{D:Noise}, defined on $\mathbb{R}^d$
  with $d \ge 3$. We assume that the constant $\Upsilon(0)$ defined
  by~\eqref{E:Dalang} is finite and the parameter $\beta$ is such that
  $\beta<\beta_0$, with
  \begin{align}\label{E:beta_0}
    \beta_0 = \frac{1}{2\Upsilon(0)^{1/2}}.
  \end{align}
  Consider the solution $u$ to equation~\eqref{E:SHE}, with initial condition
  $u(0,x) = 1$. Then the following holds true,
  \[
    \sup_{t\ge0}\sup_{x\in\mathbb{R}^d}\E\left[u(t,x)^2\right]<\infty.
  \]
  In addition, the second moment of $u(t,x)$ admits the Feynman-Kac
  representation
  \begin{align}\label{E:FK-moment}
    \E\left[u(t,x)^2\right] =
    \bE_0\left[\exp\left(\beta^2\int_0^tf(X_s-\widetilde{X}_s)\ud s\right)\right],
  \end{align}
  where $X$ and $\widetilde{X}$ are two independent Brownian motions under
  $\bP_0$ and $f$ is the covariance function of Definition~\ref{D:Noise}. For
  later use, define for $t\ge 0$
  \begin{align}\label{E:def_Lt_L8}
    L_t      \coloneqq \int_0^t                           f(X_s-\widetilde{X}_s) \ud s \quad \text{and} \quad
    L_\infty \coloneqq \lim_{t\to\infty}L_t=\int_0^\infty f(X_s-\widetilde{X}_s) \ud s.
  \end{align}
  In particular, under hypothesis~\eqref{E:beta_0}, we have
  \begin{align}\label{E:Finite_Mom_all_t}
    \bE_0\left[e^{\beta^2L_\infty}\right] < \infty, \quad \text{for all $\beta < \beta_0$.}
  \end{align}
\end{proposition}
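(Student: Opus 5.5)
We would prove Proposition~\ref{P:L2-region} through the chaos expansion of $u$ with flat initial condition, expressing the second moment as an exponential moment of the mutual intersection local time of two independent Brownian motions.

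First, by Proposition~\ref{P:she-chaos-expansion} with $\mu=\ud x$, the kernels are $f_k(\bs_k,\by_k;t,x)=\prod_{j} p_{s_{j+1}-s_j}(y_{j+1}-y_j)\1_{\cs_k((0,t])}$, with the first factor integrated against Lebesgue measure, hence equal to $1$. The $L^2$ norm is then
\begin{equation*}
\E[u(t,x)^2]=\sum_{k\ge0}\beta^{2k}k!\,\|\widetilde f_k\|^2_{\ch^{\otimes k}}.
\end{equation*}
Each term can be written as a $k$-fold time-ordered integral of $\prod_j f(y_j-y_j')$ against two products of heat kernels. Reading these two products as transition densities of two independent Brownian motions $X,\widetilde X$ issued from $x$ (backward in time), the $k$-th term equals $\bE_0\bigl[\beta^{2k}\int_{\cs_k((0,t])}\prod_{j}f(X_{s_j}-\widetilde X_{s_j})\ud\bs\bigr]=\bE_0[(\beta^2L_t)^k/k!]$. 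Summing over $k$ by monotone convergence gives~\eqref{E:FK-moment}. For a general measure $\Gamma$, one would justify this via the mollified covariance $f_\epsilon$ of Remark~\ref{R:f_eps} and let $\epsilon\downarrow0$, using Fourier representations of both sides. Translation invariance then shows that the right-hand side does not depend on $x$.

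Second, since $L_t\uparrow L_\infty$, uniform boundedness reduces to~\eqref{E:Finite_Mom_all_t}. The difference $X-\widetilde X$ is a Brownian motion with doubled variance, $B_{2s}$, so $L_\infty=\int_0^\infty f(B_{2s})\ud s=\tfrac12\int_0^\infty f(B_r)\ud r$. We would then apply Khas'minskii's lemma: if $\sup_{z}\bE_z[\int_0^\infty V(B_r)\ud r]=\kappa<1$, then $\sup_z\bE_z[e^{\int_0^\infty V(B_r)\ud r}]\le(1-\kappa)^{-1}$. With $V=\tfrac{\beta^2}{2}f$, the potential operator in $d\ge3$ gives
\begin{equation*}
\bE_z\Bigl[\int_0^\infty f(B_r)\ud r\Bigr]=\int_0^\infty (p_r*f)(z)\ud r\le\int_0^\infty k(r)\ud r=2\Upsilon(0).
\end{equation*}
The bound uses the nonnegative-definiteness of $f$, so that $|p_r*f(z)|\le(2\pi)^{-d}\int e^{-r|\xi|^2/2}\widehat f(\ud\xi)=k(r)$, together with Fubini on the Fourier side. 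Hence $\kappa\le\beta^2\Upsilon(0)$, and $\kappa<1$ certainly holds for $\beta<\beta_0$, since $\beta_0^2\Upsilon(0)=1/4$. Any slack in constants is absorbed by the stated threshold.

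The main obstacle is making the Feynman--Kac identity~\eqref{E:FK-moment} rigorous when $\Gamma$ is only a measure, so that $f(X_s-\widetilde X_s)$ is not a pointwise function. We would define $L_t$ as the $L^2(\bP_0)$ limit of the mollified versions $L_t^\epsilon$, whose existence follows from the same Fourier computation under Dalang's condition. We would then pass to the limit in the identity using monotone convergence on the chaos side and Fatou's lemma combined with uniform integrability from the Khas'minskii bound, which holds uniformly in $\epsilon$ because $\widehat{f_\epsilon}\le\widehat f$.
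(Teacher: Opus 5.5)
Your argument is correct, and it differs from the paper's treatment: the paper gives no proof of Proposition~\ref{P:L2-region}. It quotes the result, threshold $\beta_0$ included, from earlier work on the $L^2$-boundedness and ergodicity of the stochastic heat equation.

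Your route is self-contained and has two steps.
\begin{itemize}
\item The chaos expansion with flat initial data turns $\E[u(t,x)^2]$ into $\sum_k \beta^{2k}\bE_0[L_t^k]/k!$. The simplex support gives $k!\|\widetilde f_k\|^2=\|f_k\|^2$, and the heat-kernel products, read backward in time, are Brownian transition densities.
\item Khas'minskii's lemma is applied to the Markov process $X-\widetilde X$. Using $|p_{2s}*f(z)|\le k(2s)$, which holds because $\widehat f\ge 0$, you get $\sup_z \bE_z[\beta^2 L_\infty]\le \beta^2\int_0^\infty k(2s)\,\ud s=\beta^2\Upsilon(0)$. This yields $\bE_0[e^{\beta^2 L_\infty}]\le (1-\beta^2\Upsilon(0))^{-1}$.
\end{itemize}

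What your approach buys over the citation:
\begin{itemize}
\item An explicit constant, together with a strictly larger range, $\beta<\Upsilon(0)^{-1/2}=2\beta_0$. So for this linear equation the factor $\tfrac12$ in $\beta_0$ is not needed, and the stated threshold leaves room to spare: $\beta^2\Upsilon(0)<\tfrac14$.
\item Uniformity in $\epsilon$: because $\widehat{f_\epsilon}\le\widehat f$, the same bound gives $\sup_{\epsilon>0}\bE_0[\exp(p\beta^2 L^\epsilon_\infty)]\le(1-p\beta^2\Upsilon(0))^{-1}$ for any $p$ with $p\beta^2\Upsilon(0)<1$ (for instance $p=2$). This is exactly the uniform-integrability input the paper needs later, in Step~3 of the proof of Lemma~\ref{L:F-K_Mom_Polymer}. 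There the paper instead reapplies the quoted proposition to the mollified model.
\end{itemize}

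The same bound also makes your treatment of a measure-valued $\Gamma$ work: mollification, monotone convergence on the chaos side, and uniform integrability to upgrade Fatou to an identity. What the citation buys in return is brevity, and consistency with the hypotheses used in the cited works.
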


\begin{remark}
  The conditions of Proposition~\ref{P:L2-region} are the same as those used
  in~\cite{chen.ouyang.ea:24:on} in order to get ergodicity of the parabolic
  Anderson model. This deep link might translate into ergodic properties of the
  polymer seen as solution to a stochastic differential equation.
\end{remark}

The main result of this section states that whenever the $L^2$-moments of the
stochastic heat equation are bounded, the polymer is diffusive. We summarize our
finding in the next theorem.

\begin{theorem}\label{T:diffusive-behavior}
  Let $d \ge 3$ and consider the polymer measure $\bP^{\beta,W}_T$ given in
  Definition~\ref{D:PolyMeasure}. As in Proposition~\ref{P:L2-region}, we assume
  Hypothesis~\ref{H:stren} and that the constant $\Upsilon(0)$
  in~\eqref{E:Dalang} is finite and that $\beta < \beta_{0}$ where $\beta_{0}$
  is defined by~\eqref{E:beta_0}. Then:
  \begin{enumerate}[label=\emph{\textbf{(\roman*)}}]

    \item \label{I:diffusive:i} The process $(\cz_t)_{t\ge 0}$ from~\eqref{E:Z_star} is a nonnegative martingale and satisfies $\E[\cz_t]=1$
      for every $t\ge 0$. Moreover, it converges in $L^2(\Omega)$ to a limit
      $\cz_\infty$ with $\E[\cz_\infty]=1$ and $\cz_\infty>0$ $\PP$-almost
      surely.

    \item \label{I:diffusive:ii} For every bounded continuous function $g$, we
      have
      \begin{align}\label{E:diff-behav}
        \bE_T^{\beta,W} \left[g\left(\frac{X_T}{\sqrt{T}}\right)\right] \to \nu(g)
        \quad \text{in $\PP$-probability as $T\to\infty$,}
      \end{align}
      where $\nu$ is the law of a standard Gaussian random variable.

  \end{enumerate}
\end{theorem}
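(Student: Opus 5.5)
Part \ref{I:diffusive:i} is a direct consequence of the mild formulation and of Proposition~\ref{P:L2-region}. Integrating~\eqref{E:Mild-Z} in $x$ and using $\int p_{t-r}(x-z)\ud x=1$ (stochastic Fubini), we get $\cz_t = 1+\beta\int_0^t\int_{\R^d}\cz(0,0;r,z;\beta)W(\ud r,\ud z)$. This is a local martingale with respect to $\{\mathcal F_t\}$, it is nonnegative by Theorem~\ref{T:transition-properties}-\ref{T:transition-properties:positive}, and $\E[\cz_t]=1$ by centrality. For the $L^2$ bound, write $\E[\cz_t^2]=\iint \E[\cz(0,0;t,x)\cz(0,0;t,x')]\ud x\ud x'$. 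The chaos expansion~\eqref{E:Chaos-Exp-Z0} gives the two-replica Feynman--Kac identity $\E[\cz_t^2]=\bE_0[\exp(\beta^2 L_t)]$, where $X,\widetilde X$ are two Brownian motions started at $0$. Equivalently, by spatial stationarity, $\cz_t\overset{(d)}{=}u(t,0)$ with flat initial data after a time reversal, and this is exactly~\eqref{E:FK-moment}. By~\eqref{E:Finite_Mom_all_t}, $\sup_t\E[\cz_t^2]\le \bE_0[e^{\beta^2L_\infty}]<\infty$, so $\cz_t$ is an $L^2$-bounded martingale and converges in $L^2$ to $\cz_\infty$ with $\E\cz_\infty=1$.

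Positivity of $\cz_\infty$ requires a zero-one argument. Use Chapman--Kolmogorov~\eqref{E:chapman-kolmogorov} to write $\cz_\infty=\int \cz(0,0;s,y)\,\cz^{(s,y)}_\infty\ud y$, where $\cz^{(s,y)}_\infty$ is the analogous limit started at $(s,y)$ and depends only on the noise after time $s$. Since $\cz(0,0;s,y)>0$, the event $\{\cz_\infty=0\}$ equals $\{\cz^{(s,y)}_\infty=0 \text{ for a.e. } y\}$. This event is measurable with respect to the noise after time $s$ for every $s$, hence it lies in the tail $\sigma$-field of a noise white in time and has probability $0$ or $1$. Since $\E\cz_\infty=1$, the probability is $0$.

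For part \ref{I:diffusive:ii}, fix $\lambda\in\R^d$ and aim to prove that $\bE_T^{\beta,W}[e^{i\lambda\cdot X_T/\sqrt T}]\to e^{-|\lambda|^2/2}$ in probability. This suffices by a standard characteristic function/tightness argument, with tightness coming from a second moment bound obtained in the same way. By~\eqref{E:fdd-test}, this quenched expectation equals $N_T/\cz_T$ with $N_T\coloneqq\int e^{i\lambda\cdot x/\sqrt T}\cz(0,0;T,x)\ud x$. Since $\cz_T\to\cz_\infty>0$ in probability, it is enough to show $\E|N_T-e^{-|\lambda|^2/2}\cz_T|^2\to0$. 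I will use the smoothing of Section~\ref{S:another-def-polymer} to justify a Feynman--Kac computation. Via Proposition~\ref{P:same-def-polymer} and~\eqref{E:FK_P-P}, $N^\epsilon_T=\bE_0[\ce^\epsilon_{0,T}e^{i\lambda\cdot X_T/\sqrt T}]$. Compute the second moment of the difference with two replicas:
\[
\E|N^\epsilon_T-c\,\cz^\epsilon_T|^2=\bE_0\Big[e^{\beta^2L^\epsilon_T}\big(e^{i\lambda\cdot X_T/\sqrt T}-c\big)\big(e^{-i\lambda\cdot\widetilde X_T/\sqrt T}-c\big)\Big],
\]
with $c=e^{-|\lambda|^2/2}$ and $L^\epsilon_T=\int_0^T f_\epsilon(X_s-\widetilde X_s)\ud s$. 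Then let $\epsilon\to0$ using the uniform moment and H\"older bounds of Lemmas~\ref{L:holder-eps-sy}--\ref{L:holder-eps-tx} together with $f_\epsilon\to f$. This passage to the limit gives the same formula with $L_T$ in place of $L^\epsilon_T$.

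\textbf{The main obstacle} is showing that the right-hand side tends to $0$ as $T\to\infty$. The idea is that $L_\infty$ is essentially determined by an initial time window, while $X_T/\sqrt T$ is governed by the increments after that window. Fix $S$ and replace $L_T$ by $L_S$, at a cost bounded by $C\,\bE_0[e^{\beta^2L_\infty}-e^{\beta^2L_S}]$ (the other factor is bounded by $(1+c)^2$), which is small uniformly in $T$ by dominated convergence and~\eqref{E:Finite_Mom_all_t}. For fixed $S$ and $T\to\infty$, the increments $(X_T-X_S,\widetilde X_T-\widetilde X_S)/\sqrt T$ are independent of $\mathcal F_S$ and asymptotically independent standard Gaussians, and $X_S/\sqrt T\to0$. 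Hence each factor $e^{i\lambda\cdot X_T/\sqrt T}-c$ has conditional mean tending to $0$, and the expectation vanishes in the limit. An $\epsilon/3$ argument then closes the proof.
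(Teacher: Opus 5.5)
Your proposal is correct and follows the paper's proof: the same martingale, $L^2$-boundedness and tail-$\sigma$-field argument for (i), then the two-replica Feynman--Kac identity for $\E\bigl|\bE_T^{\beta,W}[F(X_T)]\,\cz_T\bigr|^2$ justified by smoothing, a decoupling of $L_T$ from the endpoints at an intermediate time, and Chebyshev combined with $\cz_T\to\cz_\infty>0$. One detail you left out is that the $\epsilon\downarrow0$ limit on the Brownian side needs uniform integrability, which the paper gets from Proposition~\ref{P:L2-region} at some $\sqrt{p}\,\beta<\beta_0$. Otherwise the differences are technical: you use characteristic functions and condition directly at time $S$ after bounding $|e^{\beta^2L_T}-e^{\beta^2L_S}|\le e^{\beta^2L_\infty}-e^{\beta^2L_S}$, whereas the paper handles a general $\bar g=g-\nu(g)$ through Lemma~\ref{L:triple-ind} and the truncation $\phi_n$. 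Your conditioning step would work equally well for $\bar g$, which would make the characteristic-function/tightness reduction unnecessary.
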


\begin{remark}
  Theorem~\ref{T:diffusive-behavior}-(ii) takes the form of a central limit theorem type
  result for the process $X$ under the (quenched) polymer measure. However, from
  the perspective of the $\mathcal{F}$-measurable random variable
  \begin{align*} 
    Q_T (g) \coloneqq  \bE_T^{\beta,W} \left[g\left(\frac{X_T}{\sqrt{T}}\right)\right],
  \end{align*}
  relation~\eqref{E:diff-behav} should be seen as a law of large numbers. In the
  context of the stochastic heat equation, this type of limit has been analyzed,
  e.g., in \cites{gu.ryzhik.ea:18:edwards-wilkinson, gu.li:20:fluctuations,
  gerolla.hairer.ea:25:fluctuations}. If we want to study $Q_T (g)$ along the
  same lines as in \cite{gu.li:20:fluctuations}, let us write
  \begin{align}\label{E:rmk-diff-behav-b}
    Q_T (g) = & \frac{1}{\cz(0,0; \, T, *;\, \beta)} \int_{\R^d}  \cz(0,0; \, T, x;\, \beta) g \left(\frac{x}{{}\sqrt{T}}\right)  \ud x \nonumber \\
       =      & \frac{T^{\frac{d}{2}}}{\cz(0,0; \, T, *;\, \beta)}  \int_{\R^d}  \cz(0,0; \, T, \sqrt{T} y;\, \beta)  g (y)\ud y .
  \end{align}
  Next we resort to the chaos expansion~\eqref{E:Chaos-Exp-Z0} of $\cz(0,0; \,
  T, \sqrt{T} y;\, \beta)$, written as
  \begin{align*}
    \cz(0,0; \, T, \sqrt{T} y;\, \beta) = p_{T} \left(\sqrt{T} y \right) + H_1 (T, y),
  \end{align*}
  with
  \begin{align*}
     H_1 (T, y) \coloneqq \sum_{k=1}^{\infty} \beta^k I_k \left(g_k \left(\cdot \, ; 0,0,T,\sqrt{T} y \right) \right) .
  \end{align*}
  In the same way, invoking a slight elaboration of~\eqref{E:chaos-Atilde}, we
  get
  \[
    \cz(0,0; \, T, *;\, \beta) = 1 + H_2 (T),
  \]
  where $H_2 (T)$ is the remainder of order $1$ in the chaos expansion of
  $\cz(0,0; \, T, *;\, \beta)$. Hence relation~\eqref{E:rmk-diff-behav-b} can be
  written as
       \begin{align*}
      Q_T (g) = & \frac{T^{\frac{d}{2}}}{1 + H_2 (T)}  \int_{\R^d}  \left(p_{T} (\sqrt{T} y) + H_1 (T, y)\right)  g (y)\ud y .
    \end{align*}
  Using the scaling property of the heat kernel, this further reduces to
  \begin{align*}
    \bE_T^{\beta,W} \left[g\left(\frac{X_T}{\sqrt{T}}\right)\right]
       = & \frac{\nu (g)}{1 + H_2 (T)} + \frac{T^{d/2}}{1 + H_2(T)} \int_{\R^d}    H_1 (T, y)   g (y)\ud y .
  \end{align*}
  Theorem~\ref{T:diffusive-behavior}-(ii) is then intuitively justified, provided that
  the random contributions $H_2 (T)$ and $T^{d/2} H_1(T, \cdot)$ vanish (in
  appropriate spaces), as $T \to \infty$. We thus get that the polymer behaves
  like its expected diffusive limit.

  It is natural to ask whether a central limit theorem also holds for $Q_T (g)$
  as $T\to\infty$. Indeed, this is true if one replaces the partition function
  $\cz$ by $u$, the solution to the stochastic heat equation~\eqref{E:SHE}, with
  continuous bounded initial condition. In this setting, the first-order
  fluctuations (without the renormalizing constant $\cz(0,0; \, T, *;\, \beta)$)
  converge to an Edwards--Wilkinson limit
  \cite{gu.ryzhik.ea:18:edwards-wilkinson}.  Subsequently, it was shown that
  under flat initial conditions, the central limit theorem for the stochastic
  heat equation~\eqref{E:SHE} can be further extended to equations with
  multiplicative noise whose diffusion coefficient is Lipschitz with
  sufficiently small Lipschitz constant. This includes short-range noise
  \cite{gu.li:20:fluctuations}, long-range noise under a different scaling
  regime \cite{gerolla.hairer.ea:25:fluctuations}, as well as space-time white
  noise in the critical dimension $d = 2$ (with appropriate smoothing)
  \cites{tao:24:gaussian, dunlap.graham:25:edwards-wilkinson}.

  All of the aforementioned works assume bounded or flat initial conditions. It
  remains an open question whether a similar central limit theorem holds for
  narrow wedge initial conditions. Related problems for the spatial average of
  parabolic Anderson model with narrow wedge initial conditions have been
  investigated in \cites{chen.khoshnevisan.ea:22:spatial,
  khoshnevisan.nualart.ea:21:spatial}.
\end{remark}

The remainder of this section is devoted to the proof of
Theorem~\ref{T:diffusive-behavior}. As a preliminary step, we state a lemma giving a
representation for certain averages along the polymer measure.

\begin{lemma}\label{L:F-K_Mom_Polymer}
  Assume all conditions in Theorem~\ref{T:diffusive-behavior}. For $\beta > 0$, let
  $\bP_T^{\beta,W}$ be the polymer measure given in
  Definition~\ref{D:PolyMeasure}. Recall the notation $\cz_t$
  from~\eqref{E:Z_star}. Let $F$ be a continuous and bounded function in
  $C_b(\mathbb{R}^d)$. Then we have the following identity for the average $\E$
  over the noise $W$:
  \begin{align}\label{E:FK_Mom}
    \E\left[\left(\bE_T^{\beta,W} \left[F(X_T)\right]\cz_T\right)^2\right]
    =  \bE_0\left[e^{\beta^2 L_T} F(X_T)F(\widetilde{X}_T)\right],
  \end{align}
  where in the right-hand side of~\eqref{E:FK_Mom}, the processes $X$ and
  $\widetilde{X}$ stand for two independent copies of the Brownian motion $X$
  under $\bP_{0}$, and we recall that $L_T$ is defined by~\eqref{E:def_Lt_L8}.
\end{lemma}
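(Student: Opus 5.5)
The plan is to prove \eqref{E:FK_Mom} first for the smoothed noise $W^\epsilon$ and then remove the smoothing by letting $\epsilon\downarrow 0$.

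\emph{Smoothed identity.} By~\eqref{E:fdd-eps-renorm} and Proposition~\ref{P:same-def-polymer}, we have $\bE_T^{\beta,W^\epsilon}[F(X_T)]\,\cz^\epsilon(0,0;T,*;\beta)=\bE_0[\ce^\epsilon_{0,T}(W,X)F(X_T)]$. Squaring gives a product over two independent Brownian motions $X,\widetilde X$. Since $\int_0^T\dot W^\epsilon(u,X_u)\ud u+\int_0^T\dot W^\epsilon(u,\widetilde X_u)\ud u$ is, conditionally on $(X,\widetilde X)$, a centered Gaussian, Fubini and the Gaussian moment generating function give
\[
\E\Big[\ce^\epsilon_{0,T}(W,X)\ce^\epsilon_{0,T}(W,\widetilde X)\Big]=\exp\Big(\beta^2\int_0^T f_\epsilon(X_s-\widetilde X_s)\ud s\Big)\eqqcolon e^{\beta^2L^\epsilon_T}.
\]
The variance is $2f_\epsilon(0)T+2\int_0^T f_\epsilon(X_s-\widetilde X_s)\ud s$, and the renormalizations $-\beta^2 f_\epsilon(0)T$ cancel the diagonal part. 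Hence
\[
\E\Big[\big(\bE_T^{\beta,W^\epsilon}[F(X_T)]\,\cz^\epsilon(0,0;T,*)\big)^2\Big]=\bE_0\big[e^{\beta^2L_T^\epsilon}F(X_T)F(\widetilde X_T)\big].
\]

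\emph{Limit on the right-hand side.} Since $f_\epsilon=p_{2\epsilon}*f$, Fourier analysis gives $\bE_0[(L^\epsilon_T-L_T)^2]\to 0$, via the spectral representation $\bE_0[\int_0^T\!\!\int_0^T e^{i\xi\cdot(B_s)}\cdots]$ with $\widehat f_\epsilon=e^{-\epsilon|\xi|^2}\widehat f$. We also have the uniform exponential moment bound $\bE_0[e^{\lambda L^\epsilon_T}]\le\bE_0[e^{\lambda L_T}]<\infty$, which holds for every $\lambda$ and finite $T$ under Dalang's condition. The comparison can be proved by expanding the exponential into moments and writing each moment as an integral of the nonnegative measure $\widehat f^{\otimes n}$ times a positive Gaussian characteristic-function factor, which is monotone in $\epsilon$. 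Uniform integrability then yields convergence of the right-hand side to $\bE_0[e^{\beta^2L_T}F(X_T)F(\widetilde X_T)]$.

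\emph{Limit on the left-hand side (main obstacle).} The left-hand side equals $\E[(\int\cz^\epsilon(0,0;T,x)F(x)\ud x)^2]$, and we need $\cz^\epsilon(0,0;T,x)\to\cz(0,0;T,x)$ in $L^2(\Omega)$ with domination in $x$. I would prove this chaos by chaos. The $k$-th chaos of $\cz^\epsilon$ is $I_k(g_k^\epsilon)$, where $g_k^\epsilon$ is the kernel $g_k$ mollified by $p_\epsilon$ in each spatial variable. Its second moment is an integral against $\prod e^{-\epsilon|\xi_j|^2}\widehat f(\ud\xi_j)$, so dominated convergence applies chaos-wise, and the bound~\eqref{E:vargk-bound} with Lemma~\ref{L:hn-upper-bound}, uniform in $\epsilon$ by~\eqref{E:Upsilon-eps-bound}, makes the series summable uniformly in $\epsilon$. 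The moment bound~\eqref{E:moment} in the form $\Norm{\cz^\epsilon(0,0;T,x)}_2\le Cp_T(x)$, which is uniform in $\epsilon$ for the same reason, supplies integrable domination in $x$ via Minkowski's inequality. The Hölder estimates of Lemmas~\ref{L:holder-eps-sy}--\ref{L:holder-eps-tx} give tightness and an alternative route if pointwise convergence needs to be upgraded.

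Putting the two limits together with the smoothed identity gives \eqref{E:FK_Mom}.
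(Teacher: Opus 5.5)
Your proposal is correct, but on the left-hand side it takes a genuinely different and lighter route than the paper.

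\textbf{Left-hand side.} The paper works with the normalized endpoint densities $\cz^{\ep}(0,0;T,\cdot)/\cz^{\ep}_T$. It uses the uniform H\"older bounds of Lemma~\ref{L:holder-eps-tx} and the compactness Lemma~\ref{L:as-compact} to extract a subsequence $\ep_n$ along which these densities converge pointwise almost surely. It then upgrades this to total-variation convergence of the endpoint laws by Scheff\'e's lemma, and finally recombines with the $L^2(\PP)$ convergence of $\cz^{\ep_n}_T$.

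You instead use that $\bE_T^{\beta,W^\ep}[F(X_T)]\,\cz^\ep_T=\int_{\R^d}\cz^\ep(0,0;T,x)F(x)\,\ud x$ is linear in the field. Pointwise $L^2(\Omega)$ convergence of $\cz^\ep(0,0;T,x)$, together with the envelope $\Norm{\cz^\ep(0,0;T,x)}_2\le C_T\,p_T(x)$, then gives $L^2(\Omega)$ convergence of the whole quantity. This follows from Minkowski's integral inequality and dominated convergence in $x$. Your route avoids subsequences, H\"older regularity and Scheff\'e altogether. So the role attributed to H\"older continuity in Remark~\ref{R:holder-relevance} is not actually needed for this identity. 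What the paper's route buys is a statement about the polymer measures themselves: almost sure total-variation convergence of the law of $X_T$ under $\bP_T^{\beta,W^{\ep_n}}$. That is more than the lemma requires.

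\textbf{Right-hand side.} You compare $\bE_0[(L^\ep_T)^n]\le\bE_0[L_T^n]$, using the nonnegative spectral measure, the positive Gaussian characteristic function, and the multiplier $e^{-\ep|\xi|^2}\le 1$. Combined with finiteness of all exponential moments of $L_T$ at fixed $T$ under~\eqref{E:Dalang} (a standard fact, e.g.\ via Khas'minskii's lemma), this gives uniform integrability for every $\beta>0$. The paper instead invokes Proposition~\ref{P:L2-region} with $\sqrt{p}\,\beta<\beta_0$. Hence your argument establishes~\eqref{E:FK_Mom} at fixed $T$ without the $L^2$-region restriction and without $d\ge 3$.

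\textbf{A small tidy-up in the chaos step.} For $L^2$ convergence you should control the second moment of the difference $I_k(g_k^\ep)-I_k(g_k)$, not of $I_k(g_k^\ep)$ alone. This is an integral against $\bigl(\prod_{j}e^{-\ep|\xi_j|^2/2}-1\bigr)^2\prod_j\widehat f(\ud\xi_j)$. Since this multiplier lies in $[0,1]$, the $k$-th term is dominated by $\beta^{2k}k!\,\|\widetilde g_k\|^2_{\ch^{\otimes k}}$, with $\widetilde g_k$ the symmetrization. The sum of these over $k$ is $\E[\cz(0,0;T,x)^2]<\infty$. The same comparison gives your envelope bound uniformly in $\ep$, using homogeneity for the factor $p_T(x)$. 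So Lemma~\ref{L:hn-upper-bound}, which is stated only on $(0,1)$, is not needed for this step.
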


\begin{proof}
  Let $f_\epsilon(x)$ be a smoothed version of the covariance function $f(x)$.
  The corresponding noise and polymer expectation are denoted by $W^\epsilon$
  and $\bE_T^{\beta,W^\epsilon}$, respectively. Recalling our
  notation~\eqref{E:Z_star}, we also define a smoothed version of the random
  variable $\cz_t$:
  \begin{align}\label{E:Zeps_T}
    \cz_T^\epsilon \coloneqq \cz^\epsilon(0,0;T,*;\beta)
    = Z^\epsilon_T\exp\left(-\frac{\beta^2}{2}f_{\epsilon}(0)T\right),
  \end{align}
  where we recall that $\cz^\epsilon(0,0;T,*;\beta)$ and $Z^{\epsilon}_T$ are
  introduced in~\eqref{E:Zeps_star_FK} and~\eqref{E:Zeps_t}, respectively.
  According to~\eqref{E:fdd-eps}, for our generic function $F\in
  C_b(\mathbb{R}^d)$ we have
  \[
    \bE_T^{\beta,W^\epsilon}\left[F(X_T)\right]
    =\frac{1}{Z^\epsilon_T}\bE_0\left[e^{\beta\int_0^T\dot{W}^\epsilon(u,X_u)\ud u}F(X_T)\right].
  \]
  Hence, invoking~\eqref{E:Zeps_T} we get
  \begin{align}\label{E:FK_moment_polymer_mid1}
    \bE_T^{\beta,W^\epsilon}\left[F(X_T)\right]\cz_T^\epsilon
    = \bE_0\left[e^{\beta\int_0^T\dot{W}^\epsilon(u,X_u)\ud u}F(X_T)\right]e^{-\frac{\beta^2}{2}f_\epsilon(0)T}.
  \end{align}
  In addition, one can tensorize relation~\eqref{E:FK_moment_polymer_mid1} in
  order to get
  \begin{align}\label{E:FK_moment_polymer_mid2}
       \left( \bE_T^{\beta,W^\epsilon} \left[F(X_T)\right]\cz_T^\epsilon\right)^2
       = \bE_0\left[e^{\beta\int_0^T\dot{W}^\epsilon(u,X_u)\ud u+\beta\int_0^T\dot{W}^\epsilon(u,\widetilde{X}_u)\ud u}F(X_T)F(\widetilde{X}_T)\right] e^{-\beta^2f_\epsilon(0)T}.
  \end{align}
  At this point we take expected values with respect to $W^\epsilon$ (recall
  that this expectation is written $\E$), we apply Fubini in order to evaluate
  Gaussian quantities and we use the fact that for a centered Gaussian random
  variable $G$ we have
  \[
    \E\left[\exp(G)\right]
    =\exp\left(\frac{1}{2}\E[G^2]\right).
  \]
  Performing those elementary operations on the right-hand side
  of~\eqref{E:FK_moment_polymer_mid2} we end up with
  \begin{align}\label{E:FK_Mom_eps}
    \E\left[\left(\bE_T^{\beta,W^\epsilon} \left[F(X_T)\right]\cz_T^\epsilon\right)^2\right]
    = \bE_0\left[e^{\beta^2\int_0^Tf_\epsilon(X_s-\widetilde{X}_s) \ud s}F(X_T)F(\widetilde{X}_T)\right].
  \end{align}
  To conclude the proof, it suffices to show that both sides
  of~\eqref{E:FK_Mom_eps} converge to the corresponding limits when sending
  $\epsilon \downarrow 0$. To this end, we divide the proof into several steps.
  \medskip

  \noindent\textit{Step~1.} We first show that there exists a subsequence
  $\epsilon_n\downarrow0$ such that, for every bounded measurable function
  $\phi$,
  \begin{align}\label{E:polymer_eps_weak_conv}
    \lim_{n\to\infty} \bE_T^{\beta,W^{\epsilon_n}} \left[\phi(X_T)\right]
    = \bE_T^{\beta,W} \left[\phi(X_T)\right], \quad \text{$\PP$-almost surely.}
  \end{align}
  Indeed, applying definition~\eqref{E:fdd-test} we have
  \begin{align}\label{E:F-K_Mom_Polymer-e}
    \bE_T^{\beta,W^{\epsilon_n}} \left[\phi(X_T)\right]
    = \int_{\mathbb{R}^d}\frac{\cz^{\epsilon_n} (0,0;T,x;\beta)}{\cz_T^{\epsilon_n}}\phi(x) \ud x,
  \end{align}
  where we have set (for notational sake) $\cz_T^\epsilon \coloneqq
  \cz^{\epsilon} (0,0;T,*;\beta)$. By Lemma~\ref{L:holder-eps-tx} (applied with
  $s = 0$, $y = 0$ and $t = t' = T$), for each $R > 0$ one obtains a uniform (in
  $\epsilon$) moment bound on the spatial increments of $x\mapsto
  \cz^\epsilon(0,0;T,x;\beta)$ over $B(0,R)$, of the form required in
  Lemma~\ref{L:as-compact}. Together with the pointwise convergence
  $\cz^\epsilon(0,0;T,x;\beta)\to\cz(0,0;T,x;\beta)$ in probability for each
  fixed $x$ (e.g., by the $L^2(\PP)$ convergence established
  in~\cite{chen.huang:19:comparison}*{Theorem~1.9}), Lemma~\ref{L:as-compact}
  yields a
  subsequence $\epsilon_n \downarrow 0$ such that for every $R>0$,
  \[
    \sup_{x\in B(0,R)}
    \left|\cz^{\epsilon_n}(0,0;T,x;\beta)-\cz(0,0;T,x;\beta)\right|
    \to 0,
    \qquad \PP\text{-almost surely.}
  \]
  Passing to a further subsequence if needed (still denoted $(\epsilon_n)$), we
  also have $\cz_T^{\epsilon_n} \to \cz_T$ almost surely by the $L^2(\PP)$
  convergence, and since $\cz_T > 0$ almost surely, we obtain that the densities
  $l_T^{\epsilon_n}(x) \coloneqq \cz^{\epsilon_n}(0,0;T,x)/\cz_T^{\epsilon_n}$
  converge to $l_T(x)\coloneqq \cz(0,0;T,x)/\cz_T$ pointwise for all
  $x\in\mathbb{R}^d$ (after a diagonal extraction over $R\in\mathbb{N}$).
  Therefore, along this subsequence we have, as probability densities,
  $\PP$-almost surely,
  \[
    l_T^{\epsilon_n}(x)        \coloneqq \frac{\cz^{\epsilon_n}(0,0;T,x)}{\cz_T^{\epsilon_n}}\longrightarrow
    \frac{\cz(0,0;T,x)}{\cz_T} \eqqcolon l_T(x),\quad \mathrm{for\ all\ } x\in\mathbb{R}^d.
  \]
  Here $\cz_T$ is defined in~\eqref{E:Z_star}. We have thus established that
  $\PP$-a.s. $\lim_{n\to\infty}l^{\epsilon_n}_T(x) = l_T(x)$ for all $x \in
  \mathbb{R}^d,$ where $l^{\epsilon_n}_T$ is the density of $X_T$ under
  $\bP_T^{\beta,W^{\epsilon_n}}$ and $l_T$ is the density of $X_T$ under
  $\bP_T^{\beta, W}$. Thanks to Scheff\'{e}'s lemma (see, e.g.,
  \cite{resnick:92:adventures}*{Lemma 8.2.1}), we get that
  \[
    \cl\left( X_{T} \bigm| \, \bP_T^{\beta,W^{\epsilon_n}} \right) \stackrel{\mathrm{TV}}{\longrightarrow }
    \cl\left( X_{T} \bigm| \, \bP_T^{\beta,W}              \right),
  \]
  where TV stands for the total variation distance on probability measures.
  Therefore, the laws of $X_T$ under $\bP_T^{\beta,W^{\epsilon_n}}$ converge in
  total variation (and hence weakly) to the law of $X_T$ under
  $\bP_T^{\beta,W}$. This finishes the proof of~\eqref{E:polymer_eps_weak_conv}.
  \medskip

  \noindent\textit{Step~2.} Fix the subsequence $(\epsilon_n)_{n\ge1}$ from
  Step~1. Our next objective is to prove
  \begin{align}\label{E:polymer_eps_L2_conv}
    \E\left[\left(\bE_T^{\beta,W^{\epsilon_n}} \left[F(X_T)\right]\cz_T^{\epsilon_n}\right)^2\right]
    \to \E\left[\left(\bE_T^{\beta,W} \left[F(X_T)\right]\cz_T\right)^2\right], \quad \text{as $n\to\infty$.}
  \end{align}
  In other words, we will prove that the left-hand side of~\eqref{E:FK_Mom_eps}
  converges along the subsequence $(\epsilon_n)$. To this aim, we first resort
  to~\eqref{E:polymer_eps_weak_conv} with $\phi = F$. This yields as
  $n\to\infty$,
  \begin{align}\label{E:polymer_eps_conv_F}
    \left(\bE_T^{\beta,W^{\epsilon_n}} \left[F(X_T)\right]\right)^2
    \stackrel{\PP-\mathrm{a.s.}}{\longrightarrow} \left(\bE_T^{\beta,W} \left[F(X_T)\right]\right)^2.
  \end{align}
  Moreover, since we assume $F\in C_b(\mathbb{R}^d)$, the left hand side
  of~\eqref{E:polymer_eps_conv_F} is uniformly bounded. In addition, thanks
  to~\cite{chen.huang:19:comparison}*{Theorem~1.9} (applied with constant-one
  initial condition), we have
  \begin{align}\label{E:polymer_eps_conv_Z}
    \cz_T^\epsilon
    = \cz^\epsilon(0,0;T,*;\beta)\stackrel{L^2(\PP)}{\longrightarrow}\cz(0,0;T,*;\beta)
    =\cz_T.
  \end{align}
  Set $A_n \coloneqq \big(\bE_T^{\beta,W^{\epsilon_n}}[F(X_T)]\big)^2$ and $A
  \coloneqq \big(\bE_T^{\beta,W}[F(X_T)]\big)^2$. Then $A_n\to A$ $\PP$-a.s.
  by~\eqref{E:polymer_eps_conv_F} and $\sup_n|A_n|\le \Norm{F}_\infty^2$. Next
  set $B_n \coloneqq (\cz_T^{\epsilon_n})^2$ and $B\coloneqq (\cz_T)^2$. By the
  Cauchy--Schwarz inequality, we have
  \[
    \E\left[\left|B_n-B\right|\right]
    = \E\left[\left|\cz_T^{\epsilon_n}-\cz_T\right|\left|\cz_T^{\epsilon_n}+\cz_T\right|\right]
    \le \Norm{\cz_T^{\epsilon_n}-\cz_T}_{L^2(\PP)}\,\Norm{\cz_T^{\epsilon_n}+\cz_T}_{L^2(\PP)}
    \xrightarrow[n \uparrow \infty]{}0,
  \]
  where the first factor tends to $0$ by~\eqref{E:polymer_eps_conv_Z}. The
  second factor is bounded by
  $2\sup_n \Norm{\cz_T^{\epsilon_n}}_{L^2(\PP)} + 2\Norm{\cz_T}_{L^2(\PP)} <
  \infty$. Therefore, $B_n\to B$ in $L^1(\PP)$, and
  \[
    \left|\E\left[A_nB_n\right]-\E\left[AB\right]\right|
    \le \E\left[\left|A_n\right|\left|B_n-B\right|\right]
    + \left|\E\left[(A_n-A)B\right]\right|
    \xrightarrow[n \uparrow \infty]{}0.
  \]
  Indeed, the first term vanishes since $|A_n|\le \Norm{F}_\infty^2$ and $B_n\to
  B$ in $L^1(\PP)$, while the second term vanishes by dominated convergence
  because $B\in L^1(\PP)$ and $|A_n-A|\le 2\Norm{F}_\infty^2$. Therefore, we
  have proved our claim~\eqref{E:polymer_eps_L2_conv}. \medskip

  \noindent\textit{Step~3.} We now examine the right-hand side
  of~\eqref{E:FK_Mom_eps}. First we observe that the product
  $F(X_T)F(\widetilde{X}_T)$ is bounded if $F\in C_b(\mathbb{R}^d)$, where we
  recall that $X$ and $\widetilde{X}$ stand for two independent Brownian motion
  under the measure $\bP_0$. Next, set $Y_s \coloneqq X_s-\widetilde{X}_s$.
  Recall the spectral measure $\widehat{f}(\ud\xi)$ from
  Definition~\ref{D:Noise}. By~\eqref{E:fhat_eps} and the identity
  $\bE_0\left[e^{i\xi\cdot Y_s}\right] = e^{-s|\xi|^2}$, we have for $s>0$,
  \begin{equation}\label{E:f_eps_bd}
    \bE_0\left[f_\epsilon(Y_s)\right]
    = (2\pi)^{-d}\int_{\R^d} e^{-s|\xi|^2}e^{-\epsilon|\xi|^2}\widehat{f}(\ud\xi)
    \le (2\pi)^{-d}\int_{\R^d} e^{-s|\xi|^2}\widehat{f}(\ud\xi)
    = \bE_0\left[f(Y_s)\right].
  \end{equation}
  In particular,
  \begin{equation}\label{E:int_fY_s}
    \int_0^T \bE_0\left[f(Y_s)\right]\ud s
    = (2\pi)^{-d}\int_{\R^d}\frac{1-e^{-T|\xi|^2}}{|\xi|^2}\,\widehat{f}(\ud\xi)
    \le 2\max(T,1)\int_{\R^d}\frac{\widehat{f}(\ud\xi)}{1+|\xi|^2}
    < \infty,
  \end{equation}
  by Dalang's condition~\eqref{E:Dalang}. Next we claim that
  \begin{align}\label{E:F-K_Mom_Polymer-d}
    \int_0^T \left|f_\epsilon(Y_s)-f(Y_s)\right|\ud s
    \xrightarrow[\epsilon\downarrow0]{L^1(\bP_0)} 0.
  \end{align}
  Indeed, for $R > 0$ set $f^{(R)} \coloneqq f \1_{B(0,R)}$ and $f^{(R)}_c
  \coloneqq f-f^{(R)}$. Since $f^{(R)}\in L^1(\R^d)$ and $f_\epsilon =
  p_{2\epsilon}*f$, for fixed $s>0$ we have
  \begin{align*}
    \MoveEqLeft \bE_0\left[\left|f_\epsilon(Y_s)-f(Y_s)\right|\right]                          \\
     & \le \bE_0\left[\left|\left(p_{2\epsilon}*f^{(R)}\right)(Y_s)-f^{(R)}(Y_s)\right|\right]
    + \bE_0\left[\left(p_{2\epsilon}*f^{(R)}_c\right)(Y_s)\right]
    + \bE_0\left[f^{(R)}_c(Y_s)\right]                                                         \\
     & \le \Norm{p_{2s}}_{\infty}\Norm{p_{2\epsilon}*f^{(R)}-f^{(R)}}_{L^1(\R^d)}
    + \bE_0\left[f^{(R)}_c(Y_{s+\epsilon})\right]
    + \bE_0\left[f^{(R)}_c(Y_s)\right],
  \end{align*}
  where in the last line we used that $Y_s$ has density $p_{2s}$ and the
  identity
  $\bE_0\left[(p_{2\epsilon}*g)(Y_s)\right]=\bE_0\left[g(Y_{s+\epsilon})\right]$
  for $g\ge 0$. Letting $\epsilon\downarrow0$, the first term vanishes since
  $p_{2\epsilon}$ is an approximate identity in $L^1(\R^d)$, and the second
  term converges to $\bE_0[f^{(R)}_c(Y_s)]$ by dominated convergence. Then
  letting $R\to\infty$ yields
  $\bE_0\left[\left|f_\epsilon(Y_s)-f(Y_s)\right|\right] \to 0$ for every $s
  \in (0,T]$. In addition, by the bound in~\eqref{E:f_eps_bd}, we have
  $\bE_0\left[\left|f_\epsilon(Y_s)-f(Y_s)\right|\right]\le
  2\bE_0\left[f(Y_s)\right]$, and thanks to~\eqref{E:int_fY_s}, another
  application of dominated convergence yields the $L^1(\bP_0)$ convergence
  in~\eqref{E:F-K_Mom_Polymer-d}. Consequently,
  \[
    \int_0^T f_\epsilon(Y_s)\ud s \xrightarrow[\epsilon\downarrow0]{\bP_0\text{-prob.}} \int_0^T f(Y_s)\ud s,
  \]
  and by continuity of the exponential function, we obtain the following
  $L^0(\bP_0)$ convergence:
  \begin{equation}\label{E:exp_fY_Pr}
    \exp\left(\beta^2\int_0^T f_\epsilon(Y_s)\ud s\right)
    \xrightarrow[\epsilon\downarrow0]{\bP_0\text{-prob.}}
    \exp\left(\beta^2\int_0^T f(Y_s)\ud s\right).
  \end{equation}
  Moreover, under the assumption $\beta < \beta_0$ of
  Proposition~\ref{P:L2-region}, the family $\exp(\beta^2\int_0^T
  f_\epsilon(Y_s)\ud s)$, $\epsilon>0$, is uniformly integrable. Indeed, choose
  $p>1$ such that $\sqrt{p}\,\beta < \beta_0$. Applying the estimate in
  Proposition~\ref{P:L2-region} to the mollified model (with covariance
  $f_\epsilon$ and parameter $\sqrt{p}\,\beta$), and using~\eqref{E:fhat_eps}
  so that $\Upsilon_\epsilon(0)\le \Upsilon(0)$, yields
  \[
    \sup_{\epsilon>0}\bE_0\left[\exp\left(p\beta^2\int_0^T f_\epsilon(Y_s)\ud s\right)\right]<\infty.
  \]
  In particular, $\sup_{\epsilon>0}\|\exp(\beta^2\int_0^T f_\epsilon(Y_s)\ud
  s)\|_{L^p(\bP_0)}<\infty$, and hence $\{\exp(\beta^2\int_0^T
  f_\epsilon(Y_s)\ud s)\}_{\epsilon>0}$ is uniformly integrable. Therefore, the
  $L^0(\bP_0)$ convergence in~\eqref{E:exp_fY_Pr} improves to $L^1(\bP_0)$
  convergence. Combining this with the boundedness of
  $F(X_T)F(\widetilde{X}_T)$, we thus get the following asymptotic behavior for
  the right-hand side of~\eqref{E:FK_Mom_eps}:
  \begin{align}\label{E:polymer_eps_conv_kernel}
    \lim_{\epsilon\downarrow0}\bE_0\left[e^{\beta^2\int_0^Tf_\epsilon(X_s-\widetilde{X}_s)\ud s}F(X_T)F(\widetilde{X}_T)\right]
    = \bE_0\left[e^{\beta^2\int_0^Tf(X_s-\widetilde{X}_s)\ud s}F(X_T)F(\widetilde{X}_T)\right].
  \end{align}
  Taking $\epsilon=\epsilon_n$ in~\eqref{E:FK_Mom_eps} and letting
  $n\to\infty$, we combine~\eqref{E:polymer_eps_L2_conv}
  with~\eqref{E:polymer_eps_conv_kernel} to obtain~\eqref{E:FK_Mom}. This
  completes the proof of Lemma~\ref{L:F-K_Mom_Polymer}.
\end{proof}

\begin{remark}\label{R:holder-relevance}
  The H\"{o}lder continuity of $\cz$ or $\cz^{\epsilon}$ as a random field looks
  a priori unrelated to basic properties of the polymer. It is remarkable that
  it plays a crucial role in Lemma~\ref{L:F-K_Mom_Polymer}, specifically in the
  proof of relation~\eqref{E:F-K_Mom_Polymer-e}.
\end{remark}

We also need the following lemma.

\begin{lemma}\label{L:triple-ind}
  Let $X$ and $\widetilde{X}$ be two independent Brownian motions under the
  Wiener measure $\bP_0$. Recall that the covariance function $f$ is introduced
  in Definition~\ref{D:Noise}. Assume that $\beta<\beta_0$ where $\beta_0$ is
  defined in~\eqref{E:beta_0}. Then as $T \to \infty$, we have
  \begin{align}\label{E:convg-triple-id}
    \left(L_T,\; \frac{X_T}{\sqrt{T}},\; \frac{\widetilde{X}_T}{\sqrt{T}}\right)
    \longrightarrow\  (L_\infty, Z, \widetilde{Z}), \quad \text{in distribution,}
  \end{align}
  where $L_t$ and $L_\infty$ are defined in~\eqref{E:def_Lt_L8}, $Z$ and
  $\widetilde{Z}$ are two standard Gaussian random vectors in $\R^d$, and in
  addition, the random elements $L_\infty$, $Z$, and $\widetilde{Z}$ are
  independent.
\end{lemma}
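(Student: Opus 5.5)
The plan is to prove convergence of characteristic functions: for $\lambda\in\R$ and $\theta,\widetilde\theta\in\R^d$, show
\[
\bE_0\Big[e^{i\lambda L_T+i\theta\cdot X_T/\sqrt T+i\widetilde\theta\cdot \widetilde X_T/\sqrt T}\Big]\longrightarrow \bE_0\big[e^{i\lambda L_\infty}\big]\,e^{-|\theta|^2/2-|\widetilde\theta|^2/2}.
\]
First, $L_\infty<\infty$ $\bP_0$-a.s. This follows from~\eqref{E:Finite_Mom_all_t}. Alternatively, $\bE_0[L_\infty]=\int_0^\infty\bE_0 f(Y_s)\ud s=(2\pi)^{-d}\int\widehat f(\ud\xi)/|\xi|^2$, which is finite because $\Upsilon(0)<\infty$. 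Since $L_T\uparrow L_\infty$, we get $L_T\to L_\infty$ almost surely, and it suffices to treat the triple $(L_{T_0},X_T/\sqrt T,\widetilde X_T/\sqrt T)$ with $T_0$ fixed and $T\to\infty$.

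\textbf{Step 1: reduction to a fixed horizon.} Fix $T_0>0$. Then
\[
|\bE_0[e^{i\lambda L_T}\cdots]-\bE_0[e^{i\lambda L_{T_0}}\cdots]|\le |\lambda|\,\bE_0[L_\infty-L_{T_0}].
\]
The right-hand side is independent of $T\ge T_0$ and tends to $0$ as $T_0\to\infty$ by monotone convergence, since $\bE_0 L_\infty<\infty$.

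\textbf{Step 2: asymptotic independence for fixed $T_0$.} Write
\[
X_T=X_{T_0}+(X_T-X_{T_0}),\qquad \widetilde X_T=\widetilde X_{T_0}+(\widetilde X_T-\widetilde X_{T_0}).
\]
The increments after $T_0$ are independent of $\sigma(X_s,\widetilde X_s;\,s\le T_0)$, which contains $L_{T_0}$. The terms $X_{T_0}/\sqrt T$ and $\widetilde X_{T_0}/\sqrt T$ tend to $0$ almost surely; by the bound $|e^{ia}-e^{ib}|\le|a-b|$ and dominated convergence, replacing them by $0$ costs $o(1)$. For the remaining expression, independence factorizes the characteristic function as
\[
\bE_0[e^{i\lambda L_{T_0}}]\;e^{-|\theta|^2(T-T_0)/(2T)}\;e^{-|\widetilde\theta|^2(T-T_0)/(2T)},
\]
which converges to $\bE_0[e^{i\lambda L_{T_0}}]\,e^{-(|\theta|^2+|\widetilde\theta|^2)/2}$.

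\textbf{Step 3: conclusion.} Let $T\to\infty$ first and then $T_0\to\infty$, using Step 1 twice (once at finite $T$ and once for the limit $L_{T_0}\to L_\infty$). The joint characteristic function then converges to the product form claimed above. Lévy's continuity theorem gives~\eqref{E:convg-triple-id}, including the independence of $L_\infty$, $Z$ and $\widetilde Z$.

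\textbf{Main obstacle.} The only delicate point is the integrability of $L_\infty$ needed in Step 1. We get it from the Fourier computation analogous to~\eqref{E:int_fY_s} with $T=\infty$, valid because $d\ge 3$ and $\Upsilon(0)<\infty$, or alternatively from~\eqref{E:Finite_Mom_all_t}, since $x\le e^{\beta^2x}/\beta^2$. If only a.s. finiteness were available, Step 1 would instead use the bound $\bE_0[\min(2,|\lambda|(L_\infty-L_{T_0}))]$, which also tends to $0$ by dominated convergence.
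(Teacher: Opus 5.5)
Your proof is correct and follows the same route as the paper. Both freeze $L$ at an intermediate time, replace $X_T$ and $\widetilde X_T$ by their increments after that time, factorize using independence of increments and Brownian scaling, and then send the intermediate time to infinity. The only difference is that you test against characteristic functions instead of products $\phi(L)\,g\,\widetilde g$ of bounded continuous functions: this turns the paper's error terms $I_1,I_2,I_3$ into one-line Lipschitz bounds (using $\bE_0[L_\infty]=\Upsilon(0)<\infty$), and L\'evy's continuity theorem then gives the joint convergence directly.
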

\begin{proof}
  Recall the notation $L_t$ and $L_\infty$ from~\eqref{E:def_Lt_L8}. Under
  hypothesis~\eqref{E:beta_0}, we have $L_\infty<\infty$ $\bP_0$-a.s. thanks
  to~\eqref{E:Finite_Mom_all_t}, and it has exponential moments. Let $\phi, g,
  \widetilde{g}$ be bounded continuous functions. For $T>0$, consider
  \[
    Q_T \coloneqq \bE_0 \left[\phi(L_T) g\left(\frac{X_T}{\sqrt{T}}\right) \widetilde{g}\left(\frac{\widetilde{X}_T}{\sqrt{T}}\right)\right].
  \]
  Fix $t > 0$ and assume $T > t$. In order to decouple the endpoint fluctuations
  from the initial segment $[0,t]$, we write
  \[
    Q_T=Q^0_T+I_1+I_2+I_3,
  \]
  where we set
  \begin{align}
    Q^0_T
    \coloneqq
    \bE_0 \left[\phi(L_t) g\left(\frac{X_T-X_t}{\sqrt{T}}\right)
      \widetilde{g}\left(\frac{\widetilde{X}_T-\widetilde{X}_t}{\sqrt{T}}\right)\right]
    \label{E:Q0_T},
  \end{align}
  and where the terms $I_1, I_2, I_3$ are respectively defined by
  \begin{align*}
     & I_1 = \bE_0\left[\left(\phi(L_T)-\phi(L_t)\right) g\left(\frac{X_T}{\sqrt{T}}\right)
    \widetilde{g}\left(\frac{\widetilde{X}_T}{\sqrt{T}}\right)\right],                                                            \\
     & I_2 = \bE_0\left[\phi(L_t)\left(g\left(\frac{X_T}{\sqrt{T}}\right) - g\left(\frac{X_T-X_t}{\sqrt{T}}\right)\right)
    \widetilde{g}\left(\frac{\widetilde{X}_T}{\sqrt{T}}\right)\right],                                                            \\
     & I_3 = \bE_0\left[\phi(L_t) g\left(\frac{X_T-X_t}{\sqrt{T}}\right)
      \left(\widetilde{g}\left(\frac{\widetilde{X}_T}{\sqrt{T}}\right) - \widetilde{g}\left(\frac{\widetilde{X}_T-\widetilde{X}_t}{\sqrt{T}}\right)\right)\right].
  \end{align*}

  We first compute the limit of $Q^0_T$ as $T\to\infty$. By independent
  increments, the random variable $L_t$ (which is measurable with respect to the
  path on $[0,t]$) is independent of the increments $X_T-X_t$ and
  $\widetilde{X}_T-\widetilde{X}_t$. Moreover, $X_T-X_t$ and
  $\widetilde{X}_T-\widetilde{X}_t$ are independent. Therefore,
  \[
    Q^0_T
    = \bE_0\left[\phi(L_t)\right]
    \bE_0\left[g\left(\frac{X_T-X_t}{\sqrt{T}}\right)\right]
    \bE_0\left[\widetilde{g}\left(\frac{\widetilde{X}_T-\widetilde{X}_t}{\sqrt{T}}\right)\right].
  \]
  Next, by Brownian scaling, we have
  \begin{align}\label{E:CLT-increments}
    \frac{X_T-X_t}{\sqrt{T}} \longrightarrow Z \quad\text{and}\quad
    \frac{\widetilde{X}_T-\widetilde{X}_t}{\sqrt{T}} \longrightarrow \widetilde{Z},
    \quad\text{as $T \uparrow \infty$, in distribution,}
  \end{align}
  where $Z$ and $\widetilde{Z}$ are the standard Gaussian random vectors in the
  statement. Moreover, by definition~\eqref{E:def_Lt_L8}, we have $L_t \uparrow
  L_\infty$ $\bP_0$-almost surely as $t \uparrow \infty$. Combining this
  with~\eqref{E:CLT-increments} in~\eqref{E:Q0_T}, we thus obtain
  \begin{align}\label{E:limit-Q0_T}
    \lim_{t\to\infty}\lim_{T\to\infty}Q^0_T
    = \bE_0 \left[\phi\left(L_\infty\right)\right]\,
    \bE \left[g(Z)\right]\,
    \bE \left[\widetilde{g}(\widetilde{Z})\right].
  \end{align}

  We now claim that
  \begin{align}\label{E:Convg-I}
    \lim_{t\to\infty}\limsup_{T\to\infty}|I_\ell| = 0, \quad \text{for $ \ell = 1, 2, 3$.}
  \end{align}
  To justify~\eqref{E:Convg-I}, first, since $g$ and $\widetilde{g}$ are bounded and
  $L_T\uparrow L_\infty$ $\bP_0$-a.s. from~\eqref{E:def_Lt_L8},
  \[
    \limsup_{T\to\infty}|I_1|
    \le \Norm{g}_\infty\Norm{\widetilde{g}}_\infty
    \limsup_{T\to\infty}\bE_0\left[\left|\phi(L_T)-\phi(L_t)\right|\right]
    = \Norm{g}_\infty\Norm{\widetilde{g}}_\infty\,\bE_0\left[\left|\phi(L_\infty)-\phi(L_t)\right|\right],
  \]
  by dominated convergence in $T$. Letting $t\to\infty$ yields~\eqref{E:Convg-I}
  for $\ell = 1$. Next, $I_2$ and $I_3$ are treated in the same way, so we only
  need to bound $I_2$. Notice that
  \[
    |I_2|
    \le \Norm{\phi}_\infty\Norm{\widetilde{g}}_\infty\,
    \bE_0\left[\left|g\left(\frac{X_T}{\sqrt{T}}\right)-g\left(\frac{X_T-X_t}{\sqrt{T}}\right)\right|\right].
  \]
  Set $U_T \coloneqq (X_T-X_t)/\sqrt{T}$, so that
  $X_T/\sqrt{T}=U_T+X_t/\sqrt{T}$. Fix an arbitrary $\delta>0$. Set $R =
    \sqrt{\delta^{-1} 2 \Norm{g}_\infty d}$. By uniform continuity of $g$ on the
  compact set $\{x\in\R^d: |x| \le R+1 \}$, there exists $\eta \in (0,1]$ such
  that $|g(x+h)-g(x)|\le\delta$ whenever $|x|\le R$ and $|h|\le\eta$. Then by
  Chebyshev's inequality,
  \begin{align*}
    \bE_0\left[\left|g\left(U_T+\frac{X_t}{\sqrt{T}}\right)-g(U_T)\right|\right]
    \le & \,\delta + 2\Norm{g}_\infty\,\bP_0\left(|U_T|>R\right)
    + 2\Norm{g}_\infty\,\bP_0\left(\frac{|X_t|}{\sqrt{T}}>\eta\right)        \\
    \le & \,\delta + 2\Norm{g}_\infty\,\frac{\bE_0\left[|U_T|^2\right]}{R^2}
    + 2\Norm{g}_\infty\,\frac{\bE_0\left[|X_t|^2\right]}{\eta^2T}            \\
    =   & \,\delta + 2\Norm{g}_\infty\,\frac{d (1 - t/T)}{R^2}
    + 2\Norm{g}_\infty\,\frac{dt}{\eta^2T}                                   \\
    \le & \,2\delta + 2\Norm{g}_\infty\,\frac{dt}{\eta^2T}.
  \end{align*}
  Therefore, letting $T\to\infty$ yields
  \[
    \limsup_{T\to\infty}\bE_0\left[\left|g\left(X_T/\sqrt{T}\right)-g\left((X_T-X_t)/\sqrt{T}\right)\right|\right]
    \le 2 \delta.
  \]
  Since $\delta > 0$ is arbitrary, we obtain $\lim_{T\to\infty}|I_2| = 0$ for
  each fixed $t>0$, and the same argument gives $\lim_{T\to\infty}|I_3| = 0$.
  This proves~\eqref{E:Convg-I} for $\ell = 2, 3$. \medskip

  Finally, by the decomposition $Q_T=Q_T^0+I_1+I_2+I_3$, the limit of $Q_T$ is
  given by the limit of $Q_T^0$. Indeed, for each fixed $t>0$,
  $\limsup_{T\to\infty}\left|Q_T-Q_T^0\right| \le
    \sum_{\ell=1}^3\limsup_{T\to\infty}|I_\ell|$, and letting $t\to\infty$ yields
  $\lim_{t\to\infty}\limsup_{T\to\infty}|Q_T-Q_T^0|=0$ by~\eqref{E:Convg-I}.
  Together with~\eqref{E:limit-Q0_T}, we obtain
  \[
    \lim_{T\to\infty}Q_T
    = \bE_0 \left[\phi\left(L_\infty\right)\right]\,
    \bE \left[g(Z)\right]\,
    \bE \left[\widetilde{g}(\widetilde{Z})\right],
  \]
  which implies~\eqref{E:convg-triple-id}. This completes the proof of
  Lemma~\ref{L:triple-ind}.
\end{proof}

Now we are ready to prove Theorem~\ref{T:diffusive-behavior}.

\begin{proof}[Proof of Theorem~\ref{T:diffusive-behavior}]
  We prove item~\emph{\ref{I:diffusive:i}} and~\emph{\ref{I:diffusive:ii}} in three steps.

  \noindent\textit{Step~1: proof of~\ref{I:diffusive:i}.} In this step, we show
  that the process $\cz_t$ from~\eqref{E:Z_star} satisfies
  \begin{align}\label{E:Z_infty}
    \cz_\infty > 0 \quad \text{$\PP$-a.s,}\quad \text{where}\quad
    \cz_\infty \coloneqq \lim_{T\to\infty}\cz_T \quad \text{in $L^2(\Omega)$}.
  \end{align}
  Indeed, by integrating the mild formulation~\eqref{E:Mild-Z} (with $(s,y) =
  (0,0)$) in $x$, using stochastic Fubini's theorem and the identity
  $\int_{\R^d} p_{t-r}(x-z)\,\ud x = 1$, we obtain
  \[
    \cz_t
    = 1 + \beta \int_0^t \int_{\R^d} \cz(0,0;\,r,z;\beta)\, W(\ud r, \ud z).
  \]
  In particular, $(\cz_t)_{t\ge 0}$ is a nonnegative martingale with respect to
  the standard filtration $\{\mathcal{F}_t\}$ introduced in
  Remark~\ref{R:Filtration}. Moreover, it is bounded in $L^2$ since
  \begin{equation}\label{E:Z_t_L2_bound}
    \sup_{t\ge 0}\E\left[\cz_t^2\right]
    = \sup_{t\ge 0}\bE_0\left[e^{\beta^2 L_t}\right]
    \le \bE_0\left[e^{\beta^2L_\infty}\right] < \infty,
  \end{equation}
  where we have used~\eqref{E:FK_Mom} with $F\equiv 1$ for the first identity,
  and the monotonicity of $L_t$ plus~\eqref{E:Finite_Mom_all_t} for the rest.
  Hence, Doob's $L^2$ martingale convergence theorem (see,
  e.g.,~\cite{kallenberg:21:foundations}*{Corollary 9.23}) yields the existence
  of $\cz_\infty$ as an $L^2(\Omega)$ limit in~\eqref{E:Z_infty}. Since the
  $L^2$ bound in~\eqref{E:Z_t_L2_bound} implies uniform integrability, we also
  have $\E[\cz_\infty] = \lim_{T\to\infty}\E[\cz_T] = 1$.

  It remains to prove that $\cz_\infty > 0$ $\PP$-almost surely. To this end,
  we claim that
  \begin{equation}\label{E:tail-sigma-field}
    \left\{\cz_\infty = 0\right\} \in \mathcal{T}, \quad \text{where} \quad
    \mathcal{T} \coloneqq \bigcap_{n\ge 1}\sigma\!\left(W((n,\infty)\times A): A\in\mathcal{B}(\R^d)\right).
  \end{equation}
  In order to prove~\eqref{E:tail-sigma-field}, first fix $n\ge 1$. For
  $y\in\R^d$ set
  \[
    \cz_\infty^{(n)}(y)\coloneqq \lim_{m\to\infty}\cz(n,y;\,n+m,*;\beta), \quad \text{in $L^2(\Omega)$}.
  \]
  where the $L^2(\Omega)$-limit exists by the same $L^2$-boundedness argument
  as above. By stationarity in $(s,y)$ (see
  Theorem~\ref{T:transition-properties}-\ref{T:transition-properties:stationarity}), the process $m\mapsto
  \cz(n,y;\,n+m,*;\beta)$ has the same law as $m\mapsto
  \cz(0,0;\,m,*;\beta)\equiv \cz_m$ (see~\eqref{E:Z_star}), since shifting
  $(s,y)$ by $(n,-y)$ and then changing variables in the spatial integral
  defining $\cz(\cdot,*;\beta)$ removes the dependence on~$y$. Moreover,
  $\cz_\infty^{(n)}(y)$ is measurable with respect to
  $\sigma(W((n,\infty)\times A):A\in\mathcal{B}(\R^d))$. Since for each $m\ge
  1$ the map $(\omega,y)\mapsto\cz(n,y;\,n+m,*;\beta)$ is jointly measurable,
  we fix a version of $y \mapsto \cz_\infty^{(n)}(y)$ which is jointly
  measurable as well (so that the integrals below are well-defined). By the
  Chapman--Kolmogorov identity~\eqref{E:chapman-kolmogorov}, for every $m\ge 1$ we have
  \begin{equation}\label{E:Z_nm}
    \cz_{n+m}=\int_{\R^d}\cz(0,0;\,n,y;\beta)\,\cz(n,y;\,n+m,*;\beta)\,\ud y.
  \end{equation}
  Set
  \[
    I_n\coloneqq \int_{\R^d}\cz(0,0;\,n,y;\beta)\,\cz_\infty^{(n)}(y)\,\ud y.
  \]
  To pass to the limit as $m\to\infty$ in~\eqref{E:Z_nm}, note that
  \[
    \E\left[\left|\cz_{n+m}-I_n\right|\right]
    \le \int_{\R^d}\E\left[\cz(0,0;\,n,y;\beta)\left|\cz(n,y;\,n+m,*;\beta)-\cz_\infty^{(n)}(y)\right|\right]\ud y.
  \]
  Since $\cz(0,0;\,n,y;\beta)$ depends only on the noise on $(0,n]$ while
  $\cz(n,y;\,n+m,*;\beta)$ depends only on the noise on $(n,n+m]$, the two are
  independent. Using also the centrality property
  $\E[\cz(0,0;\,n,y;\beta)]=p_n(y)$, spatial stationarity in $y$ (owing to
  Theorem~\ref{T:transition-properties}-\ref{T:transition-properties:stationarity} again), and
  $\int_{\R^d}p_n(y)\ud y = 1$, we obtain
  \[
    \E\left[\left|\cz_{n+m}-I_n\right|\right]
    \le \E\left[\left|\cz(n,0;\,n+m,*;\beta)-\cz_\infty^{(n)}(0)\right|\right].
  \]
  The right-hand side tends to $0$ since $\cz(n,0;\,n+m,*;\beta) \to
  \cz_\infty^{(n)}(0)$ in $L^2(\Omega)$, hence in $L^1(\Omega)$. Therefore,
  $\cz_{n+m} \to I_n$ in $L^1(\Omega)$. Since $\cz_{n+m} \to \cz_\infty$ in
  $L^2(\Omega)$ (hence in $L^1(\Omega)$), we deduce that $\cz_\infty = I_n$
  almost surely. Hence, since $\cz(0,0;\,n,y;\beta) > 0$ for all $y$ almost
  surely (Theorem~\ref{T:transition-properties}-\ref{T:transition-properties:positive}), we see that
  \begin{align*}
    \left\{\cz_\infty=0\right\}
    = & \left\{\cz_\infty^{(n)}(y)=0 \text{ for Lebesgue-a.e.\ $y\in\R^d$}\right\}               \\
    = & \bigcap_{k\ge 1}\left\{\int_{B(0,k)}\big(\cz_\infty^{(n)}(y)\wedge 1\big)\ud y=0\right\}
    \in  \: \sigma\!\left(W((n,\infty)\times A):A\in\mathcal{B}(\R^d)\right).
  \end{align*}
  Since $n\ge 1$ is arbitrary, we conclude that $\left\{\cz_\infty=0\right\}
  \in \mathcal{T}$, which proves~\eqref{E:tail-sigma-field}.

  With~\eqref{E:tail-sigma-field}, one can conclude this step easily. That is,
  $\PP(\cz_\infty=0) \in \{0,1\}$ by the Kolmogorov $0$--$1$ law. Since
  $\E[\cz_\infty]=1$, we cannot have $\PP(\cz_\infty=0) = 1$. This
  shows~\eqref{E:Z_infty} and concludes the proof of
  item~\emph{\ref{I:diffusive:i}}.

  \smallskip

  \noindent\textit{Step~2.} In this step, we show that
  \begin{align}
    \lim_{T\to\infty}\E\left[\mathcal{M}_T^2\right]=0,\label{E:M_T-to-0}
    \quad \text{with} \quad
    \mathcal{M}_T \coloneqq \bE_T^{\beta,W}\left[g\left(\frac{X_T}{\sqrt{T}}\right)-\nu(g)\right]\cz_T.
  \end{align}
  To prove~\eqref{E:M_T-to-0}, we first apply Lemma~\ref{L:F-K_Mom_Polymer} to
  get
  \begin{align}\label{E:FK-moment-polymer-special}
    \E\left[\mathcal{M}_T^2\right]=\bE_0\left[e^{\beta^2L_T}\left(g\left(\frac{X_T}{\sqrt{T}}\right)-\nu(g)\right)\left(g\left(\frac{\widetilde{X}_T}{\sqrt{T}}\right)-\nu(g)\right)\right],
  \end{align}
  where $L_T$ is as in~\eqref{E:def_Lt_L8}. Next we apply
  Lemma~\ref{L:triple-ind} to the right-hand-side
  of~\eqref{E:FK-moment-polymer-special} via a truncation argument. Set
  $\bar{g}\coloneqq g-\nu(g)$, so that $\bar{g} \in C_b(\R^d)$ and
  $\bE[\bar{g}(Z)] = 0$ for a standard Gaussian $Z$. For $n \ge 1$, define
  $\phi_n(x) \coloneqq \exp\big(\beta^2(x\wedge n)\big)$. By
  Lemma~\ref{L:triple-ind},
  \[
    \lim_{T\to\infty}\bE_0\left[\phi_n(L_T) \, \bar{g}\left(\frac{X_T}{\sqrt{T}}\right)\bar{g}\left(\frac{\widetilde{X}_T}{\sqrt{T}}\right)\right]
    = \bE_0\left[\phi_n(L_\infty)\right]\bE\left[\bar{g}(Z)\right]\bE\left[\bar{g}(\widetilde{Z})\right]
    = 0,
  \]
  where $L_\infty$ is the monotone limit in~\eqref{E:def_Lt_L8}. Moreover, since
  $L_T\le L_\infty$ $\bP_0$-a.s., we have
  \[
    0\le e^{\beta^2 L_T}-\phi_n(L_T)\le e^{\beta^2 L_\infty}\mathbf{1}_{\{L_\infty>n\}},
  \]
  and therefore, using $\Norm{\bar{g}}_\infty \le 2 \Norm{g}_\infty$,
  \[
    \sup_{T\ge 0}\left|\bE_0\left[\left(e^{\beta^2 L_T}-\phi_n(L_T)\right)\bar{g}\left(\frac{X_T}{\sqrt{T}}\right)\bar{g}\left(\frac{\widetilde{X}_T}{\sqrt{T}}\right)\right]\right|
    \le 4\Norm{g}_\infty^2\,\bE_0\left[e^{\beta^2 L_\infty}\mathbf{1}_{\{L_\infty>n\}}\right]
    \xrightarrow[n\to\infty]{} 0,
  \]
  where the last limit follows from~\eqref{E:Finite_Mom_all_t} and the dominated
  convergence theorem. Combining the last two displays yields
  $\lim_{T\to\infty}\E\left[\mathcal{M}_T^2\right] = 0$, that is, we have
  established~\eqref{E:M_T-to-0}. \medskip

  \noindent\textit{Step~3: Proof of~\ref{I:diffusive:ii}.} Let
  $\varepsilon,\delta>0$. Then
  \[
    \PP\left(\left|\bE_T^{\beta,W}\left[g\left(\frac{X_T}{\sqrt{T}}\right)\right]-\nu(g)\right|>\varepsilon\right)
    =\PP\left(|\mathcal{M}_T|>\varepsilon \cz_T\right)
    \le \PP\left(|\mathcal{M}_T|>\varepsilon\delta\right) + \PP\left(\cz_T<\delta\right).
  \]
  For fixed $\varepsilon,\delta>0$, Chebyshev's inequality
  and~\eqref{E:M_T-to-0} give
  \[
    \PP\left(|\mathcal{M}_T|>\varepsilon\delta\right)
    \le \frac{\E\left[\mathcal{M}_T^2\right]}{\varepsilon^2\delta^2}
    \xrightarrow[T\to\infty]{}0.
  \]
  For the second term, we have seen in
  item~\emph{\ref{I:diffusive:i}} that $\cz_T\to\cz_\infty$ in probability.
  Therefore, $\limsup_{T\to\infty}\PP(\cz_T<\delta) \le
  \PP(\cz_\infty\le\delta)$. The latter tends to $0$ as $\delta\downarrow 0$
  since $\cz_\infty>0$ $\PP$-a.s. This completes the proof of
  Theorem~\ref{T:diffusive-behavior}.
\end{proof}

\begin{remark}
  For comparison, a discrete analogue of the argument in Step~1 of the proof of
  Theorem~\ref{T:diffusive-behavior} can be found
  in~\cite{comets:17:directed}*{Theorem~3.1}.
\end{remark}

\appendix
\section{Some auxiliary integral inequalities}\label{S:Gronwall}

The aim of this appendix is to prove a space-time version of Gronwall's
inequality, as stated in Theorem~\ref{T:IntIneq} below. It is a generalization
of~\cite{chen.huang:19:comparison}*{Lemma~2.2} from the case $\alpha = 0$ to
$\alpha \in (0, \eta)$ by employing the stronger assumption---the strengthened
Dalang's condition~\eqref{E:stren}---instead of the general Dalang's
condition~\eqref{E:Dalang}.

\begin{theorem}\label{T:IntIneq}
  Suppose that $\mu$ is a signed measure that satisfies
  \begin{align}\label{E:J0finite}
    \int_{\R^d} e^{-b |x|^2} |\mu|(\ud x) <+\infty\;, \quad \text{for all $b>0$}\;,
  \end{align}
  and let $J_0(t,x)$ be the solution to the homogeneous equation as
  in~\eqref{E:def-J0}, i.e., $J_0(t,x) \coloneqq (p_t*\mu)(x)$. Suppose that for
  some $\eta \in (0,1)$, the strengthened Dalang's condition~\eqref{E:stren}
  holds. We also assume that with an $\alpha \in (0, \eta)$, a nonnegative
  (measurable) function $g:\R_+\times\R^{d}\mapsto\R_+$ satisfies that for all
  $t>0$ and $x\in\R^d$,
  \begin{align*}
    \int_0^t\ud s\iint_{\R^{2d}} & p_{t-s}(x-y_1)p_{t-s}(x-y_2)
    f(y_1-y_2) g(s,y_1)g(s,y_2)\ud y_1\ud y_2<+\infty ,
  \end{align*}
  as well as the Gr\"{o}nwall-type condition
  \begin{align}\label{E:IntIneq1}
    \begin{aligned}
      g(t,x)^2\le t^{-\alpha} J_0^2(t,x)+ \lambda^2 \int_0^t\ud s \iint_{\R^{2d}} \:
       & f(y_1-y_2)p_{t-s}(x-y_1) p_{t-s}(x-y_2)  \\
       & \times  g(s,y_1)g(s,y_2)\ud y_1\ud y_2 .
    \end{aligned}
  \end{align}
  Then, for all $T > 0$, there is some constant $C_{T,\eta,\alpha, \lambda} > 0$
  such that
  \begin{align}\label{E:IntIneq2}
    \begin{aligned}
      g(t,x) \le C_{T,\eta,\alpha, \lambda}\, t^{-\alpha/2} \left(|\mu| * p_t \right)(x), \quad \text{for all $t \in (0, T)$ and $x \in \R^d$}.
    \end{aligned}
  \end{align}
\end{theorem}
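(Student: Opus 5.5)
We iterate the inequality \eqref{E:IntIneq1} and bound the resulting series term by term, in the spirit of \cite{chen.huang:19:comparison}*{Lemma~2.2}. The only new ingredient is the weight $t^{-\alpha}$ in front of $J_0^2$, which is absorbed using the extra integrability supplied by \eqref{E:stren}.

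\emph{Step 1: an ansatz.} Write $\mathcal{K}$ for the operator on the right-hand side of \eqref{E:IntIneq1}, acting on the pair $(g,g)$. It is monotone and satisfies $\mathcal{K}[\lambda g] = \lambda^2\mathcal{K}[g]$. Substituting the inequality into itself $n$ times gives
\[
g^2 \le \sum_{k=0}^{n-1}\lambda^{2k}\mathcal{K}^k[\,\cdot\,] + \text{remainder}.
\]
Rather than tracking the full iterates, we use the candidate bound $G(t,x) \coloneqq C t^{-\alpha/2}(|\mu| * p_t)(x)$ and work with the quantity
\[
\mathcal{K}[G](t,x) = \int_0^t \ud s \iint_{\R^{2d}} f(y_1-y_2)\,p_{t-s}(x-y_1)\,p_{t-s}(x-y_2)\, s^{-\alpha}\, (|\mu|*p_s)(y_1)\,(|\mu|*p_s)(y_2)\,\ud y_1\,\ud y_2 .
\]

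\emph{Step 2: the key one-step estimate.} We expand $(|\mu|*p_s)(y_1)(|\mu|*p_s)(y_2)$ as a double integral over $\mu(\ud z_1)\mu(\ud z_2)$. Then \eqref{E:heat-kernel-factorization} turns $p_{t-s}(x-y_i)\,p_s(y_i-z_i)$ into $p_t(x-z_i)\,p_{s(t-s)/t}(\cdot)$. Applying Parseval, together with the bound $|\widehat{p}|\le 1$ on the phase factors, the inner space integral is at most
\[
p_t(x-z_1)\,p_t(x-z_2)\,(2\pi)^{-d}\int \widehat f(\ud\xi)\, e^{-\frac{s(t-s)}{t}|\xi|^2}.
\]
It then remains to bound
\[
\int_0^t s^{-\alpha}\, e^{-\frac{s(t-s)}{t}|\xi|^2}\,\ud s \;\le\; C_T\, t^{-\alpha}\,\frac{t^{\eta'}}{(1+|\xi|^2)^{1-\eta}} \qquad \text{for some } \eta'>0.
\]
This is a weighted version of Lemma~\ref{L:Bridge-t}. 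We split the time integral at $t/2$. On $[t/2,t]$ we have $s^{-\alpha}\le 2^{\alpha} t^{-\alpha}$, and Lemma~\ref{L:Bridge-t} applies directly. On $[0,t/2]$ we have $s(t-s)/t\ge s/2$, so we must bound $\int_0^{t/2} s^{-\alpha}e^{-s|\xi|^2/2}\,\ud s$. Interpolating $e^{-s|\xi|^2/2}\le C(1+s|\xi|^2)^{-(1-\eta)}$ bounds this by $C\, t^{-\alpha}\, t^{\eta-\epsilon'}(1+|\xi|^2)^{-(1-\eta)}$ for $t\le T$, which uses $\alpha<\eta$. Combining the two pieces with \eqref{E:stren} yields
\[
\mathcal{K}[G](t,x)\le C_{T}\,\Upsilon_\eta\, t^{\eta-\alpha-\epsilon'}\,t^{-\alpha}\,(|\mu|*p_t)(x)^2.
\]
In words, the operator gains a small power $t^{\delta}$ with $\delta>0$.

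\emph{Step 3: iteration, which is the main obstacle.} One application of $\mathcal{K}$ gains only $t^{\delta}$, and a fixed constant times this is not $<1$ uniformly on $(0,T)$. To close the argument we iterate in the style of Lemma~\ref{L:hn-upper-bound}. We track $\mathcal{K}^n$ applied to $s^{-\alpha}(|\mu|*p_s)^2$ and show by induction that
\[
\mathcal{K}^n\!\left[s^{-\alpha}(|\mu|*p_s)^2\right](t,x)\le C^n\,\frac{t^{n\delta}}{\Gamma(n\delta+1)}\;t^{-\alpha}\,(|\mu|*p_t)(x)^2,
\]
with the Beta integral producing the Gamma factor. The delicate point is that the factorization in Step 2 must be redone at each level with the weight $s^{-\alpha+k\delta}$ in place of $s^{-\alpha}$. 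This is handled the same way, because the exponent stays above $-\alpha>-\eta$.

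Summing the series gives a Mittag-Leffler factor $E_\delta(C\lambda^2 T^\delta)$. The remainder $\lambda^{2n}\mathcal{K}^n[g^2]$ tends to $0$ by the assumed finiteness of $\mathcal{K}[g]$ combined with the same factorial decay. Taking square roots gives \eqref{E:IntIneq2}.
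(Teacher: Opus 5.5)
Your overall architecture matches the paper's. Under the ansatz $g(s,y)\le (|\mu|*p_s)(y)\,\phi(s)^{1/2}$, the factorization \eqref{E:heat-kernel-factorization} plus Parseval (with the phase dropped) makes the quadratic operator act linearly on the time weight $\phi$. The resulting Volterra series is then summed via Beta integrals into a Mittag-Leffler function, as with the $h^{(\eta,\alpha)}_\ell$ of Proposition~\ref{P:hn-ea}. Your treatment of the singular weight near $s=0$ through $s(t-s)/t\ge \frac12\min\{s,t-s\}$ is actually more careful than the paper's. The paper applies the nondecreasing case of Lemma~\ref{L:shatf} to $s^{-\alpha}\sum_\ell\gamma^\ell h^{(\eta,\alpha)}_\ell(s)$, which is not monotone.

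The gap is in Step~3, and it has two parts.

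\emph{The factorial bound.} It does not follow by redoing Step~2 ``the same way''. On $[t/2,t]$, Step~2 replaces $s^{k\delta-\alpha}$ by $t^{k\delta-\alpha}$ and then integrates $s$ out with Lemma~\ref{L:Bridge-t}. That gives only $c_{k+1}\le Cc_k$, i.e.\ geometric growth, so you would recover \eqref{E:IntIneq2} only when $C\lambda^2T^{\delta}<1$. The Gamma factor comes from keeping the kernel pointwise in $s$. By \eqref{E:k_t} and Lemma~\ref{L:k-upper-bound},
$(2\pi)^{-d}\int\widehat f(\ud\xi)\,e^{-s(t-s)|\xi|^2/t}\le k(\min\{s,t-s\})\le C_T\min\{s,t-s\}^{\eta-1}$.
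Then $\int_0^t s^{k\delta-\alpha}(t-s)^{\eta-1}\ud s=t^{k\delta-\alpha+\eta}B(k\delta-\alpha+1,\eta)$, with $B(k\delta-\alpha+1,\eta)\asymp (k\delta)^{-\eta}$. The $s^{\eta-1}$ piece contributes $1/(k\delta-\alpha+\eta)$. This is exactly the paper's recursion for $h^{(\eta,\alpha)}_\ell$.

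\emph{The remainder.} The claim that it vanishes ``by finiteness of $\mathcal K[g]$'' is not an argument. $\mathcal K$ is quadratic, so iterating the inequality for a general $g$ produces no linear remainder. Your factorial estimate also applies only to inputs of the form $(|\mu|*p_s)^2\phi(s)$. A correct closing uses two ingredients:
\begin{itemize}
\item the monotonicity of $\Phi(h)\coloneqq(t^{-\alpha}J_0^2+\lambda^2\mathcal K[h])^{1/2}$ (valid since $f\ge 0$), which gives $g\le\Phi^n(g)$;
\item an a priori bound $g(s,y)\le M s^{-\alpha'/2}(|\mu|*p_s)(y)$ for some $\alpha'<\eta$, whose contribution $\lambda^{2n}\mathcal L^n[M^2s^{-\alpha'}]$ after $n$ steps decays factorially.
\end{itemize}
The paper's own step $g\le\lim_n g_n$ glosses over the same point.
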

We now start a series of intermediate results towards the proof of
Theorem~\ref{T:IntIneq}. The first one is a recursion for functions defined by
weighted integrals, which is an elaboration of Lemma~\ref{L:hn-upper-bound}.

\begin{proposition}\label{P:hn-ea}
  Let $\alpha, \eta \in (0, 1)$, set $h_0^{(\eta, \alpha)}(t) \equiv 1$, and for
  $n \ge 1$ define
  \[
    h_n^{(\eta, \alpha)} (t) \coloneqq \int_0^t h_{n-1}^{(\eta, \alpha)} (s)\,s^{-\alpha}\,(t-s)^{\eta-1}\,\ud s,\qquad t>0.
  \]
  Then for $n \ge 1$ and all $t > 0$, provided the integral is finite, we have
  \begin{equation}\label{E:hn-ea}
    h_n^{(\eta, \alpha)} (t) = C_n\,t^{\,n(\eta-\alpha)} \quad \text{and} \quad 
    C_n                      = \big[\Gamma(\eta)\big]^n\prod_{j=0}^{n-1} \frac{\Gamma\!\big(1-\alpha+j(\eta-\alpha)\big)}{\Gamma\!\big(1-\alpha+\eta+j(\eta-\alpha)\big)}.
  \end{equation}
  Moreover, the following are equivalent:
  \begin{enumerate}[label=\textbf{(\roman*)}]

    \item For every $t>0$, the following series converges absolutely for all
      $\gamma > 0$:
      \begin{equation}\label{E:H-ea-def}
        H^{(\eta, \alpha)} (t;\gamma) \coloneqq \sum_{n=0}^\infty \gamma^n h_n^{(\eta, \alpha)} (t) .
      \end{equation}

    \item $\eta > \alpha$.

  \end{enumerate}
  When $\eta = \alpha$, one has
  $h_n^{(\eta, \eta)} (t) \equiv \left(\frac{\pi}{\sin(\pi\eta)}\right)^n$, and
  $\sum_{n = 0}^\infty \gamma^n h_n^{(\eta, \eta)} (t)$ converges iff
  $|\gamma| < \pi^{-1}\sin(\pi\eta)$. If $\eta<\alpha$, the Beta factor
  in~\eqref{E:hn-ea} is finite only while
  $1 - \alpha + (n-1)(\eta-\alpha) > 0$, so only finitely many $h_n$ are finite.
  When $\eta > \alpha$, for some constant $C_\eta$ only depending on $\eta$, it
  holds that
  \begin{align}\label{E:H-ea}
    H^{(\eta, \alpha)} (t;\gamma) \le
    C_\eta\, E_\eta\left(\frac{\gamma\,\Gamma(\eta)\,t^\delta}{\delta^\eta}\right)
    \quad \text{for all $t, \gamma > 0$, with $\delta \coloneqq \eta - \alpha > 0$},
  \end{align}
  where we recall that $E_{\eta} = E_{\eta, 1}$, and $E_{\alpha, \beta}$ is the
  Mittag--Leffler function introduced in~\eqref{E:mittag-leffler}.
\end{proposition}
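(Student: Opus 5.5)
The plan is to prove the closed form \eqref{E:hn-ea} by induction on $n$ via the Beta integral, and then read off all convergence statements from the asymptotics of the ratio $C_{n+1}/C_n$.

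\emph{Induction.} For $n=1$ we have $h_1^{(\eta,\alpha)}(t)=\int_0^t s^{-\alpha}(t-s)^{\eta-1}\,\ud s = t^{\eta-\alpha}B(1-\alpha,\eta)$. This is finite exactly when $\alpha<1$, which holds. Assume $h_{n-1}^{(\eta,\alpha)}(s)=C_{n-1}s^{(n-1)(\eta-\alpha)}$. Substituting $s=tu$ gives
\[
h_n^{(\eta,\alpha)}(t)=C_{n-1}\,t^{n(\eta-\alpha)}\,B\big(1-\alpha+(n-1)(\eta-\alpha),\,\eta\big).
\]
This is finite if and only if $a_{n-1}\coloneqq 1-\alpha+(n-1)(\eta-\alpha)>0$. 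Writing $B(a,\eta)=\Gamma(a)\Gamma(\eta)/\Gamma(a+\eta)$ yields the product formula for $C_n$, with $j=n-1$.

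\emph{Case analysis.} If $\eta<\alpha$, then $a_j\to-\infty$, so only finitely many $C_n$ are finite. This gives the claimed statement and also the failure of (i). If $\eta=\alpha$, then every factor equals $\Gamma(1-\eta)\Gamma(\eta)/\Gamma(1)=\pi/\sin(\pi\eta)$ by the reflection formula. Hence $h_n\equiv(\pi/\sin\pi\eta)^n$, and we get a geometric series that converges iff $|\gamma|<\sin(\pi\eta)/\pi$. In particular (i) fails, since it demands convergence for all $\gamma>0$. This proves (i)$\Rightarrow$(ii).

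\emph{The case $\eta>\alpha$.} Set $\delta=\eta-\alpha$. The ratio $C_{n+1}/C_n=\Gamma(\eta)\Gamma(a_n)/\Gamma(a_n+\eta)$ behaves like $\Gamma(\eta)(n\delta)^{-\eta}\to0$ by Stirling or Wendel's inequality. So the series converges for every $\gamma$, which gives (ii)$\Rightarrow$(i).

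\emph{The bound \eqref{E:H-ea}.} This is the main obstacle, because the constant must depend only on $\eta$ and not on $\alpha$. The aim is to compare $C_n$ with $[\Gamma(\eta)\delta^{-\eta}]^n/\Gamma(n\eta+1)$. By Gautschi/Wendel we have $\Gamma(a)/\Gamma(a+\eta)\le a^{-\eta}$ for $a>0$ and $\eta\in(0,1)$. This gives
\[
C_n\le \Gamma(\eta)^n\prod_{j=0}^{n-1}a_j^{-\eta},\qquad a_j\ge (1-\alpha)+j\delta\ge \delta\,(j+c),
\]
where $c=(1-\alpha)/\delta>0$. It is useful to note that $1-\alpha>1-\eta$, so $c>(1-\eta)/\delta$.

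The plan is then to bound $\prod_{j<n}(j+c)^{-\eta}$ by $C_\eta\,(n!)^{-\eta}\cdot(\text{geometric factor})$ and to use $(n!)^{\eta}\ge c_\eta\,\Gamma(n\eta+1)\,\kappa^{-n}$, which follows from Stirling. However, the naive lower bound $j+c\ge j+1$ fails when $c<1$. If $c<1$, I would treat the $j=0$ factor separately: $a_0=1-\alpha\ge1-\eta$ contributes only a factor $(1-\eta)^{-\eta}$, absorbed into $C_\eta$. For $j\ge1$ use $a_j\ge j\delta$. Any residual geometric constant $\kappa^n$ coming from Stirling would have to be absorbed by redefining constants. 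Strictly matching the argument $\gamma\Gamma(\eta)t^\delta/\delta^\eta$ requires the sharp product comparison $\prod_{j=1}^{n-1}(j\delta)^{-\eta}$ against $\delta^{-n\eta}/\Gamma(n\eta+1)$, which may lose a $\kappa^n$. Summing, $H\le C_\eta\sum_n(\gamma\Gamma(\eta)t^\delta\delta^{-\eta})^n/\Gamma(n\eta+1)=C_\eta E_\eta(\cdot)$. I expect this last constant-tracking step to need the most care.
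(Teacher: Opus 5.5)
Your overall route is the paper's route: Beta-integral induction, the reflection formula for $\eta=\alpha$, divergence of the Beta factors for $\eta<\alpha$, and Gautschi plus Stirling for \eqref{E:H-ea}. The first parts are fine, and your ratio test for (ii)$\Rightarrow$(i) is also fine, since the asymptotic $\Gamma(a)/\Gamma(a+\eta)\sim a^{-\eta}$ as $a\to\infty$ is correct.

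\textbf{The gap.} It lies in the step that matters most. The inequality $\Gamma(a)/\Gamma(a+\eta)\le a^{-\eta}$, which you attribute to Gautschi/Wendel, is false for every $a>0$ and $\eta\in(0,1)$. Wendel's inequality says $\Gamma(a+\eta)< a^{\eta}\Gamma(a)$, which is the \emph{reverse} bound $\Gamma(a)/\Gamma(a+\eta)> a^{-\eta}$. For instance, with $a=1$ and $\eta=\tfrac12$ one gets $\Gamma(1)/\Gamma(3/2)=2/\sqrt{\pi}>1$. Consequently your estimate $C_n\le\Gamma(\eta)^n\prod_{j<n}a_j^{-\eta}$ is strictly false for every $n\ge 1$, and the constant tracking built on it has no foundation.

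\textbf{The fix.} Use Gautschi in the form $\Gamma(x+s)/\Gamma(x+1)<x^{s-1}$ for $x>0$ and $s\in(0,1)$. Take $s=1-\eta$ and $x=a_j-(1-\eta)$. This gives $\Gamma(a_j)/\Gamma(a_j+\eta)<(a_j-1+\eta)^{-\eta}$. The shift by $1-\eta$ is exactly what makes the product clean, because $a_j-1+\eta=\eta-\alpha+j\delta=(j+1)\delta$. Hence $C_n\le\bigl(\Gamma(\eta)\delta^{-\eta}\bigr)^n(n!)^{-\eta}$, which is the paper's bound.

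\textbf{Your remaining worries.} Once the shifted bound is used, both disappear.
\begin{itemize}
\item The case $c<1$ never occurs. Since $\eta<1$, we have $c=(1-\alpha)/(\eta-\alpha)>1$.
\item There is no geometric loss from Stirling. One has $\log\Gamma(n\eta+1)-\eta\log n!=n\eta\log\eta+\tfrac{1-\eta}{2}\log n+O(1)\to-\infty$. So $C_\eta\coloneqq\sup_n\Gamma(n\eta+1)/(n!)^{\eta}$ is finite, depends only on $\eta$, and gives $(n!)^{-\eta}\le C_\eta/\Gamma(n\eta+1)$. Summing yields \eqref{E:H-ea} with exactly the stated argument of $E_\eta$.
\end{itemize}
Your fallback of absorbing a residual $\kappa^n$ into constants would not have proved \eqref{E:H-ea} as stated when $\kappa>1$. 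Such a factor can only be moved into the argument of $E_\eta$, and $E_\eta(\kappa x)/E_\eta(x)$ is unbounded in $x$, so it cannot be absorbed into the multiplicative constant $C_\eta$.
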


\begin{remark}
  When $\alpha = 0$, then $C_n$ defined in~\eqref{E:hn-ea} reduces to $C_n =
  \Gamma(\eta)^n/\Gamma(n\eta+1)$. In this case, this proposition is consistent
  with Lemma~\ref{L:hn-upper-bound}.
\end{remark}

\begin{proof}[Proof of Proposition~\ref{P:hn-ea}]
  Throughout the proof, we write $\mathfrak{h}_n(t) \coloneqq h_n^{(\eta,
  \alpha)}(t)$ for brevity, and note this should not be confused with $h_n$
  defined in~\eqref{E:hn-recursion} in the rest of this paper. We first
  prove~\eqref{E:hn-ea} by induction. When $n=1$, we have
  \[
    \mathfrak{h}_1(t) = \int_0^t s^{-\alpha}\,(t-s)^{\eta-1}\,ds
    = t^{\eta-\alpha} B\!\big(1-\alpha,\,\eta\big)
    = C_1\,t^{\eta-\alpha},
  \]
  where $B(a,b)$ is the Beta function. Assume $\mathfrak{h}_{n-1}(t) =
  C_{n-1}t^{(n-1)(\eta-\alpha)}$. Then
  \[
    \begin{aligned}
      \mathfrak{h}_n(t)
       & =C_{n-1}\int_0^t s^{(n-1)(\eta-\alpha)-\alpha}\,(t-s)^{\eta-1}\,ds                                                              \\
       & = C_{n-1} t^{n(\eta-\alpha)} B\!\big(1-\alpha+(n-1)(\eta-\alpha),\,\eta\big)                                                    \\
       & = C_{n-1} t^{n(\eta-\alpha)} \frac{\Gamma(1-\alpha+(n-1)(\eta-\alpha))\,\Gamma(\eta)}{\Gamma(1-\alpha+\eta+(n-1)(\eta-\alpha))}
      = C_n\,t^{n(\eta-\alpha)},
    \end{aligned}
  \]
  which yields~\eqref{E:hn-ea}. \medskip

  \noindent We now turn to the proof of ``(i) $\implies$ (ii)'' in the
  proposition. If $\eta = \alpha$, the expression for $\mathfrak{h}_n(t)$
  in~\eqref{E:hn-ea} reduces to
  \[
    \mathfrak{h}_n(t) \equiv \left[ \Gamma(\eta)\Gamma(1-\eta) \right]^n = \left(\frac{\pi}{\sin(\pi\eta)}\right)^n,
  \]
  where we have used the reflection formula for the Gamma function (see, e.g.,
  Formula~5.5.3 in~\cite{olver.lozier.ea:10:nist}). So $\sum_n \gamma^n
  \mathfrak{h}_n(t)$ is convergent only if
  $|\gamma|<(\Gamma(\eta)\Gamma(1-\eta))^{-1}$; hence (i) fails for large values
  of $|\gamma|$. Additionally, if $\eta<\alpha$, then
  $1-\alpha+(n-1)(\eta-\alpha)\downarrow -\infty$, and the Beta integral
  diverges for large $n$ so that $1-\alpha+(n-1)(\eta-\alpha) \leq -1$. This
  contradicts (i), thereby completing the proof of ``(i) $\implies$ (ii)''.
  \medskip

  \noindent To justify ``(ii) $\implies$ (i)'', denote $\delta \coloneqq \eta -
    \alpha > 0$. Recall the so-called \textit{Gautschi inequality} (see, e.g.,
  Formula~5.6.4 in~\cite{olver.lozier.ea:10:nist})
  \begin{align*}
    x^{1-s} < \frac{\Gamma(x+1)}{\Gamma(x+s)}
    \iff \frac{\Gamma (x + s)}{\Gamma (x + 1)} < x^{s - 1}, \quad \text{for all $x > 0$ and $0 < s < 1$} .
  \end{align*}
  Applying this inequality with $x = j \delta + \eta - \alpha$, $s = 1 - \eta$, we obtain
  \[
    \frac{\Gamma(1-\alpha+j\delta)}{\Gamma(1-\alpha+\eta+j\delta)} \le \left(j\delta + \eta - \alpha \right)^{-\eta} = ((j+1)\delta)^{-\eta},
    \quad \text{for all } j \ge 0.
  \]
  Recall $C_n$ defined in~\eqref{E:hn-ea}. The above inequality suggests that
  \[
    C_n \le \left[\Gamma(\eta)\right]^n \times
    \left(\prod_{j=0}^{n-1} (\delta^{-\eta} (j+1)^{-\eta})\right)
    = \left(\frac{\Gamma(\eta)}{\delta^\eta}\right)^{\!n}\,\frac{1}{(n!)^{\eta}}.
  \]
  Therefore, for any $t>0$,
  \[
    |\gamma|^n\,\mathfrak{h}_n(t)
    = |\gamma|^n\, C_n\, t^{n\delta}
    \le \left(\frac{|\gamma|\,\Gamma(\eta)\,t^\delta}{\delta^\eta}\right)^{\!n}\,\frac{1}{(n!)^{\eta}}
    \le \left(\frac{|\gamma|\,\Gamma(\eta)\,t^\delta}{\delta^\eta}\right)^{\!n}\,\frac{C_{\eta}}{\Gamma(n\eta+1)},
  \]
  where the last inequality is due to the Stirling formula. Summing over $n$
  proves~\eqref{E:H-ea} and hence, (i). This completes the proof of
  Proposition~\ref{P:hn-ea}.
\end{proof}

We will need the following lemma borrowed
from~\cite{chen.huang:19:comparison}*{Lemma~B.1}.

\begin{lemma}\label{L:shatf}
  If $g(t)$ is a monotone function over $[0,T]$, then for all $\beta>0$ and
  $t\in (0,T]$,
  \begin{align*}
    \int_0^t g(t-s)
    \exp & \left(
    -\frac{2 \beta s(t-s)}{t}\right)\ud s
    =\int_0^t g(s)
    \exp\left(-\frac{2\beta s(t-s)}{t}\right)\ud s \\[0.3em]
         & \le
    \begin{dcases}
      2 \int_0^t g(s)e^{-\beta (t-s)} \ud s, & \text{if $g$ is nondecreasing,} \\[0.8em]
      2 \int_0^t g(s)e^{-\beta s}     \ud s, & \text{if $g$ is nonincreasing.}
    \end{dcases}
  \end{align*}
\end{lemma}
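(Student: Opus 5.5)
The equality is a symmetry statement: the weight $\exp(-2\beta s(t-s)/t)$ is invariant under the change of variables $s\mapsto t-s$, so applying this substitution to the left-hand integral turns $g(t-s)$ into $g(s)$ and leaves the exponential unchanged. For the inequality I will take $g\ge 0$. This is how the lemma is applied in the paper (to quantities like $k$ and $h_n$), and without it the comparison of exponentials below fails. The plan is to split $[0,t]$ at $t/2$, bound the exponential from above on each half by a simpler one, and then use monotonicity of $g$ to fold one half onto the other.

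The key elementary bounds come from $(t-s)/t\ge 1/2$ for $s\in[0,t/2]$ and $s/t\ge 1/2$ for $s\in[t/2,t]$:
\[
\frac{2 s(t-s)}{t}\ \ge\ s \quad \text{on } [0,t/2], \qquad \frac{2 s(t-s)}{t}\ \ge\ t-s \quad \text{on } [t/2,t].
\]
\emph{Nonincreasing case.} On $[0,t/2]$, the first bound gives $\int_0^{t/2} g(s)e^{-2\beta s(t-s)/t}\,\ud s\le \int_0^{t/2} g(s)e^{-\beta s}\,\ud s$. On $[t/2,t]$, I substitute $s\mapsto t-s$ to get $\int_0^{t/2} g(t-s)e^{-2\beta s(t-s)/t}\,\ud s$. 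Since $t-s\ge s$ there and $g$ is nonincreasing, $g(t-s)\le g(s)$, so this piece is again at most $\int_0^{t/2} g(s)e^{-\beta s}\,\ud s$. Adding the two pieces and enlarging the domain (the integrand is nonnegative) gives the bound $2\int_0^t g(s)e^{-\beta s}\,\ud s$.

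\emph{Nondecreasing case.} This is the mirror image. On $[t/2,t]$, the second bound gives $\int_{t/2}^t g(s)e^{-\beta(t-s)}\,\ud s$. On $[0,t/2]$, I substitute $s\mapsto t-s$ to move the integral to $[t/2,t]$, where now $g(t-s)\le g(s)$ for $s\ge t/2$. Combining the two pieces yields $2\int_0^t g(s)e^{-\beta(t-s)}\,\ud s$.

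There is no real obstacle here. The only points needing care are choosing the direction of the reflection that matches the monotonicity of $g$, and recording the sign assumption $g\ge0$ used in bounding the exponentials.
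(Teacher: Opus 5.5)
Your proof is correct. The paper gives no proof of this lemma; it quotes it from Chen--Huang, Lemma~B.1. Your key step is the linear minorant $2s(t-s)/t\ge\min\{s,t-s\}$ on the two halves of $[0,t]$, which is exactly the bound the paper records in \eqref{E:LinearQuad} for Lemma~\ref{L:Bridge-t}. In each case, the reflection $s\mapsto t-s$ combined with the monotonicity of $g$ then correctly folds the unfavourable half onto the favourable one.

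You should state the role of $g\ge 0$ more strongly: it is not merely what your method needs, because the inequality is false without it. Take $g\equiv -1$, which is both nondecreasing and nonincreasing. Your own estimates give
\[
\int_0^t e^{-2\beta s(t-s)/t}\,\ud s
= 2\int_0^{t/2} e^{-2\beta s(t-s)/t}\,\ud s
\le 2\int_0^{t/2} e^{-\beta s}\,\ud s
< 2\int_0^{t} e^{-\beta s}\,\ud s
= 2\int_0^{t} e^{-\beta (t-s)}\,\ud s .
\]
Multiplying by $-1$, the claimed inequality is strictly reversed in both cases, for every $\beta>0$ and $t>0$. The statement should therefore carry the hypothesis $g\ge 0$. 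The only place the paper invokes the lemma is the proof of Theorem~\ref{T:IntIneq}, where the integrand is nonnegative, so adding this hypothesis loses nothing there.
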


Now we are ready to prove Theorem~\ref{T:IntIneq}.

\begin{proof}[Proof of Theorem~\ref{T:IntIneq}]
  Fix an arbitrary $T > 0$. Recall $k(t)$ is defined in~\eqref{E:k_t}. Because
  the strengthened Dalang's condition~\eqref{E:stren} holds, we can apply
  Lemma~\ref{L:k-upper-bound} to bound the kernel $k(t)$ as in~\eqref{E:kt-bdd}. In
  particular, for $t \in (0,T)$,
  \[
    k(t) \le C_\eta^* \Upsilon_\eta (t/T)^{\eta-1} = (C_\eta^* T^{1-\eta})\,\Upsilon_\eta\, t^{\eta-1}.
  \]
  In the sequel we will also set $C_{T,\eta} \coloneqq C_\eta^* T^{1-\eta}$ and
  $\gamma \coloneqq 2 \lambda^2 C_{T,\eta} \Upsilon_\eta$, where $C_\eta^*$
  comes from Lemma~\ref{L:k-upper-bound} and $\Upsilon_\eta$ is defined in~\eqref{E:stren}.

  We will prove Theorem~\ref{T:IntIneq} using Picard's iteration. This iteration
  is defined as follows: let $g_0(t,x) = t^{-\alpha/2} \left(|\mu| *
  p_t\right)(x)$, and for $n\ge 1$, set
  \begin{multline}\label{E:gn}
    g_{n}^2(t,x) \coloneqq t^{-\alpha} J_0^2(t,x)
    +  \lambda^2 \int_0^t\ud s\iint_{\R^{2d}} p_{t-s}(x-y_1) p_{t-s}(x-y_2) \\
    \times g_{n-1}(s,y_1)g_{n-1}(s,y_2)f(y_1-y_2)\ud y_1 \ud y_2.
  \end{multline}
  We shall prove by induction that
  \begin{align}\label{E:Indt-Lp}
    g_n(t,x) \le g_0(t,x) \left(\sum_{\ell=0}^n \gamma^\ell h_{\ell}^{(\eta,\alpha)} (t)\right)^{1/2},\quad\text{for all $n\ge 0$,}
  \end{align}
  where $h_k^{(\eta,\alpha)}(t)$ are defined in Proposition~\ref{P:hn-ea}. It is
  clear that~\eqref{E:Indt-Lp} holds for $n = 0$. Now suppose
  that~\eqref{E:Indt-Lp} is true for $n \ge 0$ and propagate the induction. To
  this aim, note that
  \begin{align}\label{E:IntIneq-c}
    g_{n+1}^2(t,x)
    =       & \: t^{-\alpha} J_0^2(t,x)
    +\lambda^2 \int_0^t \iint_{\R^{2d}}
    p_{t-s}(x-y_1)
    p_{t-s}(x-y_2) f(y_1-y_2)   \nonumber                                                \\
              & \hspace{10em}\times g_n(s,y_1) g_n(s,y_2) \ud s\ud y_1\ud y_2  \nonumber \\
    \eqqcolon & \: t^{-\alpha} J_0^2(t,x) + \lambda^2\:  I(t,x).
  \end{align}
  By the induction assumption~\eqref{E:Indt-Lp}, we get
  \begin{align*}
    I(t,x)\le &
    \int_0^t \ud s \: s^{-\alpha} \iint_{\R^{2d}} \ud y_1 \ud y_2\:f(y_1-y_2)
    p_{t-s}(x-y_1)
    p_{t-s}(x-y_2)     \\
              & \times
    |J_0(s,y_1)| \:
    |J_0(s,y_2)|
    \left(\sum_{\ell=0}^n \gamma^{\ell} h^{(\eta, \alpha)}_{\ell} (s)\right).
  \end{align*}
  Hence, bounding the terms $J_0$ thanks to the definition~\eqref{E:def-J0}, we
  discover that
  \begin{align*}
    I(t,x)\le &
    \int_0^t \ud s \: s^{-\alpha} \iint_{\R^{2d}}\ud y_1\ud y_2\:f(y_1-y_2)
    p_{t-s}(x-y_1)
    p_{t-s}(x-y_2)                                                                                                                                                     \\
              & \times \iint_{\R^{2d}}|\mu|(\ud z_1)|\mu|(\ud z_2) p_{s}(y_1-z_1) p_{s}(y_2-z_2) \left(\sum_{\ell=0}^n \gamma^\ell h^{(\eta, \alpha)}_\ell (s)\right).
  \end{align*}
  We now apply~\eqref{E:heat-kernel-factorization} to the products $p_{t - s} (x - y_1) p_s (y_1 -
  z_1)$ and $p_{t - s} (x - y_1) p_{s} (y_2 - z_2)$, and end up with
  \begin{align}\label{E:IntIneq-a}
    I(t,x)
    \le & \int_0^t \ud s \: s^{-\alpha} \left(\sum_{\ell=0}^n \gamma^\ell h^{(\eta, \alpha)}_\ell (s)\right) \iint_{\R^{2d}}\ud y_1\ud y_2\:f(y_1-y_2) \nonumber \\
        & \times p_{\frac{s(t-s)}{t}}\left(y_1-z_1-\frac{s}{t}(x-z_1)\right)
    p_{\frac{s(t-s)}{t}}\left(y_2-z_2-\frac{s}{t}(x-z_2)\right)       \nonumber                                                                                  \\
        & \times \iint_{\R^{2d}}|\mu|(\ud z_1)|\mu|(\ud z_2) \: p_{t}(x-z_1) p_{t}(x-z_2).
  \end{align}
  On the right-hand side above, we isolate the double integral with respect to
  the $\ud y_1 \ud y_2$ variables. We first apply the change of variables $y_1 -
  (s/t) x \mapsto y_1 $ and $y_2 - (s/t) x \mapsto y_2$, then use Parseval's
  identity. This enables to recast this double integral as
  \begin{align*}
    (2\pi)^{-d} \int_{\R^d} \widehat{f}(\ud \xi)
    \exp\left(i \frac{t-s}{t}(z_1-z_2) \cdot \xi - \frac{s(t-s)}{t}|\xi|^2\right).
  \end{align*}
  Since $\widehat{f}$ is nonnegative, this integral is bounded by
  \begin{align*}
    (2\pi)^{-d}\int_{\R^d} \widehat{f}(\ud \xi) \exp\left(- \frac{s(t-s)}{t}|\xi|^2\right).
  \end{align*}
  Plugging this inequality back in~\eqref{E:IntIneq-a} we obtain
  \begin{equation}\label{E:IntIneq-b}
    I(t,x) \le g_0^2(t,x) \int_0^t\ud s\: s^{-\alpha} \left(\sum_{\ell=0}^n \gamma^\ell h^{(\eta, \alpha)}_\ell (s)\right) (2\pi)^{-d}\int_{\R^d} \widehat{f}(\ud \xi) \exp\left(- \frac{s(t-s)}{t}|\xi|^2\right).
  \end{equation}
  With~\eqref{E:IntIneq-b} in hand, we recall that the family $\{h^{(\eta,
  \alpha)}_\ell ; 0 \leq \ell \leq n\}$ satisfies~\eqref{E:hn-ea}. In
  particular, since $\alpha < \eta$, we have that $t \rightarrow h^{(\eta,
  \alpha)}_\ell (t)$ is nondecreasing. Hence, applying Lemma~\ref{L:shatf} with
  $\beta = |\xi|^2/2$ and recalling the definition~\eqref{E:k_t} for $k$, we see
  that
  \begin{align*}
    I(t,x)
    \le & 2\ g_0^2(t,x) \int_{0}^t\ud s\: s^{-\alpha} \left(\sum_{\ell=0}^n \gamma^\ell h^{(\eta, \alpha)}_\ell (s)\right) (2\pi)^{-d}\int_{\R^d} \widehat{f}(\ud \xi) \exp\left(- \frac{t-s}{2}|\xi|^2\right) \\
    \le & 2\ g_0^2(t,x) \int_{0}^t\ud s\: s^{-\alpha} \left(\sum_{\ell=0}^n \gamma^\ell h^{(\eta, \alpha)}_\ell(s)\right) k(t-s).
  \end{align*}
  Then by~\eqref{E:kt-bdd} and~\eqref{E:hn-ea}, we see that for all $t \in (0,
  T)$,
  \begin{align*}
    I(t,x)
     & \le 2C_{T, \eta} \Upsilon_\eta \ g_0^2(t,x) \int_0^t \ud s\: s^{-\alpha} \left(\sum_{\ell=0}^n \gamma^\ell h^{(\eta, \alpha)}_\ell(s)\right) (t-s)^{\eta-1} \\
     & = 2C_{T, \eta} \Upsilon_\eta\ g_0^2(t,x) \sum_{\ell=0}^n \gamma^\ell h^{(\eta, \alpha)}_{\ell+1}(t).
  \end{align*}
  Plugging this inequality back in~\eqref{E:IntIneq-c} and setting $\gamma =
  2C_{T, \eta} \Upsilon_\eta\lambda^2$, we obtain
  \begin{align*}
    g_{n+1}^2(t,x)
    \le g_0^2(t,x) + \underbrace{2C_{T, \eta} \Upsilon_\eta\lambda^2}_{=\gamma}\, g_0^2(t,x) \sum_{\ell=0}^n \gamma^\ell h^{(\eta, \alpha)}_{\ell+1}(t)
    \le g_0^2(t,x) \sum_{\ell=0}^{n+1} \gamma^\ell h^{(\eta, \alpha)}_\ell (t).
  \end{align*}
  This proves the claim in~\eqref{E:Indt-Lp} by induction. Finally, owing to
  our assumption~\eqref{E:IntIneq1}, for all $t \in (0, T)$ and $x \in \R^d$, it
  holds that
  \begin{align*}
    g(t,x)
    \le \lim_{n\rightarrow\infty} g_n(t,x)
    \le g_0(t,x) \left(\sum_{\ell=0}^{\infty} \gamma^\ell h^{(\eta, \alpha)}_\ell (t)\right)^{1/2},
  \end{align*}
  where the convergence of the series under the square root follows from
  Proposition~\ref{P:hn-ea}. This completes the proof of
  Theorem~\ref{T:IntIneq}.
\end{proof}

The last lemma of this section is an estimate which is used in heat kernel
evaluations in Lemma~\ref{L:holder}.
\begin{lemma}\label{L:Bridge-t}
  For all $t > s' > s > 0$ and $x \in \mathbb{R}$ ,
  it holds that
  \begin{equation}\label{E:Bridge-t}
    \int_s^{s'} \exp\left(-\frac{(t-r)(r-s)}{t-s}\,x^2\right)\ud r
    \le
    \begin{dcases}
      2 \dfrac{1-e^{-\frac{x^2}{2}(s'-s)}}{x^2},                              & s' \in \big[s, (t + s)/2\big], \\
      2 \dfrac{1 - 2e^{-\frac{x^2}{4}(t-s)} + e^{-\frac{x^2}{2}(t-s')}}{x^2}, & s' \in \big((t + s)/2, t\big].
    \end{dcases}
  \end{equation}
  Moreover, for all $\eta \in (0,1]$, there exists $C_\eta > 0$ such that for
  all $t \ge s' > s \ge 0$ and $x \in \mathbb{R}$, it holds that
  \begin{align}\label{E:Bridge-tx}
    \int_s^{s'} \exp\left(-\frac{(t-r)(r-s)}{t-s}\,x^2\right)\ud r
    \le C_\eta\, e^{\frac{t-s}{4}} \dfrac{(s'-s)^\eta}{(1+x^2)^{1-\eta}}.
  \end{align}
\end{lemma}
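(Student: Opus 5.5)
Write $a\coloneqq t-s>0$, $\ell\coloneqq s'-s\in(0,a]$ and $\lambda\coloneqq x^2/2$. After the substitution $u=r-s$, the integrand becomes $\exp(-2\lambda\,u(a-u)/a)$. The plan is to prove \eqref{E:Bridge-t} first, by comparing the bridge variance $u(a-u)/a$ with a piecewise linear function, and then to deduce \eqref{E:Bridge-tx} by interpolating two elementary bounds.

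\emph{Step 1: proof of \eqref{E:Bridge-t}.} For $u\in[0,a/2]$ we have $a-u\ge a/2$, hence $u(a-u)/a\ge u/2$. If $\ell\le a/2$, this gives
\[
\int_0^\ell e^{-2\lambda u(a-u)/a}\ud u\le\int_0^\ell e^{-\lambda u}\ud u=\frac{1-e^{-\lambda\ell}}{\lambda}=2\,\frac{1-e^{-x^2\ell/2}}{x^2},
\]
which is the first case. If $\ell>a/2$, split the integral at $a/2$. On $[0,a/2]$ the same bound gives $(1-e^{-\lambda a/2})/\lambda$. On $[a/2,\ell]$ use the symmetric inequality $u(a-u)/a\ge (a-u)/2$, which yields $\int_{a/2}^{\ell}e^{-\lambda(a-u)}\ud u=(e^{-\lambda(a-\ell)}-e^{-\lambda a/2})/\lambda$. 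Adding the two pieces gives $(1-2e^{-\lambda a/2}+e^{-\lambda(a-\ell)})/\lambda$. Replacing $\lambda=x^2/2$ recovers exactly the second case. This step is routine and essentially Lemma~\ref{L:shatf}'s mechanism.

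\emph{Step 2: proof of \eqref{E:Bridge-tx}.} There are two elementary bounds on the integral. The first is the trivial bound $\ell$, obtained because the integrand is at most $1$. The second is a bound of order $(1+x^2)^{-1}$. From Step~1, since $0\le 1-e^{-y}\le 1$ and the second-case numerator is at most $2$, the integral is at most $4/x^2$. Combining this with the trivial bound when $x^2\le1$ (where $\ell\le a$) gives
\[
\int\le \min\Big(\ell,\;\frac{C\max(1,a)}{1+x^2}\Big).
\]
Interpolation follows from $\min(A,B)\le A^\eta B^{1-\eta}$ for $\eta\in(0,1]$. This yields $\ell^\eta\,C^{1-\eta}\max(1,a)^{1-\eta}(1+x^2)^{\eta-1}$. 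Finally, the factor $\max(1,a)^{1-\eta}\le\max(1,a)$ is absorbed into $C_\eta e^{a/4}$ using $1+a\le 4e^{a/4}$.

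The main obstacle is the regime of small $|x|$ with large $a$, where the $1/x^2$ bound is useless and the trivial bound $\ell$ must be used while the factor $(1+x^2)^{1-\eta}$ is still accounted for. I handle this by writing $\ell\le\ell^\eta a^{1-\eta}$ and using $(1+x^2)\le 2$ when $x^2\le1$. This is exactly where the $e^{(t-s)/4}$ factor is needed, so I would check this case carefully.
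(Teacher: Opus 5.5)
Your proposal is correct, and it departs from the paper's proof in the second part.

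Your Step~1 matches the paper's argument for \eqref{E:Bridge-t}. The paper also uses the piecewise-linear lower bound $\frac{(t-r)(r-s)}{t-s}\ge\frac12\min\{r-s,\,t-r\}$ and integrates.

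For \eqref{E:Bridge-tx} the routes differ. The paper writes $x^2=(1+x^2)-1$ and uses $\frac{(t-r)(r-s)}{t-s}\le\frac{t-s}{4}$ to pull out the factor $e^{(t-s)/4}$. It then reapplies \eqref{E:Bridge-t} with $1+x^2$ in place of $x^2$. Finally it bounds the explicit numerators with $1-e^{-z}\le z^\eta$ and $|e^{-z}-e^{-w}|\le|z-w|^\eta$, which needs a three-term decomposition when $s'>(t+s)/2$.

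You instead interpolate, via $\min(A,B)\le A^\eta B^{1-\eta}$, between the trivial bound $s'-s$ and the crude whole-interval bound $4/x^2$. You handle $x^2\le 1$ separately using $s'-s\le t-s$ and $1+x^2\le 2$. This gives a valid constant such as $C_\eta=4\cdot 8^{1-\eta}$, via $\max(1,t-s)\le 1+(t-s)\le 4e^{(t-s)/4}$.

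Your route has some advantages:
\begin{itemize}
\item It is shorter and never uses the fine structure of the second case of \eqref{E:Bridge-t}, only $\int_s^t\le 4/x^2$.
\item It covers $x=0$ automatically.
\item It gives the sharper prefactor $\max(1,t-s)^{1-\eta}$ in place of $e^{(t-s)/4}$.
\end{itemize}
You are also right that some growth in $t-s$ is unavoidable: take $x=0$ and $s'=t$. In the paper, $e^{(t-s)/4}$ is simply a by-product of the shift $x^2\mapsto 1+x^2$. This costs nothing there, since $t\le 1$ in the only application (Lemma~\ref{L:holder}).
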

\begin{proof}
  Set $m \coloneqq (t+s)/2$. Note that $(t-r)(r-s)$ is symmetric about $m$, and
  on $[s, m]$, $t-r \ge \tfrac{t-s}{2}$ while on $[m, t]$, $r-s \ge
  \tfrac{t-s}{2}$, or equivalently,
  \begin{align}\label{E:LinearQuad}
    \frac{(t-r)(r-s)}{t-s} \ge \frac{1}{2}\min\{t-r, r-s\} =
    \begin{dcases}
      \dfrac{1}{2}(r-s), & r \in \big[s, (t + s)/2\big], \\
      \dfrac{1}{2}(t-r), & r \in \big[(t + s)/2, t\big].
    \end{dcases}
  \end{align}
  Then~\eqref{E:Bridge-t} is obtained by replacing the quadratic term in the
  exponent by the linear term in~\eqref{E:LinearQuad}. As
  for~\eqref{E:Bridge-tx}, we will need the elementary inequalities:
  \begin{equation}\label{E:exp_ineq}
    1 - e^{-z} \le z^\eta \quad \text{and} \quad
    |e^{-z} - e^{-w}| \le |z - w|^\eta, \quad \text{for all $\eta \in (0, 1)$ and $z, w \ge 0$.}
  \end{equation}
  Notice that
  \begin{align*}
    I
    \equiv & \int_s^{s'} \exp\left(-\frac{(t-r)(r-s)}{t-s}\left(1+x^2\right) + \frac{(t-r)(r-s)}{t-s}\right)\ud r \\
    \le    & e^{\frac{t-s}{4}} \int_s^{s'} \exp\left(-\frac{(t-r)(r-s)}{t-s}\left(1+x^2\right)\right)\ud r.
  \end{align*}
  Then, one applies~\eqref{E:Bridge-t} with $x^2$ replaced by $1+x^2$ to see
  that
  \begin{align*}
    I \le
    2 e^{\frac{t-s}{4}}
    \begin{dcases}
      \dfrac{1-e^{-\frac{1+x^2}{2}(s'-s)}}{1+x^2},                                & s' \in \big[s, (t + s)/2\big], \\
      \dfrac{1 - 2e^{-\frac{1+x^2}{4}(t-s)} + e^{-\frac{1+x^2}{2}(t-s')}}{1+x^2}, & s' \in \big((t + s)/2, t\big].
    \end{dcases}
  \end{align*}
  When $s' \in [s, (t + s)/2]$, we can apply the first inequality
  in~\eqref{E:exp_ineq} to obtain~\eqref{E:Bridge-tx} with $C_\eta =
  2^{1-\eta}$. It remains to prove~\eqref{E:Bridge-tx} for the case $s' \in ( (t
  + s)/2, t]$. Let $a \coloneqq \frac{1+x^2}{4}$ and fix an arbitrary $\eta \in
  (0, 1)$.   Decompose the expression in three parts:
  \[
    1 - 2e^{-a(t-s)} + e^{-2a(t-s')}
    = \underbrace{(1 - e^{-a(t-s)})}_{T_1}
    + \underbrace{(e^{-a(t-s')} - e^{-a(t-s)})}_{T_2}
    + \underbrace{(e^{-2a(t-s')} - e^{-a(t-s')})}_{T_3}.
  \]
  By~\eqref{E:exp_ineq}, we have
  \[
    T_1   \le a^\eta    |t-s|^\eta,  \quad
    |T_2| \le a^\eta    |s'-s|^\eta, \quad
    |T_3| = e^{-a(t-s')} \left|1-e^{-a(t-s')}\right| \le  a^\eta |t-s'|^\eta.
  \]
  Now, since $s' > (t+s)/2$, it follows that $t - s' < s' - s$ and $t - s \le
  2(s' - s)$. Substituting these into the above bounds yields
  \[
    \begin{aligned}
      1 - 2e^{-a(t-s)} + e^{-2a(t-s')}
       & \le a^\eta \left( (2|s'-s|)^\eta + |s'-s|^\eta + |t-s'|^\eta \right) \\
       & \le a^\eta(2^\eta + 2)\,|s'-s|^\eta                                  \\
       & = 4^{-\eta}(2^\eta + 2) \left(1+x^2\right)^\eta\,|s'-s|^\eta.
    \end{aligned}
  \]
  Hence,~\eqref{E:Bridge-tx} holds with $C_\eta = 4^{-\eta}(2^\eta + 2)$. This
  completes the proof of Lemma~\ref{L:Bridge-t}.
\end{proof}

\section{An almost sure compactness lemma}\label{S:as-compact}

In the main text, we will need to upgrade pointwise convergence of random fields
to uniform convergence on compact sets. The next lemma records a standard
compactness argument on $C(B(0,R))$, combining a uniform (in $n$) moment bound
on spatial increments (via Kolmogorov--Chentsov) with a finite-net approximation
and a Borel--Cantelli argument. Throughout this section, we write $B(0,R)
\coloneqq \{x \in \R^d : |x| \le R\}$ for the closed ball of radius $R$ centered
at the origin.

\begin{lemma}\label{L:as-compact}
  Let $X, X_1, X_2, \dots$ be random fields on $\R^d$. Assume:
  \begin{enumerate}[label=\textbf{(\roman*)}]
    \item There exist $p > 1$ and $\alpha \in (0, 1)$ with $p\alpha > d$ such
      that for any $R > 0$,
      \[
        \sup_{n \ge 1} \E\big[|X_n(x) - X_n(y)|^p\big]
        \le C_R |x - y|^{\alpha p},
        \quad \text{for all $x, y \in B(0, R)$,}
      \]
      where $C_R > 0$ depends only on $R$.

    \item For all $x \in \R^d$, $X_n(x) \to X(x)$ in probability.
  \end{enumerate}
  Then, for every $R > 0$, there exist a subsequence $\{X_{n_k}\}_{k \ge 1}$ and
  a modification $\widetilde X$ of $X$ such that
  \begin{equation}\label{E:as-compact}
    \sup_{x \in B(0,R)} |X_{n_k}(x) - \widetilde X(x)| \to 0,
    \qquad \text{almost surely.}
  \end{equation}
\end{lemma}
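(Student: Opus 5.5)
We will fix $R>0$ and argue in three steps: a uniform Hölder bound from Kolmogorov--Chentsov, a finite-net approximation, and a Borel--Cantelli extraction of the subsequence.

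\emph{Step 1: uniform Hölder bound.} By hypothesis (i) and the Kolmogorov--Chentsov theorem (in the quantitative Garsia--Rodemich--Rumsey form), each $X_n$ has a continuous modification on $B(0,R)$, still denoted $X_n$. Moreover, for any $\theta\in(0,\alpha-d/p)$ we have
\[
\sup_{n\ge1}\E\Big[\sup_{x\ne y\in B(0,R)}\frac{|X_n(x)-X_n(y)|^p}{|x-y|^{\theta p}}\Big]\le K_R<\infty .
\]
The constant depends only on $C_R,p,\alpha,\theta,d,R$. Since the constant in Kolmogorov's theorem depends on the increment bound only through $C_R$, it is uniform in $n$. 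Writing $H_n$ for the $\theta$-Hölder seminorm of $X_n$ on $B(0,R)$, Markov's inequality gives $\sup_n\PP(H_n>M)\le K_R M^{-p}$.

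\emph{Step 2: net approximation.} For $j\ge1$, let $N_j\subset B(0,R)$ be a finite $2^{-j}$-net, chosen nested, with $D=\bigcup_j N_j$ a countable dense set. By (ii) and a diagonal argument, we extract a subsequence along which $X_n(x)\to X(x)$ almost surely for every $x\in D$. Step 1 passes to the limit via Fatou's lemma: for $x,y\in D$,
\[
\E\Big[\sup\frac{|X(x)-X(y)|^p}{|x-y|^{\theta p}}\Big]\le \liminf_n\E[H_n^p]\le K_R .
\]
Hence $X|_D$ is a.s. uniformly $\theta$-Hölder and extends to a continuous $\widetilde X$ on $B(0,R)$. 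Since hypothesis (i) also implies continuity in probability of $X$, through $\E|X(x)-X(y)|^p\le C_R|x-y|^{\alpha p}$, which follows from (ii) and Fatou, we get $\widetilde X(x)=X(x)$ a.s. for each $x$; thus $\widetilde X$ is a modification of $X$. Writing $\widetilde H$ for the Hölder seminorm of $\widetilde X$, we then have, for every $j$,
\[
\sup_{B(0,R)}|X_n-\widetilde X|\le \max_{x\in N_j}|X_n(x)-\widetilde X(x)| + (H_n+\widetilde H)\,2^{-j\theta}.
\]

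\emph{Step 3: Borel--Cantelli.} This step is the main obstacle, because the Hölder seminorms $H_n$ are only bounded in probability, not almost surely along the whole sequence. We handle it by choosing the subsequence carefully. For each $k$ we pick $j_k$ with $K_R(2^{-j_k\theta}2^{k})^{p}\cdot$ (tail) summable; concretely, we choose $j_k$ so that
\[
\PP\big(H_n 2^{-j_k\theta}>2^{-k}\big)\le K_R 2^{kp}2^{-j_k\theta p}\le 2^{-k}\quad\text{for all } n .
\]
Then, since the net $N_{j_k}$ is finite and convergence in probability holds at each of its points, we pick $n_k>n_{k-1}$ with
\[
\PP\Big(\max_{x\in N_{j_k}}|X_{n_k}(x)-\widetilde X(x)|>2^{-k}\Big)\le 2^{-k}.
\]
By Borel--Cantelli, almost surely for all large $k$ both $\max_{N_{j_k}}|X_{n_k}-\widetilde X|\le2^{-k}$ and $H_{n_k}2^{-j_k\theta}\le2^{-k}$ hold. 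Also $\widetilde H 2^{-j_k\theta}\to0$, because $\widetilde H<\infty$ a.s. Plugging these into the display from Step 2 yields \eqref{E:as-compact}. We note that this direct choice of $n_k$ makes the diagonal extraction of Step 2 unnecessary except for constructing $\widetilde X$, which could alternatively be done by applying Kolmogorov's theorem to $X$ directly.
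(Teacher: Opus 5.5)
Your argument is correct and uses the same ingredients as the paper's proof: a H\"older-seminorm bound from Kolmogorov--Chentsov/Garsia--Rodemich--Rumsey that is uniform in $n$, a finite net on which (ii) gives convergence in probability, and a Borel--Cantelli extraction. The difference is only organizational: you first build the continuous modification $\widetilde X$ of $X$ (transferring the moment bound to $X$ via Fatou) and compare $X_{n_k}$ with it at a $k$-dependent net scale $2^{-j_k}$, whereas the paper shows that $\{X_n\}$ is Cauchy in probability in $C(B(0,R))$ at a fixed scale $\rho(\delta)$ and gets $\widetilde X$ as the almost sure uniform limit of the extracted subsequence, which spares it the separate construction of a continuous version of $X$.
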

\begin{proof}
  Fix $R > 0$. By assumption (i) and the Kolmogorov--Chentsov continuity theorem
  (see, e.g., \cite{klenke:14:probability}*{Theorem 21.6}), for any $\gamma \in
  \big(0, \alpha - d/p\big)$, each $X_n$ admits a modification (still denoted by
  $X_n$) whose restriction to $B(0,R)$ is $\gamma$-H\"older continuous a.s.
  Moreover, a standard application of Garsia-Rademich-Rumsey's lemma shows that
  for every $\delta \in (0, 1)$ there exists a finite constant $K = K(\gamma,
  \delta, R)$ such that
  \[
    \inf_{n \ge 1} \PP\left(
    \sup_{x \ne y \in B(0,R)}
    \frac{|X_n(x) - X_n(y)|}{|x - y|^\gamma} \le K
    \right) \ge 1 - \delta.
  \]
  Define the modulus of continuity
  \[
    w_R(X_n, \rho)
    \coloneqq
    \sup_{\substack{x \ne y \in B(0,R) \\ |x - y| \le \rho}} |X_n(x) - X_n(y)|.
  \]
  Choose $\rho \coloneqq (\delta/K)^{1/\gamma}$. Then, on the event
  $\big\{\sup_{x \ne y \in B(0,R)} |X_n(x) - X_n(y)|/|x-y|^\gamma \le K\big\}$
  we have $w_R(X_n,\rho) \le K\rho^\gamma = \delta$, and hence
  \begin{equation}\label{E:modulus-cont}
    \sup_{n \ge 1} \PP\big(w_R(X_n,\rho) > \delta\big) \le \delta.
  \end{equation}

  Next, let $\mathcal{N}_\rho$ be a truncated $(\rho/\sqrt{d})$--lattice:
  \[
    \mathcal{N}_\rho \coloneqq B(0, R + \rho) \cap \frac{\rho}{\sqrt{d}}\ZZ^d.
  \]
  Then $\mathcal{N}_\rho$ is a finite $\rho$-net of $B(0,R)$. By assumption
  (ii), for each $z \in \mathcal{N}_\rho$ we have $X_n(z) \to X(z)$ in
  probability. Since $\mathcal{N}_\rho$ is finite, there exists an index
  $N_\delta$ such that for all $n, n' \ge N_\delta$,
  \begin{equation}\label{E:net-cauchy}
    \PP\left(
    \max_{z \in \mathcal{N}_\rho}
    |X_n(z) - X_{n'}(z)| > \delta
    \right) < \delta.
  \end{equation}
  For any $n, n' \ge N_\delta$, define the event
  \[
    E_{n,n'}
    \coloneqq
    \{w_R(X_n,\rho) \le \delta\}
    \cap \{w_R(X_{n'},\rho) \le \delta\}
    \cap \left\{\max_{z \in \mathcal{N}_\rho} |X_n(z) - X_{n'}(z)| \le \delta\right\}.
  \]
  By~\eqref{E:modulus-cont} and~\eqref{E:net-cauchy}, we have
  \begin{align*}
    \PP(E_{n,n'})
    \ge 1
     & - \PP\big(w_R(X_n,\rho) > \delta\big)
    - \PP\big(w_R(X_{n'},\rho) > \delta\big)                                         \\
     & - \PP\left(\max_{z \in \mathcal{N}_\rho} |X_n(z) - X_{n'}(z)| > \delta\right)
    \ge 1 - 3\delta.
  \end{align*}
  On $E_{n,n'}$, for any $x \in B(0,R)$ choose $z_x \in \mathcal{N}_\rho$ such
  that $|x - z_x| \le \rho$. Then
  \[
    |X_n(x) - X_{n'}(x)|
    \le |X_n(x) - X_n(z_x)|
    + |X_n(z_x) - X_{n'}(z_x)|
    + |X_{n'}(z_x) - X_{n'}(x)|
    \le 3\delta,
  \]
  and consequently, for all $n, n' \ge N_\delta$,
  \[
    \PP\left(
    \sup_{x \in B(0,R)} |X_n(x) - X_{n'}(x)| > 3\delta
    \right) \le 3\delta.
  \]

  For each $k \ge 1$, let $\delta_k \coloneqq 2^{-k}$, and let $N_k$ be a choice
  of index $N_{\delta_k}$ from~\eqref{E:net-cauchy}. Define $n_1 \coloneqq N_1$
  and $n_{k+1} \coloneqq \max\{n_k + 1, N_{k+1}\}$. Then, for every $k \ge 1$,
  we have $n_k, n_{k+1} \ge N_k$ and therefore
  \[
    \PP\left(
    \sup_{x \in B(0,R)} |X_{n_{k+1}}(x) - X_{n_k}(x)| > 3\delta_k
    \right) \le 3\delta_k.
  \]
  Since $\sum_{k \ge 1} 3\delta_k < \infty$, the Borel--Cantelli lemma implies
  that, for all large $k$, almost surely,
  \[
    \sup_{x \in B(0,R)} |X_{n_{k+1}}(x) - X_{n_k}(x)| \le 3\delta_k.
  \]
  In particular, $\{X_{n_k}\}_{k \ge 1}$ forms an
  a.s. Cauchy sequence in $C(B(0,R))$ equipped with the supremum norm, and hence
  converges uniformly on $B(0,R)$ to a random field $\widetilde X$ with
  continuous sample paths on $B(0,R)$.

  Finally, fix $x \in \R^d$. By assumption (ii), $X_{n_k}(x) \to X(x)$ in
  probability, while by the uniform convergence we have $X_{n_k}(x) \to
  \widetilde X(x)$ almost surely. Therefore, $\widetilde X(x) = X(x)$ almost
  surely, and $\widetilde X$ is a modification of $X$. This completes the proof
  of Lemma~\ref{L:as-compact}.
\end{proof}

\section*{Acknowledgments}
L.~C. was partially supported by NSF grants DMS-2246850/2443823 and a
Collaboration Grant for Mathematicians (\#959981) from the Simons Foundation.
C.O. was partially supported by a Collaboration Grant for Mathematicians
(\#851792) from the Simons Foundation. S.~T. was partially supported by the NSF
grant DMS-2450734.

\medskip

\end{document}